\title{Braided Hopf algebras and gauge transformations}
\date{March 2022}
\author{Paolo Aschieri, Giovanni Landi, Chiara Pagani}
\numberwithin{equation}{section}
 \newcommand{\II}{\mathds{1}}
\address[]{\textit{Paolo Aschieri}  \newline \indent 
Universit{\`a} del Piemonte Orientale, 
\newline \indent 
Dipartimento di Scienze e Innovazione Tecnologica
\newline \indent   viale T.~Michel~11,~15121~Alessandria,~Italy,
\newline \indent  and INFN Torino, via P.~Giuria~1, 10125~Torino,~Italy}
\email{paolo.aschieri@uniupo.it}
\address[]{\textit{Giovanni Landi }
\newline \indent  Universit\`a di Trieste, Via A. Valerio 12/1, 34127 Trieste, Italy,
\newline \indent Institute for Geometry and Physics (IGAP) Trieste, Italy, 
\newline \indent and INFN, Sezione di Trieste, Trieste, Italy. }
\email{landi@units.it}
\address[]{\textit{Chiara Pagani} 
\newline \indent     Universit\`a di Trieste, Via A. Valerio, 12/1, 34127  Trieste, Italy. }
\email{cpagani@units.it}
\let\oldtocsection=\tocsection
\let\oldtocsubsection=\tocsubsection
 \renewcommand{\tocsection}[2]{\hspace{0em}\oldtocsection{#1}{#2}}
\renewcommand{\tocsubsection}[2]{\hspace{1em}\oldtocsubsection{#1}{#2}}
\theoremstyle{plain}
\newtheorem{thm}{Theorem}[section]
\newtheorem{lem}[thm]{Lemma}
\newtheorem{prop}[thm]{Proposition}
\newtheorem{cor}[thm]{Corollary}
\newtheorem{defi}[thm]{Definition}
\theoremstyle{definition}
\newtheorem{ex}[thm]{Example}
\theoremstyle{remark}
\newtheorem{rem}[thm]{Remark}
\newcommand{\finex}{ \hfill {\tiny $\blacksquare $ } \end{ex} }
\newcommand{\al}{\alpha}
\newcommand{\dd}{\mathcal{D}}
\newcommand{\nn}{\nonumber}
\newcommand{\ot}{\otimes}
\newcommand{\beq}{\begin{equation}}
\newcommand{\eeq}{\end{equation}}
\newcommand{\btrl}{\blacktriangleright}
\newcommand{\trl}{\vartriangleright}
\newcommand{\trr}{\triangleleft}
\newcommand{\bbK}{\mathds{k}}
\newcommand{\id}{\mathrm{id}}
\newcommand{\A}{\mathcal{A}}
\newcommand{\M}{\mathcal{M}}
\newcommand{\C}{\mathcal{C}}
\renewcommand{\O}{\mathcal{O}}
\newcommand{\IR}{\mathbb{R}}
\newcommand{\can}{\chi}
\newcommand{\flip}{\tau}
\newcommand{\ur}{\mathsf{u}_\r}
\newcommand{\bur}{\bar{\mathsf{u}}_\r}
\newcommand{\uf}{\mathsf{u}_\F}
\newcommand{\buf}{\bar{\mathsf{u}}_\F}
\renewcommand{\r}{\mathsf{R}}
\newcommand{\br}{{\overline{\r}}}
\newcommand{\T}{T}
\newcommand{\rF}{{\r_\F}}
\newcommand{\brF}{{\br_\F}}
\newcommand{\RA}{\A^H}
\newcommand{\KM}{{}_{K}{\M}}
\newcommand{\KA}{{}_{K}{\A}}
\newcommand{\KC}{{}_{K}{\mathcal{C}}}
\newcommand{\KF}{{K_\F}}
\newcommand{\KFM}{{}_{\KF}{\M}}
\newcommand{\KFA}{{}_{\KF}{\A}}
\newcommand{\KFC}{{}_{\KF}{\mathcal{C}}}
\def\hsr#1#2{\langle #1,#2\rangle}
\def\hsrF#1#2{\langle #1,#2\rangle_{\F}}
\newcommand{\cun}{\varepsilon}
\newcommand{\K}{K}
\newcommand{\ad}{\mathsf{ad}}
\newcommand{\tad}{\trl_{\ad}}
\newcommand{\tadF}{\trl_{\ad_\F}}
\newcommand{\x}{\mathsf{x}}
\newcommand{\tadr}{\btrl_{\ad_\r}}
\newcommand{\tDelta}{\underline{\Delta}}
\newcommand{\tS}{\underline{S}}
\newcommand{\tK}{\underline{\K}}
\renewcommand{\cot}{\gamma}
\newcommand{\F}{\mathsf{F}}
\newcommand{\bF}{{\overline{\mathsf{F}}}}
\newcommand{\otF}{\ot_\F}
\def\dotF{{\,\raisebox{1.5pt}{\scalebox{0.55}[0.55]{\mbox{$\bullet_{\mbox{$\F\,$}}$}}}}}
 \def\dott{{\,\raisebox{1.5pt}{\scalebox{0.55}[0.55]{\mbox{$\bullet_{\mbox{$\theta\!\:$}}$}}}}}
\def\dotcot{{\,\raisebox{1.5pt}{\scalebox{0.55}[0.55]{\mbox{$\bullet_{\mbox{$\cot\,$}}$}}}}}
\newcommand{\zero}[1]{{#1}_{\scriptscriptstyle{(0)}}}
\newcommand{\one}[1]{{#1}_{\scriptscriptstyle{(1)}}}
\newcommand{\two}[1]{{#1}_{\scriptscriptstyle{(2)}}}
\newcommand{\three}[1]{{#1}_{\scriptscriptstyle{(3)}}}
\newcommand{\four}[1]{{#1}_{\scriptscriptstyle{(4)}}}
\newcommand{\fone}[1]{{#1}_{\scriptscriptstyle{[1]}}}
\newcommand{\ftwo}[1]{{#1}_{\scriptscriptstyle{[2]}}}
\newcommand{\tone}[1]{{#1}_{\scriptscriptstyle{\underline{(1)}}}}
\newcommand{\ttwo}[1]{{#1}_{\scriptscriptstyle{\underline{(2)}}}}
\newcommand{\tuno}[1]{{#1}^{\scriptscriptstyle{<1>}}}
\newcommand{\tdue}[1]{{#1}^{\scriptscriptstyle{<2>}}}
\newcommand{\Aut}[1]{\mathrm{Aut}_B(#1)}
\newcommand{\auto}[1]{\mathrm{aut}_B(#1)}
\newcommand{\aut}[1]{\mathrm{aut}^\r_B(#1)}
\newcommand{\autF}[1]{\mathrm{aut}^{\r_\F}_{B_\F}(#1)}
\newcommand{\Der}[1]{\mathrm{Der}{(#1)}}
\newcommand{\FP}{\mathsf{G}}
\newcommand{\g}{\mathfrak{g}}
\newcommand{\Hom}{{\rm{Hom}}}
\newcommand{\lie}{{\mathcal{L}}}
\newcommand{\Ad}{\mathrm{Ad}}
\newcommand{\U}{\mathcal{U}}
\newcommand{\stwo}{\tfrac{1}{\sqrt{2}}}
\newcommand{\parn}[1]{\partial_{\scriptscriptstyle{#1}}}
\newcommand{\Ru}[2]{R_{\!}\left({#1}\ot{#2}\right)}
\newcommand{\tpr}{\,{\raisebox{.28ex}{\rule{.6ex}{.6ex}}}\,}
\renewcommand{\bot}{\boxtimes}
\newcommand{\bs}{\underline{S}}
\newcommand{\bh}{{\underline{H}}}
\renewcommand{\bot}{\boxtimes}
\newcommand{\hpr}{{\:\cdot_{^{_{{{\:\!\!\!\!\!-}}}}}}}
\newcommand{\wdg}[2]{\, \textsf{#1}  \wedge \textsf{#2}}
\begin{document}

\begin{abstract}
We study infinitesimal gauge transformations of  an equivariant
  noncommutative principal bundle as a braided Lie algebra of  derivations. 
For this, we analyse general $K$-braided  Hopf and Lie algebras, for $K$ a (quasi)triangular Hopf algebra of symmetries,
and study their representations as
braided derivations. We then study Drinfeld twist deformations of braided Hopf algebras and of Lie algebras of infinitesimal gauge transformations. We give examples coming from  deformations of abelian and Jordanian type. 
In particular we explicitly describe the braided Lie algebra of gauge
transformations of the instanton bundle and of the orthogonal
bundle on the quantum sphere $S^4_\theta$.\\[-3em]
\end{abstract}
\maketitle

\tableofcontents 
\parskip=.75 ex

\allowdisplaybreaks[4]

\section[Intro]{Introduction}

In this paper we study different notions of gauge group and 
infinitesimal gauge transformations of
noncommutative principal bundles (Hopf--Galois extensions).  
This is a key ingredient for a differential
geometry study of the theory of connections and of their moduli
spaces.
The group of gauge transformations was considered in \cite{brz-tr}
(see also \cite{brz-maj} and \cite{strong}).
A general feature of these works is that the group 
there defined is bigger than one would expect. In \cite{pgc} we
studied the gauge group of noncommutative bundles with commutative
base space algebra and noncommutative Hopf algebra as structure group.
In particular, we proved that noncommutative principal bundles arising via twist
deformation of the structure Hopf algebra of commutative principal bundles have gauge group
isomorphic to the gauge group of the initial classical
bundle.

On one hand in the present paper we improve those results and study the case of principal
bundles with noncommutative base space algebra.
 For any Hopf--Galois extension
$B=A^{co H}\subseteq A$, with total space algebra $A$, base space
algebra $B$ and structure group Hopf algebra $H$, we define the gauge group $\Aut{A}$
of right $H$-comodule algebra morphisms that restrict to the identity on
the subalgebra $B$. This definition encompasses the above mentioned
Drinfeld twist deformations case where the gauge group remains
classical. However, for arbitrary Hopf--Galois extensions it usually gives very
small groups because in general there are few algebra maps on a noncommutative space. 
For example on Galois objects (Hopf--Galois extensions with $B$
 the ground field) the gauge group is just made of characters. 

 On the other hand we study a second definition of gauge group 
 for Hopf--Galois extensions with structure Hopf algebra $H$ and
 equivariant under a triangular Hopf algebra $K$. 
As expected, we show that in this case the natural notion of Lie algebra of gauge
transformations is that internal to the representation category of the
triangular Hopf algebra $K$. A Lie algebra object in that category is
a $K$-module with a braided antisymmetric bracket compatible with the
$K$-action, that is, it is a $K$-braided Lie algebra. Similarly, the gauge group is a
$K$-braided Hopf algebra.
The framework of braided Hopf algebras as Hopf algebras internal in the
representation category of a quasitriangular Hopf algebra has been
proposed in \cite{maj-braid, majid-bla}.  A notion of braided gauge
group was also considered in \cite{durdevic}, albeit in a different braided
context.

The advantage of considering braided Hopf and Lie algebras is clear
when noncommutative principal bundles, with now noncommutative base
space, are obtained via deformation quantization of classical ones.  In this braided approach the (infinite dimensional) vector space underlying infinitesimal gauge
transformations is the same as that of the undeformed bundle.
This follows from the general categorical framework mentioned before, which is well
adapted to Drinfeld twist deformations. It is also illustrated in the key
examples of the instanton bundle and the orthogonal bundle on the
4-sphere $S_\theta^4$. In this latter both the base space and the structure group
are noncommutative. These results (for the instanton bundle) suggest a
correspondence with those on noncommutative instanton moduli spaces
whose dimensions survives the $\theta$-deformation see for
example \cite{LS} (see also \cite{BL, BLS13}).

Another relevant application of this braided gauge Lie algebra approach
is for noncommutative coset spaces --not necessarily related to
twist deformations-- whose total space $A$ is a triangular Hopf
algebra. The braided Lie algebra of infinitesimal gauge transformations
is the subalgebra of vertical vector fields of the braided Lie algebra
of vector fields. This latter  is
generated (over the base space algebra) by the right-invariant vector fields defining the
bicovariant differential calculus \`a la Woronowicz on $A$. This
relationship between differential calculi and braided infinitesimal
gauge transformations supports their relevance for a theory of connections.

Infinitesimal gauge transformations similar to those in the present
paper have appeared in the 
mathematical physics literature. We just mention the recent paper
\cite{Szabo21} where only trivial principal bundles are considered using formal
deformation quantization with $\star$-products.
A further independent argument in favour of a theory of noncommutative gauge
groups that, like the braided one we develop, does not drastically depart from the 
classical one comes from the Seiberg-Witten map between 
commutative and noncommutative gauge theories
\cite{Seiberg:1999vs}. This map 
establishes a one-to-one correspondence between the corresponding gauge transformations (yet of a different kind from the
braided gauge transformations we consider) and hence points to  
noncommutative  gauge equivalence classes that are a deformation of the classical ones.
\\

The $K$-braided Lie algebra of infinitesimal gauge transformations we
consider is an example of Lie algebra in the representation category
of $K$-modules. The first part of the  paper provides a self
contained introduction to  the theory of braided Hopf algebras, these
are Hopf algebras in the representation category of a
quasitriangular Hopf algebra $K$.
 In Section 4 we twist with a Drinfeld 2-cocycle $\F\in
K\otimes K$ the quasitriangular Hopf algebra
$K$  to the quasitriangular Hopf algebra
$K_\F$ and recall the monoidal equivalence betweeen $K$-modules and
$K_F$-modules.
$K$-braided Hopf algebras are twisted to
$K_F$-braided Hopf algebras. In Section 5 we consider  $K$  triangular,  we discuss braided Lie
algebras, that were pioneered by \cite{Gurevich} as generalized Lie
algebras. We twist these braided Lie
algebras and their
universal enveloping algebras, which are braided Hopf algebras.  This leads to
isomorphisms (denoted ${\mathcal{D}}$) of $K_F$-braided Lie algebras that
we study in detail in particular in the category of relative
$K$-modules $B$-bimodules, that is, when there is a $K$-module algebra
$B$ with a $K$-compatible action on the braided Lie algebra. 
Indeed, infinitesimal gauge transformations of a Hopf--Galois extension
$B=A^{co H}\subseteq A$ form a $K$-module and
a compatible $B$-module: they are a $K$-braided Poisson algebra.

In Section 6 the gauge group of a Hopf--Galois extension $B=A^{co
  H}\subseteq A$ as $H$-equivariant algebra endomorphisms of $A$ is
considered, this does not require braided geometry notions. On the contrary, Section 7
applies the braided geometry of the initial sections to study the
braided Lie algebra of infinitesimal gauge transformations of 
$K$-equivariant Hopf--Galois extensions $B=A^{co
  H}\subseteq A$. As discussed at the beginning of this introduction,
this notion is natural in the $K$-representation category, and it is
supported  by the relationship beween infinitesimal gauge
transformations and differential calculi.  The last section applies the Drinfeld twist deformation
machinery to provide $K_\F$-braided gauge Lie algebras 
from $K$-braided gauge Lie algebras. Thus gauge transformations of
twisted principal bundles arising from commutative principal bundles
are presented. The theory is first illustrated for a trivial principal 
bundle. Then the two main examples are those of the instanton and
orthogonal bundles on the $S^4_\theta$-spheres ($\O(S^4_\theta)\subseteq
\O(S^7_\theta)$ and $\O(S_\theta^4)\subset
\mathcal{O}(SO_\theta(5,\mathbb{R}))$ respectively)
where the braided Lie algebra of the gauge group is explicitly
presented. Two further examples considering Jordanian twist deformations
(rather than abelian $\theta$-deformations via actions of tori)
are also considered. While all these examples are on the ground field
$\mathbb{C}$ a last example considers the general case in the
context of formal deformation quantization.

\section{Basic definitions and notations} ~\\
On this preliminary section we collect basic definitions and fix notation. 
All linear spaces are $\bbK$-modules, where 
$\bbK$ is a commutative field with unit $1_\bbK$, or the ring of formal power series in a variable $\hbar$ over a field. 
Much of what follows can be generalised to $\bbK$ a commutative unital ring. 
We denote the tensor product over $\bbK$  by $\otimes$.  For $\bbK$-modules $V,W$, we denote by $\flip$ the flip operator $\flip: V \ot W \to W \ot V, ~ v \ot w \mapsto w \ot v$.

All algebras (coalgebras) are over $\bbK$ and assumed to be unital and associative (counital and coassociative). Morphisms of algebras (coalgebras) are unital (counital).
The product in an algebra $A$ is denoted $m_A: A \ot A \to A$, 
and 
 the unit map $\eta_A: \bbK \to A$, with $1_A:= \eta_A(1_\bbK)$ the unit element.  
The  counit and coproduct of a coalgebra $C$ are denoted $\cun_C: C \to \bbK$ and $\Delta_C: C \to C \ot C$ respectively.
We use the standard Sweedler notation for the coproduct:  $\Delta_C(c)= \one{c} \ot \two{c}$ (with sum understood) 
and for its iterations, $\Delta_C^n=(\id \ot  \Delta_C) \circ\Delta_C^{n-1}: c \mapsto \one{c}\ot \two{c} \ot 
\cdots \ot c_{\scriptscriptstyle{(n+1)\;}}$, for all $ c \in C$.
Moreover, for a Hopf algebra $H$, we denote $S_H: H \to H$ its antipode.
We denote by $*$ the convolution product in the  dual $\bbK$-module
$C':=\mathrm{Hom}(C,\bbK)$, $(f * g) (c):=f(\one{c})g(\two{c})$, for all  $f,g \in C'$, $c \in C$.
For a coalgebra $C$, we denote $C^{cop}$ the co-opposite coalgebra: $C^{cop}$ is the coalgebra structure on the $\bbK$-module $C$ with comultiplication $\Delta_{C^{cop}}:=\flip \circ \Delta_C$
and counit $\cun_{C^{cop}}:= \cun_C$. If $C$ is a bialgebra, such is $C^{cop}$ with  multiplication and unit the same of $C$.

\subsection{Hopf algebra modules and comodules} Given a bialgebra (or Hopf algebra) $H$, 
a left \textbf{$H$-module} is a $\bbK$-module $V$ with a left
$H$-action: a $\bbK$-module map
$\trl_V: H \ot V\to V$, $h\ot v \mapsto h \trl_V v$, such that 
\beq \label{eqn:Hmodule}
(hk) \trl_V v = h \trl_V (k \trl_V v) \; , \quad 1 \trl_V v = v \, ,
\eeq
for all $v \in V$, $h,k \in H$.
Given two $H$-modules $(V, \trl_V),~(W, \trl_W)$,  a map $\psi:V\to W$ such that 
$\psi (h \trl_V v) = h \trl_W (\psi(v))$, for all $h\in H$, $v\in V$,
is called  
a morphism of left $H$-modules. This condition will also be referred to as $H$-equivariance. 
The tensor product (as $\bbK$-modules) 
$V\otimes W$ is an $H$-module with action  
\begin{align}\label{trlVW}
 \trl_{V\otimes W} : H \ot V\otimes W \longrightarrow  V\otimes W~, \quad 
h \ot  v\otimes w  \longmapsto (\one{h} \trl_V v ) \ot (\two{h} \trl_W w).
\end{align}
We denote by {${}_H \M$ the category of left $H$-modules. 
It is a monoidal category, whose  
unit object is $\bbK$ with (trivial) action  given by the counit map $\trl_{\bbK}=\cun_H: H \simeq H \ot \bbK\to \bbK$.
For $H$ a Hopf algebra, 
the linear space $\Hom(V,W)$ of   $\bbK$-linear maps $\psi: V \to W$ is a left $H$-module with action
\begin{align}\label{action-hom}
\trl_{\Hom(V,W)}:  H \ot \Hom(V,W) &\to \Hom(V,W) 
\nn\\
h \ot \psi & \mapsto  h \trl_{\Hom(V,W)} \psi : \; v \mapsto \one{h} \trl_W \psi(S(\two{h})\trl_V v) .
\end{align}
This action is trivial on the  subspace $\Hom_{{}_H \M}(V,W)$
consisting of $H$-module morphisms: $h \trl_{\Hom(V,W)} \psi= \varepsilon(h) \psi$. 

An algebra $A$ which is a left $H$-module, $(A, \trl_A)$, is called a  left
$H$-\textbf{module algebra} if its multiplication $m_A: A \ot A\to A$ and unit $\eta_A: \bbK \to A$ are morphisms of $H$-modules:
\beq\label{mod-a}
h \trl_A (ab)= (\one{h}\trl_A a)(\two{h}\trl_A b) \, ,  \quad h \trl_A 1= \cun(h)1
\eeq
for all $a,b \in A$, $h \in H$. 
We denote by \textbf{${}_H \A$} the category of
left $H$-module algebras, with morphisms in it being $H$-module
morphisms which are also algebra maps. 
The algebra $(\Hom(V,V), \circ)$ of  $\bbK$-linear maps from an $H$-module $V$ to itself, with multiplication given by map composition  and action as in \eqref{action-hom}, is a  left $H$-module algebra.
A module $V$ that is
a left $H$-module and a left $A$-module, with $A$-action denoted
$\cdot_V$,  is called  
$(H,A)$-\textbf{relative Hopf
module} if for all $h\in H$, $a\in A$, and $v\in V$,
\beq\label{eqn:KArelmod}
h \trl_V (a\cdot_V v)=
(\one{h}\trl_V a)\cdot_V(\two{h}\trl v) ~. 
\eeq
A morphism of 
  $(H,A)$-relative Hopf modules is a morphism of
right $H$-modules  which is also a morphism of left $A$-modules. 

Furthermore, a left $H$-\textbf{module coalgebra}  
is a coalgebra $C$ which is a left $H$-module and its 
coproduct and counit are morphisms of $H$-modules:
\beq\label{mod-co}
\one{(h \trl_C c)} \ot \two{(h \trl_C c)} = (\one{h} \trl_C \one{c}) \ot (\two{h} \trl_C \two{c})  \, , 
\quad  
\cun(h \trl_C {c}) = \cun(h) \cun(c)\, 
\eeq
 for all $c \in C$, $h \in H$. We denote by  \textbf{${}_H \C$} 
 the category of left $H$-module coalgebras; 
morphisms in $_H{\C}$ 
are $H$-module maps which are also coalgebra maps.

Analogous definitions (and notations) are given for a right $H$-module $(V, \trr_V)$, and corresponding categories of right $H$-module algebras $\A_H$ and coalgebras $\C_H$. Recall that if $H$ is a Hopf algebra, any 
left $H$-module $(V, \trl_V)$ can be made into a right $H$-module $(V, \trr_V)$ with right action
$\trr_V : V \ot H \to V , \,\, v \ot h \mapsto v \trr_V h := S(h) \trl_V v $. Moreover if $(V,\trl_V)  \in \A_H$, then 
$(V, \trr_V) \in \A_{H^{cop}}$, with ${H^{cop}}$ the co-opposite Hopf algebra.

Dually,
a right \textbf{$H$-comodule} is a $\bbK$-module $V$ with a $\bbK$-linear map
$\delta^V:V\to V\otimes H$ (a right $H$-coaction) such that 
\beq \label{eqn:Hcomodule}
(\id\otimes \Delta)\circ \delta^V = (\delta^V\otimes \id)\circ \delta^V~,\quad 
(\id\otimes \cun) \circ \delta^V =\id \, . 
\eeq
In Sweedler notation we write $\delta^V:V\to V\otimes H$, 
$v\mapsto \zero{v}\otimes \one{v}$ (with sum understood), and the right $H$-comodule properties
\eqref{eqn:Hcomodule}  read
$$
\zero{v} \otimes \one{(\one{v})}\otimes \two{(\one{v})} = \zero{(\zero{v})}\otimes \one{(\zero{v})} \otimes \one{v}=: \zero{v} \ot\one{v} \ot \two{v} \, ,\quad
\zero{v} \,\cun (\one{v}) = v~
$$
for all $v\in V$.
A morphism between  $H$-comodules $V, W$ is a $\bbK$-linear map $\psi:V\to W$ which is an $H$-comodule map, that is $\delta^W\circ \psi=(\psi\otimes\id)\circ \delta^V$, or
$\zero{\psi(v)}\otimes \one{\psi(v)} = \psi(\zero{v})\otimes \one{v}$
for all $v\in V$.
Also, the tensor product (as $\bbK$-modules) 
 $V\otimes W$ is an $H$-comodule with  the right $H$-coaction  
\begin{align}\label{deltaVW}
 \delta^{V\otimes W} :V\otimes W \longrightarrow  V\otimes W\otimes H \, , \quad 
 v\otimes w & \longmapsto \zero{v}\otimes \zero{w} \otimes 
 \one{v}\one{w} ~.
\end{align}

We denote by \textbf{$\M^H$} the category of right $H$-comodules; it is a monoidal category. The
unit object is $\bbK$ with coaction $\delta^{\bbK}$
given by the unit map $\eta_H:\bbK\to \bbK\otimes H \simeq H$.

A right $H$-\textbf{comodule algebra} is an algebra  $A$  which is 
a right $H$-comodule and has multiplication and unit which are morphisms of $H$-comodules. 
This is equivalent to requiring the coaction $\delta^A: A\to A\otimes H$ to be
a morphism of unital algebras (where $A\otimes H$ has the usual tensor
product algebra structure):  for all $a,a^\prime\in A \, $,
$$
\delta^A(a\,a^\prime) =\delta^A(a)\,\delta^A(a') \, , \quad
\delta^A(1_A) = 1_A\otimes 1_H \, .
$$
We denote by \textbf{$\A^H$} the category of right $H$-comodule algebras with 
morphisms just $H$-comodule maps which are also algebra maps.
 
Finally, a right $H$-\textbf{comodule coalgebra}  
is a coalgebra $C$ which is a right $H$-comodule and  has 
coproduct and counit which are morphisms of $H$-comodules, 
that is,
\beq\label{com-co}
\zero{(\one{c})} \ot \zero{(\two{c})} \ot \one{(\one{c})} \one{(\two{c})} 
=
\one{(\zero{c})} \ot \two{(\zero{c})} \ot \one{c} \, , 
\quad  
\cun(\zero{c}) \one{c}=\cun(c) 1_H \, 
\eeq
 for each $c \in C$.
We denote by  \textbf{${\C}^H$} the category of right $H$-comodule coalgebras; 
its morphisms are $H$-comodule maps which are also coalgebra maps.
Analogous notions and notation can be introduced for left comodules (algebras and coalgebras).

\subsection{Relative modules and K-equivariance}\label{sub:Kequiv}

Let $H$ be a bialgebra and let $A\in\A^H$.
An $(H,A)$-\textbf{relative Hopf module}
$V$ is a right $H$-comodule with a left  $A$-action $\trl_V$ which is a morphism of $H$-comodules: for all $a\in A$ and $v\in V$,
\beq\label{eqn:modHcov} 
\zero{(a \trl_V v)} \ot \one{(a \trl_V v)} = \zero{a} \trl_V \zero{v} \ot \one{a}\one{v} ~. 
\eeq
A morphism of 
 $(H,A)$-relative Hopf modules is a morphism of
right $H$-comodules  which is also a morphism of left $A$-modules.
We denote by $ {}_{A}\M^H$ the category of 
 $(H,A)$-relative Hopf modules.
In a similar way one defines the categories of relative Hopf modules ${{\mathcal M}_A}^{\!H}$ for $A$ acting on the right, 
 and ${{}_{A}{\mathcal M}_A}^{\!H}\,$ for right and left  $A$
 compatible actions.

For two Hopf algebras
$K$ and $H$, a \textbf{$K$-equivariant $H$-comodule} $V$ is 
an
$(H,K)$-relative Hopf module with $K\in \A^H$ with a trivial coaction
$K\to H\ot K, k\mapsto k\otimes 1_H$. The left $K$-action $\trl_V : K\otimes V\to V$ and
the right $H$-coaction $\delta^V : V\to V\otimes H$ on $V$ thus
satisfy $\delta\circ \trl_V = (\trl_V\ot \id)\circ (\id \ot \delta)$ that is,
\beq\label{compatib}
\zero{(k \trl_V v)} \ot \one{(k \trl_V v)} = k \trl_V \zero{v} \ot \one{v}  , 
\eeq 
for all $k\in K, v\in V$.

We denote by ${}_K{\M}{}^{H}$  the category of  $K$-equivariant $H$-comodules.
It is a monoidal category: the tensor product 
of $V,W\in{}_K{\M}{}^{H}$ is the object $V\otimes W\in{}_K{\M}{}^{H}$
with tensor $K$-module structure  in \eqref{trlVW} and $H$-coaction  in \eqref{deltaVW}.
We  also consider the category ${}_K{\A}{}^{H}$
of  \textbf{$K$-equivariant $H$-comodule algebras}, where objects and morphisms are  in
${}_K{\A}{}^{H}$ if they are in  ${\A}{}^{H}$, ${}_K{\A}$ and ${}_K{\M}{}^{H}$.

\section{Quasitriangular structures and associated braidings}

\subsection{Quasitriangular Hopf algebras} ~\\
We recall some basic definitions and properties of quasitriangular Hopf algebras referring 
to  \cite[Ch.~8]{KS} or \cite[Ch.~2]{Majid} for more details and  proofs.  
This class of algebras is characterized by being  
cocommutative up to conjugation by a suitable element.

\begin{defi}\label{def:qt}
A bialgebra $\K$, with coproduct $\Delta$, is called \textbf{quasitriangular}  
if there exists an  element $\r:=\r^\alpha \ot \r_\alpha \in \K \ot \K$ (with an implicit sum) which is invertible:
$\exists \, \br \in \K \ot \K$, such that  $\r \br = \br \r = 1 \ot 1$;  $\Delta$ is quasi-cocommutative with respect to $\r$:  
\beq\label{iiR} 
\Delta^{cop} (k) = \r \Delta(k) \br
\eeq for each $k \in \K$, with $\Delta^{cop} := \flip \circ \Delta$ and $\tau$ the flip; and such that
\beq \label{iii} (\Delta \ot \id) \r=\r_{13} \r_{23} \qquad \mbox{and} \qquad 
(\id \ot \Delta)\r=\r_{13} \r_{12}.
 \eeq
Here $\r_{12}=\r^\alpha \ot \r_\alpha \ot 1$  and similarly for $\r_{13}$ and $\r_{23}$. 
Then conditions \eqref{iii}  become
\beq\label{iiiR}
\one{\r^\alpha} \ot \two{\r^\alpha} \ot \r_\alpha = \r^\alpha \ot \r^\beta \ot \r_\alpha \r_\beta
\quad ; \quad
\r^\alpha \ot \one{\r_\alpha} \ot \two{\r_\alpha}  = \r^\alpha  \r^\beta \ot \r_\beta \ot \r_\alpha 
\; .
\eeq
The element $\r$ is called a \textbf{universal $R$-matrix} of
$\K$.  
A quasitriangular bialgebra $(\K,\r)$ is \textbf{triangular} if $\br= {\r}_{21},$ with ${\r}_{21}= \flip(\r)=\r_\alpha \ot \r^\alpha$.
\end{defi}

The $\r$-matrix  of a quasitriangular bialgebra $(\K,\r)$ is unital,  
\beq\label{Rnormalized}
  (\cun \ot \id) \r = 1 =  (\id \ot \cun) \r   
\eeq 
with $\cun$ the counit of $K$, and satisfies the quantum Yang--Baxter equation
$$
\r_{12} \r_{13} \r_{23}= \r_{23} \r_{13}\r_{12} ,
$$ 
or in components, 
\beq\label{YB}
\r^\alpha \r^\beta \ot \r_\alpha \r^\gamma \ot \r_\beta \r_\gamma=
\r^\beta \r^\gamma \ot \r^\alpha \r_\gamma  \ot \r_\alpha \r_\beta \; .
\eeq

By the defining properties for a $\r$-matrix  it follows that if $\r$ is a $\r$-matrix  of $\K$, then
$\br_{21}$ is also a $\r$-matrix  of $\K$.
Moreover, $\K$ is cocommutative, $\Delta^{cop} = \Delta$, if and only if $\r=1 \ot 1$ is a $\r$-matrix  of $\K$.

A  Hopf algebra which is quasitriangular as a bialgebra is called a \textbf{quasitriangular Hopf algebra}. 
Its antipode is such that 
\beq\label{ant-R}
(S \ot \id) (\r)= \br  \; ; \quad
(\id \ot S) (\br)= \r \; ; \quad
(S \ot S) (\r)= \r \; .
\eeq
and is invertible. Indeed, consider the invertible element
$
\ur:= S(\r_\alpha)\r^{\alpha} 
$
with inverse $\bur = \r_\alpha S^2(\r^{\alpha})$. Then,
$
S^2(k)=  \ur \, k \, \bur ,
$
for all $k \in \K$. This relation and the fact that $S(\ur) \ur=\ur S(\ur)$
is central, yields the following expression for the inverse of $S$:
\beq\label{Sinv}
S^{-1}(k)=\bur \, S(k) \, \ur \, ,\qquad  \forall k \in \K \,. 
\eeq

When $(\K,\r)$  is a quasitriangular bialgebra, the monoidal category $\KM$ of left
$\K$-modules  is braided monoidal: if $(V, \trl_V), (W, \trl_W)$ are two left $\K$-modules, then the $\K$-modules $V \ot W$ and $W \ot V$, with actions 
as in \eqref{trlVW},
are isomorphic via the map
\beq\label{braiding}
\Psi^{\r}_{V,W}: V \ot W  \longrightarrow W \ot V ~, \quad v\ot w \longmapsto  
(\r_\alpha \trl_W w) \ot (\r^\alpha \trl_V v) \, .
\eeq 
\begin{rem}
In order to show that  
the braiding $\Psi^{\r}_{V,W}$ is an isomorphism of left $\K$-modules, only the condition $\br \Delta^{cop}= \Delta \br$ in \eqref{iiR} in Definition \ref{def:qt} is needed. Thus, since both $\r$ and $\br_{21}$ are $\r$-matrices for $\K$, one could  use
 $\br$ instead of $\r_{21}$  in the definition of the braiding.
\end{rem}
Additionally, the category of  left $\K$-module algebras $\KA$ is monoidal as well:

\begin{prop} \label{BTPalgK}
Let $(\K,\r)$ be a quasitriangular bialgebra. Let  $(A, \trl_A), (C, \trl_C)$ be left $\K$-module algebras, 
then the $\K$-module $A \ot C$ with tensor product action as in \eqref{trlVW}, 
 is a left $\K$-module algebra when endowed with the product
\beq\label{tpr}
(a \ot c) \tpr (a' \ot c'):= a~  \Psi^{\r}_{C,A} (c \ot a') c'=
a(\r_\alpha \trl_A a') \ot (\r^\alpha \trl_C c) c'\, .
\eeq
Moreover, if $\phi: A\to E$ and $\psi: C\to F$ 
are morphisms of $\K$-module algebras,  
then so is 
the map $\phi\otimes\psi : A \ot C \to E \ot F$, $a\otimes c\mapsto
\phi(a)\otimes \psi(c)$,
where $A\otimes C$ and $E\otimes F$ are both endowed with the $\tpr$-products.
\end{prop}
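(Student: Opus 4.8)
The plan is to verify the two assertions of Proposition~\ref{BTPalgK} directly: first that $(A\otimes C,\tpr)$ is a $\K$-module algebra, and then that $\phi\otimes\psi$ is a morphism of $\K$-module algebras. For the first part, I would check unitality, associativity, and $\K$-equivariance of $\tpr$ in turn. Unitality is immediate using the normalization \eqref{Rnormalized} of the $R$-matrix together with \eqref{mod-a}: one gets $(a\ot c)\tpr(1\ot 1)= a(\r_\alpha\trl_A 1)\ot(\r^\alpha\trl_C c)=a\,\cun(\r_\alpha)\ot(\r^\alpha\trl_C c)=a\ot c$, and similarly on the other side. For $\K$-equivariance, I would expand $k\trl_{A\ot C}\big((a\ot c)\tpr(a'\ot c')\big)$ using \eqref{trlVW} and \eqref{mod-a}, and then use the first identity in \eqref{iiiR} (the coproduct of $\r^\alpha$, or equivalently a hexagon-type relation for the braiding) to move the action of $\Delta(k)$ past the $R$-matrix; the point is that $\Psi^{\r}_{C,A}$ is a morphism of $\K$-modules, which is already recorded in the braiding discussion preceding the proposition, so this step is really just bookkeeping with Sweedler indices.

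The associativity of $\tpr$ is the technical heart of the argument, and I expect it to be the main obstacle. One must show that $\big((a\ot c)\tpr(a'\ot c')\big)\tpr(a''\ot c'') = (a\ot c)\tpr\big((a'\ot c')\tpr(a''\ot c'')\big)$. Writing both sides out, each involves two applications of the braiding $\Psi^{\r}_{C,A}$, i.e.\ two copies of the $R$-matrix, say $\r=\r^\alpha\ot\r_\alpha$ and $\r=\r^\beta\ot\r_\beta$. Reconciling the two bracketings requires precisely the hexagon relations \eqref{iii}, equivalently the component identities \eqref{iiiR}, which express $(\Delta\ot\id)\r$ and $(\id\ot\Delta)\r$ in terms of $\r_{13}\r_{23}$ and $\r_{13}\r_{12}$; combined with the module algebra property \eqref{mod-a} of $A$ and $C$, these let one match the $R$-matrix legs acting on the various tensor factors. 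Equivalently, and more conceptually, this is the statement that the braiding makes $\KM$ a braided monoidal category and that the formula \eqref{tpr} is the standard braided tensor product of algebras in such a category, whose associativity is a general categorical fact; I would present the explicit Sweedler computation but remark that it is an instance of this general principle.

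For the second assertion, I would simply compute $(\phi\otimes\psi)\big((a\ot c)\tpr(a'\ot c')\big)$, push $\phi$ and $\psi$ through using that each is both an algebra map and $\K$-equivariant, and recognize the result as $\big((\phi\ot\psi)(a\ot c)\big)\tpr\big((\phi\ot\psi)(a'\ot c')\big)$ in $E\ot F$; the $\K$-equivariance of $\phi,\psi$ is exactly what is needed to commute them past the $R$-matrix legs appearing in \eqref{tpr}. That $\phi\otimes\psi$ is itself $\K$-equivariant and unital is clear from the componentwise definitions of the tensor action \eqref{trlVW} and unit. Thus the only genuine computation is the associativity check, and the recommended strategy there is to organize the Sweedler calculation around the two hexagon identities \eqref{iiiR} so that the $R$-matrix manipulations are transparent.
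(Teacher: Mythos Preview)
Your approach is correct and is the standard one; the paper itself states this proposition without proof, referring implicitly to the textbook treatments \cite{KS,Majid} cited at the start of the section. So there is nothing to compare against, and your outline would constitute a complete proof.

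One small correction: for $\K$-equivariance of $\tpr$ you cite \eqref{iiiR}, but what is actually needed there is the quasi-cocommutativity \eqref{iiR}, i.e.\ $\Delta^{cop}(k)\r=\r\Delta(k)$, which lets you commute the middle two legs of $\Delta^{(3)}(k)$ past the $R$-matrix. You in fact say this correctly in the next clause (``the point is that $\Psi^{\r}_{C,A}$ is a morphism of $\K$-modules''), and the paper's Remark following \eqref{braiding} records that this is a consequence of \eqref{iiR}, not \eqref{iiiR}. The hexagon identities \eqref{iiiR} are, as you say, exactly what is needed for associativity; your description of that computation is accurate.
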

The $\K$-module algebra $(A \ot C, \tpr)$ is the
{\bf{braided tensor product algebra}} of $A$ and $C$. 
We
denote it by
$A \bot  C$ and write $a \bot c\in A \bot
 C$ for $a\in A$, $c\in C$, and 
similarly by 
 $\phi\bot\psi:=\phi\otimes\psi: A\bot C\to E\bot F$ the 
 tensor product of two morphisms. 
We denote by $({}_{(K,\r)}\A,\bot)$ or simply $({}_{K}\A,\bot)$ the monoidal category of left
 $K$-module algebras with respect to the quasitriangular structure
 $\r$.

\subsection{Braided Hopf algebras} \label{sec:braidedHa}~\\
We can next introduce braided Hopf algebras (see  \cite[ \S
9.4.2]{Majid} or \cite[\S 10.3.3]{KS}):
\begin{defi} \label{def:bbK}
Let  $(\K,\r)$ be a quasitriangular Hopf algebra. 
Let $(L ,\trl_L)$ be both a $\K$-module algebra 
$(L, m_L, \eta_L, \trl_L)$ and a $\K$-module coalgebra $(L, \Delta_L,\cun_L, \trl_L)$. Then $L$ is  
a \textbf{braided bialgebra associated with $(\K,\r)$} 
if it is a bialgebra
in the braided monoidal category $({}_\K\M,\otimes, \Psi^{\r})$ of $\K$-modules: 
\beq\label{cop-braid}
\cun_L\circ m_L  = m_\bbK \circ (\cun_L \ot \cun_L) \, , \quad
\Delta_L\circ m_L  = m_{L \bot L} \circ (\Delta_L\otimes\Delta_L) \, .
\eeq
That is, $\cun_L: L\to \bbK$ and  $\Delta_L: L \to L\bot L$ are algebra maps 
with respect to the product $m_L$ in $L$ and 
the product $m_{L \bot L} = (m_L \otimes m_L)  \circ (\id_L\otimes
\Psi^{\r}_{L,L} \otimes \id_L)$ in $L\bot L$ (as in
\eqref{tpr}).
We frequently term $L$ a $K$-braided bialgebra omitting to mention the choice 
 of quasitriangular structure on $K$. 
 
 A braided biagebra  $L$ is a braided Hopf algebra if there is a $\K$-module map
$S_L : L\to L $ which is
the convolution inverse of the identity
$\id: L\to L$:
\beq 
m_L\circ (\id_L\otimes S_L)\circ \Delta_L=
\eta_L\circ \cun_L =
m_L\circ (S_L\otimes\id_L)\circ \Delta_L ~. \label{bantipode}
\eeq
Such a map is called a (braided) antipode.
\end{defi}
  
 For later use we recall that the antipode $S_L: L\to L$ of a braided Hopf algebra $L$ is a  braided anti-algebra map
and a braided anti-coalgebra map:
\beq\label{S-algmap}
S_L\circ m_L=m_L\circ  \Psi^{\r}_{L,L}\circ (S_L\ot S_L)\, , \quad 
\Delta_L\circ S_L=(S_L\ot S_L)\circ \Psi^{\r}_{L,L}\circ \Delta_L \, .
\eeq
 
\begin{ex}\label{ex:braided}
For any  Hopf algebra $\K$,  the underlying algebra $(\K, m_\K, \eta_\K)$ becomes a left $\K$-module algebra 
with the left adjoint action 
\beq\label{ad}
\tad : \K \ot \K \to \K \; , \quad 
 h \ot k \mapsto h\tad k := \one{h} k S(\two{h}) .
\eeq
 In general, the coproduct $\Delta : \K \to \K \ot \K$ is not a morphism of $\K$-modules, for
 $\K \ot \K$ carrying the tensor product action as in \eqref{trlVW}. 
However, if  
$\K$ is quasitriangular with $\r$-matrix  $\r=\r^\alpha \ot \r_\alpha$, then the `braided' coproduct
\beq
\tDelta (k):= \one{k} S(\r_\alpha) \ot (\r^\alpha \tad \two{k})
=\one{k} \r_\beta S(\r_\alpha) \ot \r^\alpha  \two{k} \r^\beta 
\eeq
is now a morphism between the $\K$-modules $\K$ and
$\K\ot\K$.
This braided coproduct is coassociative and unital; together with  the
counit $\cun_\K$ it defines a coalgebra structure on
$\K$. We use the notation $\tDelta (k)= \tone{k} \ot \ttwo{k}$ to distinguish it from the coproduct in $K$.
The compatibility with the adjoint $\K$-action
implies that
$(\K,  \tDelta, \cun_\K, \tad)$ is a $\K$-module coalgebra in $\textbf{${}_\K \C$}$.  Moreover, the map 
\beq
\tS: \K \to \K \; , \quad  k \mapsto \tS(k):= \r_\alpha S( \r^\alpha \tad k) = 
\br_\alpha \bur S(k)  \br^{\alpha} 
\eeq
is a left $\K$-module morphism and
$$
\tK :=(\K, m_\K, \eta_\K, \tDelta, \cun_\K, \tS, \tad)
$$
is a braided Hopf algebra associated  with the quasitriangular Hopf algebra $(\K,\r)$.
\end{ex}

In the category of $K$-modules an action of a $K$-module algebra $L$
on a $K$-module $M$ is an action $\btrl_M : L\ot M\to M$ of the algebra
$L$ on $M$ which is $K$-equivariant:
\beq\label{action-L}
k \trl_M (\ell \btrl_M m)= (\one{k} \trl_L \ell) \btrl_M (\two{k} \trl_M m).
\eeq

Braided Hopf algebras are symmetry objects within the category of $K$-modules. 
\begin{lem}
Let ($K, \r)$ be a quasitriangular Hopf algebra and $L$ a
braided bialgebra associated with $K$ acting on $K$-modules $M$ and
$N$  as in \eqref{action-L}.  There is an action 
$$
\btrl_{M\ot N}:L\ot M\ot N\to M\ot N \, 
$$
on the tensor product $M\ot N$ given by
$$
\ell \btrl_{M\ot N}(m\ot n)=\one{\ell}\btrl_M (\r_\alpha\trl_M m) \ot  (\r^\alpha\trl_L \two{\ell})\btrl_N n~.
$$
\end{lem}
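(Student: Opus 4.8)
The map $\btrl_{M\ot N}$ is nothing but the composite
\[
L\ot M\ot N
\xrightarrow{\ \Delta_L\ot\id_M\ot\id_N\ }
L\ot L\ot M\ot N
\xrightarrow{\ \id_L\ot\Psi^{\r}_{L,M}\ot\id_N\ }
L\ot M\ot L\ot N
\xrightarrow{\ \btrl_M\ot\btrl_N\ }
M\ot N ,
\]
as one sees immediately by inserting $\Delta_L(\ell)=\one{\ell}\ot\two{\ell}$ and the explicit braiding \eqref{braiding}. This is the usual recipe making the left $L$-modules in the braided monoidal category $({}_K\M,\ot,\Psi^{\r})$ into a monoidal category, and the plan is simply to verify the three defining properties of an action of $L$ in ${}_K\M$: $K$-equivariance in the sense of \eqref{action-L}, associativity, and unitality.

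$K$-equivariance is immediate from this description: $\Delta_L$ is $K$-equivariant because $L$ is a $K$-module coalgebra (eq. \eqref{mod-co}); $\Psi^{\r}_{L,M}$ is $K$-equivariant by quasi-cocommutativity \eqref{iiR} of $(\K,\r)$ (see \eqref{braiding} and the Remark after it); and $\btrl_M,\btrl_N$ are $K$-equivariant by the hypothesis \eqref{action-L}. Being a composite of morphisms of ${}_K\M$ and tensor products of such, $\btrl_{M\ot N}$ is then a morphism $L\ot M\ot N\to M\ot N$ in ${}_K\M$, which is precisely condition \eqref{action-L} for $M\ot N$. A hands-on check is also possible: expanding $k\trl_{M\ot N}\!\big(\ell\btrl_{M\ot N}(m\ot n)\big)$ by \eqref{action-L} for $M$ and for $N$, rewriting $\Delta^3(k)$ by coassociativity and moving the $R$-matrix across a coproduct leg through $\Delta^{cop}(x)=\r\,\Delta(x)\,\br$, reproduces $(\one{k}\trl_L\ell)\btrl_{M\ot N}\!\big(\two{k}\trl_{M\ot N}(m\ot n)\big)$.

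The core of the argument is associativity, $(\ell\ell')\btrl_{M\ot N}(m\ot n)=\ell\btrl_{M\ot N}\!\big(\ell'\btrl_{M\ot N}(m\ot n)\big)$. Starting from the left-hand side one applies the braided bialgebra axiom \eqref{cop-braid}, namely $\Delta_L\circ m_L=m_{L\bot L}\circ(\Delta_L\ot\Delta_L)$ with $m_{L\bot L}=(m_L\ot m_L)\circ(\id_L\ot\Psi^{\r}_{L,L}\ot\id_L)$ as in \eqref{tpr}, and then pushes the actions $\btrl_M,\btrl_N$ through, using that $L$ is a $K$-module algebra (so an $R$-matrix leg acting on a product of elements of $L$ splits via its coproduct, governed by \eqref{iiiR}). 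This produces the same arrangement of $R$-matrices as the right-hand side, where the $R$-matrices instead come from two nested applications of $\Psi^{\r}_{L,M}$; the two expressions are then matched using coassociativity of $\Delta_L$, the hexagon relations \eqref{iii}--\eqref{iiiR} for $(\Delta\ot\id)\r$ and $(\id\ot\Delta)\r$, naturality of the braiding, and the module axioms $\ell''\btrl_M(\ell'\btrl_M m)=(\ell''\ell')\btrl_M m$ for $M$ and for $N$. Diagrammatically this is the standard move of sliding a crossing past a (co)multiplication vertex; the only real obstacle is keeping the several copies of the $R$-matrix in the correct order, which is routine but must be carried out attentively.

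Finally, unitality: $1_L\btrl_{M\ot N}(m\ot n)=\one{(1_L)}\btrl_M(\r_\alpha\trl_M m)\ot(\r^\alpha\trl_L\two{(1_L)})\btrl_N n$; since $\eta_L$ is a coalgebra map $\Delta_L(1_L)=1_L\ot1_L$, since it is $K$-equivariant $\r^\alpha\trl_L 1_L=\cun(\r^\alpha)1_L$, and using the normalisation $(\cun\ot\id)\r=1_K$ from \eqref{Rnormalized} together with $1_L\btrl_M m=m$ and $1_L\btrl_N n=n$, the right-hand side collapses to $m\ot n$. (The braided antipode of $L$ plays no role in this lemma.)
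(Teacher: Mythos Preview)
Your proposal is correct and follows essentially the same route as the paper: both verify associativity via the braided bialgebra axiom \eqref{cop-braid} together with the quasitriangular identities \eqref{iiiR}, and $K$-equivariance via quasi-cocommutativity \eqref{iiR}. Your presentation is somewhat more conceptual---identifying $\btrl_{M\ot N}$ as the composite $(\btrl_M\ot\btrl_N)\circ(\id\ot\Psi^{\r}_{L,M}\ot\id)\circ(\Delta_L\ot\id\ot\id)$ and deducing $K$-equivariance from that of each factor is cleaner than the paper's element-by-element computation---while the paper carries out the associativity check in full Sweedler-notation detail rather than sketching it; you also supply the unitality check, which the paper omits.
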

\begin{proof}
We show that this is an action: $(\ell\ell') \btrl_{M\ot N}(m\ot n)=\ell \btrl_{M\ot N} \ell'\btrl_{M\ot N}(m\ot n)$. For ease of notations in the proof we drop subscripts.
\begin{align*}
\ell\btrl (\ell'\btrl & (m\ot n)) = \ell \btrl (\one{\ell'}\btrl  (\r_\alpha\trl m) \ot  (\r^\alpha\trl \two{\ell'})\btrl n)
\\
&= \one{\ell}\btrl (\r_\beta\trl  (\one{\ell'}\btrl  (\r_\alpha \trl m)) \ot  (\r^\beta\trl \two{\ell})\btrl ((\r^\alpha\trl \two{\ell'})\btrl n)
\\
&= \one{\ell}\btrl 
(\one{\r_\beta} \trl \one{\ell'}) \btrl  (\two{\r_\beta}\r_\alpha\trl m)
\ot  (\r^\beta\trl \two{\ell})\btrl (\r^\alpha\trl \two{\ell'})\btrl n
\\
&
= (\one{\ell}
(\one{\r_\beta} \trl \one{\ell'})) \btrl  (\two{\r_\beta}\r_\alpha\trl m)
\ot  ((\r^\beta\trl \two{\ell}) (\r^\alpha\trl \two{\ell'})) \btrl n
\\
&=
 (\one{\ell}
({\r_\beta} \trl \one{\ell'})) \btrl  ({\r_\gamma}\r_\alpha\trl m)
\ot  ((\r^\gamma\r^\beta\trl \two{\ell}) (\r^\alpha\trl \two{\ell'})) \btrl n
\end{align*}
where we have used condition \eqref{action-L} on $M$ for the third equality and the  quasitriangular condition \eqref{iiiR} for the last one. 
On the other hand
\begin{align*}
 (\ell\ell')\btrl(m\ot n) &=  (\one{\ell} \r_\beta \trl \one{\ell'}) \btrl (\r_\alpha\trl m) \ot  (\r^\alpha\trl ((\r^\beta \trl \two{\ell})
\two{\ell'})
)\btrl n
\\
&=  (\one{\ell} \r_\beta \trl  \one{\ell'}) \btrl (\r_\alpha\trl  m) \ot  
((\one{\r^\alpha}\r^\beta \trl  \two{\ell}) (\two{\r^\alpha} \trl 
\two{\ell'})
)\btrl n
\\
&=  (\one{\ell} \r_\beta \trl  \one{\ell'}) \btrl (\r_\gamma \r_\alpha\trl  m) \ot  
((\r^\gamma \r^\beta \trl  \two{\ell}) ({\r^\alpha} \trl 
\two{\ell'})
)\btrl n
\end{align*}
where again we have used the quasitriangular condition for the last equality. 

\noindent
Next, we show $K$-equivariance of $\btrl_{M\ot N}$.  For all $k\in K, \ell\in L, m\in M, n\in N$, \begin{align*}
k\trl \big(\ell\btrl (m\ot n)\big)
&=
k\trl\big(\one{\ell}\btrl(\r_\alpha\trl  m) \ot (\r^\alpha\trl \two{\ell})\btrl n\big)\\
&=
\one{k}\trl\big(\one{\ell}\btrl(\r_\alpha\trl m)\big) \ot \two{k}\trl\big((\r^\alpha\trl\two{\ell})\btrl n\big)\\
&=
(\one{k}\trl\one{\ell})\btrl(\two{k}\trl (\r_\alpha\trl m)) \ot (\three{k}\trl (\r^\alpha\trl\two{\ell}))\btrl(\four{k}\trl n)\\
&=
(\one{k}\trl\one{\ell})\btrl(\r_\alpha\trl (\three{k}\trl m)) \ot (\r^\alpha\trl (\two{k}\trl\two{\ell}))\btrl(\four{k}\trl n)\\
&=(\one{k}\trl \ell)\btrl ( \two{k}\trl m\ot \three{k}\trl n)\\
&=(\one{k}\trl \ell) \btrl\big( \two{k}\trl (m\ot n)\big)~.
\end{align*}
In the second line we used that $L$ is a $K$-module algebra, in
the third that the action of $L$ on $M$ and on $N$ is $K$-equivariant. 
Then quasi-cocommutativity yields the result. 
\end{proof}

An action of a braided bialgebra 
$L$ on a $K$-module algebra $A$ is a  $K$-equivariant action $\btrl_A:L\ot A\to A$
(cf. \eqref{action-L}) which satisfies the condition} 
\beq\label{LAbraidedaction}
\ell\btrl_A (ac)=(\one{\ell}\btrl_A (\r_\alpha \trl_A  a))\, ((\r^\alpha \trl_L \two{\ell})\btrl_A c) ,
\eeq
for all $a,c\in A$.

An important example of the above construction is given by the adjoint action of a braided Hopf algebra $L$ on itself.
\begin{prop}
The  $\bbK$-linear map 
\begin{equation}\label{bLad}
\tadr : L\otimes L\to L , \quad
\ell\otimes \ell'\mapsto\ell \tadr \ell':=\one{\ell}(\r_\alpha \trl_L\ell') \, \r^\alpha \trl_L S_L(\two{\ell}), 
\end{equation} 
is a $K$-equivariant action of $L$ on the $K$-module $L$: 
\begin{align*}
(\ell {\ell'})\tadr x &= \ell\tadr ({\ell'}\tadr x)\, \\ 
k\trl_L(\ell \tadr x) & =(\one{k}\trl_L\ell)\tadr (\two{k}\trl_L x)
\end{align*}
for all $k\in K$ and $\ell, \ell', x  \in L$.
\end{prop}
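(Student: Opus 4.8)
The plan is to deduce both claims by recognizing $\tadr$ as an instance of the general braided-adjoint construction, or equivalently by a direct Sweedler-calculus verification mirroring the two lemmas already proved in the excerpt. For the multiplicativity statement $(\ell\ell')\tadr x=\ell\tadr(\ell'\tadr x)$, the key structural fact is that the braided adjoint action is built from three pieces: the braided coproduct $\Delta_L$, the braided antipode $S_L$, and left multiplication in $L$; so the proof reduces to feeding the coproduct/antipode/product axioms of a braided Hopf algebra into the definition \eqref{bLad}. Concretely, I would expand $\ell\tadr(\ell'\tadr x)$ using \eqref{bLad} twice, then repeatedly move the $\r$-matrix legs past the relevant tensor factors using the $K$-equivariance of $m_L$, $\Delta_L$ and $S_L$ (i.e.\ that $L\in{}_K\A$, ${}_K\C$ and that $S_L$ is a $K$-module map), together with the quasitriangularity identities \eqref{iiiR} and the Yang--Baxter relation \eqref{YB}; on the other side expand $(\ell\ell')\tadr x$ using that $\Delta_L$ is an algebra map into the braided tensor product $L\bot L$ (the second identity in \eqref{cop-braid}), so that $\Delta_L(\ell\ell')=\one{\ell}(\r_\alpha\trl_L\one{\ell'})\ot(\r^\alpha\trl_L\two{\ell})\two{\ell'}$, and then use the braided antipode anti-multiplicativity \eqref{S-algmap}, $S_L(\ell\ell')=m_L\circ\Psi^\r_{L,L}(S_L\ot S_L)(\ell\ot\ell')$. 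Matching the two sides is then a bookkeeping exercise: both reduce to an expression with $\one{\ell}$, two copies of $\ell'$'s coproduct legs interleaved with $x$'s position shuffled by the braiding, and $S_L(\two{\ell})$ on the far right, all decorated with $\r$-matrix legs that are forced to agree by \eqref{iiiR}.

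Rather than carrying this out by hand, the cleaner route — which I would present — is to invoke the preceding Lemma on actions of a braided bialgebra on a tensor product, specialized to $M=N=L$ with the regular action $\btrl_L=m_L$ and the braided action $\btrl_L=S_L$-twisted left multiplication, exhibiting $\tadr$ as $m_L$ composed with $(\id\ot S_L)$ applied after the diagonal $K$-action, in direct analogy with the classical adjoint action $\one{h}kS(\two{h})$ of Example~\ref{ex:braided}. That is, one checks that $\ell\tadr\ell'=m_L\big((\id\ot m_L)\circ(\id\ot\id\ot S_L)\circ(\id\ot\Psi^\r_{L,L}\ot\id)\circ(\Delta_L\ot\id\ot\id)\circ(\id\ot\Psi^\r_{L,L})\big)(\ell\ot\ell')$ up to the obvious identification, so that it is the composite of morphisms each of which respects the $K$-module structure and the braiding; the fact that $\Delta_L$ is coassociative and an algebra map, and $S_L$ a convolution inverse of $\id$, then gives associativity of $\tadr$ by the same categorical diagram chase that proves the ordinary Hopf-algebra identity $h\tad(k\tad x)=(hk)\tad x$, now carried out in $({}_K\M,\otimes,\Psi^\r)$.

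The $K$-equivariance statement $k\trl_L(\ell\tadr x)=(\one{k}\trl_L\ell)\tadr(\two{k}\trl_L x)$ is the easier half and I would prove it directly: apply $k\trl_L$ to \eqref{bLad}, use that $m_L$ is a $K$-module map to split $\one{k}$ across $\one{\ell}(\r_\alpha\trl_L\ell')$ and $\two{k}$ across $\r^\alpha\trl_L S_L(\two{\ell})$, use that $\Delta_L$ and $S_L$ are $K$-module maps to push the resulting $\one{k},\two{k},\three{k},\four{k}$ inside the coproduct legs $\one{\ell},\two{\ell}$, and then commute them past the $\r$-matrix legs using quasi-cocommutativity \eqref{iiR} exactly as in the $K$-equivariance computation at the end of the previous Lemma's proof; coassociativity of $\Delta$ in $K$ then reassembles $\one{k}\trl_L\ell$ and $\two{k}\trl_L x$.

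The main obstacle is the first identity: correctly tracking the proliferation of $\r$-matrix legs (four auxiliary copies $\r_\alpha,\r_\beta,\r_\gamma,\dots$ typically appear) and applying \eqref{iiiR} and \eqref{YB} in the right order so that the two sides visibly coincide. The categorical/diagrammatic formulation is what keeps this under control, since it reduces the claim to the statement that the braided adjoint action of any braided Hopf algebra on itself is an action — a fact that holds by the same formal argument as in the unbraided case once all maps are verified to be morphisms in $({}_K\M,\otimes,\Psi^\r)$, which the structure axioms of Definition~\ref{def:bbK} guarantee.
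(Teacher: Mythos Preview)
Your proposal is correct, and the ingredients you list for the direct computation --- the braided-algebra-map property \eqref{cop-braid} of $\Delta_L$, the braided anti-multiplicativity \eqref{S-algmap} of $S_L$, the quasitriangularity identities \eqref{iiiR}, and Yang--Baxter \eqref{YB} --- are exactly what the paper uses. The difference is one of presentation: the paper carries out the explicit Sweedler computation you sketch in your first paragraph (expanding both sides and matching them via \eqref{iiiR}, \eqref{YB}, and $K$-equivariance of $S_L$), rather than the categorical/diagrammatic argument you say you would actually present. Your $K$-equivariance argument is essentially identical to the paper's.

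One caution on your ``cleaner route'': the preceding Lemma is about the induced action of $L$ on a \emph{tensor product} $M\otimes N$, not directly about factoring $\tadr$ through it, so invoking it as stated does not immediately give the result; you would still need to set up and verify the specific decomposition of $\tadr$ you wrote down. The general principle you appeal to --- that the classical proof of $(h k)\tad x=h\tad(k\tad x)$ goes through verbatim in any braided monoidal category once all structure maps are morphisms there --- is valid and is indeed the conceptual explanation, but making it rigorous requires either the diagrammatic calculus or the explicit computation. The paper opts for the latter, which has the virtue of showing precisely where each axiom enters; your categorical framing is more economical but trades that transparency for an appeal to the braided-Hopf formalism.
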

\begin{proof}
To lighten notation we drop all subscripts. The right hand side reads  
\begin{align*}
\ell\btrl ({\ell'}\btrl x)&=
\one{\ell}  
\r_\beta\trl \big(\one{{\ell'}} 
(\r_\alpha \trl x)
\, \r^\alpha\trl S(\two{{\ell'}}) \big) \, 
\r^\beta \trl S(\two{\ell})
\\ &=
\one{\ell} 
(\r_\beta\trl \one{{\ell'}}) \, 
\r_\gamma\trl \big( (\r_\alpha\trl x) \, \r^\alpha\trl  S(\two{{\ell'}}) \big)
\, \r^{\gamma}{\r^\beta} \trl S(\two{\ell})
\\ &=
\one{\ell}
( \r_\beta\trl\one{{\ell'}})
\, (\r_{\gamma}{\r_\alpha} \trl x)
\, (\r_\delta\r^\alpha \trl S(\two{{\ell'}}) )
\, \r^{\delta}\r^\gamma\r^{\beta} \trl S(\two{\ell})~,
\end{align*}
were we used the quasitriangular condition in \eqref{iiiR}. 
The left hand side reads 
\begin{align*}
(\ell {\ell'})\btrl x&=\one{\ell} 
(\r_\beta\trl\one{{\ell'}})
\, (\r_\alpha\trl x)
\, \r^{\alpha}\trl S\big(({}^{}\r^{\beta}\trl\two{\ell}) \, \two{{\ell'}}\big)
\\ &=\one{\ell}
\, (\r_\beta\trl\one{{\ell'}})
\, (\r_\alpha\trl x)
\, \r^{\alpha}\trl\big( (\r_\gamma\trl S(\two{{\ell'}}))
\,  \r^\gamma \trl S(\r^{\beta}\trl \two{\ell}) \big)
\\ &=\one{\ell}\ (\r_\beta\trl\one{{\ell'}}) \, (\r_{\alpha}{\r_\delta} \trl x) 
\, (\r^{\alpha}\r_\gamma \trl S(\two{{\ell'}}) )
\, \r^{\delta}{\r^\gamma} \trl S({}^{}\r^{\beta}\trl\two{
  \ell})~,
\end{align*}
where we used the braided algebra map property of the coproduct \eqref{cop-braid}, the braided
antialgebra map property of the antipode  \eqref{S-algmap} and again
the quasitriangular condition in \eqref{iiiR}. 
The left hand side equals the right hand side using the Yang-Baxter
equation and the fact that the antipode 
is $K$-equivariant.
 We are left to show $K$-equivariance:
\begin{align*}
k\trl(\ell\btrl x)&=
k\trl\big(\one{\ell}(\r_\alpha \trl x) \, \r^\alpha \trl S(\two{\ell})\big)\\
&=(\one{k}\trl\one{\ell})(\two{k}\trl(\r_\alpha \trl x))\, (\three{k}\trl(\r^\alpha \trl S(\two{\ell}))) \\
&=(\one{k}\trl\one{\ell})(\r_\alpha\trl(\three{k} \trl x))\, (\r^\alpha\trl(\two{k} \trl S(\two{\ell}))) \\
&=\one{(\one{k}\trl{\ell})}(\r_\alpha\trl(\two{k} \trl x))\, (\r^\alpha\trl S(\two{(\one{k} \trl {\ell})})) \\
&=(\one{k}\trl\ell)\btrl (\two{k}\trl x) . 
\end{align*}
In the third line we used quasi-cocommutativity, in the fourth line the $K$-equivariance of the antipode
and that $L$ is a $K$-module coalgebra. 
\end{proof}

\begin{prop}
The adjoint action $\tadr$ in \eqref{bLad}  is an action of $L$ on the $K$-module algebra $L$ 
\begin{equation}\label{bralg}
\ell\tadr (x y)=(\one{\ell}\tadr (\r_\alpha\trl_L
  x)) \, ((\r^{\alpha}\trl_L\two{\ell})\tadr y) 
 \end{equation}
 for all $\ell, x,y\in L$; it is 
henceforth called braided adjoint action.
 It satisfies the Jacobi identity (it is a braided action with respect to the nonassociative product \eqref{bLad}), 
\begin{equation}\label{bralgAD}
\ell\tadr (x\tadr y)
=(\one{\ell}\tadr (\r_\alpha\trl_L x))\tadr 
((\r^{\alpha}\trl_L\two{\ell})\tadr y)~
\end{equation} 
for all $\ell, x,y\in L$. 
\end{prop}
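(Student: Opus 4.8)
The plan is to prove the two identities \eqref{bralg} and \eqref{bralgAD} by translating them into statements about the braided Hopf algebra $L$ acting on itself, and then reusing the structural results already established. For \eqref{bralg}, the point is that $\tadr$ is exactly the adjoint action built from the coproduct, multiplication and antipode of $L$, all of which are morphisms in the braided monoidal category $({}_\K\M,\otimes,\Psi^\r)$; so \eqref{bralg} should follow by an essentially categorical/graphical manipulation. Concretely, first I would expand $\ell\tadr(xy)=\one{\ell}(\r_\alpha\trl_L(xy))\,\r^\alpha\trl_L S_L(\two{\ell})$, use that $\trl_L$ is an action and that $m_L$ is $K$-equivariant to distribute $\r_\alpha$ over the product $xy$ (this produces a braiding between the factor acting on $x$ and the factor acting on $y$), then insert $\eta_L\circ\cun_L=m_L\circ(S_L\otimes\id_L)\circ\Delta_L$ (the braided antipode axiom \eqref{bantipode}) in the middle to split $\one{\ell}\,(\cdots x\cdots)\,(\cdots y\cdots)\,\r^\alpha\trl_L S_L(\two\ell)$ into a product of two adjoint-type expressions, and finally reorganise the Sweedler legs of $\Delta_L(\ell)$ using coassociativity together with the braided anti-coalgebra property of $S_L$ in \eqref{S-algmap}. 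The bookkeeping of which $\r$-legs braid past which tensor factors is the part that requires care; everything else is formal.

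For \eqref{bralgAD}, the cleanest route is to observe that this is the assertion that the braided adjoint action $\tadr$ is a \emph{braided action of $L$ on the $K$-module algebra $(L,\tadr)$} in the sense of \eqref{LAbraidedaction}, applied to the (nonassociative) algebra whose product is $\tadr$ itself. Here is where I would invoke the previous Proposition: it already tells us $\tadr$ is a $K$-equivariant action of $L$ on the $K$-module $L$, and \eqref{bralg} just established that $\tadr$ is compatible with the associative product $m_L$ of $L$ in the braided-Leibniz way. Now the key algebraic fact is that the braided adjoint action of a braided Hopf algebra on \emph{any} braided-module-algebra $A$ over it automatically satisfies the braided Leibniz rule with respect to the product of $A$; taking $A=(L,\tadr)$ then gives \eqref{bralgAD} as a special case — but to make this legitimate one must check that $(L,\tadr)$ is indeed an $L$-module algebra in the required braided sense, which is precisely the content of the two preceding propositions. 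Alternatively, and perhaps more transparently for a self-contained proof, I would expand both sides of \eqref{bralgAD} directly: the left side, $\ell\tadr(x\tadr y)$, unfolds via \eqref{bralg} applied with the product $\tadr$ on $L$ — i.e. I would apply \eqref{bralg} to the composite $x\tadr y$ — using $K$-equivariance of $\tadr$ to move the $\r$-matrix through, and the right side unfolds by expanding the inner $\tadr$'s and collecting; the two should agree after one use of the Yang--Baxter equation \eqref{YB} and one use of $K$-equivariance of $S_L$.

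The main obstacle I anticipate is not conceptual but combinatorial: controlling the proliferation of $\r$-matrix copies ($\r^\alpha,\r^\beta,\r^\gamma,\dots$) and the interleaved Sweedler indices of the (up to fourfold) coproduct of $\ell$, while correctly tracking which legs act on $x$, on $y$, and on $S_L(\two\ell)$, and when a braiding $\Psi^\r$ has to be inserted because two such legs cross. The discipline that makes this tractable is to reduce \eqref{bralgAD} to \eqref{bralg} rather than to re-derive everything from scratch: once \eqref{bralg} is in hand, \eqref{bralgAD} is \eqref{bralg} for the product $\tadr$ together with the fact — already proved — that $\ell\tadr(-)$ is an honest $L$-action, so the only genuinely new input is the Yang--Baxter relation needed to match the braidings, exactly as in the proof of the preceding proposition. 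I would therefore present \eqref{bralg} with a full (but streamlined) index computation modelled on the previous proof, and then derive \eqref{bralgAD} in a few lines by quoting \eqref{bralg}, the $L$-action property, and $K$-equivariance of all structure maps.
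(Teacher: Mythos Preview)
Your plan for \eqref{bralg} is essentially the paper's: a direct index computation expanding both sides and matching them via the quasitriangular identities and the antipode axiom. One minor point: the paper's argument does not need the braided anti-coalgebra property of $S_L$; it uses only that $L$ is a $K$-module coalgebra (to distribute $\r^\alpha\trl$ over $\Delta_L$) and $K$-equivariance of $S_L$. Your proposed insertion of $S(\two\ell)\three\ell=\varepsilon(\two\ell)$ is exactly the step that collapses the right-hand side in the paper's computation.

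For \eqref{bralgAD} your plan is more tangled than necessary, and the suggestion to ``apply \eqref{bralg} to the composite $x\tadr y$'' is not well-posed: \eqref{bralg} concerns the associative product $m_L$, so to use it you would first have to expand $x\tadr y=\one{x}(\r_\gamma\trl y)\,\r^\gamma\trl S_L(\two{x})$ as a triple product and iterate, which is considerably heavier than needed. The paper's route is shorter and uses neither \eqref{bralg} nor Yang--Baxter: starting from the right-hand side of \eqref{bralgAD}, use the action property from the previous proposition in the form $a\tadr(b\tadr y)=(ab)\tadr y$ to collapse it to
\[
\big((\one{\ell}\tadr(\r_\alpha\trl x))\,(\r^\alpha\trl\two{\ell})\big)\tadr y,
\]
then expand the inner adjoint and use one quasitriangular identity together with $S(\two\ell)\three\ell=\varepsilon(\two\ell)$ to see that the bracketed factor equals $\ell\, x$; the action property once more gives $(\ell x)\tadr y=\ell\tadr(x\tadr y)$. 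So the ``new input'' is not Yang--Baxter but simply the action property used twice plus a three-line simplification. I would replace your second-half plan with this argument.
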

\begin{proof}
We again omit all subscripts.
We start with the braided algebra map property \eqref{bralg}
(cf. \cite[Ex. 2.7]{maj-braid}, \cite[App.]{majid-bla}).
The left hand side reads:
$$
\ell\btrl (x y)=
\one{\ell} \, (\r_\alpha\trl (x y)) \, \r^\alpha \trl S(\two{\ell})=
\one{\ell} (\r_\alpha \trl x) \, (\r_\beta\trl y) 
\, \r^\beta \r^\alpha \trl S(\two{\ell})~.
$$
It equals the right hand side: 
\begin{align*}
(\one{\ell}\btrl & (\r_\alpha  \trl x)) \, ((\r^{\alpha} \trl \two{\ell})\btrl y) \\ &=
\one{\ell}\, (\r_{\beta}\r_{\alpha}\trl x) (\r^\beta\trl S(\two{\ell})) \, \one{(\r^\alpha\trl\three{\ell})\!\:\!} 
(\r_\gamma \trl y) \, (\r^\gamma \trl S((\two{\r^\alpha \trl \three{\ell})\!\:\!}))
\\ &=
\one{\ell}\, (\r_{\beta}\r_{\rho}\r_{\delta} \trl x) \,
(\r^\beta \trl  S(\two{\ell})) \, (\r^\rho \trl \three{\ell}) \, (\r_\gamma  \trl y) \, (\r^\gamma  \trl S(\r^{\delta} \trl \four{\ell}))
\\ &= \one{\ell}\, (\r_{\beta}\r_{\delta} \trl x) \, \big(\r^{\beta} \trl (S(\two{\ell})\three{\ell}) \big) \, (\r_\gamma \trl y) 
     \, (\r^\gamma  \trl S(\r^{\delta} \trl \four{\ell}))
\\ &=
\one{\ell}\, (\r_{\delta} \trl x) \, (\r_\gamma \trl y) \,
(\r^{\gamma}\r^{\delta} \trl S(\two{\ell}))
\end{align*}
where in the second and third lines we used the quasitriangular condition in \eqref{iiiR}.

The proof of property  \eqref{bralgAD}  follows recalling that $\tadr$ is an action, indeed
 \begin{align*}
(\one{\ell}\btrl (\r_\alpha\trl x))\btrl 
((\r^{\alpha}\trl\two{\ell})\btrl y)
&=
\big((\one{\ell}\btrl (\r_\alpha  \trl x)) \, \r^{\alpha} \trl \two{\ell}\big)\btrl y
\\ &=
\big(\one{\ell} (\r_{\beta}\r_{\alpha} \trl  x) \, (\r^\beta  \trl S(\two{\ell})) \, \r^{\alpha} \trl \three{\ell}\big)\btrl y
\\ &=
\big(\one{\ell} (\r_{\beta}  \trl x) \, \r^\beta  \trl (S(\two{\ell}) \three{\ell})\big)\btrl y
\\ &=
(\ell x)\btrl y
\\ &=
\ell\btrl (x\btrl y) 
\end{align*}
where for the third equality we used again the equality $(\Delta \ot \id)\r=\r_{13} \r_{12}$.
\end{proof}

\begin{lem}\label{lem:psiell}
The map 
\begin{align}\label{lem:psi}
\lie : (L, \trl_L) &\to (\Hom(L,L), \trl_{\Hom(L,L)})  \nn
\\
\ell &\mapsto \lie_\ell , \quad \lie_\ell (x):= \ell \tadr x =\one{\ell}(\r_\alpha \trl_L x) \, \r^\alpha \trl_L S_L(\two{\ell}) , \quad x \in L ,
\end{align}
is a morphim of $K$-modules.
\end{lem}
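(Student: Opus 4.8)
The plan is to show that $\lie$ intertwines the $K$-action $\trl_L$ on $L$ with the $K$-action $\trl_{\Hom(L,L)}$ on $\Hom(L,L)$ defined in \eqref{action-hom}; that is, for all $k\in K$ and $\ell\in L$ we must check
$$
\lie_{k\trl_L \ell} = k\trl_{\Hom(L,L)}\lie_\ell \, .
$$
Since both sides are linear maps $L\to L$, I would verify this equality by evaluating on an arbitrary $x\in L$. The left hand side gives $\lie_{k\trl_L\ell}(x)=(k\trl_L\ell)\tadr x$. The right hand side, by the definition \eqref{action-hom} of $\trl_{\Hom(L,L)}$, gives
$$
\bigl(k\trl_{\Hom(L,L)}\lie_\ell\bigr)(x)=\one{k}\trl_L\,\lie_\ell\bigl(S(\two{k})\trl_L x\bigr)=\one{k}\trl_L\bigl(\ell\tadr(S(\two{k})\trl_L x)\bigr)\, .
$$

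The key input is the $K$-equivariance of the braided adjoint action $\tadr$ already established in the preceding proposition, namely $k\trl_L(\ell\tadr x)=(\one{k}\trl_L\ell)\tadr(\two{k}\trl_L x)$. Starting from the right hand side, I would apply this identity with $\one{k}$ in place of $k$ and $S(\two{k})\trl_L x$ in place of $x$, obtaining
$$
\one{k}\trl_L\bigl(\ell\tadr(S(\three{k})\trl_L x)\bigr)=(\one{k}\trl_L\ell)\tadr\bigl(\two{k}\trl_L(S(\three{k})\trl_L x)\bigr)
=(\one{k}\trl_L\ell)\tadr\bigl((\two{k}S(\three{k}))\trl_L x\bigr)\, ,
$$
using that $\trl_L$ is an action for the last step. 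Then the antipode axiom $\two{k}S(\three{k})=\cun(\two{k})1$ together with counitality $\one{k}\cun(\two{k})=k$ collapses this to $(k\trl_L\ell)\tadr x$, which is exactly $\lie_{k\trl_L\ell}(x)$.

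This is essentially a formal consequence of the equivariance of $\tadr$ plus the Hopf algebra axioms, so I do not expect a genuine obstacle; the only point requiring care is bookkeeping of the Sweedler legs of $\Delta^2(k)$ when substituting into the equivariance identity, making sure the coproduct iteration is applied in the right order. One could alternatively phrase the whole argument abstractly: the content is that for any $K$-module algebra $A$ acting on a $K$-module $M$ via a $K$-equivariant action $\btrl$, the induced map $M\to\Hom(M,M)$ is automatically $K$-equivariant when $\Hom(M,M)$ carries the action \eqref{action-hom}; specializing to $A=M=L$ with $\btrl=\tadr$ gives the lemma. I would present the explicit Sweedler computation as the cleanest self-contained argument.
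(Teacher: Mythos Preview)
Your argument is correct and in fact more streamlined than the paper's. Both proofs verify the identity $\lie_{k\trl_L\ell}=k\trl_{\Hom(L,L)}\lie_\ell$ by evaluating on an arbitrary $x\in L$, but the routes diverge from there. The paper expands $\lie_{k\trl\ell}(x)$ directly via the explicit formula $\one{(k\trl\ell)}(\r_\alpha\trl x)\,\r^\alpha\trl S_L(\two{(k\trl\ell)})$, invoking the $K$-module coalgebra property of $L$ and $K$-linearity of $S_L$; on the other side it expands $(k\trl\lie_\ell)(x)$ through the module-algebra structure of $L$ and then uses quasi-cocommutativity of $K$ to match the two. You instead bypass all of this by appealing to the $K$-equivariance of $\tadr$ already established in the preceding proposition, after which only the antipode axiom is needed. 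Your approach is shorter and makes the lemma a formal corollary of that equivariance; the paper's computation is self-contained (it does not cite the earlier proposition) but effectively re-derives part of it. Your abstract remark at the end---that this is a general fact about any $K$-equivariant action $\btrl:A\otimes M\to M$ inducing a $K$-module map $A\to\Hom(M,M)$---is the right conceptual framing and would also serve as a clean proof.
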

\begin{proof}
We show that
$\lie_{k \trl_L \ell}= k \trl_{\Hom(L,L)} \lie_\ell$, for each $k \in K$, $\ell \in L$. 
Let $x \in L $, then
\begin{align*}
\lie_{k \trl \ell}(x) 
&=\one{(k \trl \ell)}(\r_\alpha \trl x) \, \r^\alpha \trl S_L(\two{(k \trl \ell)})
\\
&=(\one{k} \trl \one{\ell})(\r_\alpha \trl x) \, \r^\alpha \trl S_L(\two{k} \trl \two{\ell})
\\
&=(\one{k} \trl \one{\ell})(\r_\alpha \trl x) \, (\r^\alpha \two{k}  \trl S_L( \two{\ell}))
\end{align*}
where for the last equality we used that $S_L$ is a morphism of $H$-modules.
On the other hand, recalling the definition of $\trl_{\Hom(L,L)} $ from \eqref{action-hom},
\begin{align*}
(k \trl  \lie_\ell)(x) 
&= \one{k} \trl \lie_\ell (S(\two{k})\trl x)
\\
&= \one{k} \trl (\one{\ell} \, (\r_\alpha \trl (S(\two{k}) \trl x)) \, (\r^\alpha \trl S_L(\two{\ell})) 
\\
&= (\one{k} \trl \one{\ell} )\, (\two{k} \r_\alpha S(\four{k}) \trl x)) \, (\three{k} \r^\alpha \trl S_L(\two{\ell})) 
\\
&= (\one{k} \trl \one{\ell} )\, (\r_\alpha \three{k} S(\four{k}) \trl x)) \, ( \r^\alpha \two{k}  \trl S_L(\two{\ell})),
\end{align*}
using the quasi-cocommutativity of $K$ for the last equality. Thus, the equality holds.
\end{proof}

\begin{prop}
Let $(\tK,\tad)$ be the braided Hopf algebra associated with
the quasitriangular Hopf algebra 
$(\K,\r)$ as is Example \ref{ex:braided}. Then,
the braided adjoint action \eqref{bLad} coincides with the adjoint action \eqref{ad}: $\tadr = \tad $,
That is for all $h,k \in \K$, 
$$
 \tone{h}(\r_\alpha \tad k) (\r^\alpha \tad \tS(\ttwo{h})) = \one{h} k S(\two{h}) .
$$
\end{prop}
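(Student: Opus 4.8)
The plan is to expand the left-hand side $\tone{h}(\r_\alpha \tad k)(\r^\alpha \tad \tS(\ttwo{h}))$ by substituting the explicit formulas for the braided coproduct $\tDelta$, the braided antipode $\tS$, and the ordinary adjoint action $\tad$ from Example~\ref{ex:braided}, and then to simplify using the quasitriangular axioms \eqref{iiiR}, the Yang--Baxter equation \eqref{YB}, and the antipode relations \eqref{ant-R}, until the expression collapses to $\one{h}\,k\,S(\two{h})$.

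**Key steps.** First I would write $\tDelta(h) = \one{h}S(\r_\beta) \ot \r^\beta \tad \two{h} = \one{h}\r_\gamma S(\r_\beta) \ot \r^\beta \two{h} \r^\gamma$, so that $\tone{h} = \one{h}\r_\gamma S(\r_\beta)$ and $\ttwo{h} = \r^\beta \two{h}\r^\gamma$. Next, using $\tS(k) = \br_\alpha \bur\, S(k)\, \br^\alpha$, I compute $\tS(\ttwo{h})$; then $\r^\alpha \tad \tS(\ttwo{h}) = \one{(\r^\alpha)}\,\tS(\ttwo{h})\,S(\two{(\r^\alpha)})$, and I split $\Delta(\r^\alpha)$ via the first identity in \eqref{iiiR}. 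Similarly $\r_\alpha \tad k = \one{(\r_\alpha)}\,k\,S(\two{(\r_\alpha)})$, and I split $\Delta(\r_\alpha)$ via the second identity in \eqref{iiiR}. At this point the whole expression is a product of several $\r$-matrix legs, the factor $k$, the factor $\bur$ (from $\tS$), and $\one{h}, \two{h}$ with their antipodes. The systematic simplification proceeds by collecting the $\r$-legs that sit between the various $S(\cdots)$ terms and repeatedly using $(S\ot\id)\r = \br$, $(\id\ot S)\br = \r$, so that $\r\br$-type cancellations occur; the central element structure of $\ur = S(\r_\alpha)\r^\alpha$ and the relation $S^2(k) = \ur\,k\,\bur$ are used to absorb the leftover $\bur$. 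The Yang--Baxter equation \eqref{YB} is needed to reorder the remaining three or four $\r$-legs so the cancellations line up.

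**Main obstacle.** The hard part will be bookkeeping: after all substitutions there are on the order of six or seven $\r$-matrix factors interleaved with $S$'s and the element $\bur$, and one must choose the right sequence of applications of \eqref{iiiR}, \eqref{YB}, and \eqref{ant-R} to make them cancel pairwise rather than proliferate. A cleaner route — and the one I would actually try first — is conceptual rather than computational: by the previous Proposition the braided adjoint action $\tadr$ is a well-defined $K$-equivariant braided action of $\tK$ on itself, and by Example~\ref{ex:braided} the ordinary adjoint action $\tad$ makes $(\K, m_\K, \eta_\K, \tad)$ a $K$-module algebra. One shows $\tad$ is \emph{also} a braided action of the braided Hopf algebra $\tK$ on this module algebra, i.e.\ it satisfies \eqref{LAbraidedaction} with $S_L$, $\Delta_L$ replaced by $\tS$, $\tDelta$ — this is essentially a restatement of the module-algebra property \eqref{mod-a} of $\tad$ together with quasi-cocommutativity. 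Two braided actions of a braided Hopf algebra that agree on generators (here they manifestly agree for $h\in\K$ grouplike-like elements, or more robustly one checks both satisfy the same defining identity) must coincide; since $\tadr$ is by \eqref{bLad} built canonically from $m_L$, $\trl_L$, $S_L$ in exactly the pattern that $\tad$ satisfies, the identity $\tadr = \tad$ follows. If this abstract argument does not close cleanly, I fall back on the direct $\r$-matrix computation outlined above.
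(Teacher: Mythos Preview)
Your computational outline is correct and is exactly what the paper does: expand the left-hand side via the explicit formulas for $\tDelta$ and $\tS$, split the coproducts of the $\r$-legs using \eqref{iiiR}, convert between $\r$ and $\br$ via \eqref{ant-R}, absorb the $\bur$ using the inverse-antipode formula \eqref{Sinv}, then apply Yang--Baxter once to reorder three legs so that successive $\r\br$ cancellations collapse the expression to $\one{h}\,k\,S(\two{h})$. The bookkeeping is indeed the only difficulty, and the paper's proof is nothing more than that bookkeeping written out.

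The conceptual route you say you would try first, however, has a genuine gap. There is no uniqueness principle available that forces two $K$-equivariant braided actions of $\tK$ on itself to coincide: the compatibility condition \eqref{LAbraidedaction} is a constraint on how an action interacts with the multiplication of the module algebra, not a characterisation of the action itself, and many distinct actions can satisfy it. Your suggestion that the two actions ``manifestly agree for $h$ grouplike-like'' does not apply to a general quasitriangular $K$, and the alternative ``both satisfy the same defining identity'' is precisely the equality $\tadr=\tad$ you are trying to prove. So the abstract argument does not close; go straight to the direct computation.
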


\begin{proof}
From the explicit expressions of the braided coproduct and antipode in $\tK$, we can rewrite the braided adjoint action in the left hand side as
$$ h \tadr k = \one{h} \r_\nu S(\r_\mu)\one{\r_\alpha}  k S(\two{\r_\alpha}) \, \one{\r^\alpha}   \br_\sigma \bur S (\r^\mu  \two{h} \r^\nu ) \br^{\sigma} S(\two{\r^\alpha}) .
$$
By using the quasi triangularity of $\K$ in \eqref{iiiR}, and then the properties \eqref{ant-R} relating $\r$ to $\br$ via the altipode, this is written as
\begin{align*}
\one{h} \r_\nu S(\r_\mu)
\r_\gamma \r_\delta
& k S(\r_\alpha \r_\beta) \, \r^\alpha  \r^\gamma  \br_\sigma \bur S (\r^\mu  \two{h} \r^\nu ) \br^{\sigma} S(\r^\beta \r^\delta)
\\
& =
\one{h} \r_\nu \r_\mu \r_\gamma \br_\delta
k \r_\beta \ur  \r^\gamma  \br_\sigma \bur S(\r^\nu)  S(\two{h}) \r^\mu 
\br^{\sigma} \br^\delta\r^\beta 
\\
&=
\one{h} \r_\nu \r_\mu \br_\gamma \br_\delta
k \r_\beta S( \br^\gamma)  S(\r_\sigma)  S(\r^\nu)  S(\two{h}) \r^\mu 
\r^{\sigma} \br^\delta\r^\beta .
\end{align*}
For the last equality we  used 
$$
 \r_\gamma
\ot \ur  \r^\gamma  \br_\sigma \bur \ot \br^\sigma 
=
\br_\gamma
\ot S(\br^\gamma)  S(\r_\sigma)  \ot \r^\sigma 
$$
which is obtained from the expression \eqref{Sinv} for the inverse of the antipode.
Next the quantum Yang-Baxter equation (on the indices $\nu, \mu,\sigma$) gives
$$
\one{h} \r_\mu \r_\sigma \br_\gamma \br_\delta
k \r_\beta S( \br^\gamma)  S(\r_\nu \r^\sigma)  S(\two{h}) \r^\nu 
\r^{\mu} \br^\delta\r^\beta 
$$
and we finally obtain
\begin{align*}
\one{h} \r_\mu  \br_\delta
k \r_\beta  S(\r_\nu)  S(\two{h}) \r^\nu 
\r^{\mu} \br^\delta\r^\beta 
& =
\one{h} 
k \r_\beta  S(\r_\nu)  S(\two{h}) \r^\nu 
\r^\beta 
\\
& =
\one{h} 
k   S(\r_\nu \br_\beta)  S(\two{h}) \r^\nu 
\br^\beta 
\\
& =
\one{h} 
k     S(\two{h})  
\end{align*}
thus establishing that $h \tadr k = h \tad k$.
\end{proof}

\subsection{Dual structures}\label{DUALS} ~\\
For later use we briefly recall the dual notion of
coquasitriangular Hopf algebra and that of
associated  braided Hopf algebra, that was used in  
\cite{pgc}.

 \begin{defi}\label{defcoqt}
  A bialgebra $H$ is called \textbf{coquasitriangular} (or dual
quasitriangular) if it is endowed with a linear form $R:H \ot H \to \bbK$ such that:

\noindent
(i) $R$ is  invertible for the convolution product in $H\otimes H$, with inverse denoted $\bar R$ ,  \\ 
(ii) $m_{op} = R * m * \bar R$ , 
\\
(iii) $R\circ (m \ot \id)=R_{13}* R_{23}$ and $R\circ
(\id \ot m)=R_{13}* R_{12}$, \\ where $R_{12}=R\ot \varepsilon: H\ot
H\ot H\to \bbK$  and similarly
 for $R_{13}$ and $R_{23}$.

The linear form $R$ is called a \textbf{universal $R$-form} of $H$. 
A coquasitriangular bialgebra $(H,R)$ is called \textbf{cotriangular} if $R= \bar{R}_{21}$.
\end{defi}

As for the quasitriangular case, 
tensor products of comodule algebras are comodule algebras and 
tensor products of comodule algebra maps are again comodule
algebra maps:
\begin{prop}
Let $(H,R)$ be a coquasitriangular bialgebra. Let  $(A,\delta^A), (C,\delta^C) \in \A^H$
be right $H$-comodule algebras. 
Then the $H$-comodule $A \ot C$, with tensor product coaction 
as in \eqref{deltaVW}, is a right $H$-comodule algebra when endowed with the product
\beq\label{tpr'}
(a \ot c) \tpr (a' \ot c'):=
a\zero{a'} \ot \zero{c} c' ~\Ru{\one{c}}{\one{a'}} \, .
\eeq
Moreover, when $\phi: A\to E$ and $\psi: C\to F$ 
are morphisms of $H$-comodule algebras, then so is 
the map $\phi\otimes\psi : A \ot C \to E \ot F$, $a\otimes c\mapsto
\phi(a)\otimes \psi(c)$,
where $A\otimes C$ and $E\otimes F$ are endowed with the $\tpr$-products in \eqref{tpr'}.
\end{prop}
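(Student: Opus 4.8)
The plan is to mirror the proof of Proposition~\ref{BTPalgK}, working now in the braided monoidal category $\M^H$ of right $H$-comodules rather than in $\KM$. For $V,W\in\M^H$ define
\[
\Psi^R_{V,W}:V\ot W\longrightarrow W\ot V,\qquad v\ot w\longmapsto \zero{w}\ot\zero{v}\;\Ru{\one{v}}{\one{w}},
\]
the dual of the braiding \eqref{braiding}. One first records, exactly dually to Section~3.1, that $\Psi^R_{V,W}$ is an isomorphism of $H$-comodules (with inverse built from $\bar R$, using the convolution invertibility in Definition~\ref{defcoqt}(i)), that it is natural, and that it satisfies the two hexagon identities: well-definedness as a comodule morphism is axiom~(ii) of Definition~\ref{defcoqt}, while the hexagons are axiom~(iii). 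In these terms the product \eqref{tpr'} is exactly $m_{A\ot C}=(m_A\ot m_C)\circ(\id_A\ot\Psi^R_{C,A}\ot\id_C)$, the dual of \eqref{tpr}.

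Granting this, the first half of the statement is formal. Associativity of $\tpr$ follows from the hexagon identities together with naturality of $\Psi^R$ with respect to $m_A$ and $m_C$ (which are $H$-comodule maps), by the same manipulation used for $\bot$ in Proposition~\ref{BTPalgK}; alternatively one expands both triple products and reduces the comparison to a single use of Definition~\ref{defcoqt}(iii) and coassociativity of $\delta^A,\delta^C$. The unit is $1_A\ot 1_C$: this uses $\Psi^R_{C,A}(c\ot 1_A)=1_A\ot c$ and $\Psi^R_{C,A}(1_C\ot a')=a'\ot 1_C$, i.e.\ the normalization $\Ru{1_H}{h}=\cun(h)=\Ru{h}{1_H}$ (dual to \eqref{Rnormalized}, and a consequence of axiom~(iii)).

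It remains to prove that $\delta^{A\ot C}$ of \eqref{deltaVW} is a morphism of unital algebras from $(A\ot C,\tpr)$ to the ordinary tensor product algebra $(A\ot C,\tpr)\ot(H,m_H)$. Expanding $\delta^{A\ot C}\big((a\ot c)\tpr(a'\ot c')\big)$ via \eqref{deltaVW}, using that $\delta^A,\delta^C$ are algebra maps and the comodule axioms to move coactions through the products, one reduces the claim to comparing, on the two sides, the order in which the $H$-components originating from $c$ and from $a'$ are multiplied in the $H$-leg; these are reconciled by the quasi-commutativity relation extracted from Definition~\ref{defcoqt}(ii), namely $\one{h'}\,\one{h}\;\Ru{\two{h}}{\two{h'}}=\Ru{\one{h}}{\one{h'}}\;\two{h}\,\two{h'}$. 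Unitality, $\delta^{A\ot C}(1_A\ot 1_C)=1_A\ot 1_C\ot 1_H$, is immediate from \eqref{deltaVW}. I expect this coaction-compatibility step to be the main obstacle: it is the only place where the coquasitriangular structure is genuinely used for the $H$-leg (axiom~(i) alone would not suffice), and the Sweedler bookkeeping there is the most delicate.

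Finally, for the morphism clause: $\phi\ot\psi:A\ot C\to E\ot F$ is a morphism of $H$-comodules by \eqref{deltaVW}, since $\phi,\psi$ are. That it is an algebra map for the $\tpr$-products is a short check: apply $\phi\ot\psi$ to \eqref{tpr'} and use that $\phi,\psi$ are algebra maps and $H$-comodule maps (so $\phi(\zero{a'})\ot\one{a'}=\zero{\phi(a')}\ot\one{\phi(a')}$, and likewise for $\psi$) to recognize $\big(\phi(a)\ot\psi(c)\big)\tpr\big(\phi(a')\ot\psi(c')\big)$; it is unital since $\phi,\psi$ are. In fact the whole proposition, morphism clause included, also follows at once from the general principle that algebra objects in a braided monoidal category form a monoidal category under the braided tensor product, here applied to $(\M^H,\ot,\Psi^R)$.
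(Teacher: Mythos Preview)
Your proposal is correct and matches the paper's intent: the paper does not spell out a proof here but simply flags the proposition as dual to Proposition~\ref{BTPalgK} (``As for the quasitriangular case\ldots''), which is precisely the strategy you carry out. Your identification of the braiding $\Psi^R$ and the use of the coquasitriangular axioms (ii) and (iii) for the comodule-map property and the hexagons, respectively, are the expected ingredients.
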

The $H$-comodule algebra $(A \ot C, \tpr)$ is the
{\bf{braided tensor product algebra}} of $A$ and $C$. We
denote it by $A \bot  C$, and write $a \bot c\in A \bot
 C$ for $a\in A$, $c\in C$ and by
 $\phi\bot\psi:=\phi\otimes\psi: A\bot C\to E\bot F$ the 
 tensor product of two morphisms.
With the tensor product $\bot$ the category $(\RA,\bot)$ of 
$H$-comodule algebras is a 
monoidal category.

The definition of a braided Hopf algebra associated with a
coquasitriangular Hopf algebra $(H,R)$ is the same as that in Definition \ref{def:bbK} 
for the quasitriangular case, with the braided monoidal category of comodules for $(H,R)$ replacing that of modules for $(K,\r)$.

In particular for any Hopf algebra $H$, the data  $(H, \Delta_H,\cun_H, \Ad)$ is an $H$-comodule coalgebra, with the right adjoint coaction  
$\Ad: H\to H\ot H, ~h \mapsto \two{h} \ot S(\one{h})\three{h}$. 
When $(H, R)$ is  coquasitriangular $\Ad$ is an algebra map for the 
`braided' product in $H$ given by
\beq\label{hpr}
h \hpr k := \two{h}\two{k} 
R\big( S(\one{h})\three{h} \ot {S(\one{k})} \big)\; , \quad h,k \in H \, .
\eeq
Then $\bh:= (H, \hpr, \eta_H,  \Delta_H, \cun_H, \Ad)$ is a
braided bialgebra associated with $(H,R)$. Furthermore $\bh$
is a braided Hopf algebra with antipode $\bs$ defined  by
\beq\label{bs}
\bs(h):= S(\two{h}) R \big({S^2(\three{h})S(\one{h})} \ot {\four{h}})
\; , \quad h\in H \, .
\eeq

\section{Twisting braided Hopf algebras}\label{TbHa}
We recall some basic definitions and properties of the theory of
Drinfel'd twists \cite{drin1,drin2}, see also \cite{drin}.

\begin{defi}
Let $\K$ be a bialgebra (or Hopf algebra). A {\bf twist} for $\K$  is an invertible  element 
$\F \in \K \ot \K$ which is unital, 
$
(\cun \ot \id) (\F)= 1= (\id \ot \cun)(\F) 
$,
and satisfies 
\beq\label{twist}
(\F \ot 1)[(\Delta \ot \id)(\F)]= (1 \ot \F)[(\id \ot \Delta) (\F)] ~,
\eeq
referred to as the twist condition.
\end{defi}
If $\bF$ denotes the inverse of the twist, $\F \bF=1 \ot 1 =  \bF \F$, 
the condition \eqref{twist} can be written equivalently as 
\beq\label{btwist}
[(\Delta \ot \id)(\bF)](\bF \ot 1)=[(\id \ot \Delta) (\bF)]  (1 \ot \bF)~.
\eeq
We use the notation $\F=\F^{ \alpha} \ot \F_\alpha$ and   $\bF=: \bF^{\alpha} \ot
\bF_{\alpha}$ with an implicit summation understood, for the twist and its inverse. 
The identities \eqref{twist}, \eqref{btwist},  are then rewritten respectively as
\beq\label{twist-F}
\F^\beta \one{\F^{\alpha}} \ot \F_\beta \two{\F^\alpha} \ot \F_\alpha
= 
\F^\alpha \ot  \F^\beta \one{\F_{\alpha}} \ot \F_\beta \two{\F_\alpha} 
\eeq
\beq\label{twist-bF}
\one{\bF^{\alpha}}  \bF^\beta \ot  \two{\bF^\alpha}\bF_{\beta} \ot \bF_\alpha
= 
\bF^\alpha \ot   \one{\bF_{\alpha}}\bF^\beta \ot \two{\bF_\alpha} \bF_\beta  \; .
\eeq

\begin{rem}\label{rem:topological}
  When $K$ is not finite
  dimensional over $\bbK$, the twist $\F$ may not necessarily be an
  element of $K\otimes K$, but rather it belongs to a topological
  completion of the tensor product algebra. In the
  examples of this paper we avoid this problem either by diagonalizing
  $\F$ or by considering representations of $\F$ involving only a
  finite number of addends in $\F=\F^\alpha\otimes F_\alpha$. 
\end{rem}
\begin{ex}
The $\r$-matrix  $\r$ of a quasitriangular bialgebra $\K$ is a twist for $\K$. Condition \eqref{twist} follows from the quasitriangular condition \eqref{iiiR} and the quantum Yang-Baxter equation:
$
(\r \ot 1) [(\Delta \ot \id) \r]=\r_{12 }\r_{13} \r_{23}= \r_{23}\r_{13} \r_{12} =(1 \ot \r)[ (\id \ot \Delta)
$.
\end{ex}

When $\K$ has a twist, $\K$ can be endowed with a second bialgebra structure which is obtained by deforming its coproduct and leaving its counit and multiplication unchanged:
\begin{prop}\label{prop:twist}
Let $\F=\F^{ \alpha} \ot \F_\alpha$  be a twist on a bialgebra  $(\K, m, \eta, \Delta, \cun)$. Then
the algebra $(\K, m, \eta)$ with coproduct 
\beq\label{cop-twist}
\Delta_\F(k):= \F \Delta(k) \bF= \F^{\alpha} \one{k} \bF^{\beta} \ot \F_\alpha \two{k} \bF_{\beta}~, \qquad k \in \K
\eeq
and counit $\cun$  is a bialgebra. If in addition $\K$ is a Hopf algebra, then the
twisted bialgebra $\KF:=(\K, m, \eta, \Delta_\F, \cun)$
is a Hopf algebra with antipode $$S_\F(k):=\uf S(k) \buf ,
$$ where $\uf$ is the invertible element
$\uf:=\F^{\alpha} S(\F_\alpha)$ and $\buf= S(\bF^{\alpha})\bF_{\alpha}$ its inverse.
\end{prop}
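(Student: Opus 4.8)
The plan is to verify directly the three bialgebra/Hopf axioms for $\KF$, then exhibit the antipode. First I would show that $\Delta_\F$ is coassociative. This is the heart of the matter: applying $\Delta_\F\otimes\id$ to $\Delta_\F(k)$ and using $\Delta_\F(k)=\F\Delta(k)\bF$ together with the multiplicativity of $\Delta$, everything reduces to moving the factors of $\F$ and $\bF$ past one copy of $\Delta$. Concretely, $(\Delta_\F\otimes\id)\Delta_\F(k)=(\F\otimes 1)\big((\Delta\otimes\id)\F\big)(\Delta\otimes\id)\big(\Delta(k)\big)\big((\Delta\otimes\id)\bF\big)(\bF\otimes 1)$, and similarly with the roles of the two tensor legs swapped for $(\id\otimes\Delta_\F)\Delta_\F(k)$. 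Coassociativity of $\Delta$ makes the middle term $(\Delta\otimes\id)\Delta(k)=(\id\otimes\Delta)\Delta(k)$ symmetric, so the two sides agree precisely because of the twist condition \eqref{twist} and its inverse form \eqref{btwist}: $(\F\otimes 1)\big((\Delta\otimes\id)\F\big)=(1\otimes\F)\big((\id\otimes\Delta)\F\big)$ and $\big((\Delta\otimes\id)\bF\big)(\bF\otimes 1)=\big((\id\otimes\Delta)\bF\big)(1\otimes\F)$... wait, the correct pairing is \eqref{btwist}, which I would use on the right-hand cluster of $\bF$'s.

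Next I would check that $\cun$ is still a counit for $\Delta_\F$: this is immediate from the unitality of the twist, $(\cun\otimes\id)\F=1=(\id\otimes\cun)\F$ and likewise for $\bF$, so $(\cun\otimes\id)\Delta_\F(k)=(\cun\otimes\id)(\F\Delta(k)\bF)=\one{k}\cun(\two{k})$-type collapse gives $k$, and symmetrically on the other side. That $m$ is an algebra map for $\Delta_\F$ (i.e. $\Delta_\F$ is multiplicative) follows because $\Delta$ is multiplicative and $\F,\bF$ are fixed elements: $\Delta_\F(kk')=\F\Delta(kk')\bF=\F\Delta(k)\Delta(k')\bF=\F\Delta(k)\bF\F\Delta(k')\bF=\Delta_\F(k)\Delta_\F(k')$, using $\bF\F=1\otimes 1$. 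Together with the unchanged unit this shows $(\K,m,\eta,\Delta_\F,\cun)$ is a bialgebra.

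For the Hopf case I would verify that $S_\F(k):=\uf S(k)\buf$ is a convolution inverse of the identity for the twisted structure. The standard computation: $m\circ(S_\F\otimes\id)\circ\Delta_\F(k)=\uf S(\F^\alpha\one{k}\bF^\beta)\buf\,\F_\alpha\two{k}\bF_\beta$. One rewrites $\buf\,\F_\alpha=S(\bF^\gamma)\bF_\gamma\F_\alpha$ and uses the twist identities to recombine, ultimately invoking the antipode axiom $S(\one{k})\two{k}=\cun(k)1$ for $\K$; the bookkeeping identity one needs is that $\uf=\F^\alpha S(\F_\alpha)$ and $\buf=S(\bF^\alpha)\bF_\alpha$ are mutually inverse and interact correctly with $(\Delta\otimes\id)\F$ — this is exactly where the twist condition enters a second time. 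The analogous computation handles $m\circ(\id\otimes S_\F)\circ\Delta_\F$. I would also note $\cun(S_\F(k))=\cun(k)$ from unitality of $\uf,\buf$.

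The main obstacle is the coassociativity step: it is the only place where the full force of the twist cocycle condition \eqref{twist}/\eqref{btwist} is needed, and one must be careful to apply it to the correct "side" (the $(\Delta\otimes\id)$ leg for $\F$ versus $\bF$) and in the correct order, since $\K\otimes\K\otimes\K$ is noncommutative. The antipode computation is longer but mechanical once the identity relating $\uf,\buf$ to the twisted coproduct is isolated; I would present it as a chain of equalities citing \eqref{twist-F}, \eqref{twist-bF} and the defining property of $S$, without belaboring each Sweedler manipulation. Since this is a standard result (Drinfeld), I would keep the write-up terse and refer to \cite{drin1,drin2} for the cocycle-theoretic viewpoint, spelling out only the coassociativity and antipode verifications.
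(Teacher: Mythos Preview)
Your proposal is correct and follows the standard direct verification. Note, however, that the paper does not actually give a proof of this proposition: it is stated as a well-known result of Drinfeld, with references to \cite{drin1,drin2,drin} at the start of \S\ref{TbHa}, and the text moves on immediately to the notation $\Delta_\F(k)=\fone{k}\ot\ftwo{k}$. So there is nothing to compare against; your write-up would simply supply a proof the paper omits. If you do include it, you can safely keep it as terse as you indicate, since the coassociativity and antipode checks are exactly the ones found in the references.
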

We use the notation $\Delta_\F (k)=: \fone{k} \ot \ftwo{k}$ for the coproduct in $\KF$ to distinguish it from the original coproduct $\Delta(k)= \one{k} \ot \two{k}$ in $\K$.

Let us recall that the inverse $\bF$ of the twist $\F$ is a twist for the twisted bialgebra $\KF$, with the twist condition for $\bF \in \KF \ot \KF$ coinciding with \eqref{twist-bF}.  Clearly, using $\bF$ to deform the coproduct in $\KF$ leads back to the initial bialgebra $(\KF)_{\bF}=\K$.

For later use, we next recall some results which are dual to those related to deformations of bialgebras via 2-cocycles \cite{doi}, and which were addressed in \cite[\S 4]{pgc}. 

Let $\K$ be a  bialgebra (or Hopf algebra) with a twist $\F \in \K \ot \K$ and $\KF$ the resulting twisted 
bialgebra, as above. 
Any  $\K$-module $V$ with left action $\trl_V: \K \ot V \to V$, is also a $\KF$-module
with the same linear map  $\trl_V$, now thought as a map 
$\trl_V: \KF \ot V \to V $. 
Indeed, the module conditions in \eqref{eqn:Hmodule}
only involve the algebra structure of $\K$, and  the twisted bialgebra
$\KF$ coincides with $\K$ as algebra. 
When thinking of $V$ as a $\KF$-module we denote it by $V_\F$, with action $\trl_{V_F}$. 
Moreover, any $\K$-module morphism $\psi : V\to W$ 
can be thought as a morphism $\psi_F  : V_\F \to W_\F$  
since, as stated, the action of $\K$ on any $V,W$ coincides with the action of $\KF$ on $V_\F, W_\F$.
This amounts to say that there is an equivalence of category 
$\Gamma : \KM \to\KFM$ with the functor $\Gamma$ just the identity both on objects and on morphisms:  
$\Gamma(V):=V_\F$ and  $\Gamma(\psi):=\psi_F$. The use of $\bF$ as a twist for $\KF$,  which `twists back' $\KF$ to $\K$, inverts the construction going from $\KFM$ to $\KM$.

With the monoidal structure one needs the deformed coproduct \eqref{cop-twist}. 
Explicitly, for $\KF$ modules $(V_\F, \trl_{V_\F})$ and $(W_\F,\trl_{W_\F})$,  
the tensor product action of $\KF$ on $V_\F \otF W_\F \simeq V \ot W$ (as linear spaces) 
is given by 
\begin{align}\label{trlVW-F}
\trl_{V_\F \otF W_\F} : \KF \ot (V_\F\otF W_\F) &
                                                        \longrightarrow  V_\F \otF W_\F ~,\\
k \ot  v\otF w & \longmapsto  (\fone{k} \trl_{V_\F}  v) \otF (\ftwo{k} \trl_{W_\F} w) ~. \nn
\end{align}

We denote by $(\KFM,\otF)$ the monoidal category of left $\KF$-modules. We have then: 
\begin{prop}\label{pro:funct}
The functor $\Gamma : \KM \to\KFM$ together with the natural isomorphism   $\varphi : \otF \circ
(\Gamma\times\Gamma)\Rightarrow \Gamma\circ \otimes$ given  by the isomorphism of $\KF$-modules
\begin{align}\label{nt-mod}
\varphi_{V,W}: V_\F \otF W_\F &\longrightarrow  (V \ot W)_\F  
\\
v \otF w &\longmapsto  (\bF^{\alpha} \trl_{V_\F} v)\ot (\bF_{\alpha} \trl_{W_\F} w) \, ,\nn
\end{align}
is an equivalence of monoidal categories $(\KM, \ot)$ and 
$(\KFM, \otF)$.  
\end{prop}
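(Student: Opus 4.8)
The plan is to verify the three conditions required for $(\Gamma,\varphi)$ to be a monoidal equivalence: that $\varphi_{V,W}$ is an isomorphism of $\KF$-modules and is natural in $V$ and $W$; that $\Gamma$ together with $\varphi$ is compatible with the associativity constraints (the hexagon/pentagon-type coherence diagram); and that it is compatible with the unit constraints. Since $\Gamma$ is already known to be an equivalence of plain categories (it is the identity on objects and morphisms, with inverse given by twisting back with $\bF$), the only real content is the monoidal part, i.e. checking that the data $\varphi$ is well-defined and coherent.

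First I would check that $\varphi_{V,W}$ lands in $\KF$-module morphisms: one computes, for $k\in\KF$, that $\varphi_{V,W}\big(k\trl_{V_\F\otF W_\F}(v\otF w)\big)$ equals $k\trl_{(V\ot W)_\F}\varphi_{V,W}(v\otF w)$, expanding both sides via \eqref{trlVW-F}, \eqref{trlVW} and \eqref{cop-twist}; the key algebraic input is exactly the twist condition in the form \eqref{twist-bF} (equivalently \eqref{btwist}), which relates $(\Delta\ot\id)(\bF)$ and $(\id\ot\Delta)(\bF)$, together with the module axiom \eqref{eqn:Hmodule}. That $\varphi_{V,W}$ is bijective is immediate: its inverse is $v'\ot w'\mapsto (\F^\alpha\trl_{V_\F}v')\otF(\F_\alpha\trl_{W_\F}w')$, since $\F\bF=1\ot 1=\bF\F$. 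Naturality, $\varphi_{V',W'}\circ(\psi_\F\otF\chi_\F)=(\psi\ot\chi)_\F\circ\varphi_{V,W}$ for $\KF$-module morphisms $\psi,\chi$, follows since $\psi_\F,\chi_\F$ are $\K$-equivariant for the (un-twisted) action, which commutes with the action of the components $\bF^\alpha,\bF_\alpha$.

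Next I would verify the two coherence axioms. For the associativity hexagon one must show that the two ways of going from $(U_\F\otF V_\F)\otF W_\F$ to $\Gamma(U\ot(V\ot W))$ — using $\varphi$ twice and the trivial associators on both sides — agree. Unwinding this reduces to the equality of two expressions built from $\bF$ applied in different orders, which is precisely the twist condition \eqref{twist}/\eqref{btwist} once more, now read as a cocycle identity. For the unit axioms one uses that the unit object $\bbK$ carries the trivial action via the counit and that $\F$ is unital, $(\cun\ot\id)\F=1=(\id\ot\cun)\F$, so $\varphi_{\bbK,W}$ and $\varphi_{V,\bbK}$ reduce to the canonical identifications. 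Finally, being an equivalence of underlying categories plus a coherent monoidal structure makes $(\Gamma,\varphi)$ a monoidal equivalence; one may also note explicitly that the inverse pseudofunctor is built from twisting by $\bF\in\KF\ot\KF$, whose twist condition coincides with \eqref{twist-bF}.

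The main obstacle is purely bookkeeping: tracking the several implicit summations ($\bF=\bF^\alpha\ot\bF_\alpha$, the Sweedler legs $\fone{k}\ot\ftwo{k}$ which themselves hide $\F,\bF$, and the iterated coproducts) and correctly invoking the twist condition in exactly the right associativity pattern for each of the three checks. There is no conceptual difficulty beyond this; the twist axiom \eqref{twist} is manifestly the statement that $\varphi$ is coherent, so the proof is essentially the observation that these conditions are, line by line, the module and twist axioms. In practice it suffices to prove the $\KF$-linearity of $\varphi_{V,W}$ and the hexagon identity in detail and to remark that unitality and naturality are straightforward.
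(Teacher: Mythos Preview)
Your proposal is correct and follows the standard verification that the paper itself only sketches. In fact the paper does not give a proof of this proposition: it is stated as a recalled result (dual to the 2-cocycle case in \cite{doi}, treated in \cite[\S 4]{pgc}), and the only argument the paper supplies is the remark immediately following the statement, namely that the associativity coherence \eqref{prop-ass-phi} is a consequence of the twist condition \eqref{twist-bF}. Your outline covers precisely this, together with the $\KF$-linearity, naturality, invertibility, and unit compatibility that the paper leaves implicit; there is nothing to add.
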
 
As a consequence of the twist condition \eqref{twist-bF} the isomorphisms $\varphi_{-,-}$ satisfy 
\beq\label{prop-ass-phi}
\varphi_{V,W \ot_\F Z} \circ (\id_{V_F} \ot_\F \varphi_{W,Z})
=\varphi_{V \ot_\F W , Z} \circ (\varphi_{V,W} \ot_\F \id_{Z_F})
\eeq
for every $\KF$-modules $V_\F,W_\F,Z_\F$.
 
 There is also an equivalence between the category $\KA$ of left $\K$-module algebras 
and the corresponding category $\KFA$ of $\KF$-module algebras
\beq\label{functGammaA}
\Gamma : \KA\to \KFA \, , \quad {(A,m_A,\eta_A, \trl_A)
  \mapsto (A_\F ,m_{A_\F},\eta_{A_\F}, \trl_{A_\F})} \; .
\eeq
It is no longer the identity on objects. 
Given $A\in \KA$ with multiplication $m_A$ and unit $\eta_A$, in order for the action $\trl_{A_\F}$ to be an algebra map one has to define a new product on the $\K$-module
$A_\F=\Gamma(A)$. The new algebra
structure $m_{A_\F} , \eta_{A_\F}$ on
$A_\F\in \KFA$ is defined by using the natural
isomorphisms $\varphi$ in \eqref{nt-mod}:
$$
m_{A_\F}:=\Gamma(m_A) \circ \varphi_{A,A} \, , \quad 
\eta_{A_\F}:=\Gamma(\eta_A)
$$
Explicitly, the unit is unchanged,  while the product is deformed to
\begin{align}\label{rmod-twist} 
 m_{A_\F} : A_\F \otF A_\F & \,\longrightarrow A_\F \nn \\
a\otF a^\prime  & \,\longmapsto a \dotF a':= (\bF^{\alpha} \trl_A a)\, (\bF_{\alpha} \trl_A a') 
 \end{align}
The functor $\Gamma$ is the identity on morphism: 
for any algebra map $\psi : A\to A^\prime$ one shows  that 
$\Gamma(\psi)=\psi_F : A_\F \to A^\prime_\F$ is an algebra map for
 the deformed products. 
 
Similarly to above, there is also an equivalence between the category 
$\KC$ of left $\K$-module coalgebras and that of $\KF$-module coalgebras
\beq\label{functGammaC}
\Gamma : \KC \to\KFC \, , \quad {(C,\Delta_C,\cun_C, \trl_C) 
\mapsto (C_\F ,\Delta_{C_\F},\cun_{C_\F},\trl_{C_\F})} \; .
\eeq
Again, $\Gamma$ acts as the identity on morphisms, but not on objects. 
Each $\K$-module coalgebra $C$ with costructures $\Delta_C,\cun_C$
is mapped to the $\KF$-module coalgebra $C_\F=\Gamma(C)$ with costructures 
$\Delta_{C_\F},\cun_{C_\F}$ 
defined by 
$$
\Delta_{C_\F} := \varphi_{C,C}^{-1} \circ \Gamma(\Delta_C) \, , \quad
\cun_{C_\F}:=\Gamma(\cun_C) .
$$
Explicitly, while the counit does not change, the
deformed coproduct is 
\beq\label{cc-twist} 
 \Delta_{C_\F} : C_\F\longrightarrow C_\F\otF C_\F ~,~~c \longmapsto  \fone{c} \ot \ftwo{c}:=
(\F^{\alpha} \trl_C \one{c})  \ot (\F_\alpha \trl_C \two{c}) \,.
\eeq

\subsection{$\K$-modules of linear maps} 
We  study twist deformations of linear maps of $\K$-modules 
for later applications   in \S  \ref{sec:Fbd} to braided derivations. 

Consider the $\K$-module algebra
   $(\Hom(V,V),\circ)$ of linear maps of a $\K$-module  $V$, with action $\trl_{\Hom(V,V)}$ as in \eqref{action-hom}.
On the one hand, we obtain a $\K_\F$-module algebra $(\Hom_\F(V,V),\circ_\F)$ out of $(\Hom(V,V),\circ)$ by changing the multiplication 
$\circ$ as in \eqref{rmod-twist}:
\beq\label{circF}
\psi \circ_\F \phi = (\bF^{\alpha} \trl_{\Hom(V,V)} \psi)\circ (\bF_{\alpha} \trl_{\Hom(V,V)} \phi) \, .
\eeq  
The $\K_\F$-action $\trl_{\Hom_\F(V,V)}$ 
 coincides with  $\trl_{\Hom(V,V)}$  as linear map.
On the other hand, there is  the $\K_\F$-module algebra $(\Hom(V_\F,V_\F),\circ)$ of linear maps  of 
the $\K_\F$-module  $V_\F$ with action
\begin{align}\label{action-homF}
\trl_{\Hom(V_\F,V_\F)}:  K_\F \ot \Hom(V_\F,V_\F) &\to \Hom(V_\F,V_\F) 
\nn\\
k \ot \psi & \mapsto  k \trl_{\Hom(V_\F,V_\F)} \psi : \; a \mapsto \fone{k} \trl_{V_\F} \psi(S_\F(\ftwo{k})\trl_{V_\F} a) .
\end{align}
These two module algebras are isomorphic  (cf. \cite[Th.~4.7]{AS14}):
\begin{prop}\label{prop:Diso}
The 
$\K_\F$-module algebras $(\Hom_\F(V,V),\circ_\F)$  and $(\Hom(V_\F,V_\F),\circ)$ are isomorphic via the map 
\begin{align}\label{mappaD}
\dd :  \Hom_\F(V,V)  & \to \Hom(V_\F,V_\F) 
\nn
\\
\psi & \mapsto \dd(\psi): v \mapsto (\bF^\alpha \trl_{\Hom_\F(V,V)} \psi)(\bF_\alpha \trl_V v) 
\end{align}
with 
 inverse
\begin{align}\label{mappaDinv}
\dd^{-1}:\Hom(V_\F,V_\F) & \to \Hom_\F(V,V) \nn \\
\psi  & \mapsto 
\dd^{-1}(\psi): v \mapsto (\F^\alpha  \trl_{\Hom(V_\F,V_\F)} \psi)(\F_\alpha \trl_{V_\F}  v) \; .
\end{align}
\end{prop}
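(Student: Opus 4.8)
The plan is to verify three things: that $\dd$ and $\dd^{-1}$ are mutually inverse $\bbK$-linear maps, that they are morphisms of $\K_\F$-modules, and that $\dd$ is multiplicative, i.e. $\dd(\psi \circ_\F \phi) = \dd(\psi) \circ \dd(\phi)$, together with $\dd(\id_V) = \id_{V_\F}$. The guiding principle throughout is that $\dd$ is exactly the map built from the natural isomorphisms $\varphi_{-,-}$ of Proposition \ref{pro:funct}: morally, $\Hom(V_\F, V_\F)$ is the internal-hom of $V_\F$ in $(\KFM, \otF)$ while $\Hom_\F(V,V)$ is the image under $\Gamma$ of the internal-hom of $V$, and $\dd$ is the comparison map induced by the monoidal equivalence. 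So each claimed identity should reduce, after unwinding definitions, to the twist/cocycle identities \eqref{twist-F}, \eqref{twist-bF}, the unitality $(\cun\ot\id)\F = 1 = (\id\ot\cun)\F$, and the formula $S_\F(k) = \uf S(k)\buf$.

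First I would establish that $\dd$ and $\dd^{-1}$ are inverse to each other. Writing out $\dd^{-1}(\dd(\psi))$ one gets an expression involving $\F^\alpha \trl_{\Hom(V_\F,V_\F)}\big(\bF^\beta \trl_{\Hom_\F(V,V)}\psi\big)$ evaluated on $\F_\alpha \trl_{V_\F}(\bF_\beta \trl_V v)$; here one must be careful that the two $\K$-actions on $\Hom$ in play, namely $\trl_{\Hom(V_\F,V_\F)}$ from \eqref{action-homF} and $\trl_{\Hom_\F(V,V)} = \trl_{\Hom(V,V)}$ from \eqref{action-hom}, are genuinely different maps — the first uses $S_\F$ and $\Delta_\F$, the second uses $S$ and $\Delta$. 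Expanding both actions on a test vector, the inner antipode terms $S(\ftwo{k})$-type factors, combined with $\uf, \buf$ hidden in $S_\F$, should cancel against the outer $\F, \bF$ legs after repeated use of $\F\bF = 1\ot 1$ and the twist condition, collapsing everything to $\psi(v)$. The analogous computation for $\dd(\dd^{-1}(\psi))$ is symmetric, using $\bF$ as the twist for $\K_\F$ and the fact that $(\K_\F)_{\bF} = \K$, so in practice one only does the calculation once.

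Next I would check $\K_\F$-equivariance: $\dd(k \trl_{\Hom_\F(V,V)} \psi) = k \trl_{\Hom(V_\F,V_\F)} \dd(\psi)$ for all $k \in \K_\F$. Unwinding the left side gives $\bF^\alpha \trl_{\Hom}(k \trl_{\Hom}\psi)$ applied to $\bF_\alpha \trl v$, which by the module axiom for $\Hom(V,V)$ regroups the $\K$-products; unwinding the right side invokes \eqref{action-homF} with its $S_\F$ and $\Delta_\F$. Matching the two is precisely where the twist condition \eqref{twist-bF} and the identity $\Delta_\F(k) = \F\Delta(k)\bF$ do the work of moving the twist legs past the coproduct of $k$. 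Finally, multiplicativity $\dd(\psi \circ_\F \phi) = \dd(\psi)\circ\dd(\phi)$: expand $\psi \circ_\F \phi$ using \eqref{circF}, push it through $\dd$, and compare with the composition $\dd(\psi)\big(\dd(\phi)(v)\big)$; the key middle step is that $\dd(\phi)(v) = (\bF^\alpha \trl_{\Hom}\phi)(\bF_\alpha \trl v)$ gets fed into $\dd(\psi)$, and one uses $H$-equivariance of evaluation together with the cocycle identity to reassemble the single $\bF$-leg pair that $\circ_\F$ produces; unitality $\dd(\id) = \id$ follows immediately from $(\cun\ot\id)\bF = 1$. The main obstacle I anticipate is bookkeeping: keeping straight the two distinct $\Hom$-actions and the two distinct antipodes $S, S_\F$ while the Greek-index twist legs proliferate, and making sure each cancellation is licensed by the correct form (\eqref{twist-F} vs \eqref{twist-bF}) of the twist condition; this is routine but error-prone, and is presumably why the authors cite \cite[Th.~4.7]{AS14} where the same computation was done in a related setting.
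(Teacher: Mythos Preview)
Your proposal is correct and follows essentially the same three-step verification (mutual inverses, $\K_\F$-equivariance, multiplicativity) as the paper. The one tactical shortcut the paper takes that you might find useful: before checking invertibility it first rewrites $\dd(\psi)(v) = \F^\alpha \trl_V \psi\big(S(\F_\alpha)\buf \trl_V v\big)$ and $\dd^{-1}(\psi)(v) = \bF^\alpha \trl_{V_\F} \psi\big(S_\F(\bF_\alpha)\uf \trl_{V_\F} v\big)$ via the twist condition, after which the mutual inverse check collapses almost immediately rather than requiring the longer bookkeeping you anticipate.
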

\begin{proof}
We first observe that from the twist condition and the definition of  the action on endomorphims, we can also write
\begin{align*} 
\dd(\psi)(v) &= \F^\alpha \trl_V \psi (S(\F_\alpha) \buf \trl_V v)\; ; \; \nn \\ 
\dd^{-1}(\psi)(v) &= \bF^\alpha \trl_{V_\F} \psi (S_\F(\bF_\alpha) \uf \trl_{V_\F} v).
\end{align*}
Then it is easy to see that the two maps are one the inverse of the other (also recalling that the actions $\trl_{V}$ and $\trl_{V_\F}$ coincides as linear maps).
We show the $\K_\F$-equivariance $k \trl_{\Hom(V_\F,V_\F)} \dd (\psi)= \dd( k \trl_{\Hom_\F(V,V)} \psi)$:
\begin{align*}
(k \trl \dd (\psi))(v) &=  \fone{k} \trl \dd(\psi) (S_\F (\ftwo{k}) \trl v)
\\
&= \F^\alpha \one{k} \bF^\beta \trl \dd(\psi) (\uf  S(\F_\alpha \two{k} \bF_\beta)  \buf \trl v)
\\
&= \F^\alpha \one{k} \bF^\beta \F^\gamma \trl \psi (S(\F_\gamma)\buf  \uf  S(\F_\alpha \two{k} \bF_\beta)  \buf \trl v)
\\
&= \F^\alpha \one{k}  \trl \psi ( S(\F_\alpha \two{k} )  \buf \trl v)
\\
&= \F^\alpha \trl \big( (k \trl \psi) ( S(\F_\alpha  )  \buf \trl v)\big)
\\
&= \dd( k \trl \psi)(v) \; .
\end{align*} 
Next we show that  $\dd(\psi \circ_\F \phi)= \dd(\psi) \circ \dd(\phi)$. 
For that we compute
\begin{align*}
(\psi \circ_\F \phi) (v) &=
\big((\bF^\alpha \trl \psi)\circ (\bF_\alpha \trl \phi)\big) (v)
\\
&= (\bF^\alpha \trl \psi)(\one{\bF_\alpha} \trl \phi (S( \two{\bF_\alpha}) \trl v)
\\
&= \one{\bF^\alpha} \trl \psi( S(\two{\bF^\alpha})\one{\bF_\alpha} \trl \phi (S( \two{\bF_\alpha}) \trl v)
\\
&= \bF^\alpha \F^\gamma \trl \psi( S(\one{\bF_\alpha} \bF^\beta \F_\gamma)\two{\bF_\alpha} \one{\bF_\beta} \trl \phi (S( \three{\bF_\alpha} \two{\bF_\beta}) \trl v)
\\
&= \bF^\alpha \F^\gamma \trl \psi( S(\F_\gamma) S( \bF^\beta )\one{\bF_\beta} \trl \phi (S( \bF_\alpha \two{\bF_\beta}) \trl v)
\\
&= \bF^\alpha \F^\gamma \trl \psi( S(\F_\gamma) \buf  \F^\beta   \trl \phi (S( \bF_\alpha \F_\beta) \trl v)
\\
&= \bF^\alpha  \trl \dd(\psi)(   \F^\beta   \trl \phi (S( \F_\beta) S( \bF_\alpha) \trl v)
\\
&= \bF^\alpha  \trl \dd(\psi)\big(  \dd( \phi) (\uf S( \bF_\alpha) \trl v) \big)
\\
&= \dd^{-1} (\dd(\psi) \circ \dd(\phi)) (v) 
\end{align*}
thus showing that   $\dd$ is an algebra morphism.
\end{proof}

\subsection{Quasitriangular  bialgebras} 
We  consider the case when the twist comes from a 
quasitriangular bialgebra $(\K,\r)$. The twisted bialgebra $\K_F$ is still quasitriangular. Moreover, the isomorphisms $\varphi$ 
in \eqref{nt-mod} for module algebras are algebra maps (Proposition \ref{prop:funct-alg}); for this we need the following result. 
\begin{prop}\label{main-lem}
Let $(K,\r)$ be a quasitriangular bialgebra and $\F \in  \K \ot \K $ a twist for $\K$. 
Then the following identity holds in $\K^{\ot 4}$:
\begin{multline} \label{lhs=rhs-m}
[(\Delta \ot \Delta)\F](1 \ot \r_{21}  \ot 1)[(\id \ot  \tau \ot \id)(\Delta \ot \Delta)\bF] \\
=
(\bF \ot \bF)(1 \ot \rF_{21}  \ot 1)[(\id \ot  \tau \ot \id)(\F \ot \F)]\; ,
\end{multline} 
where $\rF_{21} = \F \, \r_{21} \, \bF_{21}$. That is,
\begin{multline}\label{lhs=rhs}
\one{\F^{\gamma}} \one{\bF^{\alpha}} \ot \two{\F^{\gamma}} \r_\beta \one{\bF_{\alpha}}  \bot_\F \one{\F_{\gamma}}\r^\beta\two{\bF^{\alpha}} \ot
 \two{\F_{\gamma}}\two{\bF_{\alpha}} \\
=
 \bF^{\beta} \F^{\gamma} \ot \bF_{\beta} \rF_\alpha \F^{\delta}  \bot_\F  \bF^{\mu}  
 \rF^\alpha \F_{\gamma}  \ot  \bF_{\mu}\F_{\delta} \, .
\end{multline}
\end{prop}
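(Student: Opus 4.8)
The plan is to prove the identity \eqref{lhs=rhs-m} in $\K^{\otimes 4}$ by reducing it, via the cocycle (twist) condition, to an identity that involves only the $\r$-matrix and is a direct consequence of quasi-cocommutativity \eqref{iiR} together with the hexagon relations \eqref{iii}. The key observation is that the left-hand side is built from $(\Delta\otimes\Delta)\F$ and $(\Delta\otimes\Delta)\bF$ sandwiching $1\otimes \r_{21}\otimes 1$, while the right-hand side is built from $\F\otimes\F$ and $\bF\otimes\bF$ sandwiching $1\otimes\rF_{21}\otimes 1$ with $\rF_{21}=\F\,\r_{21}\,\bF_{21}$; since these are the very ingredients entering the definition of the twisted braiding $\Psi^{\rF}$ on $\K_\F$-modules versus the original braiding $\Psi^{\r}$ on $\K$-modules, the statement is essentially the compatibility of the monoidal functor $\Gamma$ with braidings, written out at the level of elements.

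Concretely, the first step is to move the factor $(\Delta\otimes\Delta)\bF$ on the left to the right as $(\Delta\otimes\Delta)\F$ inverted, i.e. to rewrite the claim as
\beq\label{eqn:plan-equiv}
[(\Delta\otimes\Delta)\F]\,(1\otimes\r_{21}\otimes 1)\,[(\Delta\otimes\Delta)\F]^{-1}
=(\bF\otimes\bF)(1\otimes\rF_{21}\otimes 1)(\F\otimes\F)\,,
\eeq
after transporting the $\tau$ in the middle slots (which only permutes the two inner tensor legs and is harmless once both sides are read with the same leg ordering). The second step is to recognise that $(\Delta\otimes\Delta)\F$ can be reassembled from iterated coproducts applied to $\F$; using the twist condition \eqref{twist} in the form \eqref{twist-F} (and its inverse \eqref{twist-bF}), one re-expresses $(\Delta\otimes\Delta)\F=\F_{12}\,(\Delta\otimes\id\otimes\id)\cdots$ so that the conjugation of $1\otimes\r_{21}\otimes 1$ by $(\Delta\otimes\Delta)\F$ collapses: the outer legs of $\F$ that do not meet the inner $\r_{21}$ cancel against the corresponding legs of $\bF$, and what survives on the two middle legs is precisely $\F\,\r_{21}\,\bF$ conjugated once more by the remaining outer pieces of $\F$. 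The third step is to feed in quasi-cocommutativity \eqref{iiR}, $\r\Delta(k)\br=\Delta^{cop}(k)$, applied to the legs where a coproduct of $\F$ (or $\bF$) still touches the $\r$; together with the hexagon identities \eqref{iiiR}, which let one push $\r$ through a coproduct, this converts $(\Delta\otimes\Delta)\F\cdot(1\otimes\r_{21}\otimes 1)$ into $(1\otimes\rF_{21}\otimes 1)\cdot(\bF\otimes\bF)^{-1}\cdot(\text{tail})$, and the tail is absorbed into $\F\otimes\F$ by running \eqref{twist-F} backwards.

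I expect the main obstacle to be purely bookkeeping: keeping track of which of the four tensor legs each Greek index lives in after the repeated use of \eqref{twist-F}, \eqref{twist-bF}, \eqref{iiiR} and the flip $\tau$ in the two middle slots, and ensuring that every application of quasi-cocommutativity is made on the correct leg with the correct orientation ($\r$ versus $\br=\r_{21}^{-1}$, recalling the Remark after \eqref{braiding} that either may be used). A cleaner route, which I would actually carry out, is to avoid elementwise manipulation altogether: interpret both sides as natural transformations. The left-hand side is $\varphi_{\_,\_}^{-1}\circ\Gamma(\Psi^{\r})\circ\varphi_{\_,\_}$ evaluated on a tensor product of two copies of the regular $\K$-module, and the right-hand side is $\Psi^{\rF}$ for the twisted quasitriangular structure $\rF$; equality then follows from the general fact that $\Gamma$ (with the monoidal structure $\varphi$ of Proposition \ref{pro:funct}) is a \emph{braided} monoidal equivalence $(\KM,\otimes,\Psi^{\r})\simeq(\KFM,\otF,\Psi^{\rF})$, which in turn is exactly the statement that $\rF=\F\,\r\,\bF_{21}$ is the universal $R$-matrix of $\K_\F$ — a standard computation using \eqref{twist}, \eqref{iii} and the defining properties of $\br$. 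Spelling this categorical identity out on the regular representation, where the $\K$-action is left multiplication and hence $k\trl v=kv$, reproduces \eqref{lhs=rhs} verbatim, with the four tensor legs of \eqref{lhs=rhs-m} corresponding to $\varphi^{-1}$, the two module factors, and $\varphi$ respectively.
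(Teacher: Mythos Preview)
There is a genuine gap in both routes you sketch.

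In the elementwise route, the reformulation \eqref{eqn:plan-equiv} is \emph{not} equivalent to \eqref{lhs=rhs-m}. The flip $\tau_{23}:=(\id\otimes\tau\otimes\id)$ is applied to the factor $(\Delta\otimes\Delta)\bF$ on the left but to the factor $\F\otimes\F$ on the right; since $\tau_{23}\big((\Delta\otimes\Delta)\bF\big)\neq(\Delta\otimes\Delta)\bF$ and $\tau_{23}(\F\otimes\F)=\F_{13}\F_{24}\neq\F_{12}\F_{34}$, you cannot strip $\tau_{23}$ by a uniform relabeling of legs. The two identities, with and without $\tau_{23}$, are genuinely different elements of $\K^{\otimes 4}$.

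In the categorical route, the statement that $\Gamma$ is a \emph{braided} monoidal equivalence is only the $2$-leg identity $\rF_{21}=\F\,\r_{21}\,\bF_{21}$, which is the definition of $\rF$; evaluating braiding compatibility $\varphi_{W,V}\circ\Psi^{\rF}=\Gamma(\Psi^{\r})\circ\varphi_{V,W}$ on the regular module gives back exactly that and nothing more. The $4$-leg identity \eqref{lhs=rhs-m} instead encodes how the transformation $\id\otimes\Psi^{\r}\otimes\id$ is transported through the \emph{iterated} coherence $\varphi_{V\otimes W,V\otimes W}\circ(\varphi_{V,W}\otimes_\F\varphi_{V,W})$, and it is precisely in this iteration that the coproducts $(\Delta\otimes\Delta)\F$ appear via the twist condition \eqref{twist-F}. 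So a correct categorical proof needs both the braiding compatibility \emph{and} the associativity coherence \eqref{prop-ass-phi} of $\varphi$, not just the former; your last paragraph conflates the two and the assignment ``four legs $\leftrightarrow$ $\varphi^{-1}$, two modules, $\varphi$'' does not type-check.

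The paper's proof keeps $\tau_{23}$ throughout and is a direct algebraic reduction: it expands $(\Delta\otimes\Delta)\F$ and $(\Delta\otimes\Delta)\bF$ via the twist condition, uses quasi-cocommutativity in the form $(\r_{21}\otimes 1)(\tau\otimes\id)\Delta^2=\Delta^2(\r_{21}\otimes 1)$ to push $\r_{21}$ past the inner coproducts, and reduces everything to the identity $[(\Delta\otimes\id)\F][(\id\otimes\Delta)\bF]=(\bF\otimes 1)(1\otimes\F)$, which is just \eqref{twist} rewritten. The flip is handled, not discarded, via $\Delta^{cop}=\r\Delta\br$.
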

\begin{proof}
Firstly we observe that the twist condition \eqref{twist} for $\F$  gives
$$
[(\Delta \ot \Delta)\F]= (\bF \ot 1 \ot 1)[(1 \ot (\id \ot \Delta) \F][(\id \ot \Delta^2)\F]
$$
and, similarly, the twist condition \eqref{btwist} written for the inverse $\bF$ of $\F$ gives
$$
[(\Delta \ot \Delta)\bF]= [(\id \ot \Delta^2)\bF][(1 \ot (\id \ot \Delta) \bF](\F \ot 1 \ot 1) \; .
$$
Then, identity \eqref{lhs=rhs} is equivalent to 
\begin{align*}
 [(1 \ot (\id \ot \Delta) \F][(\id \ot \Delta^2)\F](1 \ot \r_{21}  \ot 1)(\id \ot  \tau \ot \id)\big[ [(\id \ot \Delta^2)\bF][1 \ot (\id \ot \Delta) \bF ]\big]
\\
=(1 \ot 1 \ot \bF)(1 \ot \F \, \r_{21} \, \bF_{21}   \ot 1)[(\id \ot  \tau \ot \id)(1 \ot 1 \ot \F)] \; .
\end{align*}
We multiply both sides by $(1 \ot 1 \ot \F)$ and use again the twist condition \eqref{twist}
in the left hand side to simplify
$(1 \ot \F \ot 1)$ and obtain
\begin{align*}
 [(1 \ot (\Delta \ot \id) \F][(\id \ot \Delta^2)\F](1 \ot \r_{21}  \ot 1)(\id \ot  \tau \ot \id)\big[ [(\id \ot \Delta^2)\bF][1 \ot (\id \ot \Delta) \bF ]\big]
\\
=(1 \ot \r_{21} \, \bF_{21}   \ot 1)[(\id \ot  \tau \ot \id)(1 \ot 1 \ot \F)]\; .
\end{align*}
The quasitriangular condition $\Delta^{cop} = \r  \Delta \br$ gives
$$
(\r_{21}  \ot 1)(\tau \ot \id)\Delta^2= (\r_{21} \Delta^{cop} \ot \id) \Delta=  \Delta^2 (\r_{21} \ot 1) \; ,
$$
which we can use to simplify the left hand side to
$$
 [(1 \ot (\Delta \ot \id) \F](1 \ot \r_{21}  \ot 1)(\id \ot  \tau \ot \id) [1 \ot (\id \ot \Delta) \bF ]
$$
and further to
$$
(1 \ot \r_{21}  \ot 1) [(1 \ot (\Delta^{cop} \ot \id) \F](\id \ot  \tau \ot \id) [1 \ot (\id \ot \Delta) \bF ]
$$ having used once again the quasitriangularity.
Summarizing, we are left to show that
\begin{multline*}
 [(1 \ot (\Delta^{cop} \ot \id) \F](\id \ot  \tau \ot \id) [1 \ot (\id \ot \Delta) \bF ] \\
=
(1 \ot \bF_{21}   \ot 1)[(\id \ot  \tau \ot \id)(1 \ot 1 \ot \F)]
\end{multline*}
or, by (appropriately) simplifying the operator $(\id \ot  \tau \ot \id)$, that
$$
 [(1 \ot (\Delta\ot \id) \F] [1 \ot (\id \ot \Delta) \bF ]
=
(1 \ot \bF  \ot 1)(1 \ot 1 \ot \F) \; .
$$
The identity $[(\Delta\ot \id) \F] [(\id \ot \Delta) \bF ] = (\bF \ot 1)(1 \ot \F)$ follows from the twist condition \eqref{twist}, hence the identity \eqref{lhs=rhs} in the Proposition is proven.
\end{proof}

As mentioned, the twisted bialgebra $\KF$ is quasitriangular as well. 
\begin{prop}\label{prop:KFRF}
If $(\K,\r)$ is a quasitriangular bialgebra and $\F \in  \K \ot \K $ is a twist for $\K$, then the twisted bialgebra $\KF$  with 
twisted coproduct $\Delta_\F$ (see Proposition \ref{prop:twist}) is quasitriangular with $\r$-matrix 
\beq\label{urm-def}
\rF := \F_{21} \, \r \, \bF = \F_\alpha \r^\beta \bF^{\gamma} \ot \F^\alpha \r_\beta \bF_{\gamma} 
\eeq
and inverse   $\brF :=\F \, \br \, \bF_{21} = \F^\alpha \br^\beta \bF_{\gamma} \ot \F_\alpha \br_\beta \bF^{\gamma}$. If  $(\K,\r)$ is  triangular, so is  $(\KF,\rF)$.
Also, if $\K$ is a quasitriangular Hopf algebra, then so is $\KF$ with twisted antipode $S_\F$.
\end{prop}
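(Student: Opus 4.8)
The plan is to verify directly that $\rF := \F_{21}\,\r\,\bF$ satisfies the three defining properties of a universal $R$-matrix (Definition \ref{def:qt}) relative to the twisted coproduct $\Delta_\F(k)=\F\Delta(k)\bF$. First I would check invertibility: the candidate inverse $\brF = \F\,\br\,\bF_{21}$ multiplies against $\rF$ to give $\F_{21}\r\bF\F\br\bF_{21} = \F_{21}\r\br\bF_{21} = \F_{21}\bF_{21} = 1\ot 1$ using $\F\bF = 1\ot 1$ and $\r\br = 1\ot 1$; the other order is analogous. This step is essentially free.

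Next, quasi-cocommutativity: I must show $\Delta_\F^{cop}(k) = \rF\,\Delta_\F(k)\,\brF$ for all $k\in\K$. Writing out the right-hand side, $\F_{21}\r\bF\,(\F\Delta(k)\bF)\,\F\br\bF_{21} = \F_{21}\r\,\Delta(k)\,\br\bF_{21} = \F_{21}\,\Delta^{cop}(k)\,\bF_{21}$, where I used $\bF\F = 1\ot 1$, then the original quasi-cocommutativity $\r\Delta(k)\br = \Delta^{cop}(k)$, and again $\F\bF = 1\ot 1$. Finally $\F_{21}\Delta^{cop}(k)\bF_{21} = \tau\bigl(\F\Delta(k)\bF\bigr) = \tau(\Delta_\F(k)) = \Delta_\F^{cop}(k)$, since conjugating by $\F_{21}$ is the flip of conjugation by $\F$. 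So this property reduces cleanly to the untwisted one.

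The substantive point is the hexagon identities $(\Delta_\F\ot\id)\rF = (\rF)_{13}(\rF)_{23}$ and $(\id\ot\Delta_\F)\rF = (\rF)_{13}(\rF)_{12}$. Here I would expand both sides in $\K^{\ot 3}$, inserting the definitions $\Delta_\F = \F\Delta\bF$ and $\rF = \F_{21}\r\bF$, and repeatedly use the twist condition in both forms \eqref{twist}, \eqref{btwist} together with the original hexagon identities \eqref{iii} for $\r$. This is the standard computation showing that twisting preserves quasitriangularity, but it is genuinely the main obstacle: one has to shuffle several copies of $\F$, $\bF$ and their coproduct-images past each other using \eqref{twist} and \eqref{btwist} in just the right pattern, and it is easy to misplace a leg. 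A cleaner route, which I would prefer, is to argue categorically: by Proposition \ref{pro:funct} the functor $\Gamma$ with the natural isomorphisms $\varphi$ gives a monoidal equivalence $(\KM,\ot)\simeq(\KFM,\otF)$, and one checks that $\varphi$ intertwines the braiding $\Psi^{\r}$ on $\KM$ with the map on $\KFM$ built from $\rF$; since $\Psi^{\r}$ makes $\KM$ braided and braided structures transport along monoidal equivalences, $\KFM$ is braided, which is equivalent to $(\KF,\rF)$ being quasitriangular. Concretely this amounts to verifying $\varphi_{W,V}\circ\Psi^{\r}_{V,W} = \Psi^{\rF}_{V_\F,W_\F}\circ\varphi_{V,W}$, a single identity of linear maps that unwinds to exactly the formula $\rF = \F_{21}\r\bF$.

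For the remaining assertions: if $(\K,\r)$ is triangular, then $\br = \r_{21}$, and I would compute $\brF = \F\br\bF_{21} = \F\r_{21}\bF_{21} = (\F_{21}\r\bF)_{21} = (\rF)_{21}$, using that $(\,\cdot\,)_{21} = \tau(\cdot)$ is an anti-automorphism reversing the order of a product of two-leg elements; hence $(\KF,\rF)$ is triangular. Finally, that $\KF$ is a Hopf algebra with antipode $S_\F(k) = \uf S(k)\buf$ is already established in Proposition \ref{prop:twist}, so nothing further is needed there; one only remarks that $S_\F$ is automatically invertible since a quasitriangular Hopf algebra has invertible antipode (by the formula \eqref{Sinv} applied to $\KF$).
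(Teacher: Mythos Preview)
Your proposal is correct. The paper itself gives no proof of this proposition: it is stated as a standard result of Drinfeld twist theory (the section opens by citing \cite{drin1,drin2,drin}), so there is nothing to compare against. Your direct verifications of invertibility and of $\Delta_\F^{cop}(k)=\rF\,\Delta_\F(k)\,\brF$ are clean and complete, and both routes you offer for the hexagon identities---the direct $K^{\ot 3}$ computation and the categorical transport via the monoidal equivalence $(\Gamma,\varphi)$---are valid.

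Two small corrections. In the categorical argument, the intertwining identity should read
\[
\Gamma(\Psi^{\r}_{V,W})\circ\varphi_{V,W}=\varphi_{W,V}\circ\Psi^{\rF}_{V_\F,W_\F}
\]
as maps $V_\F\otF W_\F\to (W\ot V)_\F$; your version has the $\varphi$'s on the wrong sides of the braidings (a typo, since you then unwind it correctly to $\rF=\F_{21}\r\bF$). Second, in the triangularity step you call $(\,\cdot\,)_{21}=\tau$ an anti-automorphism of $K\ot K$; it is in fact an algebra \emph{automorphism} (the multiplication in $K\ot K$ is componentwise), and that is exactly what your chain $\F\r_{21}\bF_{21}=\tau(\F_{21})\tau(\r)\tau(\bF)=\tau(\F_{21}\r\bF)=(\rF)_{21}$ uses. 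The conclusion is unaffected.
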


The category $\KFM$ of left $\KF$-modules  is a braided monoidal category with braiding
 \beq\label{braidingF}
\Psi^{\rF}_{V,W}: V \ot W  \longrightarrow W \ot V ~, \quad v\ot w \longmapsto  
(\rF_{\alpha} \trl_W w) \ot (\rF^{\alpha} \trl_V v)~,
\eeq  
for any pair of left $\KF$-modules $(V, \trl_V), (W, \trl_W)$.
\\

\begin{rem}
The matrix $\rF$ has been used in the Proposition \ref{main-lem}. 
By inspection one sees that the identity \eqref{lhs=rhs-m} can be written as:
\beq
1 \ot \Psi^{\rF}_{\KF, \KF}  \ot 1 = (\F \ot \F) [(\Delta \ot \Delta)\F][(1 \ot \Psi^{\r}_{\K,\K}  \ot 1)(\Delta \ot \Delta)\bF] (\bF \ot \bF) \, .
\eeq
Here $\K$ (respectively $\KF$) is seen as a left $\K$-module (respectively left $\KF$-module).  
\end{rem}

For quasitriangular bialgebras we compare the monoidal categories of module algebras $(\KA, \bot)$ and $(\KFA, \bot_\F)$. Proposition \ref{main-lem} and Proposition \ref{pro:funct} imply they are equivalent.

\begin{prop}\label{prop:funct-alg} 
Let $(\K,\r)$ be a quasitriangular bialgebra and $\F\in \K \ot \K$ a twist. 
There is an equivalence of monoidal categories between $(\KA, \bot)$ and $(\KFA, \bot_\F)$ 
given by the functor $\Gamma : \KA \to \KFA$ {in \eqref{functGammaA}} and the isomorphisms in $\KFA$
\begin{align} \label{nt-alg-br}
\varphi_{A,C}: A_\F \bot_\F C_\F \longrightarrow  (A \bot C)_\F  ~,
\quad
a \bot_\F c \longmapsto   (\bF^{\alpha} \trl_{A_\F} a)\bot (\bF_{\alpha} \trl_{C_\F} c)  ~,\nn
\end{align}
with 
$A_\F \bot_\F C_\F$  the braided tensor product of the algebras $A_\F$ and $ C_\F$, and 
$(A \bot C)_\F$ the image via $\Gamma$ of the braided tensor product of the algebras $A$ and $C$. 
\end{prop}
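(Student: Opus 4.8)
\emph{Proof proposal.} The plan is to bootstrap from the monoidal equivalence of module categories already established in Proposition~\ref{pro:funct}, upgrading it from $\KM$ to $\KA$. Recall from \eqref{functGammaA} that $\Gamma$ sends a $\K$-module algebra $A$ to the $\KF$-module $A_\F$ equipped with the deformed product $\dotF$ of \eqref{rmod-twist}, and acts as the identity on morphisms; hence $\Gamma(A\bot C)=(A\bot C)_\F$ is the $\KF$-module $A\ot C$ carrying the product obtained from the braided tensor product $\tpr$ of \eqref{tpr} by the same deformation procedure \eqref{rmod-twist}. The monoidal units of $(\KA,\bot)$ and $(\KFA,\bot_\F)$ are $\bbK$, with $\Gamma(\bbK)=\bbK$, and the associativity and unit constraints of both monoidal categories are the standard flip-free ones on the underlying tensor products of $\bbK$-modules; consequently the coherence axioms required of $(\Gamma,\varphi)$ as a monoidal functor $(\KA,\bot)\to(\KFA,\bot_\F)$ are equalities of underlying module maps and hence already follow from Proposition~\ref{pro:funct} --- in particular the hexagon identity is \eqref{prop-ass-phi}. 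Naturality and invertibility of $\varphi_{A,C}$ are inherited from the module level as well, since morphisms of ($\KF$-)module algebras are in particular ($\KF$-)module maps. Thus the whole statement reduces to a single claim: each $\varphi_{A,C}$ of \eqref{nt-alg-br}, which is already an isomorphism of $\KF$-modules, is moreover an \textbf{algebra} map from the braided tensor product algebra $A_\F\bot_\F C_\F$ --- formed inside the quasitriangular Hopf algebra $(\KF,\rF)$, hence built with the braiding $\Psi^{\rF}$ of \eqref{braidingF} --- to $(A\bot C)_\F$.

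To establish this claim I would argue as follows. Unitality is immediate: as $\bF$ is counital and $A,C$ are module algebras, $\varphi_{A,C}(1_A\bot_\F 1_C)=(\bF^\alpha\trl 1_A)\bot(\bF_\alpha\trl 1_C)=1_A\ot 1_C$, the unit of $(A\bot C)_\F$. For multiplicativity I would expand both sides of
\[
\varphi_{A,C}\big((a\bot_\F c)\,\tpr\,(a'\bot_\F c')\big)
=\varphi_{A,C}(a\bot_\F c)\,\dotF\,\varphi_{A,C}(a'\bot_\F c')
\]
on a general element $a\ot c\ot a'\ot c'$: on the left using formula \eqref{tpr} read off for $(\KF,\rF)$ together with the deformed products $\dotF$ on $A_\F$ and $C_\F$, on the right using the definition of $\varphi_{A,C}$, the deformed product $\dotF$ of $(A\bot C)_\F$ (the deformation of $\tpr$), and the tensor-product $\K$-action \eqref{trlVW}. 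After moving every $\K$-action so that it acts directly on the four entries $a,c,a',c'$, the desired equality becomes an identity between two elements of $\K^{\ot 4}$ applied to $a\ot c\ot a'\ot c'$, and this identity is precisely \eqref{lhs=rhs-m} of Proposition~\ref{main-lem}, in the equivalent operator form recorded in the Remark following Proposition~\ref{prop:KFRF}:
\[
1 \ot \Psi^{\rF}_{\KF, \KF}  \ot 1 = (\F \ot \F)\,[(\Delta \ot \Delta)\F]\,[(1 \ot \Psi^{\r}_{\K,\K}  \ot 1)(\Delta \ot \Delta)\bF]\, (\bF \ot \bF)\, .
\]
The left-hand side is exactly the contribution of $\bot_\F$ (the $\rF$-braiding inserted on the middle two legs), while the right-hand side is exactly what ``conjugate the inputs by $\bF$, apply the $\r$-braiding entering $\tpr$, then reconstitute via $\Gamma$'' produces --- that is, what one reads off from $\varphi_{A,C}(-)\dotF\varphi_{A,C}(-)$. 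Hence $\varphi_{A,C}$ is multiplicative.

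I expect the only real work to be bookkeeping: tracking which copy of $\F$ or $\bF$, and which Sweedler leg of which coproduct, lands on which of the four tensor factors, so as to recognise the resulting $\K^{\ot 4}$ relation as \eqref{lhs=rhs-m}. No new conceptual input beyond Proposition~\ref{main-lem} and Proposition~\ref{pro:funct} is needed; once multiplicativity of $\varphi_{A,C}$ is in place, $(\Gamma,\varphi)$ is the asserted monoidal equivalence $(\KA,\bot)\simeq(\KFA,\bot_\F)$.
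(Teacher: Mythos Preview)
Your proposal is correct and follows essentially the same approach as the paper: both reduce the statement to showing that $\varphi_{A,C}$ (or equivalently $\varphi_{A,C}^{-1}$, which is what the paper checks) is an algebra map, and both obtain this from the $\K^{\ot 4}$ identity of Proposition~\ref{main-lem}. The paper carries out the explicit component computation on $\varphi^{-1}_{A,C}$ and matches it to \eqref{lhs=rhs}, whereas you invoke the operator reformulation from the Remark after Proposition~\ref{prop:KFRF}; these are the same argument.
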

\begin{proof} 
This is really a corollary of Proposition \eqref{main-lem}.
We need to show that the
isomorphisms $\varphi_{A,C}$ in $\KFM$, or equivalently their inverses
$$
\varphi_{A,C}^{-1}:   (A \bot C)_\F  \longrightarrow A_\F \bot_\F C_\F 
\quad
a \bot c \longmapsto   (\F^{\alpha} \trl_{A_\F} a)\bot_\F (\F_{\alpha} \trl_{C_\F} c)  ~,\nn
$$
  are algebra maps:
\beq\label{phi-am} m_{A_\F \bot_\F C_\F} \circ (\varphi_{A,C}^{-1} \ot \varphi_{A,C}^{-1})= \varphi_{A,C}^{-1} \circ  \,m_{(A \bot C)_\F} \; .
\eeq
To lighten notations, we omit the subscripts in the actions.
The product $m_{A_\F \bot_\F C_\F}$ in the l.h.s. is the product in the braided
 tensor product algebra $A_\F \bot_\F C_\F$, defined as in \eqref{tpr}, but now with respect to the $\r$-matrix  $\rF$: 
\begin{align*}
m_{A_\F \bot_\F C_\F}\left( (a \bot_\F c) \ot (a' \bot_\F c')\right)
&= a\dotF (\rF_\alpha \trl a') \bot_\F (\rF^\alpha \trl c) \dotF c'\, ,
\\
&= (\bF^{\beta} \trl a)(\bF_{\beta} \rF_\alpha \trl a') \bot_\F  (\bF^{\gamma}  \rF^\alpha \trl c)  (\bF_{\gamma} \trl c')\, .
\end{align*}
The product $\,m_{(A \bot C)_\F}$ in the r.h.s. is the deformation  (as in \eqref{rmod-twist}) of the product $\tpr$ in the tensor
product algebra $A \bot C$ in  \eqref{tpr} :
\begin{align*}
\,m_{(A \bot C)_\F} \left( (a \bot c) \ot (a' \bot c')\right)
&=
\big( \bF^{\alpha} \trl(a \bot c) \big)\tpr \big( \bF_{\alpha} \trl(a' \bot c') \big)
\\
&=\big( (\one{\bF^{\alpha}} \trl a) \bot (\two{\bF^{\alpha}} \trl c) \big)\tpr \big( 
(\one{\bF_{\alpha}} \trl a') \bot (\two{\bF_{\alpha}} \trl c') \big)
\\
&= (\one{\bF^{\alpha}} \trl a)(\r_\beta \one{\bF_{\alpha}} \trl a') \bot (\r^\beta\two{\bF^{\alpha}} \trl c) 
 (\two{\bF_{\alpha}} \trl c') \, .
\end{align*}
Then, the l.h.s. of \eqref{phi-am} on the generic element $(a \bot a') \ot (c \bot c')$   gives
\begin{multline*}
m_{A_\F \bot_\F C_\F}  (\varphi_{A,C}^{-1} (a \bot c) \ot \varphi_{A,C}^{-1}(a' \bot c')) \\
=
 (\bF^{\beta} \F^{\gamma}\trl a)(\bF_{\beta} \rF_\alpha \F^{\delta} \trl a') \bot_\F  (\bF^{\mu}  \rF^\alpha \F_{\gamma} \trl c)  (\bF_{\mu}\F_{\delta} \trl c') \,.
\end{multline*}
Similarly, the r.h.s. of equation \eqref{phi-am}  gives
\begin{multline*}
\varphi_{A,C}^{-1} \circ  \,m_{(A \bot C)_\F} ((a \bot a') \ot (c \bot c')) \\
=(\one{\F^{\gamma}} \one{\bF^{\alpha}} \trl a)(\two{\F^{\gamma}} \r_\beta \one{\bF_{\alpha}} \trl a') \bot_\F (\one{\F_{\gamma}}\r^\beta\two{\bF^{\alpha}} \trl c) 
 (\two{\F_{\gamma}}\two{\bF_{\alpha}} \trl c') \; .
\end{multline*}
 Equation \eqref{phi-am}  then follows from \eqref{lhs=rhs} in Proposition \ref{main-lem}.
\end{proof}

\subsection{Braided bialgebras}\label{sec:TBB}  
Next consider a braided bialgebra $(L, m_L, \eta_L, \Delta_L,\cun_L, \trl_L)$, 
associated with the quasitriangular  bialgebra $(\K,\r)$.

On the one hand, $(L, m_L, \eta_L, \trl_L)$ is a $\K$-module 
algebra that we can deform using the functor in \eqref{functGammaA} into the 
$\KF$-module algebra $({L_\F}, m_{L_\F}, \eta_{L_\F}, \trl_{L_\F})$; this differs from $L$ only for the product given by
$\xi \dotF \xi'= (\bF^{\alpha} \trl \xi)\, (\bF_{\alpha} \trl \xi')$, for all $\xi,\xi' \in L_\F$, as in \eqref{rmod-twist}.

On the other hand, $(L, \Delta_L,\cun_L, \trl_L)$ is a 
$\K$-module 
coalgebra 
 that we  deform using the functor in \eqref{functGammaC} so to obtain the 
$\KF$-module coalgebra $({L_\F}, \Delta_{L_\F}, \cun_{L_\F}, \trl_{L_\F})$  with twisted coproduct
$
 \Delta_{L_\F} (\xi)=  \fone{\xi} \ot \ftwo{\xi}:=
(\F^{\alpha} \trl \one{\xi})  \ot (\F_\alpha \trl \two{\xi}) $
for each $\xi \in L_\F$,
as in \eqref{cc-twist}. 

We have the following result (cf. \cite[Prop. 4.7]{pgc} for a dual result):
\begin{prop}\label{prop:td-bb}
The twist deformation  of  $(L, m_L, \eta_L, \trl_L)$ 
as a $\K$-module algebra and of  
$(L, \Delta_L,\cun_L, \trl_L)$  as a $\K$-module coalgebra
is a braided bialgebra 
associated with the twisted Hopf algebra $\KF$. That is, $({L_\F}, m_{L_\F}, \eta_{L_\F}, \Delta_{L_\F},\cun_{L_\F}, \trl_{L_\F})$ is a bialgebra in the  braided monoidal category $(\KFM, \otF,
\Psi^{\rF})$ of $\KF$-modules.
If $L$ is a braided Hopf algebra, then $L_\F$ is a braided Hopf algebra, with
antipode $S_{L_\F}=\Gamma(S_L)$ (equal to $S_L$ as a linear map).
\end{prop}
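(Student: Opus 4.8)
The plan is to verify that $L_\F$ satisfies the two defining conditions \eqref{cop-braid} for a braided bialgebra associated with $(\KF,\rF)$, namely that $\cun_{L_\F}$ and $\Delta_{L_\F}$ are algebra maps with respect to the deformed product $m_{L_\F}$ and the braided tensor product product $m_{L_\F\bot_\F L_\F}$. The cleanest route is categorical rather than computational: everything is already packaged in the monoidal equivalence of Proposition~\ref{prop:funct-alg} between $(\KA,\bot)$ and $(\KFA,\bot_\F)$, given by $\Gamma$ together with the algebra isomorphisms $\varphi_{A,C}$. First I would recall that $\cun_L : L \to \bbK$ and $\Delta_L : L \to L\bot L$ are algebra maps in $\KA$ by hypothesis. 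Applying the functor $\Gamma : \KA\to\KFA$ (which sends algebra maps to algebra maps) gives that $\Gamma(\cun_L)=\cun_{L_\F} : L_\F\to\bbK$ is an algebra map, and $\Gamma(\Delta_L) : L_\F \to (L\bot L)_\F$ is an algebra map. Then I would precompose with the algebra isomorphism $\varphi_{L,L}^{-1} : (L\bot L)_\F \to L_\F\bot_\F L_\F$ from Proposition~\ref{prop:funct-alg}; by definition $\Delta_{L_\F}=\varphi_{L,L}^{-1}\circ\Gamma(\Delta_L)$, so $\Delta_{L_\F}$ is a composite of algebra maps, hence an algebra map $L_\F\to L_\F\bot_\F L_\F$. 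This is exactly the second condition in \eqref{cop-braid} for $\KF$.

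**Coassociativity, counitality and the antipode.** Next I would check that $\Delta_{L_\F}$ is coassociative and counital, i.e. that $L_\F$ is genuinely a coalgebra object (not merely that $\Delta_{L_\F}$ is multiplicative). This is precisely the content of the functor $\Gamma : \KC\to\KFC$ in \eqref{functGammaC}, under which coalgebras go to coalgebras; the compatibility of the two uses of $\varphi$ (for the algebra and coalgebra structures) is guaranteed by the coherence relation \eqref{prop-ass-phi} satisfied by the $\varphi_{-,-}$. So $(L_\F,\Delta_{L_\F},\cun_{L_\F})$ is a $\KF$-module coalgebra, and combined with the previous paragraph, $(L_\F, m_{L_\F}, \eta_{L_\F}, \Delta_{L_\F}, \cun_{L_\F}, \trl_{L_\F})$ is a bialgebra in $(\KFM,\otF,\Psi^{\rF})$. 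For the Hopf case, I would take $S_{L_\F}:=\Gamma(S_L)$, which equals $S_L$ as a $\bbK$-linear map and is automatically a $\KF$-module morphism (the $\KF$-action coincides with the $\K$-action). The braided antipode axiom \eqref{bantipode} for $L_\F$ — that $S_{L_\F}$ is the convolution inverse of $\id_{L_\F}$ with respect to $m_{L_\F}$ and $\Delta_{L_\F}$ — should again follow by transporting the identity \eqref{bantipode} for $L$ through $\Gamma$: the convolution product is built from $m_L$, $\Delta_L$ and composition, all of which $\Gamma$ intertwines appropriately with $m_{L_\F}$, $\Delta_{L_\F}$ via $\varphi_{L,L}$.

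**Main obstacle.** The subtle point, and the one I would spell out carefully, is the correct identification of the braided tensor product: I must confirm that $\varphi_{L,L}$ really is an isomorphism of \emph{braided} tensor product algebras, i.e. that it matches $m_{L_\F\bot_\F L_\F}$ (defined with the \emph{new} $\rF$-braiding $\Psi^{\rF}_{L_\F,L_\F}$) with $\Gamma(m_{L\bot L})$ (the $\Gamma$-image of the \emph{old} $\r$-braided product). This is exactly where Proposition~\ref{main-lem} — equivalently the identity $1\ot\Psi^{\rF}_{\KF,\KF}\ot 1 = (\F\ot\F)[(\Delta\ot\Delta)\F][(1\ot\Psi^{\r}_{\K,\K}\ot 1)(\Delta\ot\Delta)\bF](\bF\ot\bF)$ from the Remark — does the work, and it is the genuine computational heart hidden behind the otherwise formal argument. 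Once that compatibility is in hand, the rest is bookkeeping: the braided bialgebra axioms for $L_\F$ are the $\Gamma$-images of those for $L$, and naturality of $\varphi$ together with \eqref{prop-ass-phi} ensures all the diagrams transport coherently. I would therefore present the proof as: (i) invoke $\Gamma$ on $\KA$, $\KC$ to get the (co)algebra structures; (ii) invoke Proposition~\ref{prop:funct-alg} (resting on Proposition~\ref{main-lem}) to identify $L_\F\bot_\F L_\F$ with $(L\bot L)_\F$ as algebras; (iii) conclude that $\Delta_{L_\F}$, $\cun_{L_\F}$ are algebra maps and $S_{L_\F}=\Gamma(S_L)$ is a braided antipode, by functoriality.
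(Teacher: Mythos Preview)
Your proposal is correct and follows essentially the same approach as the paper: both arguments reduce the multiplicativity of $\Delta_{L_\F}$ to the key identity of Proposition~\ref{main-lem}. The only difference is packaging---you invoke the monoidal equivalence of Proposition~\ref{prop:funct-alg} (which was itself proved from Proposition~\ref{main-lem}) to conclude that $\Delta_{L_\F}=\varphi_{L,L}^{-1}\circ\Gamma(\Delta_L)$ is a composite of algebra maps, whereas the paper unfolds this same computation explicitly, writing out both sides of $\Delta_{L_\F}\circ m_{L_\F}=m_{L_\F\bot_\F L_\F}\circ(\Delta_{L_\F}\otF\Delta_{L_\F})$ on elements and matching them via \eqref{lhs=rhs}; the antipode check is likewise done by a short direct computation in the paper rather than by functorial transport.
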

\begin{proof}
We have to show that $\Delta_{L_\F}:L_\F \to L_\F\bot_\F L_\F$ and  $\cun_{L_\F}: L_\F \to \bbK$ 
are  algebra maps:
\beq
\cun_{L_\F}\circ m_{L_\F}  = m_\bbK \circ (\cun_{L_\F} \ot \cun_{L_\F}) \, , \quad
\Delta_{L_\F}\circ m_{L_\F}  = m_{{L_\F} \bot_\F {L_\F}} \circ (\Delta_{L_\F}\otF \Delta_{L_\F}) \, ,
\eeq
for the product in $L_\F$, 
$m_{L_\F} ( \xi \ot \zeta)=  (\bF^{\alpha} \trl \xi)\, (\bF_{\alpha} \trl \zeta) $, with $\xi,\zeta \in L_\F$
 and 
the product $m_{{L_\F} \bot_\F {L_\F}} = (m_{L_\F} \otF m_{L_\F})  \circ (\id_{L_\F}\otF \Psi^{\rF}_{{L_\F},{L_\F}} \otF \id_{L_\F})$ in the braided tensor product algebra ${L_\F}\bot_\F {L_\F}$. This latter is given from \eqref{tpr} by
$$
m_{L_\F \bot_\F L_\F}\left( (\xi \bot_\F \zeta) \ot (\xi' \bot_\F \zeta')\right)
= (\bF^{\beta} \trl \xi)(\bF_{\beta} \rF_\alpha \trl \xi') \bot_\F  (\bF^{\gamma}  \rF^\alpha \trl \zeta)  (\bF_{\gamma} \trl \zeta')\, .
$$
The identity involving the counit follows from the algebra map property of $\cun_{L}$, by using  the module coalgebra axiom \eqref{mod-co} and the fact that $\F$ is unital.

For the second identity, with coproduct $\Delta_{L_\F}: \xi \mapsto \fone{\xi} \ot \ftwo{\xi}:=
(\F^{\alpha} \trl \one{\xi})  \ot (\F_\alpha \trl \two{\xi}) $,   
we compute  the l.h.s. by using  the module coalgebra axiom \eqref{mod-co} and the property of the coproduct $\Delta_L$ to be a braided algebra map: on the generic element $\xi \ot \zeta$ one has
\begin{align*}
l.h.s. &=
\Delta_{L_\F}\circ m_{L_\F}  (\xi \ot \zeta) 
\\
&=
 \Delta_{L_\F} \left( (\bF^{\alpha} \trl \xi)\, (\bF_{\alpha} \trl \zeta)  \right)
\\
&=
\F^\gamma \trl \big( \one{(\bF^{\alpha} \trl \xi)} \r_\beta \trl \one{( \bF_{\alpha} \trl \zeta) } \big)
\ot
\F_\gamma \trl \big( \r^\beta \trl  \two{(\bF^{\alpha} \trl \xi)}\two{(\bF_{\alpha} \trl \zeta) } \big)
\\
&=
\F^\gamma \trl \big( (\one{\bF^{\alpha}} \trl \one{\xi}) (\r_\beta \one{\bF_{\alpha}} \trl \one{\zeta }) \big)
\ot
\F_\gamma \trl \big( (\r^\beta \two{\bF^{\alpha}} \trl\two{ \xi}(\two{\bF_{\alpha}} \trl \two{\zeta }) \big)
\\
&=
(\one{\F^\gamma} \one{\bF^{\alpha}} \trl \one{\xi}) (\two{\F^\gamma}\r_\beta \one{\bF_{\alpha}} \trl \one{\zeta }) 
\ot
(\one{\F_\gamma}\r^\beta\two{\bF^{\alpha}}) \trl\two{ \xi}(\two{\F_\gamma}\two{\bF_{\alpha}}) \trl \two{\zeta } \; .
\end{align*}
Similarly, we compute the r.h.s.:
\begin{align*}
r.h.s.&= m_{{L_\F} \bot_\F {L_\F}} \circ (\Delta_{L_\F}\otF \Delta_{L_\F})(\xi \ot \zeta)
\\
&=
(\bF^{\beta} \trl \fone{\xi})(\bF_{\beta} \rF_\alpha \trl \fone{\zeta}) \bot_\F  (\bF^{\gamma}  \rF^\alpha \trl \ftwo{\xi})  (\bF_{\gamma} \trl \ftwo{\zeta})
\\
&=
(\bF^{\beta} \F^\gamma \trl \one{\xi})(\bF_{\beta} \rF_\alpha  \F^\delta \trl \one{\zeta}) \bot_\F  (\bF^{\mu}  \rF^\alpha  \F_\gamma \trl \two{\xi})  (\bF_{\mu} \F_\delta\trl \two{\zeta})\; .
\end{align*}
The two expressions coincide from \eqref{lhs=rhs} in Proposition \ref{main-lem}.

If $L$ is a Hopf algebra with antipode $S_L$, which is a $\K$-module map,   
its image under $\Gamma$, $\xi\mapsto
S_{L_\F}(\xi)=S_L(\xi)$, is a $\K_\F$-module map. We have only to show that  $S_{L_\F}$ satisfies the antipode conditions \eqref{bantipode} for 
the twisted bialgebra $L_\F$.  For any element $\xi \in L_\F$ one has
\begin{align*}
m_{L_\F}  \circ (\id_{L_\F} \ot_\F S_{L_\F})\circ \Delta_{L_\F} (\xi)
& = \bF^\alpha \trl (\F^\beta \trl \one{\xi}) \ot \bF_\alpha \trl (S_L ( \F_\beta \trl \two{\xi}   ))
\\
& = (\bF^\alpha \F^\beta) \trl \one{\xi} \ot (\bF_\alpha  \F_\beta) \trl S_L(\two{\xi}   )
=   \cun_{L_\F} (\xi) 1_{L_\F} \; .
\end{align*}
Analogously one shows that 
$m_{L_\F}  \circ (S_{L_\F} \ot_\F \id_{L_\F} )\circ \Delta_{L_\F} =\eta_{L_\F}  \circ \cun_{L_\F}$.
\end{proof}

\begin{ex}\label{ex:braided-tw}
Let $
\tK :=(\K, m_\K, \eta_\K, \tDelta, \cun_\K, \tS, \tad)
$ be the braided Hopf algebra associated  with the quasitriangular Hopf algebra $(\K,\r)$, as  in Example \ref{ex:braided}.

If $\F$ is a twist for $\K$, the data 
\beq
\tK_\F :=(\tK, m_\KF, \eta_\KF, \tDelta_\F, \cun_\KF, \tS_\F, \tad)
\eeq  
is a braided Hopf algebra associated with $\KF$. From the construction above, $\tK_\F$ has the 
same unit as $\tK$ while its product is the deformed one as in \eqref{rmod-twist}:
$$
m_\KF (h \ot k ):= h \dotF k= (\bF^{\alpha} \trl  h)\, (\bF_{\alpha} \trl k)
$$

\noindent
Its coproduct is the deformed one as in \eqref{cc-twist}:
$$
\tDelta_\F(k):= \F^{\alpha} \tad (\one{k} S(\r_\beta))  \ot (\F_\alpha \r^\beta) \tad \two{k} \, .
$$
The left adjoint action of $\KF$ on $\tK_\F$ is the initial (unchanged) left adjoint action,
$$
 \tad : \KF \ot  \tK_\F \to \tK_\F\; , \quad h \ot k \mapsto h\tad k := \one{h} k S(\two{h}).
$$

\noindent
Finally, the antipode $\tS_\F$ of $\tK_\F$ coincides with that of $\tK$,
$
\tS_\F:  k \mapsto \r_\alpha S( \r^\alpha \tad k)
$, as a linear map.
\end{ex}

\begin{ex}\label{ex:braidedF}
Rather than first considering the braided Hopf algebra $\tK$ and then
twisting it to $\tK_\F$, one could start with the quasitriangular 
Hopf algebra $\KF$  and associate to it the braided Hopf algebra 
$$
\underline{\KF} :=(\KF, m_\KF, \eta_\KF, \underline{\Delta_\F}, \cun_\KF, \underline{S_\F}, \tadF)~,
$$
again constructed as in Example \ref{ex:braided}. 
This consists of the algebra $(\KF, m_\KF, \eta_\KF)$ endowed  with the same counit as $\KF$ and with the braided coproduct
\beq
\underline{\Delta_\F} (k):= \fone{k} S_\F (\rF_\alpha) \ot (\rF^\alpha \tadF \ftwo{k}) \, ,
\eeq
where $\Delta_\F(k)=\fone{k}  \ot \ftwo{k} =\F^{\alpha} \one{k} \bF^{\beta} \ot \F_\alpha \two{k} \bF_{\beta}$ is the coproduct \eqref{cop-twist} of $\KF$. Here the left adjoint action is
\beq
\tadF: \KF \ot \underline{\KF} \to \underline{\KF}\; , \quad 
h \ot k \mapsto h\tadF k := \fone{h} k S_F(\ftwo{h}),
\eeq
with 
$S_\F$ the antipode of $\KF$ : $S_\F(k)=\uf S(k) \buf = \F^{\alpha} S(\F_\alpha) S(k)  S(\bF^{\beta})\bF_{\beta}$.
On elements, 
$$
h \ot k \mapsto h\tadF k 
:= \F^{\alpha} \one{h} \bF^{\beta} \, k \,\F^{\gamma} S(\F_\gamma)
S(\F_\alpha \two{h} \bF_{\beta}) S(\bF^{\delta})\bF_{\delta} \, . 
$$
With these structures $\underline{\KF}$ is a $\KF$ module algebra and module coalgebra, moreover 
$\underline{\Delta_\F}$ 
is an algebra map (with respect to the braided tensor product). The antipode is given by
\beq
\underline{S_\F}: \underline{\KF} \to \underline{\KF} \; , \quad  k \mapsto \underline{S_\F}(k):= \rF_\alpha S_\F( \rF^\alpha \tadF k)
\eeq
and is a left $\KF$-module morphism.
\end{ex}

The following Proposition is dual to Theorem 4.9 in \cite{pgc}; its
proof is  in Appendix \ref{app:DKF}.
\begin{prop} \label{DKF}
The two braided Hopf algebras $\tK_\F$ and $\underline{\KF}$ associated with $\KF$ are isomorphic via the map (cf. \eqref{mappaD}) 
\beq\label{mapD}
\dd :\tK_\F \to \underline{\KF} \; , \quad k \mapsto 
\dd(k):=(\bF^\alpha \tad k)\bF_\alpha 
= \F^\alpha k S(\F_\alpha) \buf
\eeq
with inverse
\beq\label{mapDinv}
\dd^{-1}: \underline{\KF} \to \tK_\F   \; , \quad k \mapsto 
\dd^{-1}(k)=(\F^\alpha \tadF k)\F_\alpha 
= \bF^\alpha k S_\F(\bF_\alpha) \uf~.
\eeq
\end{prop}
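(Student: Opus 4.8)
The plan is to verify directly that $\dd$ and $\dd^{-1}$ as defined in \eqref{mapD}--\eqref{mapDinv} are mutually inverse $\bbK$-linear maps, and then to check that $\dd$ intertwines all the braided Hopf algebra structures: the products, the units, the counits, the coproducts, the antipodes and the $\KF$-actions. Since both $\tK_\F$ and $\underline{\KF}$ are built on the same underlying vector space $\K$, and $\dd$ is exactly the map $\dd$ of Proposition \ref{prop:Diso} specialized to the relevant module, several of these verifications can be imported wholesale rather than recomputed. In particular, $\tK_\F$ is (by Example \ref{ex:braided-tw}) the twist deformation of the $\K$-module algebra $\tK$, hence carries the product $\dotF$; and $\underline{\KF}$ carries the undeformed product $m_\KF$ of the quasitriangular Hopf algebra $\KF$. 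These are precisely the two module-algebra structures $(\Hom_\F(V,V),\circ_\F)$ and $(\Hom(V_\F,V_\F),\circ)$ of Proposition \ref{prop:Diso} in the special case where $V=\K$ and the action is the (adjoint) action making $\tK$ a braided Hopf algebra, so the algebra-morphism statement $\dd(h\dotF k)=\dd(h)\,\dd(k)$ and the $\KF$-equivariance of $\dd$ both follow from Proposition \ref{prop:Diso}; one only needs to observe that under the relevant identifications the formula \eqref{mapD} reduces to \eqref{mappaD}. Unitality $\dd(1_\K)=1_\K$ is immediate from $(\cun\ot\id)\F=1=(\id\ot\cun)\F$ together with $\bur$ (resp. $\buf$) being unital on $1$.

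Next I would treat the coalgebra side, which is the substantive part. One must show $\underline{\Delta_\F}\circ\dd=(\dd\bot_\F\dd)\circ\tDelta_\F$ and $\cun_\KF\circ\dd=\cun_\K$. The counit identity is a short computation using $(\cun\ot\id)\F=1$ and the fact that $\cun$ kills the extra $\br,\buf$-type factors. For the coproduct one substitutes the explicit expressions: $\tDelta_\F$ from Example \ref{ex:braided-tw}, $\underline{\Delta_\F}$ from Example \ref{ex:braidedF}, and $\dd$ from \eqref{mapD}, and reduces the claimed equality to an identity in $\K^{\ot 4}$ built from $\F$, $\bF$, $\r$ and the antipode. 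The natural way to organize this is to use that $\dd$ is, by construction, the natural isomorphism $\varphi$-type conjugation relating the two monoidal structures, so that the coproduct compatibility is essentially Proposition \ref{main-lem} (the identity \eqref{lhs=rhs-m}) together with Proposition \ref{prop:funct-alg}: indeed $\tK_\F$ has braided coproduct obtained by the functor $\Gamma$ applied to $\tDelta$, while $\underline{\KF}$ has the intrinsic braided coproduct of $\KF$, and Proposition \ref{main-lem} is exactly the statement that conjugation by $\dd$ (equivalently by $\varphi_{\K,\K}$) converts one into the other. Concretely I expect to rewrite $\underline{\Delta_\F}\circ \dd$ using \eqref{urm-def} for $\rF$, $S_\F(k)=\uf S(k)\buf$, and the twist conditions \eqref{twist-F}--\eqref{twist-bF}, and to massage it into $(\dd\ot\dd)$ applied to $\tDelta_\F$ with the braiding $\Psi^{\r}$ replaced by $\Psi^{\rF}$, which is precisely \eqref{lhs=rhs-m}.

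For the antipode I would check $\underline{S_\F}\circ\dd=\dd\circ\tS_\F$. Since $\tS_\F$ equals $\tS$ as a linear map and $\underline{S_\F}(k)=\rF_\alpha S_\F(\rF^\alpha\tadF k)$, the identity follows once the coalgebra and algebra compatibilities are established, by uniqueness of the convolution inverse of the identity in a braided Hopf algebra: one shows $\dd\circ\tS_\F$ satisfies the defining equations \eqref{bantipode} for $\underline{\KF}$ by transporting the corresponding equations for $\tK_\F$ through $\dd$, using that $\dd$ is an algebra and coalgebra isomorphism. Alternatively one can verify it directly from the two closed formulas, substituting $\buf$, $\uf$ and $\bur$, but invoking uniqueness is cleaner.

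The main obstacle is the coproduct identity: it is a genuine four-fold tensor computation and the bookkeeping of Sweedler indices for $\F$, $\bF$, $\r$ together with the antipode factors $S(\r_\beta)$, $\buf$, $\uf$ is where errors creep in. The key to keeping it manageable is to recognize that this identity is not new — it is the content of Proposition \ref{main-lem} (and the Remark following Proposition \ref{prop:KFRF}, which rephrases \eqref{lhs=rhs-m} in terms of the braidings $\Psi^{\r}$ and $\Psi^{\rF}$) — so the proof should reduce the claim to \eqref{lhs=rhs-m} rather than expanding everything by brute force. Everything else (mutual inverseness, unitality, counit, algebra morphism, equivariance, antipode) is either routine or a direct citation of Proposition \ref{prop:Diso}. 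This is why the detailed proof is deferred to Appendix \ref{app:DKF}: it is dual to Theorem 4.9 of \cite{pgc}, and the same strategy — transport of structure along $\dd$, with Proposition \ref{main-lem} supplying the one nontrivial identity — applies.
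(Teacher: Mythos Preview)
Your overall plan---verify bijectivity, then the algebra, $\KF$-module, coalgebra and antipode compatibilities---is the right shape, and your antipode argument via uniqueness of the convolution inverse is fine (the paper in fact omits the antipode check entirely). But two of your shortcuts do not work.

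First, Proposition~\ref{prop:Diso} is about the $\K_\F$-module algebras $\Hom_\F(V,V)$ and $\Hom(V_\F,V_\F)$, and its map $\dd$ sends an endomorphism $\psi$ to the endomorphism $v\mapsto (\bF^\alpha\trl\psi)(\bF_\alpha\trl v)$. The $\dd$ of \eqref{mapD} is a map $K\to K$, $k\mapsto (\bF^\alpha\tad k)\bF_\alpha$; these are different objects and there is no identification of $\tK_\F$ with $\Hom_\F(V,V)$ or of $\underline{\KF}$ with $\Hom(V_\F,V_\F)$ that makes the claim ``follows from Proposition~\ref{prop:Diso}'' literal. (The left-regular embedding $k\mapsto\ell_k$ is not equivariant for the adjoint action, so you cannot transport the statement that way.) The paper proves both the $\KF$-equivariance $\dd^{-1}(h\tadF k)=h\tad\dd^{-1}(k)$ and the algebra-map property $\dd(h\dotF k)=\dd(h)\dd(k)$ by short direct computations using only the twist condition; these are genuinely easy, but they are not citations of Proposition~\ref{prop:Diso}.

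Second, and more seriously, your reduction of the coproduct compatibility to Proposition~\ref{main-lem} is a misreading. Identity~\eqref{lhs=rhs-m} compares the braidings $\Psi^{\r}$ and $\Psi^{\rF}$ after conjugation by $(\Delta\ot\Delta)\F$ and $(\bF\ot\bF)$; it says nothing about the braided coproducts $\tDelta_\F$ and $\underline{\Delta_\F}$, which involve the additional antipode factors $S(\r_\alpha)$, $S_\F(\rF_\alpha)$, $\buf$, $\uf$. Nor is $\dd$ the same map as $\varphi_{\K,\K}$: the latter is $K\ot K\to K\ot K$, while $\dd$ is $K\to K$ and involves right multiplication by $\bF_\alpha$ rather than a tensor action. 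The paper's Appendix~\ref{app:DKF} establishes $(\dd^{-1}\ot\dd^{-1})\circ\underline{\Delta_\F}=\tDelta_\F\circ\dd^{-1}$ by a direct, fairly lengthy manipulation: one expands both sides using the explicit formulas for $\underline{\Delta_\F}$, $\tDelta_\F$, $\rF=\F_{21}\r\bF$, and $S_\F=\uf S(\cdot)\buf$, then repeatedly applies the twist conditions \eqref{twist-F}--\eqref{twist-bF} and quasitriangularity to cancel terms. Proposition~\ref{main-lem} is not invoked. So the ``one nontrivial identity'' you need is not \eqref{lhs=rhs-m}; it is the coalgebra computation itself, and you have to do it.
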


\subsection{Dual pairings of braided bialgebras and their twisting} \label{sec:dualpair}
\begin{defi}
A dual pairing of two braided
bialgebras
$(L, m_L, \eta_L, \Delta_L,\cun_L, \trl_L)$ and 
$(N, m_N, \eta_N, \Delta_N,\cun_N, \trl_N)$ associated with the quasitriangular  bialgebra $(\K,\r)$ is a bilinear map $\hsr{\cdot}{\cdot}  : L \otimes N \to \bbK$ such that
\begin{align}\label{hrs-cop}
\hsr{\xi}{xy} 
=  \hsr{\Delta_L{\xi}}{x \ot y} := \hsr{\one{\xi}}{\r_\alpha \trl_N x} \hsr{\r^\alpha \trl_L \two{\xi}}{y}
\nn
\\
\hsr{\xi \eta}{x} 
=  \hsr{\xi \ot \eta}{\Delta_N{x}} 
:= \hsr{\xi}{\r_\alpha \trl_N \one{x}} \hsr{\r^\alpha \trl_L \eta }{\two{x}}
\end{align}
and 
\beq\label{hrs-counit}
\hsr{\xi}{1_N}= \cun_L(\xi) \; , \quad \hsr{1_L}{x}= \cun_N(x)
\eeq
for all $\xi,~\eta \in L$ and $x,~y\in N$. Moreover $\hsr{\cdot}{\cdot}  : L \otimes N \to \bbK$ is a morphism of $\K$-modules: 
\beq\label{pair-mor}
\hsr{\one{k}\trl_L \xi}{\two{k}\trl_N x} = \cun(k)\hsr{\xi}{x}
\eeq
for each $k \in \K$, $\xi \in L$ and $x\in N$. 
We say that $L$ and $N$ are dually paired if they admit a dual pairing.
\end{defi}
\begin{lem}
When $K$ is a Hopf algebra, the condition for the pairing
$\hsr{\cdot}{\cdot}$ to be a morphism of $\K$-modules can 
be given equivalently as \beq\label{pair-mor-S}
\hsr{{k}\trl_L \xi}{ x} =\hsr{\xi}{S(k)\trl_N x} \, , \quad \forall ~\xi \in L\;, x \in N \; ,
\eeq
or, with $S^{-1}$ the inverse of the antipode of $K$, 
\beq\label{pair-mor-S2}
\hsr{\xi}{k\trl_N x} =\hsr{S^{-1}(k)\trl_L \xi}{ x} \, , \quad \forall ~ \xi \in L\;, x \in N \; .
\eeq
\end{lem}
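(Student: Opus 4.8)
The plan is to show that the condition \eqref{pair-mor} is equivalent to each of \eqref{pair-mor-S} and \eqref{pair-mor-S2} by a direct manipulation using only the Hopf algebra axioms for $K$ (the $R$-matrix plays no role here), together with the $K$-equivariance of the coproduct of $L$ (that $\Delta_L$ is a morphism of $\K$-modules, \eqref{mod-co}) and the counit property. The key input is that the pairing conditions \eqref{pair-mor}, \eqref{pair-mor-S}, \eqref{pair-mor-S2} are each $\bbK$-bilinear statements in $\xi$ and $x$, so it suffices to transform one into the other formally.

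First I would derive \eqref{pair-mor-S} from \eqref{pair-mor}. Starting from the left hand side of \eqref{pair-mor-S} and inserting $1_K = \one{k} S(\two{k})$ via the antipode axiom, one writes
\[
\hsr{k \trl_L \xi}{x} = \hsr{\one{k}\trl_L \xi}{\cun(\two{k}) x} = \hsr{\one{k}\trl_L \xi}{\one{\two{k}} S(\three{k}) \trl_N x}
= \hsr{\one{k}\trl_L \xi}{\two{k}\trl_N (S(\three{k})\trl_N x)} ,
\]
using coassociativity and the module property of $\trl_N$. Now apply \eqref{pair-mor} to the element $S(\three{k})\trl_N x \in N$ in place of $x$: since $S(\three{k})\trl_N x$ is just a linear combination of elements of $N$ and \eqref{pair-mor} holds for all $x \in N$, this gives $\cun(\one{k}\cdot\text{nothing})$ — more precisely \eqref{pair-mor} collapses $\hsr{\one{(\,)}\trl_L \xi}{\two{(\,)}\trl_N(\cdot)}$ to $\cun(\one{k})\hsr{\xi}{S(\two{k})\trl_N x}$, and then $\cun(\one{k})S(\two{k}) = S(k)$ by the counit axiom, yielding the right hand side of \eqref{pair-mor-S}. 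Conversely, to recover \eqref{pair-mor} from \eqref{pair-mor-S} one computes $\hsr{\one{k}\trl_L \xi}{\two{k}\trl_N x} = \hsr{\xi}{S(\one{k})\two{k}\trl_N x} = \hsr{\xi}{\cun(k)x} = \cun(k)\hsr{\xi}{x}$, again using the module property to merge $S(\one{k})\trl_N(\two{k}\trl_N x) = (S(\one{k})\two{k})\trl_N x$ and then the antipode axiom. This establishes \eqref{pair-mor} $\Leftrightarrow$ \eqref{pair-mor-S}.

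For \eqref{pair-mor-S2} I would argue symmetrically, now using that $S$ is invertible on $K$ (guaranteed by \eqref{Sinv} since $K$ is a quasitriangular Hopf algebra, though in fact one only needs $S$ bijective). From \eqref{pair-mor-S}, replace $k$ by $S^{-1}(k)$: this is legitimate since $S^{-1}$ is a linear bijection of $K$ and \eqref{pair-mor-S} holds for all $k \in K$. One gets $\hsr{S^{-1}(k)\trl_L \xi}{x} = \hsr{\xi}{S(S^{-1}(k))\trl_N x} = \hsr{\xi}{k\trl_N x}$, which is \eqref{pair-mor-S2}; the reverse implication replaces $k$ by $S(k)$ in \eqref{pair-mor-S2}. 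Since both \eqref{pair-mor-S} and \eqref{pair-mor-S2} have been shown equivalent to \eqref{pair-mor}, all three are equivalent.

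I do not anticipate a genuine obstacle here: the proof is a routine bookkeeping exercise in Sweedler notation, and the only subtlety worth flagging explicitly is that \eqref{pair-mor}, being valid for every $x\in N$, may be applied with $S(\two{k})\trl_N x$ (or any linear combination) substituted for $x$ — this is what lets one "fold in" the antipode. The role of $K$ being a Hopf algebra (rather than just a bialgebra) enters only through the existence of $S$ for the first equivalence, and through invertibility of $S$ for the formulation \eqref{pair-mor-S2}; the braiding/$R$-matrix structure is not used at all in this Lemma.
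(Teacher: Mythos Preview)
Your proof is correct and follows essentially the same route as the paper's own proof: both directions of the equivalence \eqref{pair-mor} $\Leftrightarrow$ \eqref{pair-mor-S} are obtained by the same antipode/counit manipulation, and \eqref{pair-mor-S2} is derived by substituting $S^{-1}(k)$ for $k$ in \eqref{pair-mor-S}. One minor remark: the $K$-equivariance of $\Delta_L$ that you list among the ingredients is not actually used anywhere in the argument---only the module axioms for $\trl_L,\trl_N$ and the Hopf algebra axioms for $K$ enter.
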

\begin{proof}
If \eqref{pair-mor} holds, then for all $\xi \in L\;, x \in N$ we have
$$
\hsr{{k}\trl_L \xi}{ x} 
= \hsr{\one{k}\trl_L \xi}{\cun(\two{k})\trl_N x} 
= \hsr{\one{k}\trl_L \xi}{\two{k} \trl_N (S(\three{k})\trl_N x)} 
= \hsr{\xi}{S(k)\trl_N x} .
$$
Conversely, if \eqref{pair-mor-S} holds, then
$$
\hsr{\one{k}\trl_L \xi}{\two{k}\trl_N x} 
=\hsr{\xi}{S(\one{k})\trl_N (\two{k}\trl_N x)} 
=\hsr{\xi}{(S(\one{k})\two{k})\trl_N x} 
= \cun(k)\hsr{\xi}{x}.
$$
Being the antipode of a quasitriangular Hopf algebra invertible (see \eqref{Sinv}), identity \eqref{pair-mor-S}  gives \eqref{pair-mor-S2}, when replacing $k$  by $S^{-1}(k)$.
\end{proof}

\begin{lem}
If $\hsr{\cdot}{\cdot}  : L \otimes N \to \bbK$ is a dual pairing between two braided Hopf algebras $L$ and $N$ associated with the quasitriangular Hopf algebra $(K,\r)$, then 
\beq
\hsr{S_L(\xi)}{x}= \hsr{\xi}{S_N(x)} \, , \quad \forall ~ \xi \in L\;, x \in N \; .
\eeq
\end{lem}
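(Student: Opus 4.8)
The plan is to adapt the classical argument that a Hopf-algebra pairing intertwines the antipodes, carried out now inside the convolution algebra attached to the braided tensor product coalgebra $L\bot N$. First I would record that $L\bot N$ is a coalgebra in ${}_K\M$, with counit $\cun_L\ot\cun_N$ and coproduct $\Delta_{L\bot N}=(\id_L\ot\Psi^\r_{L,N}\ot\id_N)\circ(\Delta_L\ot\Delta_N)$, that is,
\[\Delta_{L\bot N}(\xi\ot x)=\one\xi\ot(\r_\alpha\trl_N\one x)\ot(\r^\alpha\trl_L\two\xi)\ot\two x~;\]
this is the coalgebra dual to the braided tensor product algebra of Proposition \ref{BTPalgK}, and its coassociativity follows from the quasitriangularity conditions \eqref{iiiR}. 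Consequently $\Hom(L\bot N,\bbK)$ is an associative unital algebra for the convolution product $f\star g:=m_\bbK\circ(f\ot g)\circ\Delta_{L\bot N}$, with unit $u:=\cun_L\ot\cun_N$; that $u$ is a two-sided unit uses the module-coalgebra axiom \eqref{mod-co} for $\cun_L,\cun_N$ together with the normalizations $(\cun\ot\id)\r=1=(\id\ot\cun)\r$ of \eqref{Rnormalized}. The three relevant elements of this algebra are the pairing $p:=\hsr{\cdot}{\cdot}$ itself and the linear maps $\lambda:=p\circ(S_L\ot\id_N)$ and $\rho:=p\circ(\id_L\ot S_N)$, and the statement to prove is exactly $\lambda=\rho$.

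The core of the proof is then the pair of convolution identities $\lambda\star p=u$ and $p\star\rho=u$. For the first I would unfold $\star$ using the coproduct above to get $(\lambda\star p)(\xi\ot x)=\hsr{S_L(\one\xi)}{\r_\alpha\trl_N\one x}\hsr{\r^\alpha\trl_L\two\xi}{\two x}$, recognise this via the second identity in \eqref{hrs-cop} as $\hsr{S_L(\one\xi)\,\two\xi}{x}$, collapse $S_L(\one\xi)\,\two\xi$ to $\cun_L(\xi)1_L$ by the braided antipode axiom \eqref{bantipode} for $L$, and finish with $\hsr{1_L}{x}=\cun_N(x)$ from \eqref{hrs-counit}, obtaining $\cun_L(\xi)\cun_N(x)=u(\xi\ot x)$. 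The second identity is the mirror computation: $(p\star\rho)(\xi\ot x)=\hsr{\one\xi}{\r_\alpha\trl_N\one x}\hsr{\r^\alpha\trl_L\two\xi}{S_N(\two x)}$ equals $\hsr{\xi}{\one x\,S_N(\two x)}$ by the first identity in \eqref{hrs-cop}, and the braided antipode axiom for $N$ together with \eqref{hrs-counit} turns this into $\cun_N(x)\hsr{\xi}{1_N}=\cun_L(\xi)\cun_N(x)=u(\xi\ot x)$. Associativity of $\star$ then gives $\lambda=\lambda\star u=\lambda\star(p\star\rho)=(\lambda\star p)\star\rho=u\star\rho=\rho$, which is the assertion $\hsr{S_L(\xi)}{x}=\hsr{\xi}{S_N(x)}$.

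The individual steps are routine Sweedler manipulations; the one place that needs genuine care --- the would-be main obstacle --- is the bookkeeping of the braidings. One must equip $L\ot N$ with the braided tensor coproduct built from precisely the braiding $\Psi^\r_{L,N}$ that appears in the pairing axioms \eqref{hrs-cop}, so that the copies of $\r$ produced by $\Delta_{L\bot N}$ are exactly the ones absorbed when \eqref{hrs-cop} is applied; keeping the $L$- and $N$-legs (and their $\r$-factors) on the correct sides throughout is the only subtlety. Note that the $K$-module morphism property \eqref{pair-mor} of the pairing is not used explicitly here --- it only ensures in the background that $\lambda,\rho$ and all intermediate maps live in ${}_K\M$ --- and that no monoidal-coherence issues arise, since everything is phrased through the honest convolution algebra $\Hom(L\bot N,\bbK)$ over the ground field $\bbK$.
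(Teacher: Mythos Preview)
Your proof is correct and takes a genuinely different route from the paper's. The paper proceeds by direct computation: it expands $\hsr{S_L(\xi)}{x}$ by inserting $\cun(\r^\alpha\trl\two\xi)\cun(\two x)$, then uses the antipode relation $\two x S_N(\three x)$, the pairing axioms \eqref{hrs-cop}, the module-coalgebra property \eqref{mod-co}, and both quasitriangularity conditions \eqref{iiiR} to reduce to a symmetric expression; a parallel manipulation of $\hsr{\xi}{S_N(x)}$ lands on the same expression, and the two are matched. Your argument instead lifts the classical uniqueness-of-inverses trick to the braided setting: equipping $L\bot N$ with the braided tensor coproduct so that the convolution algebra $\Hom(L\bot N,\bbK)$ is associative, and then showing that $p\circ(S_L\ot\id)$ and $p\circ(\id\ot S_N)$ are respectively a left and a right $\star$-inverse of $p$. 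What your approach buys is conceptual clarity and an explanation of \emph{why} the braiding in the pairing axioms \eqref{hrs-cop} is exactly the right one (it is the one making $\Delta_{L\bot N}$ coassociative); what the paper's approach buys is self-containment, since it avoids verifying coassociativity of the braided tensor coproduct (which, as you note, needs both hexagon-type identities \eqref{iiiR} and the module-coalgebra axiom \eqref{mod-co}).
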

\begin{proof}
We omit the subscripts $_L$ and $_N$. Using the pairing axioms \eqref{hrs-cop}, we compute
\begin{align*}
\hsr{S(\xi)}{x}&= \hsr{S(\one{\xi})}{\r_\alpha \trl \one{x}} \cun(\r^{\alpha} \trl \two{\xi})
\cun(\two{x})
\\
&= \hsr{S(\one{\xi})}{\r_\alpha \trl \one{x}} \hsr{\r^{\alpha} \trl \two{\xi}}{\two{x} S (\three{x})}
\\
&= \hsr{S(\one{\xi})}{\r_\alpha \trl \one{x}} 
\hsr{\one{(\r^{\alpha} \trl \two{\xi})}}{\r_\beta \trl \two{x}} 
\hsr{\r^\beta \trl \two{(\r^{\alpha} \trl \two{\xi})}}{ S (\three{x})} 
\\
&= \hsr{S(\one{\xi})}{\r_\alpha \trl \one{x}} 
\hsr{\one{\r^{\alpha} }\trl \two{\xi}}{\r_\beta \trl \two{x}} 
\hsr{(\r^\beta \two{\r^{\alpha}}) \trl \three{\xi}}{ S (\three{x})} 
\end{align*}
where we used the module coalgebra axioms \eqref{mod-co} in the last equality. Next, we use the quasitriangularity condition  $(\Delta \ot \id) \r= \r_{13} \r_{23}$ and get
\begin{align*}
\hsr{S(\xi)}{x}=\hsr{S(\one{\xi})}{\r_\alpha \r_\gamma \trl \one{x}} 
\hsr{{\r^{\alpha} }\trl \two{\xi}}{\r_\beta \trl \two{x}} 
\hsr{(\r^\beta {\r^{\gamma}}) \trl \three{\xi}}{ S (\three{x})} .
\end{align*} 
On the other hand, with analogous computations we have
 \begin{align*}
\hsr{\xi}{S(x)}&= \cun( \one{\xi}) \cun(\r_{\gamma} \trl\one{x}) \hsr{\r^\gamma \trl \two{\xi}}{ S(\two{x})} 
\\
&= \hsr{ S(\one{\xi})\two{\xi}}{\r_{\gamma} \trl\one{x}} \hsr{\r^\gamma \trl \three{\xi}}{ S(\two{x})} 
\\
&= \hsr{ S(\one{\xi})}{\r_\alpha \trl \one{(\r_{\gamma} \trl\one{x})}}
\hsr{\r^\alpha \trl  \two{\xi}}{\two{(\r_{\gamma} \trl\one{x})}}
 \hsr{\r^\gamma \trl \three{\xi}}{S( \two{x})} 
\\
&= \hsr{ S(\one{\xi})}{\r_\alpha \one{\r_{\gamma}} \trl\one{x}}
\hsr{\r^\alpha \trl  \two{\xi}}{\two{\r_{\gamma} }\trl\two{x}}
 \hsr{\r^\gamma \trl \three{\xi}}{ S(\three{x})} .
\end{align*}
Finally, the quasitriangular condition $ (\id \ot \Delta) \r=\r_{13} \r_{12} $  gives
\begin{align*}
\hsr{\xi}{S(x)}=
 \hsr{ S(\one{\xi})}{\r_\alpha {\r_{\gamma}} \trl\one{x}}
\hsr{\r^\alpha \trl  \two{\xi}}{{\r_{\beta} }\trl\two{x}}
 \hsr{\r^\beta \r^\gamma  \trl \three{\xi}}{ S(\three{x})} = \hsr{\xi}{S(x)}, 
\end{align*}
the stated identity. \end{proof}

\begin{prop}\label{TwistDPBH}
Let $\hsr{\cdot}{\cdot}  : L \otimes N \to \bbK$ be a dual pairing between two braided
bialgebras
$(L, m_L, \eta_L, \Delta_L,\cun_L, \trl_L)$ and 
$(N, m_N, \eta_N, \Delta_N,\cun_N, \trl_N)$ associated with the quasitriangular  bialgebra $(\K,\r)$. 
Then 
the braided bialgebras
$({L_\F}, m_{L_\F}, \eta_{L_\F}, \Delta_{L_\F},\cun_{L_\F}, \trl_{L_\F})$ and 
$({N_\F}, m_{N_\F}, \eta_{N_\F}, \Delta_{N_\F},\cun_{N_\F}, \trl_{N_\F})$ associated with 
the quasitriangular  bialgebra $(\KF,\rF)$ (as given in Proposition \ref{prop:td-bb})
are dually paired via
\beq\label{hrsF}
\hsrF{\cdot}{\cdot}  : L_\F \otimes N_\F \to \bbK \; , \quad
\xi \otF x \mapsto \hsrF{\xi}{x}:=\hsr{\bF^\alpha \trl_L \xi}{\bF_\alpha \trl_N x}
\eeq
\end{prop}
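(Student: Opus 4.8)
The plan is to verify directly that $\langle\,\cdot\,,\cdot\,\rangle_\F$ satisfies, for the twisted structures of $L_\F$ and $N_\F$ relative to $(\KF,\rF)$, the three requirements of a dual pairing: the counit normalisations \eqref{hrs-counit}, the $\KF$-module morphism property \eqref{pair-mor}, and the two coproduct compatibilities \eqref{hrs-cop}. Throughout one uses that the underlying linear maps of the $\K$- and $\KF$-actions coincide, that $\eta_{L_\F}=\eta_L$, $\eta_{N_\F}=\eta_N$, $\cun_{L_\F}=\cun_L$, $\cun_{N_\F}=\cun_N$, and that $\bF\F=1\ot1=\F\bF$.

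The counit conditions are immediate: $\langle\xi,1_{N_\F}\rangle_\F=\langle\bF^\alpha\trl_L\xi,\bF_\alpha\trl_N 1_N\rangle=\langle\cun(\bF_\alpha)\bF^\alpha\trl_L\xi,1_N\rangle=\langle\xi,1_N\rangle=\cun_L(\xi)$, using counitality of $\bF$, the module-algebra axiom \eqref{mod-a} for $N$, and \eqref{hrs-counit} for the undeformed pairing; the identity $\langle1_{L_\F},x\rangle_\F=\cun_N(x)$ is symmetric. The module morphism property is equally short: since $(\bF^\alpha\ot\bF_\alpha)\Delta_\F(k)=\bF\F\Delta(k)\bF=\Delta(k)\bF$, one has $\bF^\alpha\fone{k}\ot\bF_\alpha\ftwo{k}=\one{k}\bF^\beta\ot\two{k}\bF_\beta$, whence
$$
\langle\fone{k}\trl_{L_\F}\xi,\ftwo{k}\trl_{N_\F}x\rangle_\F=\langle\one{k}\bF^\beta\trl_L\xi,\two{k}\bF_\beta\trl_N x\rangle=\cun(k)\langle\bF^\beta\trl_L\xi,\bF_\beta\trl_N x\rangle=\cun(k)\langle\xi,x\rangle_\F
$$
by \eqref{pair-mor} for the undeformed pairing. (Equivalently, $\langle\,\cdot\,,\cdot\,\rangle_\F$ is the composite $\KF$-module map $\langle\,\cdot\,,\cdot\,\rangle\circ\varphi_{L,N}$, with $\varphi_{L,N}$ the isomorphism in \eqref{nt-mod}.)

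The heart of the proof is the coproduct compatibility \eqref{hrs-cop}; I would prove the first identity and note that the second is its left--right mirror, proved identically with the roles of $m_L,\Delta_L$ and $m_N,\Delta_N$ and of the two legs of $\F$ interchanged. Expanding the right-hand side $\langle\fone{\xi},\rF_\alpha\trl_{N_\F}x\rangle_\F\,\langle\rF^\alpha\trl_{L_\F}\ftwo{\xi},y\rangle_\F$ by the definition \eqref{hrsF}, the twisted coproduct $\Delta_{L_\F}(\xi)=(\F^\alpha\trl_L\one{\xi})\ot(\F_\alpha\trl_L\two{\xi})$ from \eqref{cc-twist}, and the twisted $R$-matrix $\rF=\F_{21}\,\r\,\bF$ from \eqref{urm-def}, one obtains a product of two undeformed pairings in which $\xi$ enters only through $\one{\xi}\ot\two{\xi}$, acted on by the two outer legs of an element of $\K^{\ot4}$ built from $\F$, $\bF$ and $\r$, with $x$ and $y$ acted on by the two inner legs. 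The twist conditions \eqref{twist-F}, \eqref{twist-bF} together with quasi-cocommutativity \eqref{iiR} then rearrange this operator so that its legs on $\one{\xi}$ and $\two{\xi}$ acquire the form of $(\Delta\ot\Delta)$ of a single element composed with an $\r_{21}$; at that point the module-coalgebra axiom \eqref{mod-co} for $L$ and the undeformed axiom \eqref{hrs-cop} collapse the two pairings into a single $\langle\,c\trl_L\xi,(p\trl_N x)(q\trl_N y)\,\rangle$, and a final application of the module-algebra axiom \eqref{mod-a} for $N$ recognises $(p\trl_N x)(q\trl_N y)=\bF_\kappa\trl_N(x\dotF y)$ with $c=\bF^\kappa$, so that the expression equals $\langle\xi,x\dotF y\rangle_\F$, the left-hand side of \eqref{hrs-cop}. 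This rearrangement is of exactly the same nature as, and uses the same inputs as, Proposition \ref{main-lem}; conceptually the whole Proposition records that $\langle\,\cdot\,,\cdot\,\rangle_\F$ is the transport of the $\K$-module morphism $\langle\,\cdot\,,\cdot\,\rangle$ along the braided monoidal equivalence $(\Gamma,\varphi)$ of $\KM$ and $\KFM$, and the pairing axioms are identities between morphisms of that braided monoidal category. The only genuine obstacle is the bookkeeping of the many twist and $R$-matrix legs --- in particular keeping track of the crossing ($\id\ot\tau\ot\id$) that reflects the flip in the braiding $\Psi^\r_{L,N}$ moving an element of $L$ past an element of $N$.
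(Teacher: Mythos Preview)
Your proposal is correct and matches the paper's approach: the same three verifications (counit normalisations, $\KF$-module property, coproduct compatibilities), the same tools (twist conditions \eqref{twist-F}--\eqref{twist-bF}, quasi-cocommutativity \eqref{iiR}, the module-algebra and module-coalgebra axioms, and the undeformed pairing axioms), and the same recognition that $\langle\,\cdot\,,\cdot\,\rangle_\F=\langle\,\cdot\,,\cdot\,\rangle\circ\varphi_{L,N}$. The only cosmetic difference is that the paper computes the coproduct compatibility from the left-hand side $\langle\xi,x\dotF y\rangle_\F$ toward the right, while you sketch the reverse direction; the ingredients and difficulty are identical, and your reference to Proposition~\ref{main-lem} correctly captures the flavour of the four-leg rearrangement, though in the paper's argument that proposition is not invoked directly but rather re-derived in miniature along the way.
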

\begin{proof}
Firstly we show that this map
$\hsrF{\cdot}{\cdot}$ 
is a $\KF$-module map. With the coproduct for $\KF$ given in \eqref{cop-twist},
we have
\begin{align*}
\hsrF{\fone{k}\trl_L \xi}{\ftwo{k}\trl_N x} &=
\hsrF{\F^\alpha \one{k}\bF^\beta \trl_L \xi}{\F_\alpha\two{k}\bF_\beta\trl_N x} 
=\hsr{ \one{k}\bF^\beta \trl_L \xi}{\two{k}\bF_\beta\trl_N x} 
\\
&=  \cun(k)\hsr{\bF^\beta \trl_L \xi}{\bF_\beta\trl_N x} =
 \cun(k)\hsrF{\xi}{x}
\end{align*}
where we have used that the pairing $\hsr{\cdot}{\cdot}: L \ot N \to \bbK$ is a $\K$-module map.
Next, being $F$ unital, the  compatibility properties \eqref{hrs-counit} 
with the counits are easily verified.   
We show the first of properties \eqref{hrs-cop}, the other one can be established similarly. 
To lighten the notation we omit the subscripts. We have
\begin{align*}
\hsrF{\xi}{x \dotF y} &=
\hsr{\bF^\alpha \trl \xi}{\bF_\alpha \trl \big( (\bF^\beta \trl x )(\bF_\beta \trl y)\big)}
\\
&=
\hsr{\bF^\alpha \trl \xi}{(\one{\bF_\alpha} \bF^\beta \trl x )(\two{\bF_\alpha} \bF_\beta \trl y)}
\\
&=
\hsr{\one{\bF^\alpha}\bF^\beta \trl \xi}{(\two{\bF^\alpha}\bF_\beta \trl x )({\bF_\alpha}  \trl y)}
\\
&=
\hsr{\one{\bF^\alpha}\one{\bF^\beta} \trl \one{\xi}}{\r_\gamma\three{\bF^\alpha}\bF_\beta \trl x }
\hsr{\r^\gamma\two{\bF^\alpha}\two{\bF^\beta} \trl \two{\xi}}{{\bF_\alpha}  \trl y}
\end{align*}
where for the last two identities we have used the twist condition \eqref{twist-bF}, followed by  the module coalgebra condition  \eqref{mod-co}.
This can be further simplified by using the quasitriangular condition (\ref{iiR}) of the coproduct of $\K$ and then the 
 property \eqref{pair-mor} for the pairing to be a $\K$-module map:
\begin{multline*}
\hsr{\one{\bF^\alpha}\one{\bF^\beta} \trl \one{\xi}}{\two{\bF^\alpha}\r_\gamma\bF_\beta \trl x }
\hsr{\three{\bF^\alpha}\r^\gamma \two{\bF^\beta} \trl \two{\xi}}{{\bF_\alpha}  \trl y}
\\
=\hsr{\one{\bF^\beta} \trl \one{\xi}}{\r_\gamma\bF_\beta \trl x }
\hsr{{\bF^\alpha}\r^\gamma \two{\bF^\beta} \trl \two{\xi}}{{\bF_\alpha}  \trl y} \, .
\end{multline*}
By using again the twist condition and the coproduct in $L_\F$, we have
\begin{align*}
\hsr{{\bF^\beta} \F^\tau \trl \one{\xi}}{\r_\gamma \two{\bF_\beta} \bF_\mu\trl x }
&
\hsr{{\bF^\alpha}\r^\gamma \one{\bF_\beta}\bF^\mu \F_\tau \trl \two{\xi}}{{\bF_\alpha}  \trl y} 
\\
&=\hsr{{\bF^\beta}  \trl \fone{\xi}}{\r_\gamma \two{\bF_\beta} \bF_\mu\trl x }
\hsr{{\bF^\alpha}\r^\gamma \one{\bF_\beta}\bF^\mu  \trl \ftwo{\xi}}{{\bF_\alpha}  \trl y} 
\\
&=\hsr{{\bF^\beta}  \trl \fone{\xi}}{ \one{\bF_\beta} \r_\gamma\bF_\mu\trl x }
\hsr{{\bF^\alpha} \two{\bF_\beta}\r^\gamma \bF^\mu  \trl \ftwo{\xi}}{{\bF_\alpha}  \trl y} 
\\
&=\hsr{{\bF^\beta}  \trl \fone{\xi}}{ \one{\bF_\beta} \bF^\mu \rF_\gamma \trl x }
\hsr{{\bF^\alpha} \two{\bF_\beta} \bF_\mu \rF^\gamma \trl \ftwo{\xi}}{{\bF_\alpha}  \trl y} 
\end{align*}
where in the last but one equality we have used once again the quasitriangular condition (\ref{iiR}) in the Definition \ref{def:qt}, 
of the coproduct and in the last one the definition of $\rF$. Then, the twist condition and the
$\K$-module map property \eqref{pair-mor} of the pairing yields
\begin{align*}
\hsr{\one{\bF^\mu}{\bF^\beta}  \trl \fone{\xi}}{ \two{\bF^\mu} {\bF_\beta}  \rF_\gamma \trl x }
&
\hsr{{\bF^\alpha}  \bF_\mu \rF^\gamma \trl \ftwo{\xi}}{{\bF_\alpha}  \trl y} 
\\
&=
\hsr{{\bF^\beta}  \trl \fone{\xi}}{{\bF_\beta}  \rF_\gamma \trl x }
\hsr{{\bF^\alpha}   \rF^\gamma \trl \ftwo{\xi}}{{\bF_\alpha}  \trl y} 
\\
&=
 \hsrF{\fone{\xi}}{\rF_\alpha \trl x} \hsrF{\rF^\alpha \trl \ftwo{\xi}}{y} \; ,
\end{align*}
thus concluding the proof.
\end{proof}

\section{Braided Lie algebras and their twisting}

Further insights in the use of braided Hopf algebras as algebras of symmetries  come from the
study of associated braided Lie algebras: Lie algebras in the braided
monoidal category of $K$-modules, with $(K,\r)$
triangular. 

\subsection{Braided Lie algebras}
\begin{defi}
A \textbf{braided Lie algebra} associated with a triangular
  Hopf algebra $(K, \r)$, or simply a $K$-braided Lie algebra, 
is a $K$-module $\g$ with a
bilinear map (a nonassociative non-unital multiplication)
$$
[~,~]: \g\otimes \g\to \g
$$
that satisfies the conditions:
\begin{enumerate}[(i)]
\item
$K$-equivariance: $$k\trl [u,v]=[\one{k}\trl u,\two{k}\trl v] ,$$ 
\item braided antisymmetry: $$[u,v]=-[\r_\alpha\trl v, \r^\alpha \trl u ] , $$
\item
braided Jacobi identity: $$[u,[v,w]]=[[u,v],w]+[\r_\alpha\trl v, [\r^\alpha \trl u,w] ] , $$
\end{enumerate}
for all $u,v,w \in \g$, $k\in K$.
A $K$-braided Lie
  algebra morphism $\varphi: \g\to \g'$ is a morphism of the $K$-modules $\g$
  and $\g'$ such that
  $\varphi\circ [~,~]=[~,~]'\circ (\varphi\otimes\varphi)$, for $[~,~]$ and $[~,~]' $ the brackets of $\g$ and $\g'$ respectively.
  \end{defi}
We add the subscript $[~,~]_\r$ to the bracket when we need to
  emphasize the $\r$-matrix structure we are using. 

\begin{lem}\label{lem:Abla} 
Let $(K, \r)$ be a triangular Hopf algebra.
A $K$-module algebra $A$  is a $K$-braided Lie algebra with bracket
\beq\label{Abla}
[~,~]: A\ot A\to A, \qquad a\ot b\mapsto [a,b] = ab - (\r_\alpha \trl b) \, (\r^\alpha\trl a) \, .
\eeq
\end{lem}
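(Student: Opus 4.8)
The plan is to verify the three defining properties of a $K$-braided Lie algebra directly for the bracket $[a,b]=ab-(\r_\alpha\trl b)(\r^\alpha\trl a)$, using triangularity of $(K,\r)$ at the key steps. Since $A\in {}_K\A$, the product $m_A$ is $K$-equivariant, and for triangular $K$ one has $\br=\r_{21}$, equivalently $\r_\alpha\r^\beta\ot\r^\alpha\r_\beta=1\ot1$ (so the braiding $\Psi^\r_{A,A}$ is involutive). These two facts, together with the quasitriangularity identities \eqref{iiiR} and the Yang--Baxter equation \eqref{YB}, should make all three checks routine.

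\textbf{$K$-equivariance.} I would compute $k\trl[a,b]$ by applying the $K$-action to each of the two terms. For the first term, $k\trl(ab)=(\one{k}\trl a)(\two{k}\trl b)$ since $m_A$ is a module map. For the second term, $k\trl\big((\r_\alpha\trl b)(\r^\alpha\trl a)\big)=(\one{k}\r_\alpha\trl b)(\two{k}\r^\alpha\trl a)$; then I use quasi-cocommutativity in the form $\one{k}\r_\alpha\ot\two{k}\r^\alpha=\r_\alpha\two{k}\ot\r^\alpha\one{k}$ (which follows from $\Delta^{cop}(k)\,\r=\r\,\Delta(k)$) to rewrite this as $(\r_\alpha\two{k}\trl b)(\r^\alpha\one{k}\trl a)=(\r_\alpha\trl(\two{k}\trl b))(\r^\alpha\trl(\one{k}\trl a))$. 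Collecting the two terms gives $[\one{k}\trl a,\two{k}\trl b]$, as required.

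\textbf{Braided antisymmetry.} I would start from $-[\r_\beta\trl b,\r^\beta\trl a]=-(\r_\beta\trl b)(\r^\beta\trl a)+\big(\r_\alpha\trl(\r^\beta\trl a)\big)\big(\r^\alpha\trl(\r_\beta\trl b)\big)$. The first term is exactly $-$(second term of $[a,b]$) flipped — i.e.\ it equals $-(\r_\beta\trl b)(\r^\beta\trl a)$, matching. For the second term I use the module axiom to get $(\r_\alpha\r^\beta\trl a)(\r^\alpha\r_\beta\trl b)$, and now triangularity $\r_\alpha\r^\beta\ot\r^\alpha\r_\beta=1\ot1$ collapses this to $ab$. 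Hence $-[\r_\beta\trl b,\r^\beta\trl a]=ab-(\r_\beta\trl b)(\r^\beta\trl a)=[a,b]$.

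\textbf{Braided Jacobi identity.} This is the computational heart, and the one I expect to be the main obstacle. I would expand both sides as sums of terms of the form (action on $a$)(action on $b$)(action on $c$), with the actions being words in the $\r$-matrix legs, and track bracketings carefully. The left side $[a,[b,c]]$ expands into $a(bc)-a(\r_\alpha\trl c)(\r^\alpha\trl b)$ minus two terms coming from the outer bracket of $a$ against $[b,c]$; the right side is $[[a,b],c]$ plus $[\r_\alpha\trl b,[\r^\alpha\trl a,c]]$, each of which is a four-term expansion. Using associativity of $m_A$ and the module axioms to normalise all products to fully left-bracketed form, the identity should reduce to a statement about $R$-matrix words that is a consequence of \eqref{iiiR} and the quantum Yang--Baxter equation \eqref{YB} (this is the standard fact that the ``bracket from a braided-commutator'' on an algebra internal to a symmetric/braided category satisfies braided Jacobi; here $K$ triangular makes the braiding symmetric). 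I would organise the bookkeeping by always moving $\r$-legs onto the rightmost available factor via quasi-cocommutativity, reducing the number of distinct monomials, and then match term by term. The antisymmetry already proven can be invoked to shorten this: it suffices to verify the ``left Leibniz'' form, i.e.\ that $\mathrm{ad}_a=[a,-]$ is a braided derivation, which is a single Leibniz-type identity $[a,bc]=[a,b]\,c+(\r_\alpha\trl b)\,[\r^\alpha\trl a,c]$ — and this last identity, rather than the symmetric Jacobi form, is the cleanest thing to check by a direct expansion using associativity, the module axiom, and one application of \eqref{iiiR}.
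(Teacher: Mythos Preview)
Your proposal is correct. The checks of $K$-equivariance and braided antisymmetry are identical to the paper's, and your primary plan for Jacobi --- expand both sides into monomials and match using \eqref{iiiR}, quasi-cocommutativity, and triangularity --- is exactly what the paper does.

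Your suggested shortcut via the product-Leibniz identity $[a,bc]=[a,b]c+(\r_\alpha\trl b)[\r^\alpha\trl a,c]$ is a genuine alternative the paper does not take here (the identity itself is recorded separately, as the braided Poisson property in Remark~\ref{Poisson}). It does work: the Leibniz identity is the two-line computation you indicate, and applying it to $[a,[b,c]]=[a,bc]-[a,(\r_\gamma\trl c)(\r^\gamma\trl b)]$ and comparing with the right-hand side of Jacobi reduces everything to one residual identity that collapses using $K$-equivariance of the bracket together with one instance of triangularity $\r_\epsilon\r^\alpha\otimes\r^\epsilon\r_\alpha=1\otimes 1$. This saves some of the eight-term bookkeeping in the paper's direct match, but be aware that the passage ``derivation of the product $\Rightarrow$ derivation of the bracket'' is not automatic in the braided setting --- it still consumes exactly the ingredients you list, so the saving is organisational rather than conceptual.
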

\begin{proof} 
The $K$-equivariance follows from the
$K$-module algebra property \eqref{mod-a} of $A$ and quasi-cocommutativity of $K$:
\begin{align*}
k \trl [a,b] & = 
(\one{k} \trl a)(\two{k} \trl b) - (\one{k} \r_\alpha \trl b) (\two{k}  \r^\alpha\trl a)
\\
& = 
(\one{k} \trl a)(\two{k} \trl b) - (\r_\alpha \two{k} \trl b) (  \r^\alpha \one{k} \trl a)
\\
&= 
[\one{k} \trl a ,\two{k} \trl b] \, .
\end{align*} 
The braided antisymmetry follows from the triangularity of $K$: 
\begin{align*}
[\r_\alpha\trl b, \r^\alpha \trl a ] &= (\r_\alpha\trl b)(\r^\alpha \trl a)- (\r_\beta \r^\alpha \trl a) \, (\r^\beta \r_\alpha\trl b)
\\
&= (\r_\alpha\trl b)(\r^\alpha \trl a)- a b = - [a,b] .
\end{align*} 
We are left to prove Jacobi identity.  On the one hand, using  $K$-equivariance of $[ \, , \, ]$ just shown, and
condition $(\id \ot \Delta)\r=\r_{13} \r_{12}$ together with quasi co-commutativity, we have
\begin{align*}
 [a,[b,c]] & = a[b,c] - (\r_\alpha \trl  [b,c] ) (\r^\alpha \trl a)
\\
&= a[b,c] -  [\one{\r_\alpha} \trl b, \two{\r_\alpha} \trl c] (\r^\alpha \trl a)
\\ 
&=  
a b c - a (\r_\beta \trl c) \, (\r^\beta\trl b)
- (\one{\r_\alpha} \trl b)( \two{\r_\alpha} \trl c) (\r^\alpha \trl a) 
\\
& \quad +
  (\r_\beta \two{\r_\alpha} \trl c) (\r^\beta \one{\r_\alpha} \trl b)(\r^\alpha \trl a) 
\\ 
&=  
a b c - a (\r_\beta \trl c) \, (\r^\beta\trl b)
- (\one{\r_\alpha} \trl b)( \two{\r_\alpha} \trl c) (\r^\alpha \trl a) 
\\
& \quad +
  (\one{\r_\alpha} \r_\beta \trl c) (\two{\r_\alpha}  \r^\beta  \trl b)(\r^\alpha \trl a)  
\\ 
&=  
a b c - a (\r_\beta \trl c) \, (\r^\beta\trl b)
- (\r_\gamma \trl b) (\r_\alpha \trl c)(\r^\alpha \r^\gamma \trl a)  
\\
& \quad +  (\r_\alpha \r_\beta  \trl c) \, ( \r_\gamma \r^\beta \trl b) (\r^\gamma  \r^\alpha 
  \trl a)
\, .
\end{align*}

 On the other hand, using also $(\Delta \ot \id) \r=\r_{13} \r_{23}$, analogous computations give
\begin{align*}
& [[a,b],c] + [\r_\alpha \trl b ,[\r^\alpha \trl a , c]]
\\ 
&= [a,b]c 
- (\r_\beta \trl c) \, (\r^\beta \trl [ a,  b] )
+ (\r_\alpha \trl b) [\r^\alpha \trl a , c] 
- (\r_\beta \trl [\r^\alpha \trl a , c])(\r^\beta \r_\alpha \trl b)
\\
 &= [a,b]c 
- (\r_\beta \trl c) \,  [\one{\r^\beta} \trl a, \two{\r^\beta} \trl b] 
+ (\r_\alpha \trl b) [\r^\alpha \trl a , c] 
\\
&\quad
- [ \one{\r_\beta} \r^\alpha \trl a , \two{\r_\beta} \trl  c](\r^\beta \r_\alpha \trl b)
\\
 &= [a,b]c 
- (\r_\gamma \r_\beta \trl c) \,  [\r^\gamma \trl a, \r^\beta \trl b] 
+ (\r_\alpha \trl b) [\r^\alpha \trl a , c] 
\\
&\quad
- [ \r_\gamma \r^\alpha \trl a , \r_\beta \trl  c]( \r^\beta  \r^\gamma\r_\alpha \trl b)
\\
 &= [a,b]c 
- (\r_\gamma \r_\beta \trl c) \,  [\r^\gamma \trl a, \r^\beta \trl b] 
+ (\r_\alpha \trl b) [\r^\alpha \trl a , c] 
- [ a , \r_\beta \trl  c]( \r^\beta   \trl b)
\\
&=ab c  - (\r_\alpha \trl b) \, (\r^\alpha\trl a) c
- (\r_\gamma\r_\beta \trl c) \, (\r^\gamma \trl a ) (\r^\beta \trl b )
\\
& \quad +  (\r_\gamma\r_\beta \trl c)  ( \r_\alpha \r^\beta \trl b) \, (\r^\alpha \r^\gamma \trl a) 
+ (\r_\alpha \trl b) (\r^\alpha \trl  a)c 
\\
& \quad -  (\r_\alpha \trl b) (\r_\beta \trl c) \, (\r^\beta \r^\alpha \trl a) 
-  a (\r_\beta \trl  c)( \r^\beta   \trl b)
+  (\r_\alpha \r_\beta \trl  c)  (\r^\alpha \trl a ) ( \r^\beta   \trl b)
\\
&=ab c  
+  (\r_\gamma\r_\beta \trl c)  ( \r_\alpha \r^\beta \trl b) \, (\r^\alpha \r^\gamma \trl a) 
\\
& \quad -  (\r_\alpha \trl b) (\r_\beta \trl c) \, (\r^\beta \r^\alpha \trl a) 
-  a (\r_\beta \trl  c)( \r^\beta   \trl b)
\end{align*} 
where for the fourth equality we have used the triangularity of $\r$.
This expression equals that for $ [a,[b,c]]$ found above.
\end{proof}
\begin{rem}\label{Poisson}
Let $(K,\r)$ be a triangular Hopf algebra. A {\bf $K$-braided Poisson
  algebra}  $(P,\cdot, [~,~])$ is a $K$-module algebra  $(P,\cdot)$,
and a 
$K$-braided Lie algebra $(P,[~,~])$ such that the bracket  is a braided derivation of the  multiplication: for all $p,r,s \in P$,
\begin{equation}\label{eqPois}
[p,r\cdot s]=[p,r]\cdot
s+\r_\alpha\trl r\cdot [\r^\alpha \trl p,s]~.
\end{equation}
In particular, any $K$-braided algebra $(A, \cdot)$ is canonically a $K$-braided Poisson
algebra $(A, \cdot, [~,~])$ with the bracket
defined in \eqref{Abla}. 
\end{rem}
An enveloping algebra of $\g$ is the datum $(A,\alpha)$ of a 
$K$-module  algebra $A$, with braided Lie algebra structure as in Lemma \ref{lem:Abla},
and  
a braided Lie algebra morphism $\alpha: \g\to A$, that is a $K$-equivariant map such that
$\alpha([u,v])=[\alpha(u),\alpha(v)]$, for all $u,v\in \g$.  

The universal enveloping algebra $(\U(\g), \iota)$ of $\g$ is characterised by the following universal
property:  given an enveloping algebra $(A,\alpha)$ there is a
unique lift of $\alpha$ to a $K$-equivariant algebra map $\hat\alpha: \U(\g)\to A$, such that $\hat\alpha\circ \iota=\alpha:\g\to A$.

Existence of $(\U(\g), \iota)$ follows from considering the usual 
tensor algebra $T(\g)$, canonically a $K$-module algebra,  and the quotient  $\U(\g):=T(\g)/I$
where $I$ is the algebra ideal generated by $u\ot v-\r_\alpha
\trl v\ot\r^\alpha\trl u-[u,v]$. The quotient is a $K$-module algebra since the ideal $I$ is preserved by the $K$-action due to the quasi-cocommutativity of $K$. The $K$-equivariant map $\hat\iota:\g\to T(\g)$, $u\mapsto\hat\iota(u)=u$
induces the braided Lie algebra morphism $\iota:\g\to \U(\g)$ on the quotient. As in the classical case, the universality 
of $(\U(\g), \iota)$ follows from that of $T(\g)$ and uniqueness (up to isomorphism) from the universal property.

\begin{prop}\label{prop:uea}
The universal enveloping algebra $\U(\g)$ of a braided Lie algebra $\g$ is a braided Hopf
algebra associated with the triangular Hopf algebra $(K, \r)$.
\end{prop}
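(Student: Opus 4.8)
The plan is to endow $\U(\g)$ with its braided Hopf structure entirely through the universal property of $(\U(\g),\iota)$, deducing the bialgebra axioms from the uniqueness clause of that property; only the antipode will need a short separate argument. First I would build the coproduct. By Proposition \ref{BTPalgK} the braided tensor product $\U(\g)\bot\U(\g)$ is a $K$-module algebra, hence by Lemma \ref{lem:Abla} a $K$-braided Lie algebra with the bracket \eqref{Abla}. Consider the $\bbK$-linear map $j:\g\to\U(\g)\bot\U(\g)$, $u\mapsto\iota(u)\bot 1+1\bot\iota(u)$; it is manifestly $K$-equivariant, and I would check that it is a braided Lie algebra morphism. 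Unfolding $[j(u),j(v)]$ with the $\bot$-product, the unitality $(\cun\ot\id)\r=1=(\id\ot\cun)\r$ collapses the ``off-diagonal'' contributions to $\iota(u)\bot\iota(v)$; the term $\iota(u)\iota(v)\bot 1$, together with its partner coming from $(\r_\alpha\trl j(v))(\r^\alpha\trl j(u))$, reproduces $[\iota(u),\iota(v)]\bot 1=\iota([u,v])\bot 1$, and symmetrically $1\bot\iota([u,v])$; the genuinely braided cross-terms cancel because $K$ is triangular, $\Psi^{\r}\circ\Psi^{\r}=\id$. The universal property then yields a unique $K$-equivariant algebra map $\Delta:\U(\g)\to\U(\g)\bot\U(\g)$ with $\Delta\circ\iota=j$. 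For the counit, $\bbK$ is a $K$-module algebra whose induced braided Lie bracket vanishes (again by unitality of $\r$), so the zero map $\g\to\bbK$ is a braided Lie morphism and lifts to a unique $K$-equivariant algebra map $\cun_{\U}:\U(\g)\to\bbK$.

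Next I would verify the braided bialgebra axioms. That $\Delta$ and $\cun_{\U}$ are algebra maps with respect to $m_{\U(\g)}$ and $m_{\U(\g)\bot\U(\g)}$, that is \eqref{cop-braid}, is built into their construction. Coassociativity and counitality then follow by uniqueness: using Proposition \ref{BTPalgK} (the $\bot$-product of algebra maps is again an algebra map) and the monoidal structure of $({}_K\A,\bot)$, both $(\Delta\bot\id)\circ\Delta$ and $(\id\bot\Delta)\circ\Delta$ are $K$-equivariant algebra maps $\U(\g)\to\U(\g)\bot\U(\g)\bot\U(\g)$ agreeing on generators, each sending $\iota(u)$ to $\iota(u)\bot 1\bot 1+1\bot\iota(u)\bot 1+1\bot 1\bot\iota(u)$, hence they coincide; likewise $(\cun_{\U}\bot\id)\circ\Delta$ and $(\id\bot\cun_{\U})\circ\Delta$ are $K$-equivariant algebra endomorphisms of $\U(\g)$ agreeing with $\id_{\U(\g)}$ on generators, hence equal to it. Thus $\U(\g)$ is a braided bialgebra associated with $(K,\r)$.

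For the antipode the key observation is that the convolution product $f*g:=m_{\U(\g)}\circ(f\ot g)\circ\Delta$ on $\Hom_{\bbK}(\U(\g),\U(\g))$, with $\ot$ the ordinary $\bbK$-linear tensor, is associative with unit $\eta_{\U(\g)}\circ\cun_{\U}$ purely by (co)associativity — the braiding plays no role — and preserves $K$-equivariance. Now $\U(\g)$ carries the filtration $\bbK 1=\U_0\subseteq\U_1\subseteq\cdots$ inherited from $T(\g)$; the $K$-action preserves it, and since $\Delta$ is a $\bot$-algebra map with $\Delta(\iota(u))\in\U_1\ot\U_0+\U_0\ot\U_1$, while the $\bot$-product (built from $m_{\U(\g)}$ and the degree-preserving $\Psi^{\r}$) maps $(\U_p\ot\U_q)\times(\U_{p'}\ot\U_{q'})$ into $\U_{p+p'}\ot\U_{q+q'}$, one gets $\Delta(\U_n)\subseteq\sum_{p+q=n}\U_p\ot\U_q$. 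Hence $\U(\g)$ is a connected filtered (braided) bialgebra, and the classical argument applies verbatim: setting $\pi:=\id_{\U(\g)}-\eta_{\U(\g)}\circ\cun_{\U}$ one has $\pi^{*k}\big|_{\U_n}=0$ for $k>n$, so that $S_{\U}:=\sum_{k\ge 0}(-1)^k\pi^{*k}$ is a well-defined (pointwise-finite), $K$-equivariant convolution inverse of $\id_{\U(\g)}$, i.e. a braided antipode in the sense of \eqref{bantipode}. One also checks on generators that $S_{\U}\circ\iota=-\iota$, equivalently that $S_{\U}$ agrees with the braided anti-algebra map obtained by applying the universal property to the braided-opposite algebra $\U(\g)^{\mathrm{op}}$ — which is associative precisely because $K$ is triangular — and to the braided Lie morphism $u\mapsto-\iota(u)$.

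I expect the main obstacle to be concentrated in two routine but slightly delicate verifications. The first is that $j$ above is genuinely a braided Lie algebra morphism: here triangularity of $K$ is essential for the cancellation of the braided cross-terms, and the bookkeeping of the $\r$-matrix legs must be done carefully. The second is the assertion that the ``connected filtered bialgebra $\Rightarrow$ Hopf algebra'' mechanism transfers to the braided setting, which hinges on the remark above that convolution — unlike the bialgebra compatibility \eqref{cop-braid} itself — is a purely $\bbK$-linear operation and therefore never sees the braiding; once this is granted, the pointwise convergence of $S_{\U}$ and its $K$-equivariance are immediate from the behaviour of the filtration under $\Delta$ and the $K$-action.
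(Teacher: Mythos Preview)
Your argument is correct and takes a genuinely different route from the paper. The paper works on the tensor algebra $T(\g)$: it defines $\Delta$, $\cun$, $S$ on degree-one elements and extends them as (braided anti-)algebra maps, checks directly that $T(\g)$ is a braided Hopf algebra, and then shows that the ideal $I=\langle u\ot v-\r_\alpha\trl v\ot\r^\alpha\trl u-[u,v]\rangle$ is a braided Hopf ideal (triangularity being used precisely for the coideal condition), so that the structure descends to $\U(\g)=T(\g)/I$. You instead work entirely on $\U(\g)$ via its universal property: the computation showing that $j:\g\to\U(\g)\bot\U(\g)$ is a braided Lie morphism is essentially the same use of triangularity as the paper's coideal check, but packaged so that coassociativity and counitality fall out of uniqueness rather than from an inductive verification on $T(\g)$. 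For the antipode the paper simply extends $u\mapsto -u$ as a braided anti-algebra map on $T(\g)$ and passes to the quotient, whereas your Takeuchi-style construction $S_{\U}=\sum_{k\ge 0}(-1)^k\pi^{*k}$ is more conceptual and highlights that the antipode axiom \eqref{bantipode} is a purely convolution-theoretic statement, insensitive to the braiding. The trade-off is that the paper's approach makes the explicit formulas (e.g.\ $S$ as a braided anti-algebra map) immediately visible, while yours is cleaner but leaves those formulas to be recovered a posteriori, as you note in your final remark about $\U(\g)^{\mathrm{op}}$.
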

\begin{proof}
We first show that the tensor algebra $T(\g)$ is a braided Hopf algebra. 
We define the coproduct $\Delta (u)=u\bot 1+1\bot u$ for degree one elements $u\in \g$ and extend it to all
$T(\g)$ by requiring it to be a unital algebra map between $T(\g)$ and the braided tensor
algebra $T(\g)\bot T(\g)$ (cf. Proposition \ref{BTPalgK}):
$$
\Delta(\zeta \ot \xi)=(\one{\zeta}\ot\r_\alpha\trl\one{\xi})\bot
(\r^\alpha\trl\two{\zeta}\ot\two{\xi})$$
 for all $\zeta,\xi\in T(\g)$.
Coassociativity of $\Delta$ on degree one elements is trivial, it is then easy to see that if it holds for 
elements $\zeta$ and $\xi$ in $T(\g)$ it holds also for their product.
The counit $\varepsilon:T(\g)\to \bbK$ is the unital algebra map defined by
$\varepsilon(u)=0$ for all $u\in \g$.  
Then $(T(\g), \Delta, \varepsilon)$
is a $K$-module coalgebra as in \eqref{mod-co}.
Indeed,
the $K$-module coalgebra property $\Delta(k\trl
\zeta)=\one{k}\trl \one{\zeta}\bot\two{k}\trl\two{\zeta}$ 
holds on degree one elements and, using quasi-cocommutativity of $K$, it is  seen to hold on the
product $\zeta\otimes\xi$ if it holds on the single elements $\zeta,
\xi\in T(\g)$. The condition on the counit is trivially satisfied. Since $\Delta$ has been defined as an algebra map
$T(\g)\to \T(\g)\bot T(\g)$ we have that $T(\g)$ is a braided bialgebra 
associated with $(K,\r)$. It is a braided Hopf algebra with antipode 
the $K$-equivariant map  $S(u)=-u$ for $u\in \g$, extended to $T(\g)$ as braided anti-algebra map.

Finally, the quotient $\U(\g) = T(\g) / I$ is a braided Hopf algebra since the ideal 
$I = \langle u\ot v-\r_\alpha \trl v\ot \r^\alpha\trl u-[u,v] \rangle$ is a braided Hopf
ideal. Indeed, as already mentioned $I$ is a $K$-module; it is also a coideal since
$$\Delta(u\ot v-\r_\alpha \trl v\ot \r^\alpha\trl v-[u,v])\in I\bot
T(\g)+T(\g)\bot I$$ for all $u,v\in \g$, as can be shown by using triangularity of the universal $\r$ matrix.
\end{proof}

\begin{rem}
Over a field of  characteristic zero a braided Lie algebra $\g$ is
the $K$-submodule in $\U(\g)$ of primitive elements:
${\rm{Prim}}(\U(\g))=\g$. More in general, a connected braided cocommutative
Hopf algebra $L$ is the universal enveloping algebra of the braided Lie
algebra $\g={\rm{Prim}}(L)$ of its primitive elements, see \cite{Khor}.
\end{rem}

\subsection{Braided Lie algebras of braided derivations}\label{BLA}
We now study derivations of $K$-module algebras.

Let $A$ be a $K$-module algebra and $(\Hom(A,A), \trl_{\Hom(A,A)})$ the $K$-module
of linear maps from $A$ to $A$ as in  \eqref{action-hom}. 
We denote
\beq\label{Der}
\Der{A}:= \{ \psi \in  \Hom(A,A) \, | \, \psi(aa')= \psi(a)
a' + (\r_\alpha\trl a)\,(\r^\alpha 
\trl_{\Hom(A,A)}
\psi)(a') \}
\eeq
the  $\bbK$-module
of \textbf{braided derivations} of $A$. We add the superscript $\r$
and write $\rm{Der}^\r{(A)}$
when we wish to emphasize the
role of the braiding. 
{Braided derivations} are
 a {$K$-submodule} of $\Hom(A,A)$: 
 \begin{lem} For $(K,\r)$ quasi-triangular,
$\Der{A}$ is a $K$-module with action given by the restriction of 
 $\trl_{\Hom(A,A)}$:
 \begin{align}\label{action-der}
\trl_{\Der{A}}:  K \ot \Der{A} &\to \Der{A}
\nn
\\
k \ot \psi & \mapsto  k \trl_{\Der{A}} \psi : \; a \mapsto \one{k} \trl \psi(S(\two{k})\trl a) .
\end{align}
\end{lem}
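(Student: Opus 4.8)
The plan is to realise $\Der{A}$ as an equalizer, in the category ${}_K\M$, of two morphisms of $K$-modules with source $\Hom(A,A)$. An equalizer of $K$-module maps is automatically a $K$-submodule of the source, and the induced action on it is the restriction of the action on the source, which is exactly the content of \eqref{action-der}; so this will prove the lemma.

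First I would set up the target. Endow $A\otimes A$ with the tensor $K$-module structure \eqref{trlVW} and $\Hom(A\otimes A,A)$ with the action \eqref{action-hom}, and define two maps $P_1,P_2\colon\Hom(A,A)\to\Hom(A\otimes A,A)$ by $P_1(\psi)=\psi\circ m_A$ and $P_2(\psi)\colon a\otimes a'\mapsto \psi(a)\,a'+(\r_\alpha\trl a)\,(\r^\alpha\trl_{\Hom(A,A)}\psi)(a')$. By the defining condition \eqref{Der} we have $\Der{A}=\{\psi : P_1(\psi)=P_2(\psi)\}$, so it suffices to show that $P_1$ and $P_2$ are morphisms in ${}_K\M$: then for $\psi\in\Der{A}$ and $k\in K$ one gets $P_1(k\trl\psi)=k\trl P_1(\psi)=k\trl P_2(\psi)=P_2(k\trl\psi)$, whence $k\trl\psi\in\Der{A}$.

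The next step is to record three elementary equivariance facts, each a short Sweedler check using the antipode identities $m\circ(\id\otimes S)\circ\Delta=\eta\cun=m\circ(S\otimes\id)\circ\Delta$: (a) the evaluation $\mathrm{ev}\colon\Hom(V,W)\otimes V\to W$, $\psi\otimes v\mapsto\psi(v)$, is $K$-equivariant; (b) for $K$-equivariant $f\colon U\to V$, precomposition $\psi\mapsto\psi\circ f$ is a $K$-module map $\Hom(V,W)\to\Hom(U,W)$; (c) a $K$-equivariant map $g\colon X\otimes A\to A$ transposes to a $K$-module map $X\to\Hom(A,A)$, $x\mapsto g(x\otimes-)$. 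Granting these, $P_1$ is a $K$-module map by (b), since $m_A$ is $K$-equivariant ($A$ being a $K$-module algebra, \eqref{mod-a}). For $P_2$, the first summand is the transpose under (c) of $m_A\circ(\mathrm{ev}\otimes\id_A)$, and the second summand is the transpose of $m_A\circ(\id_A\otimes\mathrm{ev})\circ(\Psi^{\r}_{\Hom(A,A),A}\otimes\id_A)$, where $\Psi^{\r}_{\Hom(A,A),A}$ is the braiding \eqref{braiding}, which is a $K$-module map because $(K,\r)$ is quasitriangular; using (a) and equivariance of $m_A$, both composites are $K$-equivariant, hence so are their transposes and their sum $P_2$. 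This finishes the argument.

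The only point requiring care is checking that the two summands of $P_2$ are genuinely the transposes of the stated composites — in particular that $\Psi^{\r}_{\Hom(A,A),A}$ sits on the correct pair of tensor factors and in the correct order, so that one obtains $(\r_\alpha\trl a)\,(\r^\alpha\trl\psi)(a')$ rather than a mirrored term; this is bookkeeping once the relevant diagram is drawn. A reader preferring a hands-on argument can instead verify directly that for $\psi\in\Der{A}$ and $k\in K$ the map $k\trl_{\Hom(A,A)}\psi$ obeys the braided Leibniz rule: expand $(k\trl\psi)(aa')=\one{k}\trl\psi(S(\two{k})\trl(aa'))$ using \eqref{mod-a} and the anti-coalgebra property of $S$, apply the defining identity for $\psi$, and reassemble using \eqref{mod-a} again, the relation $k\trl\chi(y)=(\one{k}\trl_{\Hom}\chi)(\two{k}\trl y)$, the quasitriangular identity $(\Delta\otimes\id)\r=\r_{13}\r_{23}$ and quasi-cocommutativity; the antipode factors then cancel against the spare coproduct legs, leaving $(k\trl\psi)(a)\,a'+(\r_\alpha\trl a)(\r^\alpha\trl(k\trl\psi))(a')$. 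In that route the Sweedler index bookkeeping is the laborious part, and it is the main obstacle either way.
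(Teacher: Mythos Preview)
Your argument is correct and takes a genuinely different route from the paper. The paper proceeds by the direct Sweedler computation you sketch in your closing paragraph: it expands $(k\trl\psi)(ab)=\one{k}\trl\psi\big((S(\three{k})\trl a)(S(\two{k})\trl b)\big)$, applies the braided Leibniz rule for $\psi$, and then massages the second summand using $(\Delta\otimes\id)\r=\r_{13}\r_{23}$, the identity $(\id\otimes S)\br=\r$, and quasi-cocommutativity (twice) until the antipode legs cancel and the expression $(\r_\alpha\trl a)\,(\r^\alpha\trl(k\trl\psi))(b)$ emerges.

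Your equalizer approach is cleaner: it isolates the quasitriangularity hypothesis at a single point, namely the $K$-equivariance of the braiding $\Psi^{\r}_{\Hom(A,A),A}$, and the rest is formal closed-category bookkeeping (your facts (a)--(c), which are routine and hold for $\Hom(V,W)$ with arbitrary $K$-modules $V,W$, so your (c) applies with $V=A\otimes A$ as needed). This makes the proof essentially computation-free and immediately adaptable to analogous submodules cut out by equivariant conditions. The paper's direct computation, by contrast, shows concretely how each of the quasitriangular identities \eqref{iiR}, \eqref{iiiR}, \eqref{ant-R} is consumed, which is perhaps more illuminating for a reader tracking the $\r$-matrix through later calculations in the paper.
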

\begin{proof}
We only need to show that the restriction of $\trl_{\Hom(A,A)}$ to $\Der{A}$ has image in $\Der{A}$, that is, $k \trl_{\Hom(A,A)} \psi$ is a braided derivation if $\psi$ is such, for each $k \in K$. 
\begin{align*}
(k \trl & \psi )(ab) = \one{k} \trl \psi \big(S(\two{k}) \trl (ab)\big)
\\
& = 
\one{k} \trl \psi \big((S(\three{k}) \trl a) (S(\two{k}) \trl b)\big)
\\
& = 
\one{k} \trl \Big( \psi\big (S(\three{k}) \trl a \big) (S(\two{k}) \trl b)
+ (\r_\alpha S(\three{k}) \trl a) (\r^\alpha \trl  \psi) \big(S(\two{k}) \trl b\big) \Big)
\\
& = 
\one{k} \trl \psi\big( S(\four{k}) \trl a \big) \;  (\two{k}S(\three{k}) \trl b)
\\ & \qquad
+ (\one{k}  \r_\alpha S(\four{k}) \trl a) \; \two{k} \trl \big((\r^\alpha \trl \psi)(S(\three{k}) \trl b)\big)
\end{align*}
using that $\psi$ is a braided derivation and $A$ is a module algebra.
Then, the first term in the sum simplifies to $(k\trl \psi) (a) \, b$, while for the second term we have
\begin{align*}
\big(\one{k}  \r_\alpha S(\four{k}) \trl a\big) \; &\two{k} \trl \big((\r^\alpha \trl \psi)(S(\three{k}) \trl b)\big)
\\
&= 
(\one{k}  \r_\alpha S(\four{k}) \trl a) \;  (\two{k}  \one{\r^\alpha} )\trl \psi (S(\three{k} \two{\r^\alpha}) \trl b)
\\
&= 
(\one{k}  \r_\alpha \r_\beta S(\four{k}) \trl a) \;  ( \two{k} \r^\alpha )\trl \psi (S(\three{k} \r^\beta ) )\trl b)
\\
&= 
(\one{k}  \r_\alpha  S(\four{k} \br_\beta ) \trl a) \;  ( \two{k} \r^\alpha )\trl \psi ( S( \three{k} \br^\beta)\trl b)
\\
&= ( \r_\alpha  \two{k}  S( \br_\beta \three{k} ) \trl a) \;  ( \r^\alpha \one{k} )\trl \psi ( S(  \br^\beta \four{k})\trl b)
\\
&= ( \r_\alpha    \r_\beta \trl a) \;  ( \r^\alpha \one{k} )\trl \psi ( S(  \r^\beta \two{k})\trl b)
\end{align*}
using quasitriangularity for the second equality,  $(\id \ot S) \br = \r$ for the third one,  and
quasi-cocommutativity twice. Finally, again by quasitriangularity this is rewritten as 
$$
( \r_\alpha    \trl a) \;  ( \one{(\r^\alpha k)} )\trl \psi ( S( \two{(\r^\alpha k}))\trl b)
=  ( \r_\alpha    \trl a) \;  ((\r^\alpha  k) \trl \psi) (  b)
$$
thus showing that $k \trl \psi$ is a braided derivation:
$$
(k \trl \psi )(ab) = (k\trl \psi) (a) b +  ( \r_\alpha    \trl a) \;  (\r^\alpha  \trl (k \trl \psi)) (  b) .
$$
This ends the proof.
\end{proof}

As in Lemma \ref{lem:Abla}, when the Hopf algebra $K$ is triangular, associated with the $K$-module
    algebra $(\Hom(A,A),\circ)$ there is the braided Lie algebra
    $(\Hom(A,A), [~,~])$ where $[~,~]$ is the braided commutator as in \eqref{Abla}.  

For $a\in A$ we define
\begin{equation}\label{defofell}\ell_a\in \Hom(A,A)~,~~
  \ell_a:A\to A\,,~ a'\mapsto \ell_a(a')=aa'~.
\end{equation}
Using the $K$-module structure \eqref{action-hom} of  $\Hom(A,A)$, one shows  that $\ell :A\to \Hom(A,A)$, $a\mapsto \ell_a$, is $K$-equivariant, $k\trl\ell _a=\ell_{k\trl
    a}$ for all $k\in K$, $a\in A$. It follows that a linear map $\psi\in
  \Hom(A,A)$ is a $(K,\r)$-braided derivation, $\psi(ab)= \psi(a) b + (\r_\alpha\trl a)\,(\r^\alpha  \trl \psi)(b) $, if and only if
$\psi (\ell_a (b))= \ell_{\psi(a)} (b) + (\ell_{\r_\alpha\trl a}\circ (\r^\alpha  \trl \psi))(b) $, that is 
if and only if 
  \begin{equation}
    \label{psiella0}
    [\psi,\ell_a]=\ell_{\psi(a)}\end{equation}
  for all $a\in A$. 

\begin{prop}\label{prop:der}
Let $(K,\r)$ be triangular.
 For each $K$-module algebra $A$,  the  $K$-module of braided
 derivations with
\begin{align}\label{bracket-der}
[~ , ~]  :& \, \Der{A}\ot \Der{A} \to \Der{A}
\nn \\
& \psi \ot \lambda \mapsto [\psi,\lambda] :=\psi\circ \lambda-(\r_\alpha\trl_{\Der{A}} \lambda) \circ (\r^\alpha\trl_{\Der{A}} \psi) 
\end{align}
is a  braided Lie subalgebra of  $\Hom(A,A)$.
\end{prop}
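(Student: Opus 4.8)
The plan is to show that $\Der{A}$ is closed under the braided commutator \eqref{bracket-der} of $\Hom(A,A)$ and that it is a $K$-submodule (already established in the preceding lemma), so that it inherits the braided Lie algebra structure of $(\Hom(A,A),[~,~])$ from Lemma \ref{lem:Abla}. Since the braided Lie algebra axioms (equivariance, braided antisymmetry, braided Jacobi) hold for $\Hom(A,A)$ by Lemma \ref{lem:Abla}, they automatically restrict to any $K$-submodule closed under the bracket. Thus the only point requiring proof is: if $\psi,\lambda\in\Der{A}$ then $[\psi,\lambda]\in\Der{A}$.

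The cleanest route to closure is via the characterization \eqref{psiella0}: a linear map $\psi\in\Hom(A,A)$ is a braided derivation if and only if $[\psi,\ell_a]=\ell_{\psi(a)}$ for all $a\in A$, where $\ell:A\to\Hom(A,A)$ is the $K$-equivariant map $a\mapsto\ell_a$. So I would compute $[[\psi,\lambda],\ell_a]$ inside the braided Lie algebra $\Hom(A,A)$ using the braided Jacobi identity (iii) of Lemma \ref{lem:Abla}, applied to the triple $(\psi,\lambda,\ell_a)$:
$$
[\psi,[\lambda,\ell_a]] = [[\psi,\lambda],\ell_a] + [\r_\alpha\trl\lambda,[\r^\alpha\trl\psi,\ell_a]]~.
$$
Now use that $\lambda$ and $\psi$ are braided derivations, i.e. $[\lambda,\ell_a]=\ell_{\lambda(a)}$, and, because $\Der{A}$ is a $K$-submodule, $\r^\alpha\trl\psi$ is again a braided derivation, so $[\r^\alpha\trl\psi,\ell_a]=\ell_{(\r^\alpha\trl\psi)(a)}$. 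Substituting and using the $K$-equivariance of $\ell$ (so that $\r_\alpha\trl\ell_{\lambda(a)}=\ell_{\r_\alpha\trl\lambda(a)}$, and similarly inside the second term), one rewrites both bracket terms on the right as $\ell$ of something. Rearranging gives
$$
[[\psi,\lambda],\ell_a] = [\psi,\ell_{\lambda(a)}] - [\r_\alpha\trl\lambda,\ell_{(\r^\alpha\trl\psi)(a)}] = \ell_{\psi(\lambda(a))} - \ell_{(\r_\alpha\trl\lambda)((\r^\alpha\trl\psi)(a))} = \ell_{[\psi,\lambda](a)}~,
$$
where the middle step again uses $\psi\in\Der{A}$ and $\r_\alpha\trl\lambda\in\Der{A}$ together with the definition \eqref{bracket-der} of $[\psi,\lambda]$ as an element of $\Hom(A,A)$. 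By the characterization \eqref{psiella0} this precisely says $[\psi,\lambda]\in\Der{A}$.

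The main obstacle is bookkeeping the $\r$-matrix legs correctly when invoking $K$-equivariance of $\ell$ and the fact that $\r^\alpha\trl\psi$, $\r_\alpha\trl\lambda$ are themselves braided derivations: one must be careful that the same summation index $\alpha$ from the Jacobi identity is the one threaded through both $\ell$-equivariance and the derivation property, and that no spurious reordering of the (noncommuting) $\r$-legs occurs. A fully explicit alternative — directly expanding $[\psi,\lambda](aa')$ using \eqref{bracket-der}, the derivation property of $\psi$ and $\lambda$, the module-algebra axiom, quasitriangularity \eqref{iiiR} and the Yang–Baxter equation \eqref{YB}, and triangularity $\br=\r_{21}$ to collapse crossing terms — also works but is computationally heavier; I would present the $\ell$-based argument as the main proof and mention that triangularity enters only through its prior use in Lemma \ref{lem:Abla}. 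Either way the triangularity of $(K,\r)$ is essential, since it underlies the braided Jacobi identity being used.
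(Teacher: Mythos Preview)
Your proof is correct and follows essentially the same route as the paper: both reduce closure of $\Der{A}$ under the bracket to the characterization $[\psi,\ell_a]=\ell_{\psi(a)}$ and then apply the braided Jacobi identity in $\Hom(A,A)$ to compute $[[\psi,\lambda],\ell_a]=\ell_{[\psi,\lambda](a)}$. The paper's writeup is slightly terser but the argument is identical.
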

\begin{proof}
We only need to show that the image of $[~,~]$ is in $\Der{A}$.    
Let $\psi,\lambda \in \Der{A}$ and $a\in A$. Using the
characterization \eqref{psiella0} of briaded derivations  and the Jacobi identity in $\Hom(A,A)$, we have
\begin{align*}
[[\psi,\lambda], \ell_a] &= [\psi,[\lambda, \ell_a]] - [ \r_\alpha\trl \lambda, [\r^\alpha\trl \psi , \ell_a]]
\\
&= [\psi, \ell_{\lambda(a)}] - [ \r_\alpha\trl \lambda,  \ell_{(\r^\alpha\trl \psi) (a)}]
\\
&= \ell_{\psi (\lambda(a))} - \ell_{( \r_\alpha\trl \lambda)(\r^\alpha\trl \psi) (a)}
\\
&= \ell_{[\psi ,\lambda](a)} 
\end{align*}
and thus $[\psi,\lambda]$ is a braided derivation.
\end{proof}

Recall that the  $K$-module algebra $A$ is {\bf quasi-commutative} (or
braided commutative) when
\beq\label{qcAB} 
a\, a'  =   (\r_\alpha \trl{a'}) \, (\r^\alpha\trl{a})~, 
\eeq
 for all $ a,a' \in A$. In this case the braided Lie algebra $\Der{A}$ is also
a left $A$-submodule of $\Hom(A,A)$  
by defining
\begin{equation}\label{AmodderA}
  (a \psi)(c):=a\, \psi(c)
  \end{equation} 
for  $\psi \in
  \Hom(A,A)$, $a,c\in A$, that is,
  $a\psi:=\ell_a\circ \psi~.$
 Indeed, for  $a\in A$, 
$\psi\in
\Der{A}$ one has:
\begin{align*}
a\psi(cc')
&=a\psi(c)c'+a (\r_\alpha \trl
c)(\r^\alpha\trl\psi)(c')
=a\psi(c)c'+(\r_\beta\r_\alpha \trl
c)(\r^\beta\trl a)(\r^\alpha\trl\psi)(c')
\\[.2em]
&
=  a\psi(c)c'+( \r_\alpha \trl
c)(\one{\r^\alpha} \trl a)(\two{\r^\alpha}\trl\psi)(c')
=
a\psi(c)c'+ (\r_\alpha \trl
c)(\r^\alpha\trl a\psi)(c')\,.
\end{align*}
The compatibility with the $K$-action, as in \eqref{eqn:KArelmod},
shows that $\Der{A}$ is a 
$(K,A)$-relative Hopf module. The compatibility of the $A$-module and the braided
Lie algebra structures follows from 
$(\Hom(A,A),\circ, [~,~])$ being a braided Poisson algebra (cf. Remark \ref{Poisson}).

\begin{prop}\label{modLie}
Let $(K,\r)$ be triangular and let $A$ be a quasi-commutative $K$-module
algebra. The Lie bracket   of $\Der{A}$ is a derivation of the
$A$-module   $\Der{A}$, that is, it satisfies, for all
$a,a'\in A, \psi,\psi'\in \Der{A}$,
$$[\psi, a'\psi']=\psi(a')\psi'+\r_\alpha\trl a'[\r^\alpha\trl
\psi,\psi']$$
and more generally
\begin{equation}\label{moduloLie}\begin{split}
    [a \psi, a'\psi']\,=&~a \psi(a') \psi'
    +a(\r_\alpha\trl a')  [\r^\alpha\trl
    \psi,\psi']\\[.2em]
&    +\r_\beta\r_\alpha\trl a' \big(\r_\delta\r_\gamma\trl \psi'\big)\!\!\:( \r^\delta\r^\beta \trl a)\:\!
    \r^\gamma\r^\alpha\trl \psi~.\end{split}
\end{equation}
\end{prop}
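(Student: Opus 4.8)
The plan is to reduce the general formula \eqref{moduloLie} to the simpler derivation property $[\psi, a'\psi']=\psi(a')\psi'+\r_\alpha\trl a'\,[\r^\alpha\trl\psi,\psi']$, and to establish the latter directly from the definitions. First I would prove the single-slot version. Recall from Remark \ref{Poisson} and Proposition \ref{prop:der} that $(\Hom(A,A),\circ,[~,~])$ is a $K$-braided Poisson algebra, and that $\Der A$ is a left $A$-submodule via $a\psi=\ell_a\circ\psi$, with $\ell:A\to\Hom(A,A)$ a $K$-equivariant algebra map ($\ell_a\circ\ell_{a'}=\ell_{aa'}$, $k\trl\ell_a=\ell_{k\trl a}$). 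Using the braided Poisson (Leibniz) identity \eqref{eqPois} in $\Hom(A,A)$ with $r=\ell_{a'}$, $s=\psi'$, I get $[\psi,\ell_{a'}\circ\psi']=[\psi,\ell_{a'}]\circ\psi'+(\r_\alpha\trl\ell_{a'})\circ[\r^\alpha\trl\psi,\psi']$. Now $[\psi,\ell_{a'}]=\ell_{\psi(a')}$ by the characterization \eqref{psiella0} of braided derivations, and $\r_\alpha\trl\ell_{a'}=\ell_{\r_\alpha\trl a'}$ by $K$-equivariance of $\ell$. Hence $[\psi,a'\psi']=\psi(a')\psi'+(\r_\alpha\trl a')[\r^\alpha\trl\psi,\psi']$, as claimed; note this step does not yet use quasi-commutativity of $A$, only that $A$-action on $\Der A$ is well defined.

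Next I would obtain \eqref{moduloLie} by applying the single-slot identity in both arguments together with braided antisymmetry (ii) and $K$-equivariance (i) of the bracket on $\Der A$. Write $[a\psi,a'\psi']=-[\r_\mu\trl(a'\psi'),\r^\mu\trl(a\psi)]$; since the $A$-action is compatible with the $K$-action ($\Der A$ is a $(K,A)$-relative Hopf module, i.e. $\r_\mu\trl(a'\psi')=(\one{\r_\mu}\trl a')(\two{\r_\mu}\trl\psi')$ and similarly for $a\psi$), I can expand both factors, use the single-slot derivation identity just proved (in the "left" slot, after another antisymmetry flip so that the scalar sits on the right), and then repeatedly apply the quasitriangularity relations \eqref{iiiR}, the Yang–Baxter equation \eqref{YB}, and triangularity $\br=\r_{21}$ to move all the $\r$-legs into the pattern displayed in \eqref{moduloLie}. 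At this point quasi-commutativity \eqref{qcAB} of $A$ enters: it is needed precisely to reorder the two algebra elements $a,a'$ (or $\psi(a')$ against $a$) picked up from the two slots into the order shown, producing the $\big(\r_\delta\r_\gamma\trl\psi'\big)(\r^\delta\r^\beta\trl a)\,\r^\gamma\r^\alpha\trl\psi$ term. Equivalently, and perhaps more cleanly, one applies the single-slot identity twice in succession: first expand $[a\psi,a'\psi']=a[\psi,a'\psi']+(\ldots)$ using that $[~,~]$ is a braided derivation of the $A$-module $\Der A$ in its first argument (again via antisymmetry from the second-argument statement), then expand the inner $[\psi,a'\psi']$; the bookkeeping of the resulting four terms, after combining with \eqref{qcAB}, gives the three terms of \eqref{moduloLie} (one pair of terms merges).

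I expect the main obstacle to be the purely computational one of tracking the $\r$-matrix legs through the double expansion and collapsing them via \eqref{iiiR}, \eqref{YB} and triangularity into exactly the index pattern written on the right-hand side of \eqref{moduloLie} — in particular making sure the application of quasi-commutativity is inserted at the right place and that no stray braiding survives. Conceptually there is nothing subtle: everything follows from $\Der A$ being a braided Poisson $A$-module, i.e. from \eqref{eqPois}, \eqref{psiella0} and $K$-equivariance of $\ell$, the only genuinely new input beyond Proposition \ref{prop:der} being that quasi-commutativity of $A$ lets the $A$-module and Lie structures interact as in \eqref{moduloLie}. I would present the single-slot identity as the key lemma-step and then indicate the general case by the antisymmetry-plus-quasi-commutativity argument, leaving the index chase to a short explicit computation.
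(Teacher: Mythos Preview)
Your approach is essentially the same as the paper's: both work inside the $K$-braided Poisson algebra $(\Hom(A,A),\circ,[~,~])$, identify $a\psi=\ell_a\circ\psi$, and use $K$-equivariance of $\ell$ together with the characterization $[\psi,\ell_{a'}]=\ell_{\psi(a')}$. The only organizational difference is that the paper first derives the general four-term Poisson identity
\[
[p\cdot q,p'\cdot q']=p\cdot[q,p']\cdot q'+[p,\r_\alpha\trl p']\cdot\r^\alpha\trl q\cdot q'+\r_\beta\r_\alpha\trl p'\cdot\r^\beta\trl p\cdot[\r^\alpha\trl q,q']+\r_\beta\r_\alpha\trl p'\cdot[\r^\beta\trl p,\r_\gamma\trl q']\cdot\r^\gamma\r^\alpha\trl q
\]
in an arbitrary braided Poisson algebra and then specializes to $p=\ell_a$, $p'=\ell_{a'}$, $q=\psi$, $q'=\psi'$; quasi-commutativity of $A$ enters precisely as $[\ell_a,\ell_{a'}]=0$, which makes the second term \emph{vanish} (rather than two terms merging, as you wrote) and leaves exactly the three terms of \eqref{moduloLie}. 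This is cleaner than your antisymmetry-plus-iteration route because it avoids the index chase you anticipate as the main obstacle.
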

\begin{rem}
The knowledge of the bracket on a $K$-submodule $\mathfrak{X}$ of
$\Der{A}$ (not necessarily a braided Lie subalgebra)
that generates $\Der{A}$ as a left $A$-module is therefore sufficient
to determine via \eqref{moduloLie} the
$K$-braided Lie algebra $\Der{A}$.
\end{rem}
\begin{proof}
Let $(P, \cdot, [~,~])$ be a $K$-braided Poisson algebra as in Remark
\ref{Poisson}. From \eqref{eqPois} and the braided antisymmetry of the bracket we have,  $[p\cdot q,
r]=p\cdot [q,r]+[p,\r_\alpha\trl r]\cdot \r^\alpha \trl q$, for
all $p,q,r\in P$. Using
this and \eqref{eqPois} again we obtain
more generally 
\begin{equation}\label{pqp'q'}\begin{split}
    [p\cdot q, p'\cdot q']\,=&~p\cdot [q,p']\cdot q'
    +[p, \r_\alpha\trl  p']\cdot \r^\alpha \trl q\cdot q'
    +\r_\beta\r_\alpha\trl p'\cdot \r^\beta \trl p\cdot [\r^\alpha\trl
    q,q']\\ &
    +\r_\beta\r_\alpha\trl p'\cdot [ \r^\beta \trl p ,\r_\gamma\trl q']
    \cdot\r^\gamma\r^\alpha\trl q~.
         \\[-1.2em]
  \end{split}
\end{equation}
We now consider the case $(P,\cdot, [~,~])=(\Hom(A,A),\circ,[~,~])$.
When $A$ is quasi-commutative the left action of $A$ on $\Der{A}$
is $a\psi=\ell_a\circ\psi$ for all $a\in A$, $\psi\in \Der{A}$, so
that $[\psi,\ell_{a'}]\circ\psi'=\ell_{\psi(a')}\circ \psi'=\psi(a')\psi'$. Quasi-commutativity of $A$ in the form
$[\ell_a,\ell_{a'}]=0$ for all $a,a'\in A$ implies
$[\ell_a,\ell_{\r_\alpha\trl a'}]\otimes \r^\alpha=0$. Then, one obtains
equation \eqref{moduloLie} setting $p=\ell_a$, $p'=\ell_{a'}$, $q=\psi$,
$q'=\psi'$  in \eqref{pqp'q'}.
\end{proof}

\begin{ex}\label{exAtr}
{\it Braided Lie algebra of a bicovariant
   differential calculus on a cotriangular Hopf algebra.}
 Let $(A,R)$ be a cotriangular Hopf algebra with dual triangular Hopf
algebra $(U,\mathscr{R})$, with 
pairing $\langle\, , \, \rangle : U\ot A \to \bbK$ such that $R(a\ot a')= \langle  \mathscr{R}, a \ot a'\rangle:=\langle
\mathscr{R}^\alpha, a\rangle\langle \mathscr{R}_\alpha, a'\rangle$,
for all $a,a' \in A$. The opposite  Hopf algebra $U^{op}$  is triangular with universal matrix $\overline{\mathscr{R}}$.
The algebra $A$ is a $U^{op}\otimes U$-module algebra via
$\trl : U^{op}\otimes U\otimes A\to A$, $(\zeta\otimes\xi)\trl
a=\langle\zeta,\one{a}\rangle\two{a}\langle\xi,\three{a}\rangle$.
 Furthermore,
$U^{op}\otimes U$ is triangular with
$$\mathfrak{R}=(\id\otimes \rm{flip}\otimes\id)(\overline{\mathscr{R}}\otimes
\mathscr{R})~.$$
It is not difficult to see  that the
property $m_{op}=R*m*\overline{R}$ of the
cotriangular Hopf algebra $(A, R)$ (see (ii) in Definition \ref{defcoqt}) is just
the quasi-commutativity property
\eqref{qcAB} of $A$ with respect to the triangular structure $\mathfrak{R}$ of $U^{op}\otimes U$. In this quasi-commutative case
the canonical notion of braided derivations of $A$
is given by equation \eqref{Der} where the braiding is the one
associated with ${\mathfrak{R}}$; we denote this braided Lie algebra by
${\rm{Der}}^{\mathfrak{R}\!\!\:}(A)$.
Hence 
${\rm{Der}}^{\mathfrak{R}\!\!\:}(A)$ is a  $U^{op}\otimes U$-braided
Lie algebra and a $(U^{op}\otimes U, A)$-relative Hopf
module, the compatibility between these structures being as in
Proposition \ref{modLie}.

As shown in \cite[\S 3.3]{LC-Aschieri}, if $U$ separates the elements
of $A$, the braided derivations
${\rm{Der}}^{\mathfrak{R}\!\!\:}(A)$ define a bicovariant differential calculus on $A$,
the unique bicovariant differential calculus compatible with the
cotriangular structure $R$ studied in \cite[\S 4.3]{Gomez-Majid}.
The braided derivations 
${\rm{Der}}^{\mathfrak{R}\!\!\:}(A)$ give the $A$-bicovariant bimodule dual
to that of one-forms.

By the fundamental theorem of bicovariant bimodules 
${\rm{Der}}^{\mathfrak{R}\!\!\:}(A)=
A\otimes {\rm{Der}}^{\mathfrak{R}\!\!\:}(A){}_{\rm{inv}}$, 
that is, braided derivations
 are freely generated as  a left
$A$-module by the right-invariant ones
\begin{equation}\label{rdifcalc}
  {\rm{Der}}^{\mathfrak{R}\!\!\:}(A){}_{\rm{inv}}:=\{u\in
{\rm{Der}}^{\mathfrak{R}\!\!\:}(A)\,|\,\Delta(u(a))= u(\one{a})\otimes
\two{a} \mbox{ for all } a\in A\}\,.
\end{equation}
We write $ {\rm{Der}}^{\mathfrak{R}\!\!\:}(A){}_{\rm{inv}}$
rather than ${\rm{Der}}^{\mathfrak{R}\!\!\:}_{\M^A}(A){}$ 
to conform with Woronowicz's notation \cite{wor}. Right-invariance implies
that the $U$-action is trivial,  
$
\xi\trl u=\varepsilon(\xi) u
\:\!
$
for all $\xi\in U$, $u\in
{\rm{Der}}^{\mathfrak{R}\!\!\:}(A){}_{\rm{inv}}$. Indeed
$\xi\trl(u(a))
=u(a)_{(1)}\hsr\xi{u(a)_{(2)}}=u(a_{(1)})\hsr\xi{a_{(2)}}
=u(\xi\trl
a)$. Also the $U^{op}$-action closes in
${\rm{Der}}^{\mathfrak{R}\!\!\:}(A){}_{\rm{inv}}$:
\begin{equation}\label{Uopaction}
  \begin{split}(\zeta\trl u)(a)&=
\one{\zeta}\trl(u(S^{-1}(\two{\zeta})\trl a))\\ &=\langle
S^{-1}(\two{\zeta}) , \one{a}\rangle\,\one{\zeta}\trl(u(\two{a}))\\
&=
\langle S^{-1}(\two{\zeta}) , \one{a}\rangle\langle
\one{\zeta},\one{u(\two{a})}\rangle\, \two{u(\two{a})}\\
&=
\langle S^{-1}(\two{\zeta}) , \one{a}\rangle\langle
\one{\zeta},u(\two{a})\rangle\, \three{a}
\end{split}
\end{equation}
for all $\zeta\in U^{op}$, $u\in
{\rm{Der}}^{\mathfrak{R}\!\!\:}(A){}_{\rm{inv}}$, $a\in A$,
and 
\begin{equation}\label{closureUop}
  \Delta((\zeta\trl u)(a))
=\langle S^{-1}(\two{\zeta}) , \one{a}\rangle\langle
\one{\zeta},{u(\two{a})}\rangle\, \three{a}\otimes\four{a}
=(\zeta\trl u)(\one{a})\otimes\two{a}~.
\end{equation}
This implies that ${\rm{Der}}^{\mathfrak{R}\!\!\:}(A){}_{\rm{inv}}$ is
a $U^{op}\otimes U$-submodule and a braided subalgebra of
${\rm{Der}}^{\mathfrak{R}\!\!\:}(A){}$. Since
\begin{equation*}
  (\mathfrak{R}_\al\trl a) \!\:\mathfrak{R}^\al\trl
u=(\overline{\mathscr{R}}_\alpha\!\!\:\trl\!\!\:
(\mathscr{R}_\beta\trl a))\;\!\overline{\mathscr{R}}^\al\!\!\:\trl\!\!\:(\mathscr{R}^\beta\trl
u)=
(\overline{\mathscr{R}}_\alpha\trl a)\:\!\overline{\mathscr{R}}^\alpha\trl u~,
\end{equation*}
we further have
\begin{equation}\label{deri=deri}
  {\rm{Der}}^{\mathfrak{R}\!\!\:}(A){}_{\rm{inv}}={\rm{Der}}^{\overline{\mathscr{R}}\!\!\:}(A){}_{\rm{inv}}~.
  \end{equation}
This  is a $U^{op} $-braided Lie algebra
isomorphism.
\end{ex}
\subsection{Actions of braided Lie algebras and universal enveloping algebras}\label{BLAD}

An action of a $K$-braided Lie algebra $\g$ on a $K$-module $W$ is a
$K$-equivariant map
$${\lie} : \g\otimes W\to W ~,~~u\otimes w\mapsto {\lie}_u(w)
$$
such that
${\lie}_{[u,v]}={\lie}_u\circ
{\lie}_v-{\lie}_{\r_\alpha\trl_V v}\circ {\lie}_{\r^\alpha\trl_Vu}$
for all $u,v\in \g$.
Equivalently, an action such that
$(\Hom(W,W ), \lie)$ is an enveloping algebra of $\g$. 

When $W=A$ is a $K$-module algebra, the action of $\g$   
is in addition required to satisfy the braided Leibniz rule:
\begin{equation}\label{leibuab}
\lie_u(ab)= \lie_u(a) b + (\r_\alpha\trl a)\,\lie_{\r_{}^\alpha \trl_\g u}(b) ~,
\end{equation}
for all
$u\in \g$, $a,b\in A$.
Equivalently, $\lie : 
\g\to \Hom(A,A)$  is required to have image in $\Der{A}$. Hence it is a morphism
$\lie: \g\to \Der{A}$ of braided Lie algebras.\\

\begin{prop}
  Let $\U(\g)$ be the universal enveloping algebra of a braided Lie
  algebra $\g$ associated with $(K,\r)$.
  Any action $\lie: \g\otimes A\to A$  of a braided Lie algebra $\g$ on a $K$-module
  algebra $A$ lifts to an action $\lie : \U(\g)\otimes A\to A$ of the braided Hopf algebra $\U(\g)$
  on $A$.
\end{prop}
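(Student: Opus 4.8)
The plan is to build the lift from the universal property of $\U(\g)$. As already noted just before \eqref{leibuab}, the datum of an action $\lie:\g\otimes A\to A$ is equivalent to that of a morphism $\lie:\g\to\Der{A}\subseteq\Hom(A,A)$ of $K$-braided Lie algebras, where $(\Hom(A,A),\circ)$ carries the braided commutator of Lemma \ref{lem:Abla} (the morphism property comes from $K$-equivariance of $\lie$ together with the action relation $\lie_{[u,v]}=\lie_u\circ\lie_v-\lie_{\r_\alpha\trl v}\circ\lie_{\r^\alpha\trl u}$). Hence $(\Hom(A,A),\lie)$ is an enveloping algebra of $\g$, and the universal property of $(\U(\g),\iota)$ produces a unique $K$-equivariant algebra map $\widehat{\lie}:\U(\g)\to\Hom(A,A)$ with $\widehat{\lie}\circ\iota=\lie$. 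I will then set $\xi\btrl_A a:=\widehat{\lie}(\xi)(a)$. Since $\widehat{\lie}$ is a unital algebra map into $(\Hom(A,A),\circ)$, this is at once a left $\U(\g)$-module structure on $A$, $(\xi\xi')\btrl_A a=\xi\btrl_A(\xi'\btrl_A a)$ and $1\btrl_A a=a$, and it restricts to $\lie$ on $\g$, so it is indeed a lift of the given action. That $\btrl_A$ is moreover $K$-equivariant in the sense of \eqref{action-L} is a short computation: expanding $(\one{k}\trl\xi)\btrl_A(\two{k}\trl a)=\bigl(\one{k}\trl_{\Hom(A,A)}\widehat{\lie}(\xi)\bigr)(\two{k}\trl a)$ with the explicit action \eqref{action-hom} and collapsing via $S(\one{k})\two{k}=\cun(k)1$ returns $k\trl(\xi\btrl_A a)$.

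The substantive point is the braided Leibniz rule \eqref{LAbraidedaction}. Here I would first recast it categorically: by the lemma in Section \ref{sec:braidedHa} producing, for a braided bialgebra acting on $K$-modules $M$ and $N$, the tensor action $\btrl_{M\otimes N}$, the $K$-module $A\otimes A$ carries the $\U(\g)$-action $\xi\btrl_{A\otimes A}(a\otimes c)=\one{\xi}\btrl_A(\r_\alpha\trl a)\otimes(\r^\alpha\trl\two{\xi})\btrl_A c$, and \eqref{LAbraidedaction} is precisely the statement that $m_A:A\otimes A\to A$ is a morphism of $\U(\g)$-modules. This map $m_A$ is $K$-equivariant (as $A$ is a $K$-module algebra), it intertwines the action of $1_{\U(\g)}$, and it intertwines the action of every $u\in\g$: using $\Delta(\iota(u))=\iota(u)\bot1+1\bot\iota(u)$, the normalisation $(\cun\otimes\id)\r=1$, and $\lie_{\r^\alpha\trl u}=\r^\alpha\trl\lie_u$, one finds $m_A\bigl(\iota(u)\btrl_{A\otimes A}(a\otimes c)\bigr)=\lie_u(a)\,c+(\r_\alpha\trl a)\,\lie_{\r^\alpha\trl u}(c)=\lie_u(ac)=\iota(u)\btrl_A m_A(a\otimes c)$, which is exactly \eqref{leibuab}. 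Finally, the set of $\xi\in\U(\g)$ for which $m_A$ intertwines the $\xi$-action is a subalgebra — immediate from associativity of $\btrl_A$ and of $\btrl_{A\otimes A}$ — and it contains $\iota(\g)$ together with $1_{\U(\g)}$, which generate $\U(\g)$ as an algebra; hence it is all of $\U(\g)$, establishing \eqref{LAbraidedaction}. (The same subalgebra argument, starting from $\lie_u(1_A)=0$, also shows $\eta_A$ is $\U(\g)$-equivariant, so that $A$ is in fact a $\U(\g)$-module algebra.)

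I expect the only genuine subtlety to be the bookkeeping in recognising \eqref{LAbraidedaction} as ``$m_A$ is equivariant for the tensor $\U(\g)$-action'': once this is set up, all the quasitriangular manipulations (Yang--Baxter, the identities \eqref{iiiR}) are absorbed into the already-proved lemma on $\btrl_{M\otimes N}$, and the passage from $\g$ to all of $\U(\g)$ reduces to the trivial observation that the intertwining elements form a subalgebra. The one spot where one must be careful with the $R$-matrix normalisation and the explicit primitive coproduct of $\iota(u)$ is the base case above, i.e. checking that on degree-one elements the tensor-action form of \eqref{LAbraidedaction} collapses to the braided Leibniz rule \eqref{leibuab}.
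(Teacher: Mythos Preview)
Your proof is correct and follows the same overall architecture as the paper's: lift $\lie:\g\to\Hom(A,A)$ to an algebra map $\widehat{\lie}:\U(\g)\to\Hom(A,A)$ via the universal property, then verify the braided Leibniz rule \eqref{LAbraidedaction} by reducing to degree-one elements and propagating multiplicatively. Where you differ is in the execution of that last step. The paper carries out the inductive step by hand: it expands both $\lie_{\zeta\xi}(ab)$ and $\lie_{\one{(\zeta\xi)}}(\r_\gamma\trl a)\,\lie_{\r^\gamma\trl\two{(\zeta\xi)}}(b)$ explicitly and matches them using the quasitriangular identities \eqref{iiiR}. You instead recast \eqref{LAbraidedaction} as the statement that $m_A$ intertwines the $\U(\g)$-action on $A$ with the tensor action $\btrl_{A\otimes A}$ from the lemma in \S\ref{sec:braidedHa}; since that lemma already establishes $\btrl_{A\otimes A}$ is an honest action, the set of intertwining $\xi$ is automatically a subalgebra and the induction becomes a one-line observation. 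Your route is cleaner and makes transparent why the argument works --- all the $\r$-matrix manipulations are packaged into the tensor-action lemma --- while the paper's direct computation is more self-contained and avoids invoking that lemma as a black box.
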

  \begin{proof}  By definition of universal enveloping algebra, 
the $K$-braided Lie algebra morphism $\lie: (\g, [~,~]) \to (\Hom(A,A), [~,~])$  lifts
to a unique $K$-equivariant algebra morphism $\widehat{\lie}: (\U(\g), \cdot) \to (\Hom(A,A),\circ)$ that with
slight abuse of notation we still denote ${\lie}: \U(\g) \to
\Hom(A,A)$.

We show that condition \eqref{LAbraidedaction} holds; for all 
$\xi\in \U(\g)$,
\begin{equation}\label{Lxiableib}
\lie_\xi(ab)= \lie_{\one{\xi}}(\r_\gamma\trl a)\,\lie_{\r_{}^\gamma 
\trl \two{\xi}}(b)
\end{equation} (where we dropped the subscripts of the $K$-actions).
This is indeed the case if $\xi\in\g\subset\U(\g)$. The equality is
then proven inductively, by showing that if $\zeta, \xi\in\U(\g)$ satisfy \eqref{Lxiableib}
then so does the product $\zeta\xi$. On one hand we have
\begin{align*}
  \lie_{\zeta\xi}(ab)=\lie_\zeta(\lie_\xi(ab))&=\lie_\zeta(\lie_{\one{\xi}}(\r_\gamma\trl
                                                a)\!\:\lie_{\r^\gamma\trl\two{\xi}}(b))\\
&= \lie_{\one{\zeta}}(\r_\alpha\trl \lie_{\one{\xi}}(\r_\gamma\trl
                                                                                      a))\!\:\lie_{\r^\alpha\trl\two{\zeta}}(\lie_{\r^\gamma\trl\two{\xi}}(b))\\
       &=\lie_{\one{\zeta}}(\lie_{\r_{\one{\alpha}}\trl
         \one{\xi}}(\r_{\two{\alpha}}\r_\gamma\trl
         a))\:\!\lie_{\r^\alpha\trl\two{\zeta}}(\lie_{\r^\gamma\trl\two{\xi}}(b))\\
  &=\lie_{\one{\zeta}}\lie_{{\r_{\alpha}}\trl \one{\xi}}(\r_\beta\r_\gamma\trl  a)\;\!\lie_{\r^\beta\r^\alpha\trl\two{\zeta}}(\lie_{\r^\gamma\trl\two{\xi}}(b))\\
&=\lie^{}_{\one{\zeta}(\r_{\alpha}\trl \one{\xi})}(\r_\beta\r_\gamma\trl
                                                                                                                                                                  a)\:\!\lie_{(\r^\beta\r^\alpha\trl\two{\zeta})(\r^\gamma\trl\two{\xi})}(b)~.
\end{align*}
On the other hand, since  
$\Delta(\zeta\xi)=\Delta(\zeta)\tpr\Delta(\xi)=\one{\zeta}(\r_\alpha\trl\one{\xi})\otimes(\r^\alpha\trl\two{\zeta})\two{\xi}$, we have
\begin{align*}
  \lie_{\one{(\zeta\xi)}}(\r_\gamma\trl a)\lie_{\r^\gamma\trl
  \two{(\zeta\xi)}} (b) \,& =\lie^{}_{\one{\zeta}(\r_{\alpha}\trl
                          \one{\xi})}(\r_\gamma\trl
                          a)\:\!\lie^{}_{\r^\gamma\trl
                          ((\r^\alpha\trl\two{\zeta})\two{\xi})}(b)\\
  & =\lie^{}_{\one{\zeta}(\r_{\alpha}\trl
                          \one{\xi})}(\r_\gamma\trl
                          a)\:\!\lie^{}_{\one{(\r^\gamma}\r^\alpha\trl\two{\zeta})(\two{\r^\gamma}\trl\two{\xi})}(b)\\
  & =\lie^{}_{\one{\zeta}(\r_{\alpha}\trl \one{\xi})}(\r_\beta\r_\gamma\trl a)\:\!\lie^{}_{(\r^\beta\r^\alpha\trl\two{\zeta})(\r^\gamma\trl\two{\xi})}(b)~.
\end{align*}
Comparing these two
expressions we obtain $
\lie_{\zeta\xi}(ab)=\lie_{\one{(\zeta\xi)}}(\r_\gamma\trl
a)\:\!\lie_{\r^\gamma\trl  \two{(\zeta\xi)}} (b) \,$.
\end{proof}
When $\g=\Der{A}$ with action $\Der{A}\otimes A\to A$ given by the evaluation, we obtain: 
  \begin{cor}\label{UDAonA} Let $A$ be a $K$-module algebra and $\Der{A}$ the
    associated braided Lie algebra of derivations. 
  The universal enveloping algebra $\U(\Der{A})$ acts on $A$ as a
  $K$-braided Hopf algebra.
\end{cor}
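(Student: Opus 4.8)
The plan is to derive this corollary directly from the preceding Proposition, which states that any action of a braided Lie algebra $\g$ on a $K$-module algebra $A$ lifts to an action of the braided Hopf algebra $\U(\g)$ on $A$. First I would verify that the hypotheses of that Proposition are met in the present situation: we take $\g = \Der{A}$, which by Proposition \ref{prop:der} is a $K$-braided Lie algebra (a braided Lie subalgebra of $\Hom(A,A)$), and we take as action the evaluation map $\Der{A} \ot A \to A$, $\psi \ot a \mapsto \psi(a)$.

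The key steps are then: (1) check that evaluation is $K$-equivariant, which is immediate from the definition of the $K$-action \eqref{action-der} on $\Der{A}$ together with the $K$-action on $A$ — indeed $k \trl \psi(a) = \one{k}\trl\psi(S(\two{k})\two{k}\trl a) = (\one{k}\trl_{\Der{A}}\psi)(\two{k}\trl a)$ after inserting $S(\two{k})\three{k} = \cun(\two{k})1$; (2) check that evaluation satisfies the defining identity for an action of a braided Lie algebra, namely $\lie_{[\psi,\lambda]} = \lie_\psi \circ \lie_\lambda - \lie_{\r_\alpha\trl\lambda}\circ\lie_{\r^\alpha\trl\psi}$ — but this is precisely the definition \eqref{bracket-der} of the bracket on $\Der{A}$ evaluated on an element of $A$, since $[\psi,\lambda] = \psi\circ\lambda - (\r_\alpha\trl\lambda)\circ(\r^\alpha\trl\psi)$ as maps $A \to A$; (3) check the braided Leibniz rule \eqref{leibuab}, which holds by the very definition \eqref{Der} of $\Der{A}$. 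Thus evaluation is an action of the braided Lie algebra $\Der{A}$ on the $K$-module algebra $A$, and applying the Proposition gives the lift to an action of $\U(\Der{A})$ on $A$ as a braided Hopf algebra in the braided monoidal category $(\KM,\otimes,\Psi^\r)$ of $K$-modules.

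Since all three verifications are essentially tautological — they reduce to unwinding definitions that were already established in Section \ref{BLA} — there is no genuine obstacle here; the corollary is a direct specialization. If anything, the only point deserving a sentence of care is confirming that the lifted action $\lie : \U(\Der{A}) \ot A \to A$ is genuinely a braided Hopf algebra action, i.e. that it satisfies \eqref{LAbraidedaction} with respect to the braided coproduct of $\U(\Der{A})$; but this is exactly the content of equation \eqref{Lxiableib} proven in the Proposition. So the proof is simply: the evaluation map makes $A$ into a module over the braided Lie algebra $\Der{A}$, hence by the Proposition over its universal enveloping algebra $\U(\Der{A})$, which is a braided Hopf algebra by Proposition \ref{prop:uea}. \hfill$\square$
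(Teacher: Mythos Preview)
Your proposal is correct and follows exactly the same route as the paper: the corollary is obtained by specializing the preceding Proposition to $\g=\Der{A}$ with the evaluation map as action, and the paper's proof is literally just that one sentence. Your explicit verification of the three hypotheses (which the paper leaves implicit) is fine, though note a small index slip in your $K$-equivariance line --- it should read $\one{k}\trl\psi(S(\two{k})\three{k}\trl a)$ rather than $S(\two{k})\two{k}$, as your subsequent parenthetical correctly indicates.
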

Thus, a $K$-module algebra $A$ is canonically a braided
$\U(\Der{A})$-module algebra with respect to the braided Hopf algebra $\U(\Der{A})$.
We notice that in this  context primitive
elements $\psi \in \Der{A} \subset \U(\Der{A})$ act on $A$ as braided derivations:   
\begin{align*}
\lie_\psi(ab) = \lie_{\one{\psi}}(\r_\alpha \trl_A a) \, (\r^\alpha \trl_{\Der{A}} \lie_{\two{\psi}})(b) 
= \lie_\psi(a) b + (\r_\alpha\trl a)\,\lie_{\r_{}^\alpha \trl_{\Der{A}} \psi}(b) . 
\end{align*}

There is a canonical action of a braided Lie algebra $\g$ on the
$K$-module algebra $\U(\g)$,  the adjoint action.
\begin{prop} 
Given a braided Lie algebra $\g$ associated with $(K, \r)$, the braided adjoint action
\eqref{bLad} is an action of $\g$ on the braided Hopf algebra $\U(\g)$; equivalently, a morphism $\g\to \Der{\U(\g)}$ of braided Lie algebras.
\end{prop}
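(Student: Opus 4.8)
The plan is to show that the braided adjoint action $\btrl_{\ad_\r}$ of a braided Lie algebra $\g$ on $\U(\g)$ restricts to an action on the braided Lie subalgebra $\g\subset\U(\g)$, and that this restriction lands in $\Der{\U(\g)}$, i.e.\ defines a braided Lie algebra morphism. Concretely, for $u\in\g\subset\U(\g)$ the braided coproduct of $\U(\g)$ is $\tDelta(u)=u\bot 1+1\bot u$ and the braided antipode is $S_L(u)=-u$, so formula \eqref{bLad} collapses to
\beq
u\tadr\xi=u\,(\r_\alpha\trl_L\xi)\,\r^\alpha\trl_L S_L(1)+1\,(\r_\alpha\trl_L\xi)\,\r^\alpha\trl_L S_L(u)
= u\xi-(\r_\alpha\trl_L \xi)(\r^\alpha\trl_L u)\, ,
\eeq
using $S_L(u)=-u$, $\r^\alpha\trl_L S_L(1)=1$ and unitality of the $\r$-matrix \eqref{Rnormalized}. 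This is exactly the braided commutator $[u,\xi]_\r$ of Lemma \ref{lem:Abla} applied in the $K$-module algebra $\U(\g)$. In particular when $\xi=v\in\g$ this reduces to the Lie bracket $[u,v]$ of $\g$ (since $\g$ is a braided Lie subalgebra of $\U(\g)$ by construction via the ideal $I$), so $u\tadr v\in\g$ and the restriction $\g\ot\g\to\g$ coincides with the bracket of $\g$.

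Next I would invoke the previous Proposition: $\tadr$ is an action of $L=\U(\g)$ on the $K$-module algebra $\U(\g)$ satisfying the braided Leibniz rule \eqref{bralg}. Restricting the first tensor slot to $u\in\g$, equation \eqref{bralg} reads
\beq
u\tadr(xy)=(\one{u}\tadr(\r_\alpha\trl_L x))\,((\r^\alpha\trl_L\two{u})\tadr y)
= (u\tadr(\r_\alpha\trl_L x))\,(\r^\alpha\trl_L\,\varepsilon\text{-part})\ \cdots
\eeq
but it is cleaner to use $\tDelta(u)=u\bot1+1\bot u$ directly: the two terms give $u\tadr(xy)=(u\tadr x)\,y+(\r_\alpha\trl_L x)\,((\r^\alpha\trl_L u)\tadr y)$, which is precisely the braided derivation property \eqref{Der} for the map $\lie_u:=u\tadr(-):\U(\g)\to\U(\g)$. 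Hence $\lie_u\in\Der{\U(\g)}$ for every $u\in\g$. Combined with $K$-equivariance of $\tadr$ (also from the previous Proposition) and the fact that $\lie_{[u,v]}=\lie_u\circ\lie_v-\lie_{\r_\alpha\trl v}\circ\lie_{\r^\alpha\trl u}$ — which follows because $\tadr$ is an action of the \emph{associative} algebra $\U(\g)$ so that $\lie_{uv}=\lie_u\circ\lie_v$, and $[u,v]=uv-(\r_\alpha\trl v)(\r^\alpha\trl u)$ in $\U(\g)$ — we conclude that $u\mapsto\lie_u$ is a morphism of braided Lie algebras $\g\to\Der{\U(\g)}$.

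So the proof is essentially a matter of specializing \eqref{bLad}, \eqref{bralg} and the action property to degree-one (primitive) elements and identifying the resulting formulas with the defining properties of $\Der{\U(\g)}$ and of a braided Lie algebra morphism; most of the work has already been done in the two preceding Propositions and in Lemma \ref{lem:Abla}. The only point requiring a little care — and the main (minor) obstacle — is checking that the restriction $\g\ot\g\to\U(\g)$ actually has image in $\g$ and agrees there with the original bracket $[~,~]$ of $\g$; this is where one uses that, by the very construction of $\U(\g)=T(\g)/I$ with $I=\langle u\ot v-\r_\alpha\trl v\ot\r^\alpha\trl u-[u,v]\rangle$, the braided commutator of two degree-one elements in $\U(\g)$ equals their $\g$-bracket. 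Everything else is substitution of $\tDelta(u)=u\bot1+1\bot u$ and $S_L(u)=-u$ into identities already established.
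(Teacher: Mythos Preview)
Your proof is correct and follows essentially the same approach as the paper: specialize the braided adjoint action \eqref{bLad} to primitive elements $u\in\g$ (obtaining $u\tadr\xi=u\xi-(\r_\alpha\trl\xi)(\r^\alpha\trl u)$), then use \eqref{bralg} with $\tDelta(u)=u\bot 1+1\bot u$ to verify the braided Leibniz rule, and invoke the $K$-equivariance already established. You are in fact slightly more explicit than the paper in verifying that $\lie_{[u,v]}=[\lie_u,\lie_v]_\r$ via the associative action property $\lie_{\xi\zeta}=\lie_\xi\circ\lie_\zeta$, which the paper leaves implicit.
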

\begin{proof}
Let $\g$ be a braided Lie algebra and  $\U(\g)$ its universal enveloping algebra, a braided Hopf algebra for 
Proposition \ref{prop:uea}.
From Lemma \ref{lem:psiell},  there is a
morphism of $K$-modules:
\begin{align*}
\lie  : (\U(\g) , \trl_{\U(\g)} ) &\to (\Hom(\U(\g) ,\U(\g) ), \trl_{\Hom(\U(\g) ,\U(\g) )})  \\
\xi &\mapsto \lie_\xi , \quad \lie_\xi  (\zeta):= \xi  \tadr \zeta =\one{\xi }(\r_\alpha \trl \zeta) \, \r^\alpha \trl S_{\U(\g)} (\two{\xi }), \quad \zeta \in \U(\g) .
\end{align*}
This restricts to the $K$-module morphism 
\begin{align*}  
\lie : (\g , \trl_{\g} ) &\to (\Der{\U(\g)}, \trl_{\Der{\U(\g)}})  \nn
\\
u  &\mapsto \lie_u , \quad \lie_u  (\zeta)= u  \tadr \zeta =
u \zeta
- (\r_\alpha \trl \zeta) \, (\r^\alpha \trl  u), \quad \zeta \in \U(\g) .
\end{align*}
Indeed, the map $\lie_u$ is a braided derivation: from \eqref{bralg} the map $\tadr$ is an action, then
\begin{align*}
\lie_u (\xi \zeta) &= u \tadr (\xi \zeta)
\\
& = (\one{u}\tadr (\r_\alpha\trl
  \xi)) \, ((\r^{\alpha}\trl \two{u})\tadr \zeta) 
\\
& = (u\tadr 
  \xi) \, \zeta 
+
( \r_\alpha\trl
  \xi) \, ((\r^{\alpha}\trl u)\tadr \zeta)  
\\
& = \lie_u  (\xi) \, \zeta 
+
( \r_\alpha\trl
  \xi) \, (\lie_{\r^{\alpha}\trl u}\tadr \zeta)  
\\
& = \lie_u  (\xi) \, \zeta 
+
( \r_\alpha\trl
  \xi) \, (\r^{\alpha}\trl \lie_{u}) (\zeta)  
\end{align*}
where the last equality uses $K$-equivariance of $\lie$.
\end{proof}

\subsection{Twisting braided Lie algebras}\label{sec:tbla}
We next consider twist deformations of braided Lie algebras. Let $\g$ be a
braided Lie algebra associated with a triangular Hopf algebra
$(K,\r)$ and $\F$ be a twist for $K$. The $K_\F$-module $\g_F$
(this is just $\g$ with $K$-action; see the construction after
Proposition \ref{prop:twist}) inherits from $\g$ a twisted bracket:
\begin{prop}\label{prop:gf}
The $K_\F$-module $\g_\F$ with bilinear map 
\beq 
[~ , ~]_\F  =  \g_\F\otimes \g_\F\to \g_\F~,~~u\otimes v\to [u,v]_\F :=[\bF^\alpha \trl u,\bF_\alpha\trl  v]~.
\eeq
is a 
braided Lie algebra associated with $(K_\F, \r_\F)$.
\end{prop}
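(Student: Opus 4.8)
The plan is to verify the three braided Lie algebra axioms of Definition 5.1 for $(\g_\F, [~,~]_\F)$ with respect to the triangular structure $\r_\F=\F_{21}\,\r\,\bF$ of $K_\F$ (Proposition \ref{prop:KFRF}). The $K_\F$-action on $\g_\F$ is, as a linear map, the original $K$-action $\trl$ on $\g$; this is the key bookkeeping fact, used repeatedly, together with the twist condition in the forms \eqref{twist-F}, \eqref{twist-bF}, the identity $\F\bF=1\ot1=\bF\F$, and the fact that $\trl$ is a $K$-action so that composite elements of $K$ act by multiplication.

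First I would check $K_\F$-equivariance: one must show $k\trl [u,v]_\F=[\fone{k}\trl u,\ftwo{k}\trl v]_\F$, i.e. $k\trl[\bF^\alpha\trl u,\bF_\alpha\trl v]=[\bF^\alpha\F^\beta\one{k}\bF^\gamma\trl u,\ \bF_\alpha\F_\beta\two{k}\bF_\gamma\trl v]$ after unfolding $[~,~]_\F$ and $\Delta_\F$ from \eqref{cop-twist}. Using that the original bracket $[~,~]$ is $K$-equivariant, the left side is $[\one{k}\bF^\alpha\trl u,\two{k}\bF_\alpha\trl v]$; the right side, upon cancelling $\F\bF=1$ in the $\bF^\alpha\F^\beta$ and $\bF_\alpha\F_\beta$ slots, becomes $[\one{k}\bF^\gamma\trl u,\two{k}\bF_\gamma\trl v]$, and these agree. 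Next, braided antisymmetry: $[u,v]_\F=-[\rF_\alpha\trl v,\rF^\alpha\trl u]_\F$. Unfolding, the right side is $-[\bF^\beta\rF_\alpha\trl v,\bF_\beta\rF^\alpha\trl u]=-[\bF^\beta\F_\mu\r^\nu\bF^\sigma\trl v,\bF_\beta\F^\mu\r_\nu\bF_\sigma\trl u]$; cancelling $\bF^\beta\F_\mu$-type pairs against $\bF_\beta\F^\mu$ requires care because the indices are crossed, so here I would instead invoke the original braided antisymmetry of $[~,~]$ to rewrite $[u,v]_\F=[\bF^\alpha\trl u,\bF_\alpha\trl v]=-[\r_\gamma\bF_\alpha\trl v,\r^\gamma\bF^\alpha\trl u]$ and then match this against the unfolded right side using $\bF\F=1$ and the definition of $\rF$; the triangularity $\br=\r_{21}$ (equivalently $\brF=(\rF)_{21}$, Proposition \ref{prop:KFRF}) is what makes the cross-terms collapse correctly.

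The main obstacle is the braided Jacobi identity, $[u,[v,w]_\F]_\F=[[u,v]_\F,w]_\F+[\rF_\alpha\trl v,[\rF^\alpha\trl u,w]_\F]_\F$. The cleanest route avoids a brute-force $\r$-matrix manipulation: by Lemma \ref{lem:Abla} and Proposition \ref{prop:uea} the braided Lie structure on $\g$ is the one coming from its universal enveloping algebra $\U(\g)$ viewed as a $K$-braided Poisson/commutator algebra, and by Proposition \ref{prop:td-bb} and Example \ref{ex:braided-tw} the twist $\F$ sends the braided Hopf algebra $\U(\g)$ to the braided Hopf algebra $\U(\g)_\F$ associated with $K_\F$, whose braided commutator bracket $ab-(\rF_\alpha\trl b)(\rF^\alpha\trl a)$ — now with respect to $\dotF$ — restricts, on the image of $\g$, precisely to $[~,~]_\F$ as defined here. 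Concretely one checks $\xi\dotF\zeta-(\rF_\alpha\trl\zeta)\dotF(\rF^\alpha\trl\xi)=[\bF^\alpha\trl\xi,\bF_\alpha\trl\zeta]$ for $\xi,\zeta$ primitive, which is a short computation using $\xi\dotF\zeta=(\bF^\alpha\trl\xi)(\bF_\alpha\trl\zeta)$ and the cancellation $\bF\F=1$ inside $\rF=\F_{21}\r\bF$. Since $\U(\g)_\F$ is a genuine $K_\F$-module algebra, its braided commutator automatically satisfies the braided Jacobi identity by Lemma \ref{lem:Abla} applied over $(K_\F,\rF)$, and this descends to $\g_\F$. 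If instead one wants a self-contained direct proof, the hard part is organizing the eight resulting terms and applying the quantum Yang–Baxter equation \eqref{YB} for $\rF$ together with triangularity; I would prefer the enveloping-algebra argument. Finally, one records that $[~,~]_\F$ is bilinear and, being built from the bilinear $[~,~]$ and linear actions, well defined, completing the verification that $(\g_\F,[~,~]_\F)$ is a $(K_\F,\rF)$-braided Lie algebra.
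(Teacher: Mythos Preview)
Your treatment of $K_\F$-equivariance and braided antisymmetry follows the same line as the paper's proof (the paper is equally terse on antisymmetry, simply citing the identity $\r_\F=\F_{21}\r\bF$).

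For the Jacobi identity you take a genuinely different route. The paper gives a direct eight-line computation: it expands $[u,[v,w]_\F]_\F$ using $K$-equivariance of the original bracket, applies the original Jacobi identity, and then repeatedly uses the twist condition \eqref{twist-bF}, quasi-cocommutativity, and the relation $\r\bF=\bF_{21}\r_\F$ to reassemble the right-hand side; no Yang--Baxter equation is invoked. Your approach instead embeds $\g$ in $\U(\g)$, notes that $\U(\g)_\F$ is a $K_\F$-module algebra (for which only the functor \eqref{functGammaA} is needed, not the full Proposition~\ref{prop:td-bb}), and applies Lemma~\ref{lem:Abla} over $(K_\F,\r_\F)$ to obtain a braided Lie algebra structure on $\U(\g)_\F$ via the $\r_\F$-braided commutator. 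The short check that this commutator restricts to $[~,~]_\F$ on $\g$ is exactly the computation the paper records afterwards as Example~\ref{ex:FbaA}, and it does not rely on the present proposition, so there is no circularity. Your argument is more conceptual and in fact delivers all three axioms at once, since $\g_\F$ sits as a $K_\F$-submodule closed under the bracket inside a bona fide braided Lie algebra; the paper's direct computation has the complementary virtue of being entirely self-contained at the level of $\g$, without passing through the enveloping algebra.
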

\begin{proof}
The $K_\F$-equivariance is easily proven using the analogue property for $[~,~]$ and recalling from \eqref{cop-twist} that $\Delta_{\K_\F}= \F \Delta_\K \bF$: 
for all $u,v\in \g$, $k\in K_\F$,  
$$
k\trl [u,v]_\F
= [\one{k} \bF^\alpha \trl u, \two{k} \bF_\alpha \trl  v] 
= [\bF^\alpha \fone{k} \trl  u, \bF_\alpha   \ftwo{k}\trl  v] 
=[\fone{k}\trl u,\ftwo{k}\trl v]_\F   \; .
$$
Similarly, the braided antisymmetry $[u,v]_\F=-[{\r_\F}_\beta\trl
v,{\r_\F}^\beta\trl u ]_\F$ follows  from  the braided antisymmetry of $[~,~]$ and recalling that
$\r_\F=\F_{21}\r\bF$.

We are left to prove the Jacobi identity: for all $u,v,w\in \g_\F$, we have 
\begin{align*}
[u,[v,&w]_\F]_\F
=[\bF^\beta\trl u\,,\,[\one{\bF_\beta}\bF^\alpha\trl v\,,\,\two{\bF_\beta}\bF_\alpha\trl w]]
\\
&=[[\bF^\beta\trl u\,,\,\one{\bF_\beta}\bF^\alpha\trl v]\,,\,\two{\bF_\beta}\bF_\alpha\trl w]
+ 
[\r_\gamma\one{\bF_\beta}\bF^\alpha\trl v\,,\,[\r^\gamma\bF^\beta\trl u\,,\,\two{\bF_\beta}\bF_\alpha\trl w]]
\\
&=[\one{\bF^\beta}\bF^\alpha\trl u\,,\,\two{\bF^\beta}\bF_\alpha\trl v]\,,\,\bF_\beta\trl w]]
+
[\r_\gamma\two{\bF^\beta}\bF_\alpha\trl v\,,\,[\r^\gamma\one{\bF^\beta}\bF^\alpha\trl u\,,\,\bF_\beta\trl w]] \\
&=[\bF^\beta\trl[\bF^\alpha\trl u\,,\,\bF_\alpha\trl
  v]\,,\,\bF_\beta\trl w]+
[\one{\bF^\beta}\r_\gamma\bF_\alpha\trl 
  v\,,\,[\two{\bF^\beta}\r^\gamma\bF^\alpha\trl u\,,\,\bF_\beta\trl w]]\nn\\
&=[[u,v]_\F,w]_\F+
[\one{\bF^\beta}\bF^\alpha{\r_\F}_\gamma\trl 
  v\,,\,[\two{\bF^\beta}\bF_\alpha{\r_\F}^\gamma\trl u\,,\,\bF_\beta\trl w]]\nn\\
&=[[u,v]_\F,w]_\F+
[{\bF^\beta}{\r_\F}_\gamma\trl 
  v\,,\,[\one{\bF_\beta}\bF^\alpha{\r_\F}^\gamma\trl
  u\,,\,\two{\bF_\beta}\bF_\alpha\trl w]]\nn\\
&=[[u,v]_\F,w]_\F+
[{\bF^\beta}{\r_\F}_\gamma\trl 
  v\,,\,{\bF_\beta}\trl[\bF^\alpha{\r_\F}^\gamma\trl u\,,\,\bF_\alpha\trl w]]\nn\\
&=[[u,v]_\F,w]_\F+[{\r_\F}_\gamma\trl v,
[{\r_\F}^\gamma \trl u,w]_\F ]_\F .
\end{align*}
For the first two equalities we used the $K$-equivariance and Jacobi identity for $[~,~]$, for the third the twist property \eqref{twist-bF},  for the forth the $K$-equivariance and the quasi-cocommutativity of $K$. Then  
$\r\bF=\bF_{21}\r_\F$ and the twist property end the proof.  
\end{proof}

\begin{ex}\label{ex:FbaA}
When $\g$ is a $K$-module algebra $A$ as in Lemma \ref{lem:Abla},
  its braided commutator is twisted to
  \begin{align*}
[a,b]_\F &= 
(\bF^\alpha \trl_A a ) (\bF_\alpha\trl_A  b)  - (\r_\beta \bF_\alpha \trl_A b) \, (\r^\beta \bF^\alpha  \trl_A a)
\\
&= (\bF^\alpha \trl_A a ) (\bF_\alpha\trl_A  b)  - ( \bF^\alpha \rF_\beta  \trl_A b) \, ( \bF_\alpha \rF^\beta \trl_A a)
\\
&= a \dotF b - ( \rF_\beta  \trl_A b)  \dotF (  \rF^\beta \trl_A a) . 
\end{align*}
Since $\trl_A$ and $ \trl_{A_\F}$ coincide,  this shows that the braided Lie algebra $\g_\F$ is just the twisted $\K_\F$-module algebra $A_\F$  with braided commutator defined as in \eqref{Abla} but using the universal  $R$-matrix $\rF$ of $\K_\F$ and the product in $A_\F$.
 \end{ex}

\begin{ex}\label{ex:hom}
As a particular case of the previous example, consider the $\K$-module
algebra $(\Hom(V,V), \circ)$ of linear maps  of a $K$-module $V$ with
action \eqref{action-hom}.  The twist deformation of the braided Lie algebra 
$((\Hom (V,V), \circ), [\, ,\, ])$ is the braided Lie algebra 
$((\Hom_\F (V,V), \circ_\F), [\, ,\, ]_\F)$,
that is the $\K_\F$-module algebra $(\Hom_\F (V,V), \circ_\F)$,
see \eqref{circF}, with  braided commutator
$$
[\psi,\lambda]_\F = \psi \circ_\F \lambda -  ( \rF_\alpha  \trl_{\Hom(V,V)} \lambda)  \circ_\F (  \rF^\alpha \trl_{\Hom(V,V)} \psi)\; .
$$
We prove that it is isomorphic to the braided Lie algebra $((\Hom(V_\F,V_\F), \circ)
,[\, , \,]_{\rF} )$ associated with the triangular  Hopf algebra $(K_\F, \r_\F)$.  
Indeed, the $\K_\F$-module algebra isomorphism $\dd :  (\Hom_\F(V,V) , \circ_\F)  \to (\Hom(V_\F,V_\F), \circ) $  in Proposition \ref{prop:Diso} is also a braided Lie algebra  isomorphism: 
\begin{align*}
\dd \big([\psi,\lambda]_\F \big) &=
 \dd (\psi) \circ \dd( \lambda)  -  \dd( \rF_\alpha  \trl_{\Hom(V,V)} \lambda)  \circ \dd (  \rF^\alpha \trl_{\Hom(V,V)} \psi)
\\
&= 
\dd (\psi) \circ \dd( \lambda)  - \rF_\alpha  \trl_{\Hom(V_\F,V_\F)}\dd (  \lambda)  \circ  \rF^\alpha \trl_{\Hom(V_\F,V_\F)} \dd (  \psi)
\\
&= [\dd (\psi) , \dd( \lambda) ]_{\rF} .
\end{align*}
Here we used that $\dd$ is an algebra map and a morphism of
$\K_\F$-modules.

Furthermore, from    the $K$-braided Poisson algebra structure $(\Hom (V,V),
\circ, [\, ,\, ])$, we   have that  
$(\Hom_\F (V,V), \circ_\F, [\, ,\, ]_\F)$, is a $K_\F$-braided Poisson
algebra. Indeed, the derivation property of the bracket $ [\, ,\, ]_\F$
with respect to the   multiplication $\circ_\F$ is proven
exactly as in the proof of the Jacobi identity of Proposition
\ref{prop:gf}  (just replace the
second bracket $[~,~]_\F$ with the multiplication $\circ_\F$).
Then it follows that $\dd :  (\Hom_\F(V,V) , \circ_\F, [~,~]_\F)  \to
(\Hom(V_\F,V_\F), \circ, [~,~]_{\r_\F})$ is a $K_\F$-braided Poisson
algebra isomorphism
 \end{ex}

The braided Hopf algebra $\U(\g)_\F$ associated with
$(K_\F , \r_\F)$ is given, as in Proposition \ref{prop:td-bb}, by the twist deformation 
of the universal enveloping algebra $\U(\g)$ of the braided Lie algebra $\g$.
We now show that it coincides (up to isomorphism) with the universal enveloping algebra
$\U(\g_\F)$ of the braided Lie algebra $\g_\F$, given as in Proposition \ref{prop:gf}. In other words, 
the universal enveloping algebra construction commutes with twist
deformation: $\g \to \g_\F \to \U(\g_\F)$ equals $\g \to \U(\g)\to \U(\g)_\F$.

The isomorphism 
$\U(\g_\F)\simeq \U(g)_\F$ 
is induced from the $K_\F$-module isomorphism
$$
\varphi^{(2)}:=\varphi_{\g,\g}:
\g_\F\ot_\F\g_\F\to (\g\ot\g)_\F~, \quad u\ot_\F v\mapsto \bF^\alpha\trl_\g u\ot
\bF_\alpha\trl_\g v~,
$$ 
given as in \eqref{nt-mod}, and its lifts $\varphi^{(n)}$ to $n$-th tensor products 
$\varphi^{(n)}:\g_\F^{\ot_\F n}\to (\g^{\ot n})_\F$, for $n \in \mathbb{N}$, defined recursively, 
for  $n\geq 2$, by
$$
\varphi^{(n+1)}=\varphi^{}_{\g,(\g^{\ot n})}\circ (\id_{\g_\F}\ot_\F\varphi^{(n)}) ,
$$
with the maps $\varphi^{}_{\g,(\g^{\ot n})}$ again as in \eqref{nt-mod}, and $\varphi^{(0)}:=\id_\bbK$, 
$\varphi^{(1)}:=\id_{\g_\F}$. 
Explicitly, 
\begin{multline*}
\varphi^{(n)}(u_1\ot_\F \ldots
u_{n-1}\ot_\F u_{n}) = \\ =\bF^{\alpha_1}\trl u_1\ot\bF_{\alpha_1} \trl (\ldots    
\bF^{\alpha_{n-2}}\trl u_{n-2}\ot\bF_{\alpha_{n-2}}\trl (\bF^{\alpha_{n-1}}\trl
  u_{n-1}\ot
  \bF_{\alpha_{n-1}}\trl u_n))\,, 
\end{multline*}
for all $u_i\in \g_\F$, $i=1,\ldots n$.  
Property \eqref{prop-ass-phi} of the isomorphisms $\varphi_{-,-}$, generalized to $n$-th tensor products, gives 
\begin{lem}
For integers $p,q \in \mathbb{N}$ with $p+q=n$, 
\beq\label{np+q}
\varphi^{(n)}=\varphi^{}_{\g^{\ot p},\g^{\ot q}}\circ 
(\varphi^{(p)}\ot_\F\varphi^{(q)}) \, .
\eeq
\end{lem}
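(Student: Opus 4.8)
The plan is to prove the identity \eqref{np+q} by induction on $q$, keeping $p$ arbitrary but fixed; the base case $q=0$ is trivial since $\varphi^{(0)}=\id_\bbK$ and $\varphi^{}_{\g^{\ot p},\g^{\ot 0}}=\varphi_{\g^{\ot p},\bbK}=\id$, so that the right-hand side reduces to $\varphi^{(p)}\ot_\F \id_\bbK=\varphi^{(n)}$ with $n=p$. For the base case $q=1$ one has $\varphi^{(1)}=\id_{\g_\F}$ and \eqref{np+q} becomes $\varphi^{(p+1)}=\varphi^{}_{\g^{\ot p},\g}\circ(\varphi^{(p)}\ot_\F\id_{\g_\F})$, which I would obtain from the defining recursion $\varphi^{(p+1)}=\varphi_{\g,\g^{\ot p}}\circ(\id_{\g_\F}\ot_\F\varphi^{(p)})$ by repeatedly applying the cocycle-type relation \eqref{prop-ass-phi} to move the outermost factor across, or — more cleanly — by recognising that both sides describe the same iterated $\bF$-twisting of the tensor factors and appealing to the generalised associativity \eqref{prop-ass-phi} directly in the form already noted in the excerpt that $\varphi^{(n)}$ is independent of the order of bracketing.

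First I would set up the inductive step: assume \eqref{np+q} holds for $q$ (all $p$), and prove it for $q+1$, i.e.\ with $n+1=p+(q+1)$. Starting from the defining recursion applied at level $n+1$, $\varphi^{(n+1)}=\varphi_{\g,\g^{\ot n}}\circ(\id_{\g_\F}\ot_\F\varphi^{(n)})$, I would substitute the inductive hypothesis $\varphi^{(n)}=\varphi_{\g^{\ot p},\g^{\ot q}}\circ(\varphi^{(p)}\ot_\F\varphi^{(q)})$ inside, giving
$$
\varphi^{(n+1)}=\varphi_{\g,\g^{\ot n}}\circ\bigl(\id_{\g_\F}\ot_\F\bigl(\varphi_{\g^{\ot p},\g^{\ot q}}\circ(\varphi^{(p)}\ot_\F\varphi^{(q)})\bigr)\bigr)\, .
$$
Then the crucial move is to apply \eqref{prop-ass-phi} (generalised to $n$-fold tensor products in the obvious way, using the monoidal structure) with $V=\g$, $W=\g^{\ot p}$, $Z=\g^{\ot q}$, to rewrite $\varphi_{\g,\g^{\ot n}}\circ(\id_{\g_\F}\ot_\F\varphi_{\g^{\ot p},\g^{\ot q}})$ as $\varphi_{\g^{\ot(p+1)},\g^{\ot q}}\circ(\varphi_{\g,\g^{\ot p}}\ot_\F\id_{\g^{\ot q}_\F})$. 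Feeding this back and using functoriality of $\ot_\F$ (so that $(\varphi_{\g,\g^{\ot p}}\ot_\F\id)\circ(\id\ot_\F(\varphi^{(p)}\ot_\F\varphi^{(q)}))$ can be reorganised as $(\varphi_{\g,\g^{\ot p}}\circ(\id_{\g_\F}\ot_\F\varphi^{(p)}))\ot_\F\varphi^{(q)}=\varphi^{(p+1)}\ot_\F\varphi^{(q)}$ by the very definition of $\varphi^{(p+1)}$), I arrive at $\varphi^{(n+1)}=\varphi_{\g^{\ot(p+1)},\g^{\ot q}}\circ(\varphi^{(p+1)}\ot_\F\varphi^{(q)})$. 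This is not yet the statement for $(p+1,q)$ at level $n+1$; rather it \emph{is} exactly \eqref{np+q} with $p$ replaced by $p+1$ and $q$ unchanged, so combined with the $q=1$ base case and induction on $p$ one can then also deduce the general $(p,q+1)$ case — alternatively one does a genuine double induction, or observes that the argument above, run symmetrically, already covers all splittings.

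The main obstacle I expect is purely bookkeeping: making precise the ``generalisation to $n$-th tensor products'' of the pentagon-like identity \eqref{prop-ass-phi}. This requires a preliminary sub-lemma (or an explicit remark) that for arbitrary objects $X,Y,Z$ in $\KFM$ one has $\varphi_{X,Y\ot_\F Z}\circ(\id_{X_\F}\ot_\F\varphi_{Y,Z})=\varphi_{X\ot_\F Y,Z}\circ(\varphi_{X,Y}\ot_\F\id_{Z_\F})$, which is literally \eqref{prop-ass-phi} but now applied with $X,Y,Z$ themselves tensor powers of $\g$; since \eqref{prop-ass-phi} was stated for all $\KF$-modules, this is immediate, but one must be careful that in the displayed explicit formula for $\varphi^{(n)}$ the nested $\bF$-actions associate correctly — this is guaranteed precisely by the twist condition \eqref{twist-bF}, which is what underlies \eqref{prop-ass-phi} in the first place. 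Once this associativity coherence is in hand, the induction is routine and the Lemma follows; I would therefore spend most of the write-up stating the coherence carefully and then present the inductive step compactly as above.
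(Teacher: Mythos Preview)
Your proposal is correct and follows essentially the same route as the paper: both use the defining recursion $\varphi^{(n+1)}=\varphi_{\g,\g^{\ot n}}\circ(\id_{\g_\F}\ot_\F\varphi^{(n)})$, substitute the inductive hypothesis, and then apply the associativity identity \eqref{prop-ass-phi} with $V=\g$, $W=\g^{\ot p}$, $Z=\g^{\ot q}$ to obtain the $(p+1,q)$-splitting at level $n+1$. The only difference is organisational: the paper runs the induction directly on $n$ (so that every splitting of $n+1$ with $p'\geq 1$ arises from some splitting of $n$, and $p'=0$ is trivial), which avoids the mismatch you noticed between ``induction on $q$'' and a computation that actually increments $p$; adopting induction on $n$ from the start makes your double-induction workaround unnecessary.
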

\begin{proof}
By induction: the identity holds for $n\leq 2$; 
assume it holds for $n$, then 
\begin{align*}
\varphi^{(n+1)}
&=\varphi_{\g, \g^{\ot n}}
\circ
 (\id_{\g_\F}\ot_\F\varphi^{(n)})
\\
&=\varphi_{\g,\g^{\ot n}}
\circ 
  (\id_{\g_\F}\ot_\F   \varphi_{\g^{\ot p},\g^{\ot q}}
\circ 
(\varphi^{(p)}\ot_\F\varphi^{(q)}) )
\\
&=\varphi_{\g,\g^{\ot n}}
\circ 
(\id_{\g_\F}\ot_\F   \varphi_{\g^{\ot p},\g^{\ot q}})
 \circ
  (\id_{\g_\F}\ot_\F   \varphi^{(p)}\ot_\F\varphi^{(q)}) \\
&=\varphi^{}_{\g^{\ot p+1},\g^{\ot q}}
\circ 
(\varphi_{\g,\g^{\ot p}}\ot_\F   \id_{(\g^{\ot q})_\F})
\circ
  (\id_{\g_\F}\ot_\F   \varphi^{(p)}\ot_\F\varphi^{(q)}) \\
&=\varphi^{}_{\g^{\ot p+1},\g^{\ot q}}
\circ 
(\varphi^{(p+1)}\ot_\F\varphi^{(q)})~
\end{align*}
where  for the  fourth equality we have used property \eqref{prop-ass-phi} of the isomorphisms $\varphi_{-,-}$.
\end{proof}

\begin{prop}\label{prop:ugf}
 The braided Hopf algebras $\U(\g)_\F$ and $\U(\g_\F)$ associated with the triangular Hopf algebra
$(K_\F , \r_\F)$ are isomorphic.
\end{prop}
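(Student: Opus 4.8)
The plan is to construct an explicit isomorphism $\Phi : \U(\g_\F)\to \U(\g)_\F$ of braided Hopf algebras associated with $(K_\F,\r_\F)$, built from the $K_\F$-module isomorphisms $\varphi^{(n)}:\g_\F^{\ot_\F n}\to (\g^{\ot n})_\F$ introduced just before the statement. First I would recall that $\U(\g)=T(\g)/I$ and $\U(\g_\F)=T(\g_\F)/I_\F$, where $I=\langle u\ot v-\r_\alpha\trl v\ot\r^\alpha\trl u-[u,v]\rangle$ and $I_\F=\langle u\ot_\F v-{\r_\F}_\alpha\trl v\ot_\F{\r_\F}^\alpha\trl u-[u,v]_\F\rangle$. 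Since $\Gamma$ acts as the identity on the $K$-module level, $\Gamma(T(\g))=T(\g)_\F$ as a $K_\F$-module, and one assembles the maps $\varphi^{(n)}$ into a single $K_\F$-module isomorphism $\varphi^{T}:=\bigoplus_n \varphi^{(n)}:T(\g_\F)=\bigoplus_n \g_\F^{\ot_\F n}\to \bigoplus_n(\g^{\ot n})_\F=T(\g)_\F$. The first key step is to check that $\varphi^T$ is an algebra isomorphism from $T(\g_\F)$ (with its $\ot_\F$-concatenation product) to $T(\g)_\F=\Gamma(T(\g))$ (with the deformed product $\dotF$ of \eqref{rmod-twist}); this is exactly Lemma \eqref{np+q} in the form $\varphi^{(p+q)}=\varphi_{\g^{\ot p},\g^{\ot q}}\circ(\varphi^{(p)}\ot_\F\varphi^{(q)})$ together with the fact that $\Gamma$ is monoidal, i.e.\ $\varphi_{\g^{\ot p},\g^{\ot q}}$ intertwines concatenation with $\dotF$. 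So $\varphi^T$ is an isomorphism in $\KFA$.

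The second key step is to show that $\varphi^T$ descends to the quotients, i.e.\ $\varphi^T(I_\F)=I_\F$'s image is the ideal generated by the twisted relations and this coincides with (the $\Gamma$-image of) $I$. Concretely, on the degree-two part $\varphi^{(2)}=\varphi_{\g,\g}:u\ot_\F v\mapsto \bF^\alpha\trl u\ot\bF_\alpha\trl v$, and one computes
\begin{align*}
\varphi^{(2)}\big(u\ot_\F v-{\r_\F}_\alpha\trl v\ot_\F{\r_\F}^\alpha\trl u-[u,v]_\F\big)
&=\bF^\alpha\trl u\ot\bF_\alpha\trl v-\bF^\alpha{\r_\F}_\beta\trl v\ot\bF_\alpha{\r_\F}^\beta\trl u-[u,v]_\F\\
&=\bF^\alpha\trl u\ot\bF_\alpha\trl v-\r_\beta\bF^\alpha\trl v\ot\r^\beta\bF_\alpha\trl u-[\bF^\alpha\trl u,\bF_\alpha\trl v]
\end{align*}
using $\r_\F=\F_{21}\r\bF$ so that $\bF^\alpha{\r_\F}_\beta\ot\bF_\alpha{\r_\F}^\beta=\r_\beta\bF^\alpha\ot\r^\beta\bF_\alpha$ (after relabelling), and the definition $[u,v]_\F=[\bF^\alpha\trl u,\bF_\alpha\trl v]$. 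Writing $u'=\bF^\alpha\trl u$, $v'=\bF_\alpha\trl v$, this is precisely a generator $u'\ot v'-\r_\beta\trl v'\ot\r^\beta\trl u'-[u',v']$ of $I$ (viewed inside $T(\g)_\F$), and since $\bF$ is invertible these exhaust the generators. Because $\varphi^T$ is an algebra map and both $I_\F$ and $I$ are the two-sided ideals generated by their respective degree-two generators, $\varphi^T$ maps $I_\F$ isomorphically onto $\Gamma(I)$. Hence $\varphi^T$ induces an algebra isomorphism $\Phi:\U(\g_\F)=T(\g_\F)/I_\F\to T(\g)_\F/\Gamma(I)=\Gamma(T(\g)/I)=\U(\g)_\F$, where the last identification uses that $\Gamma$ is exact on the relevant quotients (the deformed product descends).

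The third step is to promote $\Phi$ to a morphism of braided Hopf algebras. Both $\U(\g_\F)$ and $\U(\g)_\F$ are braided Hopf algebras associated with $(K_\F,\r_\F)$: the former by Proposition \ref{prop:uea} applied to $\g_\F$, the latter by Proposition \ref{prop:td-bb} applied to $\U(\g)$. Their coproducts are both the unique extensions of $u\mapsto u\bot 1+1\bot u$ on degree-one elements to braided algebra maps into the respective braided tensor squares, so it suffices to check that $\Phi$ restricts to the identity-type map $\g_\F\to\g$ on primitives and is an algebra map (already shown); then $\Delta_{\U(\g)_\F}\circ\Phi$ and $(\Phi\bot_\F\Phi)\circ\Delta_{\U(\g_\F)}$ are both braided algebra maps $\U(\g_\F)\to \U(\g)_\F\bot_\F\U(\g)_\F$ agreeing on generators $u\in\g_\F$, hence equal by the universal property of the tensor algebra; counit and antipode compatibility follow likewise by checking on generators. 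The main obstacle I anticipate is the bookkeeping in the third step: verifying that $\Phi$ (equivalently $\varphi^{(2)}$ at the coalgebra level, i.e.\ the fact that $\varphi_{\g,\g}$ intertwines the twisted braided coproduct on $\U(\g_\F)$ with the $\Gamma$-image of the braided coproduct on $\U(\g)$, braided by $\rF$ versus $\r$) is compatible, which ultimately rests on Proposition \ref{main-lem} (the same identity \eqref{lhs=rhs} that powers Proposition \ref{prop:td-bb}) — so the cleanest route is to invoke Proposition \ref{prop:funct-alg} and Proposition \ref{prop:td-bb} rather than recompute, reducing the argument to the combinatorial Lemma \eqref{np+q} and the degree-two relation check above.
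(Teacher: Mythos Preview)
Your proposal is correct and follows essentially the same route as the paper: assemble $\varphi^\bullet=\bigoplus_n\varphi^{(n)}:T(\g_\F)\to T(\g)_\F$, use Lemma~\eqref{np+q} for the algebra-map property, verify the coalgebra structure on degree~$\leq 1$ generators (where both coproducts give $u\mapsto u\bot_\F 1+1\bot_\F u$), and check the degree-two relation so that $I_{\g_\F}\mapsto (I_\g)_\F$ before descending to the quotient. One small index slip: from $\bF\cdot(\r_\F)_{21}=\r_{21}\bF_{21}$ the middle term is $\r_\beta\bF_\alpha\trl v\ot\r^\beta\bF^\alpha\trl u$ (the $\bF^\alpha,\bF_\alpha$ get swapped relative to the first term), which is precisely what makes the identification with the generator $u'\ot v'-\r_\beta\trl v'\ot\r^\beta\trl u'-[u',v']$ for $u'=\bF^\alpha\trl u$, $v'=\bF_\alpha\trl v$ work.
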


\begin{proof}
The isomorphisms 
$\varphi^{(n)}:\g_\F^{\ot_\F n}{\longrightarrow} (\g^{\ot n})_\F$ induce the
  isomorphism 
$$
{\varphi^\bullet}:=\oplus_{n\geq 0}\varphi^{(n)}:T(\g_\F)\to T(\g)_\F
$$
of (graded) $K_\F$-modules. We show it is a braided Hopf algebra isomorphism.

Let us denote by $m_{\ot_\F}$ the multiplication in the tensor algebra $T(\g_\F)$, 
and by $(m_\ot)^{}_\F$ the one in $T(\g)_\F$ given by 
$
(m_\ot)^{}_\F(\tilde\zeta,\tilde\xi):=\bF^\alpha\trl \tilde\zeta\ot
  \bF_\alpha\trl \tilde\xi
$
 on
 homogeneous elements 
  $\tilde\zeta\in (\g^{\ot n})_\F$, $\tilde\xi\in (\g^{\ot m})_\F $, as in \eqref{rmod-twist}.
Then for all
  $\zeta\in \g_\F^{\ot_\F n},\xi\in \g_\F^{\ot_\F m} $, 
\begin{align*}
(m_\ot)^{}_\F\circ (\varphi^{(n)}\times \varphi^{(m)})(\zeta,\xi)
&=
(m_\ot)^{}_\F(\varphi^{(n)}(\zeta), \varphi^{(m)}(\xi))
\\
&=\bF^\alpha\trl \varphi^{(n)}(\zeta)\ot \bF_\alpha\trl  \varphi^{(m)}(\xi)\\
&= \varphi^{}_{ \g^{\ot n}, \g^{\ot m}} \circ 
(\varphi^{(n)}\ot_\F \varphi^{(m)})(\zeta\ot_\F\xi)\\
&=
\varphi^{(n+m)}\circ m^{}_{\ot_\F}(\zeta,\xi)
\end{align*}
using \eqref{np+q} for the last equality. 
This implies $(m_\ot)^{}_\F\circ
(\varphi^{\bullet}\times \varphi^{\bullet})=\varphi^{\bullet}\circ
m^{}_{\ot_\F}$ so that $\varphi^\bullet: T(\g_\F)\to T(\g)_\F$ is
  an algebra isomorphism, in fact a $K_\F$-module algebra isomorphism.

We are left to show that $\varphi^\bullet$ is a coalgebra morphism. Since the counits are zero (but on $\bbK$), the condition 
$\varepsilon_{T(\g_\F)} = \varepsilon_{T(\g)_\F} \circ  \varphi^{\bullet}$ is trivially true. 
To show the compatibility with the coproducts, 
$(\varphi^{\bullet}\bot_\F \varphi^{\bullet}) \circ \Delta_{T(\g_\F)} = \Delta_{T(\g)_\F} \circ  \varphi^{\bullet}$,
it suffices to check it on degree zero and one elements since both sides are braided algebra maps.
Being $\varphi^{(0)}$ and $\varphi^{(1)}$ both identity maps, this reduces to show that $\Delta_{T(\g_\F)} = \Delta_{T(\g)_\F}$ on $u \in \g_\F$. But this is immediate. Indeed, 
$T(\g_\F)$ has coproduct $\Delta_{T(\g_\F)}$ given by 
$\Delta_{T(\g_\F)}(u)=u\bot_\F 1+1\bot_\F u$ for $u \in \g_\F$, (and extended as a  braided
algebra map, see Proposition \ref{prop:uea}). 
On the other hand, the coproduct on $T(\g)_\F$ is
the twist deformation as \eqref{cc-twist} of the coproduct of $T(\g)$, 
$$
\Delta_{T(\g)_\F} := \varphi_{T(\g),T(\g)}^{-1} \circ \Gamma(\Delta_{T(\g)}) : \quad
\tilde\xi \longmapsto  \fone{\tilde\xi} \bot_\F \ftwo{\tilde\xi}:=
(\F^{\alpha} \trl \one{\tilde\xi})  \bot_\F (\F_\alpha \trl \two{\tilde\xi}),
$$
which reduces to $\Delta_{T(\g)_\F}(u)=u\bot_\F 1+1\bot_\F u$ in degree one. 
 
The braided Hopf algebra isomorphism $\varphi^{\bullet}$ induces such an isomorphism 
$\U(\g_\F) \simeq \U(\g)_\F$ on the quotients.
Firstly, the ideal $T(\g_\F) \supseteq I_{\g_\F} = \langle  u\ot_\F v-{\r_\F}^{}_{\!\beta}\trl v\ot_\F{\r_\F}^\beta \trl u-[u,v]_\F \rangle$
is mapped by $\varphi^{\bullet}$ to the ideal
$T(\g)_\F \supseteq  (I_\g)_\F = \langle u\ot v-\r_\beta\trl v\ot\r^\beta\trl u-[u,v] \rangle $.
For all $u,v\in \g_\F$, since
$\varphi^{(1)}=\id_{\g_\F}$ and using $\bF_{21}\r_\F=\r \bF$,  
we compute
\begin{align*}
\varphi^\bullet (u\ot_\F v-{\r_\F}^{}_{\beta} &\trl v\ot_\F{\r_\F}^\beta \trl u-[u,v]_\F)
\\
&=\bF^\alpha\trl u\ot \bF_\alpha\trl  v - \bF^\alpha{\r_\F}^{}_{\beta} \trl v \ot \bF_\alpha{\r_\F}^\beta \trl u -[u,v]_\F
\\
&=\bF^\alpha\trl u\ot \bF_\alpha\trl v - \r_\beta\bF_\alpha\trl v\ot \r^\beta\bF^\alpha\trl u - [\bF^\alpha\trl u,\bF_\alpha\trl v] .
\end{align*} 
Since $\g_\F$ is a $K_\F$-module, the last line is a sum over the index $\alpha$ of generators
of $(I_\g)_\F$.

The algebra map property of
$\varphi^\bullet$ then implies that the ideal $I_{\g_\F}\subseteq T(\g_\F)$ 
is isomorphic to the ideal $(I_\g)_\F\subseteq T(\g)_\F$. Both
$I_{\g_\F}$ and $(I_\g)_\F$ are $K_\F$-modules, hence $\varphi^\bullet$
induces a $K_\F$-equivariant algebra isomorphism on the quotients
$$
\U(\g_\F)=T(\g_\F)/I_{\g_\F}\simeq T(\g)_\F/(I_\g)_\F~.
$$
This is actually a braided Hopf algebra isomorphism 
since both $I_{\g_\F}$ and $(I_\g)_\F$  are also coideals.
The proposition is proven by observing that as braided Hopf algebras,
$$
T(\g)_\F/(I_\g)_\F\simeq
\Big(T(\g)/ I_\g \Big)^{}_\F=\U(\g)_\F
$$
 where $T(\g)_\F/(I_\g)_\F\simeq
\Big(T(\g)/ I_\g \Big)^{}_\F$ via the identity map of $\bbK$-modules.
\end{proof}

\subsection{ Twisting  braided derivations}\label{sec:Fbd}
Let $A$ be a $\K$-module algebra  and $A_\F$ the corresponding
$\K_\F$-module algebra obtained by twisting the product as in
\eqref{rmod-twist}. 
As in Example \ref{ex:hom} there is an 
isomorphism of braided Lie algebras:
\begin{thm}\label{thm:Dalg}
The braided  Lie algebras $(\Der{A}_\F , [\, ,\, ]_\F )$ and
$(\Der{A_\F} , [\, ,\, ]_{\rF})$ are isomorphic via the map
$\dd: \Der{A}_\F \to  \Der{A_\F} $, the restriction of
  the isomorphism $\dd:  ((\Hom(A,A)_\F,
\circ_\F), [\, ,\, ]_\F)\to ((\Hom(A_\F,A_\F), \circ) ,[\, , \,]_{\rF}
)$.
\end{thm}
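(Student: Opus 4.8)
The plan is to leverage the already-established isomorphism of $K_\F$-module algebras and braided Lie algebras $\dd: ((\Hom(A,A)_\F,\circ_\F),[\,,\,]_\F)\to ((\Hom(A_\F,A_\F),\circ),[\,,\,]_{\rF})$ from Proposition \ref{prop:Diso} and Example \ref{ex:hom}. Since $\Der{A}_\F$ is (as a $K_\F$-module) the braided Lie subalgebra of $(\Hom(A,A)_\F,\circ_\F)$ consisting of twisted derivations of $A$ (i.e.\ the image under $\Gamma$ of the braided Lie subalgebra $\Der{A}\subseteq\Hom(A,A)$), and $\Der{A_\F}$ is the braided Lie subalgebra of $(\Hom(A_\F,A_\F),\circ)$ of $\rF$-braided derivations of $A_\F$, everything reduces to showing that $\dd$ carries $\Der{A}_\F$ \emph{onto} $\Der{A_\F}$; once this is checked, $\dd$ restricts to a braided Lie algebra isomorphism automatically because it is a braided Lie algebra morphism on the ambient object and both restrictions already carry their inherited braided Lie brackets.

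First I would recall the characterization \eqref{psiella0}: a linear map $\psi\in\Hom(A,A)$ is a $(K,\r)$-braided derivation of $A$ iff $[\psi,\ell_a]=\ell_{\psi(a)}$ for all $a\in A$, where $\ell_a$ is left multiplication by $a$ and $a\mapsto \ell_a$ is $K$-equivariant. The analogous statement holds at the twisted level: $\psi\in\Hom(A_\F,A_\F)$ is a $(K_\F,\rF)$-braided derivation of $A_\F$ iff $[\psi,\ell^\F_a]_{\rF}=\ell^\F_{\psi(a)}$ for all $a\in A_\F$, where $\ell^\F_a$ is left multiplication by $a$ \emph{in} $A_\F$, i.e.\ $\ell^\F_a(b)=a\dotF b$. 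So the key computational step is to identify $\dd(\ell_a)$ and, dually, $\dd^{-1}(\ell^\F_a)$ in terms of the twisted left-multiplication operators. I expect that $\dd$ does \emph{not} send $\ell_a$ to $\ell^\F_a$ on the nose, but rather to $\ell^\F_{g(a)}$ for some $K_\F$-module algebra isomorphism $g$ of $A_\F$ onto itself (the ``quantization map''), or at least that the subalgebra $\{\ell^\F_a : a\in A_\F\}\subseteq\Hom(A_\F,A_\F)$ is the $\dd$-image of $\{\ell_a : a\in A\}_\F\subseteq\Hom(A,A)_\F$. Using the explicit formula $\dd(\psi)(v)=\F^\alpha\trl_A\psi(S(\F_\alpha)\buf\trl_A v)$ from the proof of Proposition \ref{prop:Diso}, I would compute $\dd(\ell_a)(v)$ and unravel it via the twist condition and the definition $a\dotF b=(\bF^\alpha\trl a)(\bF_\alpha\trl b)$ to land on an expression of the form $\ell^\F_{(\text{something in }a)}(v)$.

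With that identification in hand, the argument closes cleanly. Given $\psi\in\Der{A}_\F$, so that $[\psi,\ell_a]_\F=\ell_{\psi(a)}$ for all $a$ (this is exactly the twisted version of \eqref{psiella0}, which holds since $\psi\in\Gamma(\Der{A})$ and the characterization is $\Gamma$-natural), apply $\dd$, use that $\dd$ is a braided Lie algebra morphism $[\,,\,]_\F\to[\,,\,]_{\rF}$ and an algebra map, and use $\dd(\ell_a)=\ell^\F_{g(a)}$ together with $K_\F$-equivariance of $a\mapsto\ell^\F_a$ to conclude $[\dd(\psi),\ell^\F_b]_{\rF}=\ell^\F_{\dd(\psi)(b)}$ for all $b\in A_\F$; hence $\dd(\psi)\in\Der{A_\F}$. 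The same argument run with $\dd^{-1}$ (using the formula $\dd^{-1}(\psi)(v)=\bF^\alpha\trl_{A_\F}\psi(S_\F(\bF_\alpha)\uf\trl_{A_\F}v)$) shows $\dd^{-1}(\Der{A_\F})\subseteq\Der{A}_\F$, so $\dd$ restricts to a bijection between the two subspaces, and therefore to the asserted braided Lie algebra isomorphism.

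\textbf{Main obstacle.} The delicate point is the bookkeeping in identifying $\dd(\ell_a)$: one must push the twist $\F$ through left multiplication in $A$ and recognize the result as left multiplication in the deformed product $\dotF$, which requires careful use of the twist cocycle condition \eqref{twist-bF} and the unitality of $\F$. This is the only genuinely computational step; everything else is a formal consequence of results already proved (Propositions \ref{prop:Diso}, \ref{prop:gf}, Example \ref{ex:hom}, and the characterization \eqref{psiella0}).
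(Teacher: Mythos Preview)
Your approach is essentially the paper's: reduce to showing $\dd$ and $\dd^{-1}$ carry braided derivations to braided derivations, using the characterization $[\psi,\ell_a]=\ell_{\psi(a)}$ and its $\rF$-analogue, and the fact that $\dd$ is already a braided Lie algebra isomorphism on the ambient $\Hom$ spaces.

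Two small corrections. First, your ``main obstacle'' evaporates: since $a\mapsto\ell_a$ is $K$-equivariant one has directly
\[
\dd(\ell_a)(v)=(\bF^\alpha\trl\ell_a)(\bF_\alpha\trl v)=\ell_{\bF^\alpha\trl a}(\bF_\alpha\trl v)=(\bF^\alpha\trl a)(\bF_\alpha\trl v)=a\dotF v,
\]
so $\dd(\ell_a)=\ell^\F_a$ on the nose (your $g$ is the identity). No twist-cocycle manipulation is needed here. Second, for $\psi\in\Der{A}_\F$ the twisted relation is $[\psi,\ell_a]_\F=[\bF^\alpha\trl\psi,\ell_{\bF_\alpha\trl a}]=\ell_{(\bF^\alpha\trl\psi)(\bF_\alpha\trl a)}=\ell_{\dd(\psi)(a)}$, not $\ell_{\psi(a)}$ as you wrote. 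With these fixes your chain of equivalences becomes: $\psi\in\Der{A}$ $\Leftrightarrow$ $[\psi,\ell_a]_\F=\ell_{\dd(\psi)(a)}$ $\Leftrightarrow$ (apply $\dd$) $[\dd(\psi),\ell^\F_a]_{\rF}=\ell^\F_{\dd(\psi)(a)}$ $\Leftrightarrow$ $\dd(\psi)\in\Der{A_\F}$, which is exactly the paper's argument.
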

\begin{proof}
Since $\Der{A}_\F\subset\Hom(A,A)_\F$ and  $\Der{A_\F}\subset \Hom(A_\F,A_\F)$ are braided Lie subalgebras,
we  just need to prove that
$\dd: \Der{A}_\F \to  \Der{A_\F} $ is a bijection between the sets
$\Der{A}=\Der{A}_\F$ of $(K,\r)$-braided derivations of $A$ and
$\Der{A_\F}$ of $(K_\F,\r_\F)$-braided derivations of $A_\F$.
Recall  that a linear map $\psi\in
  \Hom(A,A)$ is a $(K,\r)$-braided derivation if and only if  \eqref{psiella0} holds:
 \begin{equation}\label{psiella}
   [\psi,\ell_a]_\r=\ell_{\psi(a)}
 \end{equation}
 for all $a\in A$. This condition is
  linear in $\psi$ and $a$ and, since $A$ and $\Der{A}$ are
$K$-modules, it implies 
\begin{equation}\label{Fpsiell}
  [\bF^\alpha\trl \psi,\ell_{\bF_\alpha\trl
    a}]_\r=\ell_{(\bF^\alpha\trl \psi)(\bF_\alpha\trl a)}
\end{equation}
for all $a\in A$. Similarly, applying $\F$ to $\F^{-1}\trl(\psi\otimes a)$ we see
  that  \eqref{psiella} is equivalent to \eqref{Fpsiell}. 
In turn, from the $K$-equivariance of $\ell$ the latter is equivalent to
  $$[\psi,\ell_{a}]_\F:=
  [\bF^\alpha\trl \psi,\bF_\alpha\trl\ell_{
    a}]_\r=\ell_{\dd(\psi)(a)}~. 
    $$
Applying the isomorphism $\dd:  ((\Hom(A,A)_\F,
\circ_\F), [\, ,\, ]_\F)\to ((\Hom(A_\F,A_\F), \circ) ,[\, , \,]_{\rF}
)$ of $K_\F$-braided Lie algebras  
shows
that \eqref{psiella} is equivalent to
\begin{equation}\label{ddbderla}
[\dd(\psi), \dd(\ell_a)]_{\rF}=\dd(\ell_{\dd(\psi)(a)})~.
\end{equation}
Thus $\psi$ is a $(K,\r)$-braided derivation if and only if \eqref{ddbderla} holds. The theorem is proven by showing that the latter is equivalent to $D(\psi)$ being a $(K_\F,\r_\F)$-braided derivation.
To this end we
evaluate  \eqref{ddbderla} on any $a'\in A$ and, using
$K$-equivariance of $\ell$, we obtain
$$\dd(\psi)\big( \dd(\ell_a)(a')\big)- (\rF_\alpha\trl
\dd(\ell_{a}))\big((\rF^\alpha\trl
\dd(\psi))(a')\big)\,=\,\dd(\psi)(a)\dotF a'$$
that using also $K_\F$-equivariance of $\dd$ is equivalent to
\begin{equation}\dd(\psi)(a\dotF a')=\dd(\psi)(a)\dotF a'+  (\rF_\alpha\trl a)\dotF(\rF^\alpha\trl
  \dd(\psi))(a')~.\label{RFbrder}
  \end{equation}
The implication  \eqref{psiella} $\Rightarrow$ \eqref{RFbrder} shows that
the bijection $\dd:  \Hom(A,A)_\F\to \Hom(A_\F,A_\F)$  restricts
to the injection $\dd: \Der{A}_\F \to  \Der{A_\F} $ (recall that
$\Hom(A,A)=\Hom_\F(A,A)$ as sets). Since any
element of $\Hom(A_\F,A_\F)$ can be written as $\dd(\psi)$ for some
$\psi\in  \Hom(A,A)_\F$, the
implication \eqref{RFbrder} $\Rightarrow$ \eqref{psiella} shows that
$\dd: \Der{A}_\F \to  \Der{A_\F} $ is a surjection.
\end{proof}

When $A$ is quasi-commutative, the $K$-braided Lie
algebra $\Der{A}$ has an $A$-module structure defined in \eqref{AmodderA}
that is compatible with the Lie bracket of $\Der{A}$ (cf. Proposition \ref{modLie}). 
This implies that also the $K_\F$-braided Lie
algebras $\Der{A}_\F$ and $\Der{A_\F}$ have compatible $A_\F$-module
structures and the isomorphism $\dd:\Der{A}_\F\to \Der{A_\F}$ maps one
into the other. 
The $(K,A)$-relative Hopf module $\Der{A}$  with $A$-module structure \eqref{AmodderA} is twisted to  the 
$(K_\F,A_\F)$-relative Hopf module $\Der{A}_\F$ with  
\begin{equation}\label{acdotFD}
  a\cdot_\F\psi:=(\bF^{\alpha}
  \trl_A a)\, (\bF_{\alpha}\trl_{\Der{A}} \psi)
\end{equation}
for all $a\in A_\F$,
$\psi\in \Der{A}_\F$, that is $a\cdot_\F\psi=\ell_a\circ_\F\psi$. The braided derivation property of the bracket
with respect to this module structure reads, for all $\psi,\psi'\in \Der{A}_\F$, $a\in A_\F$,
\begin{equation}\label{LieAmodF}
  [\psi,
a\cdot_\F\psi']_\F=[\psi,a]_\F\cdot_\F\psi'+({\r_\F}_\alpha\trl
a)\cdot_\F[{\r_\F}^\alpha\trl \psi,\psi']_\F
\end{equation}
where $[\psi,a]_\F:=(\bF^\alpha\trl
\psi)(\bF_\alpha\trl a)$ in agreement with $[\psi,\ell_a]_\F= [\bF^\alpha\trl \psi,\bF_\alpha\trl \ell_a]=\ell_{(\bF^\alpha\trl
\psi)(\bF_\alpha\trl a)}$. We recall that 
$(\Hom (A,A)_\F, \circ_\F, [\, ,\, ]_\F)$, is a $K_\F$-braided Poisson
algebra (cf. Example \ref{ex:hom}). Then the proof of \eqref{LieAmodF}
is along the proof of
Proposition \ref{modLie}  where now we consider a
  $K_\F$-braided Poisson algebra $(P_\F, \cdot_\F, [~,~]_{\F})$  and
the $K$-equivariant map $\ell: A\to \Hom(A,A)$, with $\ell_a$ defined in
\eqref{defofell}, is seen as a
$K_\F$-equivariant map
$\ell: A_\F\to \Hom(A,A)_\F$. Setting
$p=\id_{A_\F}$, $q=\psi$, $p'=\ell_a$, $q'=\psi'$ in the
$K_\F$-braided Poisson algebra version of \eqref{pqp'q'} we
obtain
\begin{equation*}\begin{split}
[\psi, a\cdot_\F\psi']_\F =[\psi, \ell_a\circ_\F \psi']_\F&=[\psi,\ell_a]_\F\circ_\F\psi'+({\r_\F}_\alpha\trl\ell_{a})\circ_\F[{\r_\F}^\alpha\trl \psi,\psi']_\F \\ &=
[\bF^\alpha\trl \psi,\ell_{\bF_\alpha\trl a}]\circ_\F\psi'+\ell_{\bF^\beta{\r_\F}_\alpha\trl
  a}\circ\bF_\beta\trl [{\r_\F}^\alpha\trl \psi,\psi']_\F \\
& =
\ell_{(\bF^\alpha\trl \psi)(\bF_\alpha\trl a)}\circ_\F\psi'+(\bF^\beta{\r_\F}_\alpha\trl
  a)\: \F_\beta\trl [{\r_\F}^\alpha\trl \psi,\psi']_\F\\ &
=
[\psi,a]_\F\cdot_\F\psi'+({\r_\F}_\alpha\trl
  a)\cdot_\F [{\r_\F}^\alpha\trl \psi,\psi']_\F~.
\end{split}\end{equation*}

This establishes the braided derivation property \eqref{LieAmodF}. More generally, setting
$p=\ell_a$, $q=\psi$, $p'=\ell_{a'}$, $q'=\psi'$ in \eqref{pqp'q'} we obtain
\begin{equation}\label{LieAmodF+}\begin{split}
  [a\cdot_\F\psi,
  a'\cdot_\F\psi']_\F=& ~a\cdot_\F[\psi,a']_\F\cdot_\F\psi'+
  a\cdot_\F({\r_\F}_\alpha\trl
  a')\cdot_\F[{\r_\F}^\alpha\trl \psi,\psi']_\F\\ &
  +{\r_\F}_\beta\:\!{\r_\F}_\alpha\trl a' \cdot_\F[{\r_\F}_\delta{\r_\F}_\gamma\trl \psi'\,,\, \r_\F{}^\delta\r_\F{}^\beta \trl a]_\F\cdot_\F
    \r_\F{}^\gamma\r_\F{}^\alpha\trl \psi~.
\end{split}\end{equation}

The above shows that we have a $(K_\F,\r_\F)$-braided Lie algebra   $\Der{A}_\F$ with compatible 
left $A_\F$-module structure \eqref{acdotFD}. In addition 
we have also that $\Der{A_\F}$ is a
$(K_\F,\r_\F)$-braided Lie algebra with compatible $A_F$-module
structure  defined by 
\begin{equation}\label{buldpsi}
(a\dotF\tilde\psi)(a'):=a\dotF\tilde\psi(a')
\end{equation}
for all $a,a'\in A_\F,\tilde\psi\in \Der{A_\F}$,  as in \eqref{AmodderA}.
Indeed
if $A$ is $K$-quasi-commutative then $A_\F$ is
$K_\F$-quasi-commutative  so that we can apply Proposition \ref{modLie} to this
case.
\begin{cor}\label{DAmodiso}
If the $K$-module algebra $A$ is quasi-commutative the braided Lie
algebra isomorphism $\dd: (\Der{A}_\F , [\, ,\, ]_\F )\rightarrow (\Der{A_\F} , [\, ,\, ]_{\rF})$ of Theorem \ref{thm:Dalg} is also an isomorphism of
the $A_\F$-modules $\Der{A}_\F$ and $\Der{A_\F}$.  
\end{cor}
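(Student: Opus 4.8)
The plan is to use that Theorem~\ref{thm:Dalg} already provides a \emph{bijective} morphism of $K_\F$-braided Lie algebras $\dd\colon(\Der{A}_\F,[\, ,\, ]_\F)\to(\Der{A_\F},[\, ,\, ]_{\rF})$, so the only remaining point is $A_\F$-linearity: for all $a\in A_\F$ and $\psi\in\Der{A}_\F$,
\[
\dd(a\cdot_\F\psi)=a\dotF\dd(\psi)\,,
\]
where $\cdot_\F$ is the $A_\F$-module structure \eqref{acdotFD} on $\Der{A}_\F$ and $a\dotF(-)$ is the one of \eqref{buldpsi} on $\Der{A_\F}$. Note that both module structures exist precisely because $A$, and hence $A_\F$, is quasi-commutative (cf.\ \eqref{AmodderA} and Proposition~\ref{modLie}), which is the hypothesis of the corollary; beyond guaranteeing that the two sides are well defined, quasi-commutativity will not be used again.

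The argument rests on three facts already available. First, by the discussion following \eqref{acdotFD}, the action is $a\cdot_\F\psi=\ell_a\circ_\F\psi$ computed in $(\Hom_\F(A,A),\circ_\F)$, with $\ell_a$ the left multiplication \eqref{defofell} by $a$ in $A$ and $\circ_\F$ the twisted composition \eqref{circF}. Second, by Proposition~\ref{prop:Diso}, $\dd\colon(\Hom_\F(A,A),\circ_\F)\to(\Hom(A_\F,A_\F),\circ)$ is an algebra isomorphism. Third, $\dd(\ell_a)$ is exactly the left multiplication operator $a'\mapsto a\dotF a'$ of $A_\F$. This last identity is the only computation to carry out, and it is immediate from the explicit formula \eqref{mappaD} for $\dd$ together with the $K$-equivariance $k\trl\ell_a=\ell_{k\trl a}$ of $\ell$:
\[
\dd(\ell_a)(a')=(\bF^\alpha\trl\ell_a)(\bF_\alpha\trl a')=\ell_{\bF^\alpha\trl a}(\bF_\alpha\trl a')=(\bF^\alpha\trl a)(\bF_\alpha\trl a')=a\dotF a'\,,
\]
the last equality being the definition \eqref{rmod-twist} of the twisted product on $A_\F$.

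Combining the three facts finishes the proof:
\[
\dd(a\cdot_\F\psi)=\dd(\ell_a\circ_\F\psi)=\dd(\ell_a)\circ\dd(\psi)=a\dotF\dd(\psi)\,,
\]
the last step being the definition \eqref{buldpsi} of the $A_\F$-action on $\Der{A_\F}$. Together with the bijectivity and the fact that $\dd$ preserves the brackets, both supplied by Theorem~\ref{thm:Dalg}, this exhibits $\dd$ as an isomorphism of $A_\F$-modules, and hence of $(K_\F,\r_\F)$-braided Lie algebras with compatible $A_\F$-module structure. There is no genuine obstacle here: the only content is the identification $\dd(\ell_a)=a\dotF(-)$, and the compatibility of the module structure with the bracket on each side need not be re-examined, being recorded in \eqref{LieAmodF}--\eqref{LieAmodF+} and transported by $\dd$ via the two identities just used.
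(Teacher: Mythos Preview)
Your proof is correct and follows essentially the same route as the paper: both arguments reduce to the single identity $\dd(\ell_a)(a')=a\dotF a'$ and then use the algebra map property $\dd(\ell_a\circ_\F\psi)=\dd(\ell_a)\circ\dd(\psi)$ from Proposition~\ref{prop:Diso} to conclude $\dd(a\cdot_\F\psi)=a\dotF\dd(\psi)$. The only cosmetic difference is that the paper first verifies $\dd(\ell_a)\circ\dd(\psi)=a\dotF\dd(\psi)$ by direct evaluation and then invokes the algebra map property, whereas you isolate $\dd(\ell_a)=a\dotF(-)$ as a standalone computation; the content is the same.
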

\begin{proof}
  For any $a\in A_\F$ we have $\ell_a:A\to A$ and $\dd(\ell_a):A_\F\to
  A_\F$. Let $\dd(\psi)$ be an  element of $\Der{A_\F}$, we
  show that $a\dotF \dd(\psi)=\dd(\ell_a)\circ \dd(\psi)$. For any
  $a'\in A_\F$,
  \begin{equation*}\begin{split}
\dd(\ell_a)\circ
 \dd(\psi)\:\!(a')&=\dd(\ell_a)\big(\dd(\psi)(a')\big)=
 \ell_{\bF^\alpha\trl a}\big(\bF_\alpha\trl \big(\dd(\psi)(a')\big)\big)=
 a\dotF\big(\dd(\psi)(a')\big)\\ &=(a\dotF \dd(\psi))(a')~.
\end{split} \end{equation*}
We then have for any $\psi\in \Der{A}$,
$\dd(a\cdot_\F\psi)=\dd(\ell_a\circ_\F\psi)=\dd(\ell_a)\circ \dd(\psi)=a\dotF\dd(\psi)$.
\end{proof}
We denote $\tilde\psi=\dd(\psi)$ for the image of $\psi \in \Der{A}_\F$.
 When applying $\dd$ to \eqref{LieAmodF+}, we obtain the
equality, 
\begin{align}\label{LieAmodF++}
  [a\dotF\tilde\psi,
  a'\dotF\tilde\psi']_{\r_\F}=& ~a\dotF\tilde\psi(a')\dotF\tilde\psi'+
  a\dotF({\r_\F}_\alpha\trl
  a')\dotF[{\r_\F}^\alpha\trl \tilde\psi,\tilde\psi']_{\r_\F}\\ &
  +{\r_\F}_\beta{\r_\F}_\alpha\trl a' \dotF[{\r_\F}_\delta{\r_\F}_\gamma\trl \tilde\psi'\,,\, {\r_\F}^\delta{\r_\F}^\beta \trl a]_{\r_\F}\dotF
    {\r_\F}^\gamma{\r_\F}^\alpha\trl \tilde\psi \nn
\end{align}
for all $\tilde\psi$, $\tilde\psi'\in \Der{A_\F}$.
We used  that 
$[\psi,a]_\F={(\bF^\alpha\trl \psi)(\bF_\alpha\trl a)}=
\dd(\psi)(a)$.

\section{Gauge group of Hopf--Galois extensions}
The notion of gauge group of a quantum principal bundle 
$A^{co H}\subset A$ has been studied in \cite{brz-tr, brz-maj}  
as the group of invertible $H$-equivariant unital maps $A \to A$. These are not
required to be algebra maps and in the case of a commutative principal
bundle the resulting gauge group is much bigger than the usual
commutative one (see \cite[Ex. 3.1]{pgc}).

Here, along the lines of \cite{pgc}
we follow a different route to the gauge group. 
We  first study the group of unital $H$-equivariant 
{\sl algebra} maps (right $H$-comodule algebra maps) from $A$ to $A$.
Examples are provided that show when this notion is too
restrictive and when it is not. 
In the next section we introduce $K$-equivariant Hopf--Galois extensions $A^{co H}\subset A$, 
with $K$ an external triangular Hopf algebra of symmetries of the  
extension. In that context we provide a more general definition of  
gauge symmetries as braided Hopf algebras.

\subsection{Hopf--Galois extensions}
We consider noncommutative principal bundles as Hopf--Galois
extensions. These are $H$-comodule algebras $A$ with a canonically
defined map $\chi: A\ot_B A\to A\ot H$ which is required to be
invertible.  
\begin{defi}
Let $H$ be a Hopf algebra and let $A\in\A^H$ be an $H$-comodule
algebra with coaction $\delta^A$.
Consider the subalgebra $B:= A^{coH}=\big\{b\in A ~|~ \delta^A (b) = b
\otimes 1_H \big\} \subseteq A$ of coinvariant elements 
and let
$A \ot_B A$
be the corresponding balanced tensor product with multiplication $m: A \ot_B
A \to A$  induced from $m_A: A \ot A \to A$. 
 The extension $B\subseteq 
A$ is called an $H$-\textbf{Hopf--Galois extension} provided the
map
\begin{align}\label{canonical}  \can := (m \ot \id) \circ (\id \otimes_B \delta^A ) : A \otimes_B A  \longrightarrow A \ot H~  ,  \quad a' \ot_B a \longmapsto a' a_{\;(0)} \ot a_{\;(1)} ,
\end{align} 
the {\bf canonical  map}, is bijective. 
\end{defi}
\noindent
Being $\chi$ left $A$-linear, its inverse, when it exists, is determined by the translation map:
 $$\tau={\chi^{-1}}{|_{_{1 \ot {H}}}}:\,1\ot H\simeq H\to A\ot_B A \,, \quad h\mapsto
\tuno{h} \ot \tdue{{h}} . $$

A Hopf--Galois extension is {\bf cleft} if there is a convolution 
invertible morphism of $H$-comodules $j : H \to A$ (the {\bf cleaving
map}), where $H$ has coaction $\Delta$. This is equivalent to an isomorphism $A \simeq B \otimes H$ of left
$B$-modules and right $H$-comodules, where $B \otimes H$ is a left
$B$-module via multiplication on the left and a right H-comodule via $\id \otimes \Delta$.
When the cleaving map is an algebra map,
the algebra $A$ is isomorphic to a  smash product algebra $A\simeq B\sharp H$.

The extension $B=A^{co H}\subset A$ is called 
$H$-{\bf{principal comodule algebra}} if it is Hopf--Galois and 
$A$ is $H$-equivariantly projective as a left $B$-module, i.e., there
exists a left $B$-module and right $H$-comodule morphism $s : A \to B
\otimes A$ that is a section of the (restricted) product $m : B
\otimes A \to A$. 
This is equivalent to requiring that the total space algebra $A$ is
faithfully flat as a left $B$-module.
 Cleft Hopf--Galois extensions are always principal comodule algebras.

\subsection{Gauge group}
Let $\Hom_{\M^H}(A, A)$ be the $\bbK$-module of $H$-comodule maps from $A$ to $A$ and let  $\Hom_{\A^H}(A, A)$ be the subset of
algebra maps from $A$ to $A$.
The following theorem improves previous results in \cite[Prop. 3.6]{pgc} (where $A$ was quasi-commutative) and
\cite[Prop. 3.3]{HanLandi}  (where the faithfully flat
condition was used), see also \cite[Rem. 3.11]{Schneider}.
\begin{thm}\label{thm:Finv} Let $B=A^{coH}\subseteq A$ be an
  $H$-Hopf--Galois extension.
The set 
\beq\label{def1GG}
\Aut{A}:=\{ \F \in \Hom_{\A^H}(A, A) \, |  \,\, \F_{|_B}=\id\}
\eeq
of right $H$-comodule algebra morphisms that restrict to the identity
on the subalgebra $B$  is a group
with respect to the composition of maps 
$$
\F \cdot \mathsf{G}:=  \mathsf{G} \circ \F 
$$
for all  $\F , \mathsf{G} \in \Aut{A}$. For $\F\in \Aut{A}$ its 
inverse $\F^{-1} \in
\Aut{A}$  is given by
\begin{eqnarray}\label{inv-F}
\F^{-1}:=m\circ (\id\ot m )\circ (\id\ot \F\ot_B\id)\circ (\id\ot  
\tau)\circ \delta^A\: :  A&\!\longrightarrow \!& A\\ a &\!\longmapsto\!& \zero{a} \F (\tuno{\one{a}}) \tdue{\one{a}} 
~. ~~~\nn  
\end{eqnarray} 
\begin{proof}
The map $\F^{-1}$ in \eqref{inv-F} is well-defined because $\F\ot_B \id$ is well-defined
due to the right $B$-linearity of $\F$. 
The theorem is proven by showing that $\F^{-1}$ is the inverse
of $\F$. We first prove that $\F^{-1} \circ \F= \id_A$. 
Indeed, being $\F$ an 
$H$-equivariant of algebra map,  
\begin{equation}\label{FFmenouno}
\F^{-1}(\F(a))=\F(\zero{a}) \F (\tuno{\one{a}}) \tdue{\one{a}}=
\F(\zero{a}  \tuno{\one{a}}) \tdue{\one{a}}=\F(1) a=a~.
\end{equation}
Here in the last but one equality we used that $\chi$ and its inverse are left
$A$-linear  
so that  $
\zero{a}\tau(\one{a}) =\zero{a}\chi^{-1}(1\otimes
\one{a})=\chi^{-1}(\zero{a}\otimes \one{a})=
\chi^{-1}(\chi(1\otimes {a}))=1\otimes a$.

To prove that $\F \circ \F^{-1}= \id_A$ we notice that the balanced tensor
product map $\F\otimes_B\F:
A\otimes_B A\to A\otimes_B A$ is well defined because $\F$ is
 left and right $B$-linear.  We show that
\begin{equation*}
(\F\otimes_B \F)\circ \tau=\tau~.
\end{equation*}
This is equivalent to $\chi\circ
(\F\otimes_B \F)\circ \tau=1_A\otimes\id_H$; for all $h\in H$,
\begin{equation*}
\begin{split}
\chi\big(\F(\tuno{h})\otimes_B \F(\tdue{h})\big)&= \F(\tuno{h})\,\zero{\F(\tdue{h})_{}}\otimes
\one{\F(\tdue{h})_{}}\\[.2em]
&=\F(\tuno{h})\,\F(\zero{\tdue{h}})_{} \otimes
\one{\tdue{h}}\\[.2em]
&=\F(\tuno{h}\zero{\tdue{h}})\otimes
\one{\tdue{h}}\\[.2em]
&=(\F\otimes\id_H)(\chi(\tuno{h}\otimes_B\tdue{h}))\\[.2em]
&=1_A\otimes h
\end{split}
\end{equation*}
 where in the second equality we used $H$-equivariance of $\F$,  in the
 last that $\tau={\chi^{-1}}{|_{_{1 \ot {H}}}}$.

 Using that $\F$ is an algebra map and the above property       it is
 now easy to show that $\F \circ \F^{-1}=\F^{-1} \circ \F= \id_A$; for
 all $a\in A$,
 \begin{equation*}
   \begin{split}
  \F (\F^{-1}(a))&=\F\big(\zero{a} \F (\tuno{\one{a}}) \tdue{\one{a}}
  \big)=\F(\zero{a}) \F(\F (\tuno{\one{a}}))
  \F(\tdue{\one{a}})\\[.2em] &=
  \F(\zero{a}) \F (\tuno{\one{a}})\tdue{\one{a}}=\F^{-1}(\F(a))=a~.
\end{split}
\end{equation*}
where in the last but one equality we used equation \eqref{FFmenouno}.
  \end{proof}
\end{thm}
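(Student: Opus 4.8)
The plan is to verify that $\Aut{A}$ is a group under the opposite-composition product $\F\cdot\mathsf{G}:=\mathsf{G}\circ\F$, and that the explicit formula \eqref{inv-F} produces a two-sided inverse. The easy part is that the set is closed under this product and contains a unit: if $\F,\mathsf{G}$ are both unital right $H$-comodule algebra maps restricting to $\id$ on $B$, then so is $\mathsf{G}\circ\F$ (composition of algebra maps is an algebra map, composition of comodule maps is a comodule map, and $(\mathsf{G}\circ\F)|_B=\mathsf{G}|_B\circ\F|_B=\id$), and $\id_A\in\Aut{A}$ is a two-sided unit. Associativity of $\cdot$ is associativity of $\circ$. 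So the whole content is that \eqref{inv-F} works.

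First I would check that $\F^{-1}$ as defined is well-posed and lands in $\Aut{A}$. Well-posedness: since $\F$ is right $B$-linear, $\F\otimes_B\id$ descends to the balanced tensor product, so the composite $a\mapsto \zero{a}\,\F(\tuno{\one{a}})\,\tdue{\one{a}}$ makes sense given that $\tau=\chi^{-1}|_{1\otimes H}$ takes values in $A\otimes_B A$. That $\F^{-1}$ is $H$-equivariant follows from $H$-equivariance of $\F$ together with the standard properties of the translation map (it intertwines the coactions appropriately: $\zero{\tuno{h}}\otimes\tuno{\tdue{h}}\otimes\,\tdue{h}_{}\cdots$, i.e. $\delta^A(\tuno{h})\otimes\tdue{h} = \tuno{\one{h}}\otimes S(\three{h})\,\cdot\,\ldots$). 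That $\F^{-1}|_B=\id$ is immediate since for $b\in B$ one has $\delta^A(b)=b\otimes 1_H$ and $\tau(1_H)=1_A\otimes_B 1_A$, so $\F^{-1}(b)=b\,\F(1_A)\,1_A=b$. It remains to verify $\F^{-1}$ is an algebra map; this I would actually get for free at the end, because once $\F^{-1}\circ\F=\id_A=\F\circ\F^{-1}$ is established, $\F^{-1}$ is the set-theoretic inverse of the algebra map $\F$ and hence itself an algebra map.

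The two substantive computations are exactly the two displayed in the theorem statement's proof skeleton. For $\F^{-1}\circ\F=\id_A$: apply $\F^{-1}$ to $\F(a)$, use that $\F$ is an $H$-comodule map to pull $\delta^A$ through $\F$, use that $\F$ is an algebra map to merge the three factors into $\F\big(\zero{a}\,\tuno{\one{a}}\big)\,\tdue{\one{a}}$, and then invoke left $A$-linearity of $\chi$ (hence of $\chi^{-1}$) in the form $\zero{a}\,\tau(\one{a})=\chi^{-1}(\delta^A(a))=\chi^{-1}(\chi(1_A\otimes_B a))=1_A\otimes_B a$, yielding $\F(1_A)\,a=a$. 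For $\F\circ\F^{-1}=\id_A$: the clean route is to first prove the auxiliary identity $(\F\otimes_B\F)\circ\tau=\tau$, equivalently $\chi\circ(\F\otimes_B\F)\circ\tau=1_A\otimes\id_H$, by a short calculation applying $\chi$ and using $H$-equivariance of $\F$, the algebra-map property of $\F$, and $\chi\circ\tau=1_A\otimes\id_H$; then applying $\F$ to $\F^{-1}(a)=\zero{a}\,\F(\tuno{\one{a}})\,\tdue{\one{a}}$, distributing $\F$ across the product and using $(\F\otimes_B\F)\circ\tau=\tau$ to replace $\F(\tuno{\one{a}})\otimes_B\F(\tdue{\one{a}})$ by $\tuno{\one{a}}\otimes_B\tdue{\one{a}}$, reduces $\F\circ\F^{-1}$ to the same expression $\zero{a}\,\F(\tuno{\one{a}})\,\tdue{\one{a}}=\F^{-1}(a)$... wait, rather it reduces $\F(\F^{-1}(a))$ to $\zero{a}\,\F(\tuno{\one{a}})\,\tdue{\one{a}}=\F^{-1}(\F^{-1}(a))$; the efficient bookkeeping is the one in the skeleton: $\F(\F^{-1}(a))=\F(\zero{a})\,\F(\F(\tuno{\one{a}}))\,\F(\tdue{\one{a}})=\F(\zero{a})\,\F(\tuno{\one{a}})\,\tdue{\one{a}}=\F^{-1}(\F(a))=a$, where the middle step uses $(\F\otimes_B\F)\circ\tau=\tau$ (so $\F(\F(\tuno{h}))\otimes_B\F(\tdue{h})=\F(\tuno{h})\otimes_B\tdue{h}$) and the last uses the already-proven $\F^{-1}\circ\F=\id$.

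The main obstacle is purely notational: keeping the Sweedler indices for $\delta^A$ straight against the translation-map identities $\chi(\tuno{h}\otimes_B\tdue{h})=1_A\otimes h$, $\tuno{h}\tdue{h}=\varepsilon(h)1_A$, $\delta^A(\tuno{h})\otimes\tdue{h}=\tuno{\one{h}}\otimes\tdue{\one{h}}\otimes\,\ldots$, and the left $A$-linearity of $\chi$. There is no conceptual difficulty and no need for faithful flatness or quasi-commutativity—only bijectivity of the canonical map $\chi$—which is precisely the improvement over \cite[Prop.~3.6]{pgc} and \cite[Prop.~3.3]{HanLandi}.
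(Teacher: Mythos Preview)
Your proposal is correct and follows essentially the same route as the paper: well-definedness via right $B$-linearity of $\F$, the computation of $\F^{-1}\circ\F=\id_A$ via left $A$-linearity of $\chi^{-1}$, the auxiliary identity $(\F\otimes_B\F)\circ\tau=\tau$ proved by applying $\chi$, and then the reduction of $\F\circ\F^{-1}$ to $\F^{-1}\circ\F$. You add some surrounding detail the paper leaves implicit (closure, unit, associativity, and the observation that $\F^{-1}\in\Aut{A}$ follows automatically once it is the set-theoretic inverse of the bijective $H$-comodule algebra map $\F$), but the substantive steps are identical.
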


In general this notion of gauge group is too restrictive. 
For instance, a Hopf algebra $H$ coacting on itself with its coproduct, can be seen as the Hopf--Galois extension
of the ground field $\bbK$. 
In this case the gauge group
coincides with the group of  
characters of $H$. Indeed for  $\chi:H\to \bbK$ a
character, the map  $\alpha_\chi:=(\chi\otimes\id)\circ\Delta:H\to H$ is
a right $H$-comodule algebra morphism. Vice versa, given a  right
$H$-comodule algebra morphism $\alpha$, the composition 
$\chi=\varepsilon\circ\alpha$ is an algebra map. These two
constructions are one the inverse of the other. 
This equivalence holds also for locally compact quantum groups \cite{Chirvasitu}.
For example, the gauge group of the exention $
\mathbb{C}\subseteq\O_q(SL(2))$, $q^2\not=1$, is
just the multiplicative group $\mathbb{C}^\times$.

More in general, for  a faithfully flat Galois object $A$ over $\bbK$ (a Hopf--Galois
extension $\bbK=A^{co H} \subset A$) 
we have $\Aut{A}={\rm{Char}}(L)$. This is the group of 
characters of the Hopf algebra 
$L:=(A\otimes A)^{co H}$  of coinvariant elements of the $H$-comodule algebra
$A\otimes A$ under the diagonal coaction \cite[\S 3.1]{SchauenburgFIC}, \cite[\S 6.1]{HanLandi}.

The gauge group $\Aut{A}$ of an $H$-Hopf--Galois extension is in general
very small because, being a group, it is a classical object
associated to a noncommutative principal bundle.
Nonetheless in the relevant case of deformation quantization this notion is adeguate.
We show that for  twist deformations of Hopf--Galois extensions, it  
leads to a gauge group which is isomorphic to the
classical gauge group of the undeformed principal bundle.

\begin{ex}\label{btp}
For simplicity we begin
with the  Galois object
$\bbK\subseteq H$ with $H$-coaction 
given by the coproduct.
Let $H$ be  commutative with trivial cotriangular structure $R=\varepsilon\ot \varepsilon$.
Let $\cot:H\otimes H\to \bbK$ be a 2-cocycle on $H$ 
with convolution inverse $\bar\cot$
(every twist on a Hopf algebra
dual to $H$ gives a 2-cocycle on $H$ via $\cot(h\otimes h')=
\langle \F^\alpha ,h \rangle \, \langle \F_\alpha , h'\rangle$, see
e.g. \cite[App. A]{ppca}). 
As in \cite[\S 4]{pgc}
and dually to the theory in Section \ref{TbHa}, the  Hopf
algebra $(H, R=\varepsilon\ot \varepsilon)$ is deformed to the
cotriangular Hopf algebra $(H_\cot , R_\cot=\cot_{21} *\bar\cot)$. 
The total space algebra
$(A=H,\cdot,\Delta)$ is deformed as an $H$-comodule algebra to
$(A_\cot=H_{\dotcot}, \dotcot, \Delta)$, where
$h\dotcot h'=\one{h}\one{h'}\bar\cot(\two{h}\otimes
\two{h'})$ for all $h,h'\in H$.
We thus obtain the Hopf--Galois extension
$\bbK\subseteq H_{\dotcot}$. This is a cleft
extension with cleaving map $j=\id_H: H_\cot\to H_{\dotcot}$, but
in general needs not be a trivial extension since $ H_\cot$ and
$H_{\dotcot}$ are in general not isomorphic as $H_\cot$-comodule
algebras. The algebra $H_{\dotcot}$ is  quasi-commutative:
$h{\dotcot}h'=  \one{h'} \!\!\:\dotcot\one{h}~
\Ru{\two{h}}{\two{h'}}$ for all $h,h'\in H$.

 The gauge group of this noncommutative cleft extension $\bbK\subseteq H_{\dotcot} $ is
isomorphic to the gauge group of the trivial extension $\bbK\subseteq
H$. Indeed an algebra map $\F: H\to H$ that is also a right
$H$-comodule map satisfies $\Delta (\F(h))=\F(\one{h})\otimes \two{h}$
and therefore
\begin{align}
\F(h\dotcot
h')&=\F(\one{h}\one{h'})\bar\cot(\two{h}\otimes \two{h'})=
\F(\one{h})\F(\one{h'})\bar\cot(\two{h}\otimes \two{h'})\nonumber\\[.2em]
&=
\one{\F(h)}\one{\F(h')}\bar\cot(\two{\F(h)}\otimes \two{\F(h')})=
\F(h)\dotcot\F(h') \label{multpropFcot}
\end{align}
for all $h,h'\in H_{\dotcot}$. This shows that $\F$ is also an algebra map $\F:
H_{\dotcot}\to H_{\dotcot}$ and an $H_\cot$-comodule algebra
map (since the coproduct of $H_\cot$ is the same as that of $H$).

\noindent
For example, if $H=\O(SL(n))$, 
the gauge group of the Hopf--Galois extension 
$\bbK\subseteq \O(SL(n))^{}_{\dotcot}$ is the classical gauge group $SL(n)$.
 \end{ex}

This construction generalizes, thus extending the results of \cite[Prop. 4.12]{pgc} to the case where $A$ is not necessarily quasi-commutative. A right $H$-comodule algebra $A$ is deformed, via  a 2-cocycle $\cot$ on $H$, 
to the right
$H_\cot$-comodule algebra $A_\cot$ with product
\begin{equation}\label{acota'=}
a\dotcot a'=\one{a}\one{a'}\bar\cot(\two{a}\otimes
\two{a'})
\end{equation}
for all $a,a'\in A$. The subalgebra $B\subseteq A$ of $H$-coinvariant
elements is also the subalgebra of $A_\cot$ of 
$H_\cot$-coinvariant elements. Then  $B\subseteq A_\cot$ is an
$H_\cot$-Hopf--Galois extension if and only if $B\subseteq A$ is an
$H$-Hopf--Galois extension (\cite[Corol. 3.7]{ppca}).
\begin{prop}\label{isoclass} 
Let $B=A^{co H}\subseteq A$ be a Hopf--Galois extension and $\gamma$
be a 2-cocycle on $H$. The gauge group $\Aut{A_\gamma}$ of the twisted
Hopf--Galois extension $B=A_\gamma^{co H_\gamma}\subseteq A_\gamma$ is
isomorphic to the gauge group $\Aut{A}$ of the original Hopf--Galois extension. 
\end{prop}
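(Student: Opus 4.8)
The plan is to construct an explicit group isomorphism $\Phi \colon \Aut{A} \to \Aut{A_\gamma}$ which is the identity on underlying linear maps. The candidate is the obvious one suggested by Example \ref{btp}: send $\F \in \Aut{A}$ to the \emph{same} $\bbK$-linear map $\F \colon A \to A$, now regarded as a map $A_\gamma \to A_\gamma$. So the bulk of the proof is to check that this assignment is well defined, i.e.\ that $\F \in \Aut{A}$ implies $\F \in \Aut{A_\gamma}$, and then that it is bijective and multiplicative.

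First I would verify well-definedness. Since $A_\gamma$ and $A$ agree as $H$-comodules (the coaction is unchanged, only the product is deformed via \eqref{acota'=}) and $H_\gamma$ agrees with $H$ as a coalgebra, an $H$-comodule map $\F \colon A \to A$ is automatically an $H_\gamma$-comodule map $A_\gamma \to A_\gamma$; this uses only $\delta^A \circ \F = (\F \ot \id) \circ \delta^A$. The content is that $\F$ intertwines the deformed products: using that $\F$ is an $H$-comodule map, $\F(\one a) \ot \two a = \one{\F(a)} \ot \two{\F(a)}$ (equivalently the coaction of $A$ equals the coaction of $A_\gamma$), exactly the computation \eqref{multpropFcot} of Example \ref{btp} goes through verbatim:
\begin{align*}
\F(a \dotcot a') &= \F(\one a \one{a'}) \,\bar\cot(\two a \ot \two{a'})
= \F(\one a)\F(\one{a'}) \,\bar\cot(\two a \ot \two{a'}) \\
&= \one{\F(a)}\one{\F(a')} \,\bar\cot(\two{\F(a)} \ot \two{\F(a')})
= \F(a) \dotcot \F(a') \, ,
\end{align*}
where the second equality uses that $\F$ is an algebra map for the undeformed product and the third uses $H$-equivariance of $\F$ together with the fact that $B = A^{coH}$ is fixed pointwise (so in particular $\F$ is unital, $\F(1_A) = 1_A$). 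Finally, since the subalgebra of coinvariants $B \subseteq A$ equals the subalgebra of coinvariants $B \subseteq A_\gamma$ as a set, the condition $\F_{|_B} = \id$ is literally the same condition in both extensions; here one should also recall from the cited \cite[Corol.\ 3.7]{ppca} that $B \subseteq A_\gamma$ is indeed Hopf--Galois, so that $\Aut{A_\gamma}$ makes sense and Theorem \ref{thm:Finv} applies.

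It remains to check that $\Phi$ is a group homomorphism and bijective. Multiplicativity is immediate: the group law in both $\Aut{A}$ and $\Aut{A_\gamma}$ is $\F \cdot \mathsf{G} = \mathsf{G} \circ \F$ (composition of linear maps), which does not refer to the algebra structure at all, so $\Phi(\F \cdot \mathsf{G}) = \Phi(\F) \cdot \Phi(\mathsf{G})$ holds trivially, and $\Phi(\id_A) = \id_{A_\gamma}$. Bijectivity follows by symmetry: the inverse construction deforms $A_\gamma$ back to $A = (A_\gamma)_{\bar\gamma}$ using the inverse $2$-cocycle $\bar\gamma$ on $H_\gamma$ (just as $\bF$ twists $\KF$ back to $\K$ in Section \ref{TbHa}), and the same argument shows every $\mathsf{G} \in \Aut{A_\gamma}$, viewed as a linear map, lies in $\Aut{A}$; this exhibits a two-sided inverse to $\Phi$ which is again the identity on linear maps.

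I do not expect a genuine obstacle here; the one point requiring care is bookkeeping around \emph{which} structures are literally unchanged under the deformation (the $\bbK$-module $A$, its $H$-coaction, the coalgebra $H$, the coinvariant subalgebra $B$, and the group law on automorphisms) versus what changes (only the products on $A$ and $H$), so that the well-definedness computation and its mirror image for $\bar\gamma$ are clean. One should also state explicitly, before the argument, that $B \subseteq A_\gamma$ is Hopf--Galois so that the target group is defined; this is quoted, not proved. No faithful flatness or principality hypothesis is needed beyond what makes the two gauge groups defined, since Theorem \ref{thm:Finv} requires only the Hopf--Galois property.
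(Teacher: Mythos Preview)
Your proposal is correct and follows essentially the same approach as the paper: the paper's proof simply observes that the same $\bbK$-linear map $\F$ is an algebra map for the deformed product by the computation \eqref{multpropFcot}, leaving the $H$-comodule compatibility, the identification of coinvariants, and the bijectivity via $\bar\gamma$ implicit. Your write-up just makes these implicit points explicit.
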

\begin{proof}
Let $\F:A\to A$ be a unital algebra map, then the same $\bbK$-linear
map $\F$ thought as a map $\F: A_\cot\to A_\cot$ is a unital algebra
map. Indeed the unit of $A_\cot$ and of $A$ are the same. The
algebra property
$
\F(a\dotcot a' )=
\F(a)\dotcot\F(a')$ for  $a,a'\in A_\cot$
is shown as in \eqref{multpropFcot}.
\end{proof}

In particular we have the following example (cf. \cite[Ex. 4.19]{pgc}).
\begin{ex}
Consider  the Hopf--Galois extension $B=A^{co H}\subseteq A$, with $A=\O(P)$, $B=P/G$, $H=\O(G)$, of 
a  principal $G$-bundle $P\to P/G$
 of affine varieties. Its twisted deformation
 $B=A_\gamma^{co H_\gamma}\subseteq A_\gamma$
   has  gauge group isomorphic to the one of $P\to P/G$.
 \end{ex}

\section{Braided Hopf algebra of gauge transformations}\label{secondapproach}

We study the gauge symmetry
of a $K$-equivariant Hopf--Galois extension, 
for $K$ a triangular Hopf algebra. 
This is an $H$-Hopf Galois extension $B=A^{coH}\subseteq A$ with
$A$ a $K$-equivariant $H$-comodule algebra, as defined in Section
\ref{sub:Kequiv}, and  $B$ a $K$-submodule (this holds for example
when $K$ is flat as
$\bbK$-module, cf. \cite[Prop. 3.12]{ppca}). 
In this context  there is a $K$-braided Hopf algebra of (infinitesimal) gauge transformations  that
generalizes the notion of gauge group 
of the previous section.  

According to the definition of gauge group $\Aut{A}$  in \eqref{def1GG}, infinitesimal gauge
transformations of a Hopf--Galois extension $B=A^{co H}\subseteq A$
are $H$-comodule maps that are vertical derivations
\begin{align}
\auto{A}:=\{u\in\Hom(A,A)\;|\;& \delta (u(a))=u(\zero{a})\otimes \one{a},\nonumber
  \\ &u(a a')=u(a) a'+au(a')\,,\label{LieGG}\\& u(b)=0,
  \mbox{ for all } a, a'\in A, b\in B\}~.
  \nonumber
\end{align}
They form a Lie algebra with Lie bracket given by the commutator.

In the $K$-equivariant case
we can require a braided derivation property.
\begin{defi}\label{defBLieGG}
Let $(K,\r)$ be a triangular Hopf algebra. Infinitesimal gauge
transformations of a $K$-equivariant Hopf--Galois extension $B=A^{co H} \subseteq A$
are $H$-comodule maps that are braided vertical derivations
\begin{align}
\aut{A}:=\{u\in \Hom(A,A)\;|\;& \delta (u(a))=u(\zero{a})\otimes \one{a},\nonumber
  \\ &u(a a')=u(a) a'+(\r_\alpha\trl a)(\r^\alpha\trl u)(a')\,,\label{BLieGG}\\& u(b)=0,
  \mbox{ for all } a, a'\in A, b\in B\}~.
  \nonumber
\end{align}
\end{defi}

In this context  infinitesimal
gauge transformations form a 
Lie algebra object in the category of $K$-modules.
For a $(K,\r)$-module algebra $A$, 
recall  that  
${\rm{Der}}^\r(A)$ as in  \eqref{Der} denotes the braided Lie algebra of braided derivations of
$A$.
\begin{prop}\label{prop:autRB}  
  The linear space $\aut{A}$ with bracket 
      \begin{align}\label{restrictedbracket}
     [~,~]_\r:\aut{A}\otimes
        \aut{A} & \to \aut{A}
       \nn  \\
    u\otimes u' &  \mapsto
        [u,u']_\r:=u\circ u'-\r_\alpha\trl u'\circ
        \r^\alpha \trl u~,~~
      \end{align}
  for all $u,u'\in  \aut{A}$,   is a $K$-braided Lie subalgebra of
${\rm{Der}}^\r(A)$.    
\begin{proof}
We show that $\aut{A}$ is the intersection of three $K$-modules. 
Firstly, 
the  $\bbK$-submodule $\Hom_{\M^H}(A,A)\subseteq \Hom(A,A)$  of
$H$-equivariant $\bbK$-linear maps from $A$ to $A$  is a
$K$-submodule of $\Hom(A,A)$. Indeed commutativity of
the $K$-action with the $H$-coaction on $A$ is equivalent to $H$-equivariance,
$\delta(\lie_k a) =(\lie_k\otimes\,\id_A)\delta(a)$, 
of the map $\lie_k: A\to A$, $\lie_k(a)=k\trl a$
for all $k\in K$ and $a\in A$.
Then
for $u\in  \Hom_{\M^H}(A,A)$ and from $(k\trl_{\Hom(A,A)} u)(a)=
\one{k}\trl(u(S(\two{k})\trl a))$ for all $a\in A$ (cf. \eqref{action-hom}), we
have
$k\trl_{\Hom(A,A)} u =
\lie_{\one{k}}\circ\, u\circ \lie_{S(\two{k})}\in
\Hom_{\M^H}(A,A)$ since composition of $H$-equivariant
maps.

  Since $B$ is a $K$-module,  the
  $\bbK$-submodule $\Hom_B(A,A)\subseteq \Hom(A,A)$ of linear maps
  that annihilate $B$ (that is of vertical maps) is a 
  $K$-submodule of $\Hom(A,A)$. Hence
  $\aut{A}=\Hom_{\M^H}(A,A)\cap{\rm{Der}}^\r(A)\cap\Hom_B(A,A)$ is a $K$-submodule
  since intersection of $K$-submodules.

The bracket in \eqref{restrictedbracket} is just  the restriction to $\aut{A}$ of the one of ${\rm{Der}}^\r(A)$ defined in \eqref{bracket-der}.
It closes in $\aut{A}$: being the braided
Lie bracket of ${\rm{Der}}^\r(A)$ a braided commutator,  $H$-equivariance and verticality  follow. 
\end{proof}
\end{prop}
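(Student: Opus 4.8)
The plan is to realize $\aut{A}$ as an intersection of three $K$-submodules of $\Hom(A,A)$ and then invoke Proposition~\ref{prop:der} to conclude that the braided commutator restricts to a braided Lie bracket on it. First I would observe that the commutativity of the $K$-action and the $H$-coaction on $A$ (equation~\eqref{compatib}, valid since $A$ is a $K$-equivariant $H$-comodule algebra) says precisely that for each $k\in K$ the map $\lie_k:A\to A$, $a\mapsto k\trl a$, is a morphism of $H$-comodules. Using the explicit action \eqref{action-hom} of $K$ on $\Hom(A,A)$, namely $(k\trl_{\Hom(A,A)}u)(a)=\one{k}\trl u(S(\two{k})\trl a)$, we have $k\trl_{\Hom(A,A)}u=\lie_{\one{k}}\circ u\circ\lie_{S(\two{k})}$, which is a composite of $H$-comodule maps whenever $u$ is; hence $\Hom_{\M^H}(A,A)$ is a $K$-submodule of $\Hom(A,A)$.

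Next I would treat the vertical maps: since $B$ is a $K$-submodule of $A$, the subspace $\Hom_B(A,A)$ of linear maps vanishing on $B$ is clearly stable under the action \eqref{action-hom} — if $u(B)=0$ then $\lie_{\one{k}}\circ u\circ\lie_{S(\two{k})}$ also kills $B$ because $\lie_{S(\two{k})}$ preserves $B$ — so $\Hom_B(A,A)$ is a $K$-submodule. The third piece, ${\rm{Der}}^\r(A)$, is a $K$-submodule of $\Hom(A,A)$ by the Lemma preceding \eqref{action-der}. Therefore
$$
\aut{A}=\Hom_{\M^H}(A,A)\cap{\rm{Der}}^\r(A)\cap\Hom_B(A,A)
$$
is a $K$-submodule of $\Hom(A,A)$, being an intersection of $K$-submodules.

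It then remains to check that the braided commutator \eqref{bracket-der} of ${\rm{Der}}^\r(A)$ restricts to $\aut{A}$, i.e.\ that $[u,u']_\r$ again lies in $\aut{A}$ for $u,u'\in\aut{A}$. By Proposition~\ref{prop:der} we already know $[u,u']_\r\in{\rm{Der}}^\r(A)$, so only $H$-equivariance and verticality need verifying. Both follow because the bracket is a braided commutator built from composition and the $K$-action: $u\circ u'$ is an $H$-comodule map when $u,u'$ are, and $\r_\alpha\trl u'$, $\r^\alpha\trl u$ remain $H$-comodule maps by the first step above, so $[u,u']_\r\in\Hom_{\M^H}(A,A)$; similarly $u\circ u'$ and the twisted terms all annihilate $B$, so $[u,u']_\r\in\Hom_B(A,A)$. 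Hence $\aut{A}$ with the restricted bracket is a $K$-braided Lie subalgebra of ${\rm{Der}}^\r(A)$. The only mildly delicate point is the bookkeeping in the first step — making sure the $K$-equivariance of the $H$-coaction is correctly identified with each $\lie_k$ being an $H$-comodule map and that conjugation by $\lie_{\one k}$, $\lie_{S(\two k)}$ stays within the relevant subspaces — but this is routine once \eqref{compatib} and \eqref{action-hom} are unwound; there is no real obstacle.
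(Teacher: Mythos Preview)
Your proof is correct and follows essentially the same approach as the paper: you realize $\aut{A}$ as the intersection $\Hom_{\M^H}(A,A)\cap{\rm{Der}}^\r(A)\cap\Hom_B(A,A)$ of three $K$-submodules (using the same observation that $k\trl u=\lie_{\one{k}}\circ u\circ\lie_{S(\two{k})}$), and then check that the braided commutator of ${\rm{Der}}^\r(A)$ restricts. The only difference is that you spell out the closure under the bracket a bit more explicitly, but the argument is identical in substance.
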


In this  braided context infinitesimal gauge transformations of a $K$-equivariant  Hopf--Galois extension $B=A^{co H}\subseteq
A$ are thus encoded in the braided Lie algebra $\aut{A}$, or
equivalently, in the braided Hopf algebra
 $\U(\aut{A})$ associated with  $\aut{A}$
( cf. Corollary \ref{UDAonA}).
We recover the Lie algebra \eqref{LieGG}
of (usual) vertical derivations when $K=\bbK$ with $\r=1\otimes 1$.

 \subsection{Quantum principal bundle over quantum homogeneous
   space}\label{QPBQHS}
 A quantum subgroup of a Hopf algebra $A$ is a  Hopf algebra $H$ together with  
a surjective bialgebra (and thus Hopf algebra) homomorphism $\pi: A \to H$. Then
$A$ is a right $H$-comodule algebra via the projection of the coproduct
\begin{equation}\label{coactAH}
\delta : = (\id \ot \pi)\circ \Delta: A \longrightarrow A \ot H~ .
\end{equation}
When
$B=A^{coH}\subseteq A$ is a Hopf--Galois extension, we call
$A$ a {\it quantum principal bundle
over the quantum homogeneous space} $B$  (see e.g. \cite[\S 5.1]{brz-maj}).

As in Example \ref{exAtr}, let $A$ be cotriangular with dual
Hopf algebra $U$ that is triangular with $\mathscr{R}\in
 U\otimes U$ 
 (here a topological tensor  product is understood if $U$ is not finite dimensional over $\bbK$).
From the $U^{op}\otimes U$-action
$\trl: U^{op}\otimes U\otimes A\to A$, $(\zeta\otimes\xi)\trl
a=\langle\zeta,\one{a}\rangle\two{a}\langle\xi,\three{a}\rangle$ the right coaction of $A$ on itself $\Delta: A\to
A\otimes A$ and the
$U^{op}$-action commute,  hence so do the right  $H$-coaction $\delta: A\to
A\otimes H$ and the  $U^{op}$-action (this is in general not the case for the $U$-action).
It follows that the
quantum principal bundle $A$
over the quantum homogeneous space $B$ is  a
$K$-equivariant Hopf--Galois extension $B=A^{co H}\subseteq A$ with $(K,\r)=(U^{op},\overline{\mathscr{R}})$.
The associated $K$-braided Lie algebra of infinitesimal gauge
transformations is $\aut{A}\subseteq \rm{Der}^\r{(A)}$.

We study the relation between
$\aut{A}$ and the braided Lie algebra
of  braided derivations
${\rm{Der}}^{\mathfrak{R}\!\!\:}(A)=A\otimes {\rm{Der}}^{\mathfrak{R}\!\!\:}(A){}_{\rm{inv}}$ of the cotriangular Hopf algebra
$A$ in Example \ref{exAtr}.
Let $(H,R_H)$ be cotriangular and 
the cotriangular structure on $A$ be given by that on $H$, that is,
$R(a\otimes a')=R_H(\pi(a)\otimes\pi(a'))$ for all $a,a'\in A$.
In terms of the triangular structure $(U_H, \mathscr{R}_H)$ dual to
$(H,R_H)$ we have $U_H\subset U$ and  $\mathscr{R}_H= \mathscr{R}\in
U_H\otimes U_H\subset U\otimes U$.
This implies $\r_H:=\overline{\mathscr{R}}_H=\overline{\mathscr{R}}$ 
so that
\begin{equation}\label{bd=}
  {\rm{Der}}^{\r_H\!\!\:}(A)={\rm{Der}}^{\r}(A)
  \end{equation}
as linear spaces of braided derivations; the first is  a  $K_H=U_H^{op}$-braided
Lie algebra, the second is a $K=U^{op}$-braided Lie algebra.
Similarly,  $\mathfrak{R}_H=\mathfrak{R}$, so that
${\rm{Der}}^{\mathfrak{R}_H\!\!\:}(A)={\rm{Der}}^{\mathfrak{R}\!\!\:}(A)$
as linear spaces; the first is a  $U_H^{op}\otimes U_H$-braided
Lie algebra, the second is a $U^{op}\otimes U$-braided one.
The linear subspace of
$H$-equivariant derivations of $A$, $${\rm{Der}}^{\mathfrak{R}_H\!\!\:}_{{\!\mathcal{M}}^H\:\!\!}(A)=\{u\in
{\rm{Der}}^{\mathfrak{R}_H\!\!\:}(A)\,|\, \delta (u(a))=u(\zero{a})\otimes \one{a}, {\mbox{ for all }} a\in A\}$$
is a  $U_H^{op}\otimes U_H$-braided Lie subalgebra of
${\rm{Der}}^{\mathfrak{R}_H\!\!\:}(A)$. Indeed
the action of $U_H$ is trivial
while the action of  $U_H^{op}$ closes in
${\rm{Der}}^{\mathfrak{R}\!\!\:}_{{\!\mathcal{M}}^H\:\!\!}(A)$, with 
proofs similar to those for 
${\rm{Der}}^{\mathfrak{R}\!\!\:}(A){}_{\rm{inv}}$ in Example \ref{exAtr}.

\begin{thm}\label{propgc}
Let $(A, R)$ be a cotriangular Hopf algebra with dual triangular Hopf
algebra $(U,\mathscr{R})$. Let 
$B=A^{coH}\subseteq A$ be 
a {quantum principal bundle
over the quantum homogeneous space} $B$,
with Hopf algebra projection $\pi : A\to H$.
Assume $(H,R_H)$ is cotriangular with 
$R = R_H \circ (\pi \ot \pi$).

Then the  braided gauge transformations $\aut{A}$ are the 
vertical braided vector fields in
\mbox{$B\otimes {\rm{Der}}^{\mathfrak{R}\!\!\:}(A){}_{\rm{inv}}$},
where
$ {\rm{Der}}^{\mathfrak{R}\!\!\:}(A){}_{\rm{inv}}$ are the
right-invariant vector fields defining the
bicovariant differential calculus on $(A,R)$.  
This linear space isomorphism is a $U^{op}_H$-braided Lie algebra
isomorphism, where $U_H \subset U$ is the triangular Hopf algebra dual to $H$.
\end{thm}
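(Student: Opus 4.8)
The plan is to identify $\aut{A}$ inside the larger $U^{op}\otimes U$-braided Lie algebra ${\rm{Der}}^{\mathfrak{R}}(A) = A \otimes {\rm{Der}}^{\mathfrak{R}}(A)_{\rm inv}$ (Example \ref{exAtr}) by imposing the three defining conditions of Definition \ref{defBLieGG}: $H$-equivariance of the coaction, verticality on $B$, and the braided Leibniz rule. First I would observe that, because $R = R_H\circ(\pi\ot\pi)$, the triangular structures relevant to the bundle and to the total-space calculus agree on the relevant subalgebras: as recorded in \eqref{bd=} one has ${\rm{Der}}^{\r_H}(A) = {\rm{Der}}^{\r}(A) = {\rm{Der}}^{\mathfrak{R}}(A)$ as linear spaces, with $\r = \overline{\mathscr R} = \overline{\mathscr R}_H = \r_H$. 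So the braided derivations entering $\aut{A}$ via $\rm{Der}^\r(A)$ in Proposition \ref{prop:autRB} are exactly the braided derivations of the cotriangular Hopf algebra $A$; there is no discrepancy between the "$\r$-braiding" coming from the external symmetry $K=U^{op}_H$ and the "$\mathfrak{R}$-braiding" governing the Woronowicz calculus, because the $U_H$-action on any $H$-equivariant derivation is trivial (shown as for ${\rm{Der}}^{\mathfrak R}(A)_{\rm inv}$ in Example \ref{exAtr}), so on this subspace the $U^{op}_H\otimes U_H$-braiding reduces to the $U^{op}_H$-braiding.

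Next I would use the fundamental theorem of bicovariant bimodules, already invoked in Example \ref{exAtr}: ${\rm{Der}}^{\mathfrak R}(A) = A\otimes {\rm{Der}}^{\mathfrak R}(A)_{\rm inv}$ as a left $A$-module, the right-invariant derivations being those $u$ with $\Delta(u(a)) = u(\one a)\otimes \two a$. The key step is then to show that an element $\sum_i a_i\, u_i \in A\otimes {\rm{Der}}^{\mathfrak R}(A)_{\rm inv}$ (with the $u_i$ a basis of right-invariant derivations) satisfies the $H$-equivariance condition $\delta(u(a)) = u(\zero a)\otimes \one a$ precisely when each coefficient $a_i$ lies in $B = A^{coH}$. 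For this I would push the right $H$-coaction $\delta = (\id\ot\pi)\circ\Delta$ through $u = \sum_i \ell_{a_i}\circ u_i$, use right-invariance of $u_i$ under the full coproduct $\Delta$ — hence $\delta(u_i(a)) = u_i(\zero a)\otimes\one a$, the analogue of \eqref{rdifcalc} for the projected coaction — together with the module-algebra compatibility \eqref{eqn:modHcov} for the left $A$-action on ${\rm{Der}}^{\mathfrak R}(A)$, and compare with $u(\zero a)\otimes\one a$; linear independence of the $u_i$ (as right-$A$-linear maps) forces $\delta^A(a_i) = a_i\otimes 1_H$. This identifies the $H$-equivariant braided derivations with $B\otimes {\rm{Der}}^{\mathfrak R}(A)_{\rm inv}$. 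Imposing in addition verticality, $u(b)=0$ for all $b\in B$, cuts this down to the "vertical braided vector fields in $B\otimes{\rm{Der}}^{\mathfrak R}(A)_{\rm inv}$" named in the statement; the braided Leibniz rule is automatic since every element of ${\rm{Der}}^{\mathfrak R}(A)$ already is a braided derivation and $B\otimes{\rm{Der}}^{\mathfrak R}(A)_{\rm inv}$ is closed under left multiplication by $B$.

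Finally I would upgrade this linear isomorphism to a $U^{op}_H$-braided Lie algebra isomorphism. The bracket on $\aut{A}$ is the restriction of the braided commutator of ${\rm{Der}}^\r(A)$ (Proposition \ref{prop:autRB}), and under \eqref{bd=} this is the restriction of the braided Lie bracket of ${\rm{Der}}^{\mathfrak R}(A)$ in the category of $U^{op}_H\otimes U_H$-modules; since the $U_H$-part acts trivially on $H$-equivariant derivations, the bracket is the one of the $U^{op}_H$-braided Lie algebra, and it is computed on decomposable elements $a\,u$, $a'\,u'$ by the $A$-module formula \eqref{moduloLie} (valid because $A$ is quasi-commutative, Example \ref{exAtr}). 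Both sides of the claimed isomorphism carry this same bracket, and the identification is the identity map on the common underlying space $B\otimes{\rm{Der}}^{\mathfrak R}(A)_{\rm inv}$, so it is a $U^{op}_H$-braided Lie algebra map; $U^{op}_H$-equivariance is inherited from that of the inclusion ${\rm{Der}}^{\mathfrak R}(A)_{\rm inv}\hookrightarrow{\rm{Der}}^{\mathfrak R}(A)$ and the $U^{op}_H$-module structure on $B$.

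I expect the main obstacle to be the bookkeeping in the second step: carefully justifying that right-invariance with respect to $\Delta$ descends to the statement "$\delta(u(a)) = u(\zero a)\otimes\one a$ iff coefficients are coinvariant", i.e. handling the interplay of the left $A$-module structure on derivations (which is twisted by the coaction via \eqref{eqn:modHcov}) with the projection $\pi$, and extracting the conclusion for each coefficient via a suitable right-$A$-linear-independence argument for the $u_i$. The braided/triangular structures, once \eqref{bd=} is in hand, essentially take care of themselves.
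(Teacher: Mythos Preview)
Your plan follows the paper's proof closely: decompose ${\rm Der}^{\mathfrak R}(A) = A\otimes{\rm Der}^{\mathfrak R}(A)_{\rm inv}$, show that $H$-equivariance forces coefficients into $B$, restrict to vertical elements, and use triviality of the $U_H$-action on $H$-equivariant derivations to identify the $\r$- and $\mathfrak R$-braidings there. One point to correct: you write that \eqref{bd=} gives ${\rm Der}^{\r}(A) = {\rm Der}^{\mathfrak R}(A)$ as linear spaces, but \eqref{bd=} only records ${\rm Der}^{\r_H}(A) = {\rm Der}^{\r}(A)$. The further identification with ${\rm Der}^{\mathfrak R}(A)$ is false on the full space (a $\mathfrak R$-braided Leibniz rule involves both the $U^{op}$- and the $U$-action, an $\r$-braided one only the former) and holds only after restricting to $H$-equivariant derivations, where, as you correctly note a few lines later, the $U_H$-action is trivial. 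The paper accordingly first establishes ${\rm Der}^{\mathfrak R}_{\mathcal M^H}(A) = B\otimes{\rm Der}^{\mathfrak R}(A)_{\rm inv}$ and only then identifies this with ${\rm Der}^{\r_H}_{\mathcal M^H}(A)$; your argument has the right ingredients but the order in the first paragraph should be reversed.
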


\begin{proof}
Recall from Example \ref{exAtr} that ${\rm{Der}}^{\mathfrak{R}\!\!\:}(A)=A\otimes
{\rm{Der}}^{\mathfrak{R}\!\!\:}(A){}_{\rm{inv}}$. 
Here $
{\rm{Der}}^{\mathfrak{R}\!\!\:}(A){}_{\rm{inv}}$ are the right-invariant vector fields for the
$A$-coaction (and thus for the $H$-coaction); they are $H$-equivariant. 
Then, 
$${\rm{Der}}^{\mathfrak{R}\!\!\:}_{{\!\mathcal{M}}^H\:\!\!}(A)=B\otimes {\rm{Der}}^{\mathfrak{R}\!\!\:}(A){}_{\rm{inv}}~.
$$ 
Recall the linear space equalities 
${\rm{Der}}^{\mathfrak{R}\!\!\:}(A)={\rm{Der}}^{\mathfrak{R}_H\!\!\:}(A)$
and the analogous one in \eqref{bd=}. 
From the proof of Proposition \ref{prop:autRB},  
${\rm{Der}}^{\r\!\!\:}_{{\!\mathcal{M}}^H\:\!\!}(A)={\rm{Der}}^\r(A)\cap
\Hom_{\M^H}(A,A)$
is a  $U^{op}$-braided Lie algebra and  thus 
can be seen as the  $U_H^{op}$-braided Lie algebra
${\rm{Der}}^{\r_H\!\!\:}_{{\!\mathcal{M}}^H\:\!\!}(A)={\rm{Der}}^{\r_H}(A)\cap
\Hom_{\M^H}(A,A)$. 
The equality of $U_H^{op}$-braided Lie algebras
$$
{\rm{Der}}^{\r_H\!\!\:}_{{\!\mathcal{M}}^H\:\!\!}(A)={\rm{Der}}^{\mathfrak{R}_H\!\!\:}_{{\!\mathcal{M}}^H\:\!\!}(A)
$$ is immediate since the $U_H$-action on
${\rm{Der}}^{\mathfrak{R}_H\!\!\:}_{{\!\mathcal{M}}^H\:\!\!}(A)$ is
  trivial so that the braiding ${\mathfrak{R}_H\!\!\:}$ acts
as $\r_H$. We thus have the $U^{op}_H$-braided Lie algebra isomorphism
$${\rm{Der}}^{\r_H\!\!\:}_{{\!\mathcal{M}}^H\:\!\!}(A)=B\otimes
{\rm{Der}}^{\mathfrak{R}\!\!\:}(A){}_{\rm{inv}}~.$$
The theorem is proven restricting this isomorphism to vertical vector fields.
\end{proof}
Given a cotriangular Hopf algebra $A$ the Galois object
$\bbK=A^{co A}\subseteq A$ is a quantum principal
bundle  over the quantum homogeneous space $B=\bbK$.
Here $(A,R)=(H,R_H)$ so that,
\begin{cor}\label{corgc}
Let $(A, R)$ be a cotriangular Hopf algebra with dual triangular Hopf
algebra $(U,\mathscr{R})$.
The $U^{op}$-braided Lie algebra of infinitesimal gauge trasformations ${\rm{aut}}^\r_\bbK(A)$  of the  Galois object
$\bbK=A^{co A}\subseteq A$ is
isomorphic to the $U^{op}$-braided Lie algebra ${\rm{Der}}^{\mathfrak{R}\!\!\:}(A){}_{\rm{inv}}$ of right-invariant vector fields which 
define the bicovariant differential calculus on $(A, R)$.
\end{cor}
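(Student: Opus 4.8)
The plan is to obtain this as the specialization of Theorem \ref{propgc} to the extremal case where the quantum homogeneous space is the ground field $\bbK$, i.e.\ where the Hopf algebra projection is the identity. First I would take $H = A$ and $\pi = \id_A : A \to A$. This is trivially a surjective Hopf algebra homomorphism, so $A$ becomes a right $A$-comodule algebra via $\delta = (\id \ot \id) \circ \Delta = \Delta$, and the subalgebra of coinvariants is $B = A^{co A} = \{ b \in A \mid \Delta(b) = b \ot 1 \} = \bbK 1_A$. One must note that $\bbK = A^{co A} \subseteq A$ is indeed a Hopf--Galois extension: the canonical map $\chi : A \ot_\bbK A \to A \ot A$, $a' \ot a \mapsto a'\one{a}\ot\two{a}$, is bijective with inverse $a' \ot a \mapsto a' S(\one{a}) \ot \two{a}$, a standard fact (this is exactly the statement that $A$, as a comodule algebra over itself, is a Galois object — already used in the excerpt just before the corollary). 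Hence $A$ is a quantum principal bundle over the quantum homogeneous space $\bbK$.

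Next I would check the cotriangularity hypothesis of Theorem \ref{propgc}: we need $(H, R_H)$ cotriangular with $R = R_H \circ (\pi \ot \pi)$. Since $H = A$ and $\pi = \id_A$, we simply take $R_H = R$, and the compatibility $R = R \circ (\id \ot \id)$ is tautological. The dual triangular Hopf algebra of $(H, R_H) = (A, R)$ is then $(U_H, \mathscr{R}_H) = (U, \mathscr{R})$, so in the statement of Theorem \ref{propgc} we have $U_H = U$ and the $U_H^{op}$-braided Lie algebra structure is the $U^{op}$-braided one.

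With all hypotheses of Theorem \ref{propgc} verified in this special case, the theorem yields a $U^{op}_H = U^{op}$-braided Lie algebra isomorphism between $\aut{A} = {\rm{aut}}^\r_\bbK(A)$ (here $B = \bbK$, so verticality $u(b) = 0$ for $b \in B$ is automatic once $u(1_A) = 0$, which is forced by the braided derivation property) and the vertical braided vector fields inside $B \otimes {\rm{Der}}^{\mathfrak{R}}(A)_{\rm{inv}} = \bbK \otimes {\rm{Der}}^{\mathfrak{R}}(A)_{\rm{inv}} = {\rm{Der}}^{\mathfrak{R}}(A)_{\rm{inv}}$. Since $B = \bbK$, the space of vertical maps is all of $\bbK \otimes {\rm{Der}}^{\mathfrak{R}}(A)_{\rm{inv}}$ — there is nothing to cut out — so the isomorphism is onto the full braided Lie algebra ${\rm{Der}}^{\mathfrak{R}}(A)_{\rm{inv}}$ of right-invariant vector fields defining the bicovariant differential calculus on $(A,R)$ (cf.\ Example \ref{exAtr}). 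The one point that deserves a line of care — and the only place where anything must be said beyond ``apply the theorem'' — is reconciling ``vertical'' with ``lies in $\bbK \otimes {\rm{Der}}^{\mathfrak{R}}(A)_{\rm{inv}}$'': since $B = \bbK$ every linear map $A \to A$ annihilating $\bbK 1_A$ is simply a map sending $1_A \mapsto 0$, and for elements of ${\rm{Der}}^{\mathfrak{R}}(A)_{\rm{inv}}$ the braided derivation property already forces $u(1_A) = 0$; hence the verticality constraint is vacuous and the isomorphism of Theorem \ref{propgc} restricts to the identity on the whole space. I expect no genuine obstacle here — this is a corollary in the strict sense — the only mild subtlety being to make the degenerate bookkeeping ($H = A$, $B = \bbK$, $U_H = U$) explicit so the reader sees that Theorem \ref{propgc}'s hypotheses are met verbatim.
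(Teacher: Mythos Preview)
Your proposal is correct and follows exactly the paper's approach: the corollary is obtained by specializing Theorem \ref{propgc} to the case $H=A$, $\pi=\id_A$, so that $(H,R_H)=(A,R)$, $U_H=U$, and $B=\bbK$. The paper states this in a single line before the corollary; your only addition is the explicit (and correct) observation that verticality over $B=\bbK$ is vacuous for braided derivations, so the isomorphism lands on all of ${\rm{Der}}^{\mathfrak{R}}(A)_{\rm{inv}}$.
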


This theorem and the corollary show that 
 Definition \ref{defBLieGG} of braided infinitesimal
gauge transformations $\aut{A}$ captures a much richer structure
--$\,$related to the bicovariant differential calculus$\,$-- than
that of the Lie algebra ${\rm{aut}}_B(A)$ in \eqref{LieGG}.

\begin{ex}\label{Uq2}
{\it Cotriangular quantum group $\O(U_q(2))$}
(the multiparametric quantum group $\O(U_{q,r}(2))$ with $r=1$, see
e.g. \cite{resh}). 
We recall that $\O(U_q(2))$ is generated by the entries of the matrix
{$T_{\!}=\!\!\:(t_{ij})\!:=$\tiny{ $
\begin{pmatrix} a & b \\ c & d\end{pmatrix},
 $}}
and by the inverse $D^{-1}$ of the quantum determinant $D=a d-q^{-1} b
c$. They satisfy the commutation relations
\begin{eqnarray*}
a b = q^{-1} b a \,\, ; \quad 
a c = q c a \,\, ; \quad  
b d = q d b \,\, ; \quad 
c d = q^{-1} d c\,\, ; \quad 
b c = q^2 c b \,\, ; \quad 
a d = d a\, \nonumber\\[.2em]
a {D}^{-1}= {D}^{-1} a \; ; \quad b {D}^{-1} = q^{-2} {D}^{-1} b
\; ; \quad c {D}^{-1} = {q}^2 {D}^{-1} c
  \; ; \quad d {D}^{-1} =  {D}^{-1} d \, .~~~
\end{eqnarray*}
The costructures are 
$\Delta(t_{ij})=\sum_{k=1,2}t_{ik} {\otimes} t_{kj}$ , $\Delta(D^{-1})=D^{-1}\otimes D^{-1}$, and 
$\varepsilon(t_{ij})=  \delta_{ij}$, $\varepsilon(D^{-1})= 1$,
while the antipode is
$ S(T)= {D}^{-1}
 {\mbox{\tiny{$\begin{pmatrix}
d & -q^{-1} b
\\ -q c & a
\end{pmatrix}$}}}
\,,~S(D^{-1}) = D \, . 
$

The $*$-structure defining the real form $\O(U_q(2))$ requires the
deformation parameter to be a phase;
the $*$-structure is then given
by 
${\mbox{\tiny{$\begin{pmatrix} a^* & b^* \\{} c^* & d^*\end{pmatrix}$}}}
=
{D}^{-1} {\mbox{\tiny{$\begin{pmatrix}
d & -q c
\\ -q^{-1} b & a
\end{pmatrix}$}}}\,, ~(D^{-1})^*=D
$.

From these relations it is easy to see that
${\rm{Aut}}_\bbK(\O(U_q(2)))={\rm{Char}}(\O(U_q(2)))=U(1)\times U(1)$
with abelian gauge Lie algebra
${\rm{aut}}_\bbK{{\O(U_q(2))}}=\mathbb{R}^2$. This is half the vector space dimension of the
commutative case ($q=1$). Definition \ref{defBLieGG}
gives on the other hand a braided gauge Lie algebra
${\rm{aut}}_\bbK^\r(\O(U_q(2)))$ that is non-abelian and 4-dimensional since
the bicovariant differential calculus on $\O(U_q(2))$ is
4-dimensional. Indeed, since $\O(U_q(2))=\O(U(2))_{\F}$ is a twist deformation of the coordinate ring on
$U(2)$ \cite{resh}, the calculus can be obtained as twist deformation of the calculus on $U(2)$,
hence from the linear space equalities
$\Der{\O(U(2))}=\Der{\O(U(2))}_\F=\Der{\O(U(2))_\F}$, cf. Theorem
\ref{thm:Dalg}, 
the linear space of vector fields has classical dimension.
The four dimensional quantum Lie algebra is explicitly
given in \cite{AC-IGLN}, Table 1 with conjugation
iii) in (4.69) there. Notice that this is not a twist deformation of
a principal bundle as in Example \eqref{btp}, rather $\O(U_q(2))$ it is a deformation of $\O(U(2))$
as a Hopf algebra.
 \end{ex}

\begin{rem}
  Braided derivations for a quantum principal bundle $A$ over a quantum
homogoneous space $B=A^{co H}$ can be more intrinsically defined in
terms of the cotriangular Hopf algebra $(A, R)$, without referring to a
dual triangular Hopf algebra $(U,\mathscr{R})$ giving $(K,\r)=(U^{op},
\overline{\mathscr{R}})$. Indeed, from  Definition \ref{Der} of braided
derivation, we have,  \begin{equation*}
  (\r_\al\trl a)\!\;(\r^\al\trl\psi)(a')
    =\bar{R}_\al(\one{a}) {R}_\beta(\two{a})\three{a}
    \!\:R^\beta(\one{a'})\bar{R}^\al(\one{\psi(\two{a'})})\,\two{\psi(\two{a'})}
\end{equation*}
for all $a, a'\in A$,
$\psi\in \Hom(A,A)$. Here
  we used the adjoint $U^{op}$-action on $\Hom(A,A)$, the properties of the universal matrix $\r$ 
and that $\bar{R}(a\otimes
a')=\langle \overline{\mathscr{R}},a\otimes a'\rangle=\langle \r,a\otimes a'\rangle$, for all $a,a'\in A$.
Therefore, for $(A,R)$ cotriangular we have the $R$-braided derivations
\begin{eqnarray*}
  {\rm{Der}}^R(A)= \{ \psi :A\to A   \,|\;\psi &\!\!\!\!\!\!\!\!\!\!\!\!\!\!\!\!\!\!\!\!\!\mbox{is  $\bbK$-linear},
                                  ~~~~~~~~~~~~~~~~~~~~~~~~~~~~~~~~~~~~~~~~~~~~~\;~~~~~~~~~~\\
  \psi(aa')=
  &\!\!\!\psi(a) a'  +\bar{R}_\al(\one{a}) {R}_\beta(\two{a})\three{a}
    R^\beta(\one{a'})\bar{R}^\al(\one{\psi(\two{a'})})\,\two{\psi(\two{a'})}\}\nonumber
\end{eqnarray*}
with infinitesimal gauge transformations 
\begin{equation*}
{\rm{aut}}_B^{R\!\!\:}(A)= \{ u\in   {\rm{Der}}^R(A)\,|\,
   \delta (u(a))=u(\zero{a})\otimes \one{a}, u(b)=0,
   \mbox{ for all } a\in A, b\in B\} .
   \end{equation*}
These are expressed only in terms of 
the cotriangular Hopf algebra $(A,R)$ and the Hopf--Galois extension $B:=A^{co
  H}\subseteq A$ with coaction given in \eqref{coactAH}.

When $A$ is finite dimensional its cotriangularity is equivalent
  to the triangularity of its dual Hopf algebra $U$ so that $
  {\rm{Der}}^R(A)= {\rm{Der}}^\r(A)$ and
  ${\rm{aut}}_B^{R\!\!\:}(A)=\aut{A}$ as defined in Definition  \ref{defBLieGG} 
with  $(K,\r)=(U^{op},{\overline{\mathscr{R}}})$.
\end{rem}

\section{Braided Hopf algebra gauge symmetry from twist deformation}
\label{BHagsftd}Given a twist $\F$ of the  Hopf algebra $K$, the $K$-equivariant $H$-comodule algebra
$A$ is twisted to the $K_\F$-equivariant $H$-comodule algebra $A_\F$
with multiplication defined in \eqref{rmod-twist}. The $K$-submodule
algebra $B=A^{co H}$ of $A$ is twisted to the $K_\F$-submodule
algebra $B_\F=A_\F^{co H}$. Furthermore  $B=A^{co H}\subseteq A$ is a
Hopf--Galois extension (principal comodule algebra)  if and only if  $B_\F=A_\F^{co H}\subseteq
A_\F$ is a Hopf--Galois extension  (principal comodule algebra), see
\cite[Corol. 3.16 (Corol. 3.19)]{ppca}.

We study
the twist deformation of infinitesimal gauge transformations when $K$
has triangular structure $\r$ using the results in \S \ref{sec:tbla}.
In this case $K_\F$ has triangular structure $\r_\F$ and the $K$-braided Lie algebra $(\aut{A}, [~,~])$
 is  twisted to the
$K_\F$-braided Lie algebra $(\aut{A}^{}_\F,[~,~]_\F)$ with bracket
$[~,~]_\F=[~,~]$ as in Proposition \ref{prop:gf}. This is a braided Lie subalgebra of
$(\rm{Der}^\r(A)_\F, [~,~]_\F)$. We can also consider the $K_\F$-braided Lie
algebra $(\autF{A_\F} ,[~,~]_{\r_\F})$:
\begin{prop}\label{autautF}
The $(K_\F,\r_\F)$-braided Lie algebras $(\aut{A}^{}_\F,[~,~]_\F)$ and
$(\autF{A_\F},$ $[~,~]_{\r_\F})$ are isomorphic via the map 
$$
\dd:
\aut{A}^{}_\F{\, \longrightarrow\,}\autF{A_\F} ,
$$
 the restriction of the isomorphism $\dd: (\rm{Der}^\r{(A)}_\F,
[~,~]_\F){\, \to\,} (\rm{Der}^{\r_\F}{(A_\F)}, [~,~]_{\r_\F})$ of braided
Lie algebras, in Theorem \ref{thm:Dalg}, to $\aut{A}^{}_\F\subseteq \rm{Der}^\r(A)_\F $.
\end{prop}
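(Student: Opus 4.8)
The plan is to leverage Theorem \ref{thm:Dalg}, which already provides the braided Lie algebra isomorphism $\dd : (\rm{Der}^\r{(A)}_\F, [~,~]_\F) \to (\rm{Der}^{\r_\F}{(A_\F)}, [~,~]_{\r_\F})$, and simply show that this map restricts correctly to the subspaces of infinitesimal gauge transformations. Since $\aut{A}^{}_\F \subseteq \rm{Der}^\r(A)_\F$ and $\autF{A_\F} \subseteq \rm{Der}^{\r_\F}(A_\F)$ are braided Lie subalgebras (by Proposition \ref{prop:autRB} applied to $(K,\r)$ and to $(K_\F,\r_\F)$ respectively), and since $\dd$ is already known to be an isomorphism of the ambient braided Lie algebras, it suffices to prove that $\dd$ maps $\aut{A}^{}_\F$ \emph{bijectively} onto $\autF{A_\F}$. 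Because $\dd$ is a bijection on the ambient spaces, this reduces to the single claim: for $\psi \in \rm{Der}^\r(A)_\F = \rm{Der}^\r(A)$, one has $\psi \in \aut{A}^{}_\F$ if and only if $\dd(\psi) \in \autF{A_\F}$.

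Recall from Definition \ref{defBLieGG} that membership in $\aut{A}$ is the conjunction of three conditions: being a braided derivation (already handled by Theorem \ref{thm:Dalg}, since $\dd$ restricts to an isomorphism $\Der{A}_\F \to \Der{A_\F}$), being an $H$-comodule map, and being vertical (annihilating $B$). So the two remaining points are: (i) $\psi$ is an $H$-comodule map if and only if $\dd(\psi)$ is; (ii) $\psi(B)=0$ if and only if $\dd(\psi)(B_\F)=0$. For (ii) I would use the explicit formula $\dd(\psi)(v) = \F^\alpha \trl_V \psi(S(\F_\alpha)\buf \trl_V v)$ from the proof of Proposition \ref{prop:Diso}: since $B$ is a $K$-submodule of $A$, and $B_\F = B$ as a linear subspace (with $K$-action unchanged), the element $S(\F_\alpha)\buf \trl v$ lies in $B$ whenever $v \in B_\F$, so $\psi$ annihilating $B$ forces $\dd(\psi)(B_\F)=0$; conversely, using the inverse formula $\dd^{-1}(\tilde\psi)(v) = \bF^\alpha \trl \tilde\psi(S_\F(\bF_\alpha)\uf \trl v)$ gives the reverse implication. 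For (i) I would use that the $K$-action on $A$ commutes with the $H$-coaction (the $K$-equivariance of the $H$-comodule algebra $A$, as recalled in \S\ref{QPBQHS} and used in the proof of Proposition \ref{prop:autRB}): writing $\dd(\psi) = \lie_{\F^\alpha}\circ \psi \circ \lie_{S(\F_\alpha)\buf}$ as a composition of $H$-equivariant maps $\lie_k$ (which are $H$-comodule maps precisely by this commutativity) with $\psi$, one sees $\dd(\psi)$ is an $H$-comodule map whenever $\psi$ is, and the argument is reversible via $\dd^{-1}$. Note also that the $H$-coaction on $A_\F$ equals that on $A$ (twisting by $\F \in K\otimes K$ changes only the product, not the $H$-comodule structure), so the notions of $H$-comodule map for $A$ and $A_\F$ coincide on the level of linear maps.

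Putting these together: $\dd$ restricts to a linear bijection $\aut{A}^{}_\F \to \autF{A_\F}$ which, being the restriction of the braided Lie algebra isomorphism of Theorem \ref{thm:Dalg} to braided Lie subalgebras on both sides, is automatically a $(K_\F,\r_\F)$-braided Lie algebra isomorphism. The main obstacle I anticipate is bookkeeping rather than conceptual: one must be careful that the three defining conditions of $\aut{A}$ in the twisted setting are stated with respect to $\r_\F$-braiding and the $A_\F$-product, and verify that the explicit formula for $\dd$ interacts correctly with all of them simultaneously; but each individual check is short and uses only results already established (Theorem \ref{thm:Dalg}, Proposition \ref{prop:Diso}, the $K$-equivariance of $A$, and the fact that $B_\F = B$ as $K$-submodules with unchanged coaction). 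No genuinely new identity is needed.
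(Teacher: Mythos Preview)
Your proposal is correct and follows essentially the same approach as the paper's proof: both reduce to checking that $\dd$ and $\dd^{-1}$ preserve verticality (using that $B=B_\F$ is a $K$-submodule and the explicit formula for $\dd$) and $H$-equivariance (using the compatibility \eqref{compatib} of the $K$-action with the $H$-coaction, equivalently that each $\lie_k$ is an $H$-comodule map). The paper's argument is slightly terser but identical in substance; your explicit decomposition $\dd(\psi)=\lie_{\F^\alpha}\circ\psi\circ\lie_{S(\F_\alpha)\buf}$ makes the $H$-equivariance step particularly transparent.
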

\begin{proof}
Since $B_\F$ is a $K_\F$-module (and a $K$-module since
the $K_\F$ and $K$ algebra structures coincide), the isomorphism $\dd$ maps vertical derivations into vertical
derivations: $\dd(u)(b)=0$ for any $u\in  \aut{A}^{}_\F$ and $b\in B_\F$.  Equivariance of $D(u)$ under the
$H$-coaction is straighforward from $H$-equivariance of $u$ and the
compatibility of the $K$-action (or the $K_\F$-action) with  the
$H$-coaction (cf. \eqref{compatib}). Verticality and $H$-equivariance
of $\dd^{-1}(u_\F)$ for any $u_\F\in\autF{A_\F}$ are similarly proven.
As a consequence $\dd:\aut{A}^{}_\F{\, \to \,}\autF{A_\F}\subseteq\rm{Der}^{\r_\F}{(A_\F)}$ is
injective and surjective.
\end{proof}

We can also consider 
the $(K,\r)$-braided Hopf algebra 
 $\U(\aut{A})$. Under a twist $\F$ of $K$ we have the $(K_\F,\r_\F)$-braided Hopf algebra
$\U(\aut{A})_\F$: 
\begin{cor}\label{UautUautF}
The twist $\U(\aut{A})_\F$ of the $(K,\r)$-braided Hopf algebra $\U(\aut{A})$   of the Hopf--Galois extension $B=A^{co H}\subseteq
A$ is isomorphic to the $(K_\F,\r_\F)$-braided Hopf algebra
$\U(\autF{A_\F})$  of the Hopf--Galois extension 
$B_\F=A_\F^{co H}\subseteq
A_\F$.
  \end{cor}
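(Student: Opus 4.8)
The plan is to assemble this corollary from two ingredients already established in the excerpt. First, by Proposition~\ref{autautF} we have an isomorphism of $(K_\F,\r_\F)$-braided Lie algebras $\dd:\aut{A}^{}_\F\xrightarrow{\;\sim\;}\autF{A_\F}$. Second, by Proposition~\ref{prop:ugf} the universal enveloping algebra functor commutes with twisting: for any $(K,\r)$-braided Lie algebra $\g$ there is a braided Hopf algebra isomorphism $\U(\g)_\F\simeq\U(\g_\F)$, natural enough to be applied with $\g=\aut{A}$. Composing, $\U(\aut{A})_\F\simeq\U(\aut{A}^{}_\F)$, and then applying the universal enveloping algebra functor to the braided Lie algebra isomorphism $\dd$ of Proposition~\ref{autautF} gives $\U(\aut{A}^{}_\F)\simeq\U(\autF{A_\F})$. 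Chaining these yields the desired braided Hopf algebra isomorphism $\U(\aut{A})_\F\simeq\U(\autF{A_\F})$.

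More concretely, the steps I would carry out are: (i) invoke Proposition~\ref{prop:ugf}, with $\g=\aut{A}$, to obtain the braided Hopf algebra isomorphism $\varphi^\bullet:\U(\aut{A}^{}_\F)\to\U(\aut{A})_\F$ built from the maps $\varphi^{(n)}$; (ii) note that a $K$-braided Lie algebra morphism $\psi:\g\to\g'$ induces a braided Hopf algebra morphism $\U(\psi):\U(\g)\to\U(\g')$ — this follows from the universal property of $(\U(\g),\iota)$ stated just before Proposition~\ref{prop:uea}, since $\iota'\circ\psi:\g\to\U(\g')$ is an enveloping algebra datum, and $\U(\psi)$ is a coalgebra map because it respects primitives; applied to the isomorphism $\dd:\aut{A}^{}_\F\to\autF{A_\F}$ of Proposition~\ref{autautF} (for the triangular Hopf algebra $(K_\F,\r_\F)$) this gives a braided Hopf algebra isomorphism $\U(\dd):\U(\aut{A}^{}_\F)\to\U(\autF{A_\F})$; (iii) compose: $\U(\dd)\circ(\varphi^\bullet)^{-1}:\U(\aut{A})_\F\to\U(\autF{A_\F})$ is the required isomorphism.

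The only genuinely new verification needed is step (ii), that a braided Lie algebra isomorphism functorially lifts to a braided Hopf algebra isomorphism of universal enveloping algebras. This is essentially formal: on the tensor algebra level $T(\psi):T(\g)\to T(\g')$ is the obvious degreewise map, it is a $K$-module algebra map by construction, it is a coalgebra map because $\psi$ sends the primitive generators of $T(\g)$ to primitives of $T(\g')$ and both coproducts are braided algebra maps (so agreement on degree $\le 1$ forces agreement everywhere, exactly as in the proof of Proposition~\ref{prop:ugf}), and it carries the defining ideal $I_\g=\langle u\ot v-\r_\alpha\trl v\ot\r^\alpha\trl u-[u,v]\rangle$ into $I_{\g'}$ because $\psi$ is $K$-equivariant and bracket-preserving; hence it descends to the quotients. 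I expect no real obstacle here — the main point to be careful about is simply that all three constructions (Proposition~\ref{prop:ugf}, Proposition~\ref{autautF}, and the functoriality in step~(ii)) are taken with respect to the \emph{same} triangular structure $(K_\F,\r_\F)$, so that the composite is an isomorphism in the category of $(K_\F,\r_\F)$-braided Hopf algebras, which is precisely what the corollary asserts.
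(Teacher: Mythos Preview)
Your proposal is correct and follows essentially the same approach as the paper's proof: invoke Proposition~\ref{prop:ugf} to identify $\U(\aut{A})_\F\simeq\U(\aut{A}_\F)$, then lift the braided Lie algebra isomorphism $\dd$ of Proposition~\ref{autautF} to the universal enveloping algebras. The paper simply states the second step as an immediate corollary of Proposition~\ref{autautF}, whereas you spell out the routine functoriality argument in more detail.
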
\begin{proof}
  From Proposition \ref{prop:ugf} the braided Hopf algebra $\U(\aut{A})_\F$ is
 isomorphic to $\U(\aut{A}_\F)$. As a corollary of Proposition
 \ref{autautF} this latter is isomorphic to the  $(K_\F,\r_\F)$-braided Hopf
 algebra  $\U(\autF{A_\F})$. 
\end{proof}

\subsection{Gauge transformations of twisted principal bundles}
  
Braided Lie algebra and Hopf algebra gauge symmetries 
 arise in particular  by considering twist deformations of
commutative principal bundles of affine varieties. 
In this case $K=\U({\rm{Lie}}(L))$ is a cocommutative Hopf algebra and $H=\O(G)$ a commutative
Hopf algebra with $L$ and $G$ affine algebraic groups and $\O(G)$ the
coordinate ring of $G$. The $L$-equivariant principal
bundle of affine varieties $P\to M=P/G$ gives the $(K,\r=1\ot 1)$-equivariant Hopf--Galois extension $B=A^{co H}\subseteq
A$, where $A=\O(P)$, $B=\O(M)$ are the coordinate rings of  
$P$ and $M$. The Lie algebra $\auto{A}$ and the Hopf algebra
$\U(\auto{A})$ of infinitesimal gauge transformations  are
braided Lie and Hopf algebras with trivial braiding $\r=1\otimes 1$. Under a twist $\F$ of $K$   
we obtain the $(K_\F,\r_\F=\F_{21}\F^{-1})$-braided Lie and Hopf algebras
$\autF{A_\F}$ and $\U(\autF{A_\F})$  of the Hopf--Galois extension 
$B_\F=A_\F^{co H}\subseteq A_\F$.

Due to the isomorphisms
$\aut{A}_\F\simeq \autF{A_\F}$ and $\U(\aut{A})_\F\simeq\U(\autF{A_\F})$  
 of
Proposition \ref{autautF} and Corollary \ref{UautUautF}  
these braided Lie and Hopf algebras coincide as linear spaces
with  the initial Lie and Hopf algebras  $\auto{A}$ and
$\U(\auto{A})$.
This has to be compared with the (unbraided) gauge Lie
and Hopf algebras
$\auto{A_\F}$ and $\U(\auto{A_\F})$ as defined in \eqref{LieGG} which
are usually much smaller as vector spaces (cf. Example
\ref{Uq2}).

In the following we illustrate this constructions with a few explicit examples considering
twists arising from actions of tori, $L=\mathbb{T}^2$ (abelian twists),
and from actions of $L=\mathbb{R}_{>0}\ltimes \mathbb{R}$,
the affine group $ax+b$ of  the real line
(Jordanian twists).

Let us start with a $\mathbb{T}^2$-equivariant principal bundle $P\to M=P/G$ 
as before. The universal enveloping algebra $K$ of
$\mathbb{T}^2$ is  
generated by two commuting elements $H_1,H_2$. Due to the torus
action, these act as derivations on $A$ and on $B$. The twist
\begin{equation}\label{twistT2}
  \F=e^{{\pi}i
  \theta(H_1\ot H_2-H_2\ot H_1)} 
\end{equation}
of $K$, with $\theta\in \mathbb{R}$,  then defines the algebras $A_\theta:=A_\F$ and $B_\theta:=B_\F$
with multiplication given in \eqref{rmod-twist}. The element $\F$
actually belongs to a topological completion of the algebraic tensor
product $K\otimes K$ which does not play a role here, see Remark \ref{rem:topological}.
The algebras $A_\theta$ and $B_\theta$ are equivalently obtained considering the $2$-cocycle
$\gamma:  \mathcal{O}(\mathbb{T}^2)\otimes
\mathcal{O}(\mathbb{T}^2)\to \mathbb{C}$, $t\otimes t' \mapsto\gamma(t\otimes t'):=\langle \F, t\otimes t' \rangle$ and the $2$-cocycle
deformation of $B$ via the coaction of
$\O(\mathbb{T}^2)$ dual to the action of $K$.

\begin{ex}{\it{Trivial  bundle.}}
Consider
the trivial Hopf--Galois extension $B\subseteq B\otimes H$, with
$B=\O(M)$ and $H=\O(G)$ the coordinate algebras of an algebraic affine
variety $M$ and  a group $G$. The gauge
group of this  bundle consists of affine variety maps $\mathbb{T}^2\to G$, that
is, algebra maps $H\to B$. 
The Lie algebra of infinitesimal gauge transformations is the free $B$-module 
\beq\label{autbh}
{\rm{aut}}_B(B\otimes H)=B\otimes \rm{Lie}(G)
\eeq
with
$\rm{Lie}(G)$ the  right-invariant vector fields on $G$. Equivalently \eqref{autbh} is the Lie algebra of vertical derivations of $A=B\otimes H$.
If $\{\chi^i\}$ ($i=1,..., \rm{dim } (G)$) is a basis of the tangent space $T_eG$ at the unit
of $G$, then $\{X^i \}$, with $X^i(h) = \chi^i\trl
h=\chi^i(\one{h})\two{h}$, $h\in H$, is a basis of right-invariant
vector fields on $G$.
The Lie bracket is $[b_iX^i,b'_jX^j]=b_ib'_j[X^i,X^j]$, for all $b_i,b_j'\in B$.

Let   $L=\mathbb{T}^2$ act on $M$. With the twist  \eqref{twistT2} 
the  Hopf--Galois extension $B\subseteq B\otimes H$, is 
deformed to  $B_\theta\subseteq A_\theta=B_\theta\otimes H$. 
Here we use $\dott$ to denote the multiplications $\dotF$ in $B_\F=B_\theta$
and $A_\F=A_\theta$.
The corresponding infinitesimal gauge transformations form the braided Lie algebra and free $B_\theta$-module
\begin{equation}\label{autBLieG}
  {\rm{aut}}^{\r_\F}_{B_\theta}(B_\theta\otimes H)=
  \dd({\rm{aut}}_{B}(B\otimes H)^{}_\theta)=
  B_\theta\otimes \rm{Lie}(G)~,
  \end{equation}
where $\r_\F=\F_{21}\bF=\bF^2$. 
For the last equality we first observe that $\dd(X^i)=X^i$ for $X^i\in \rm{Lie}(G)\subseteq
\Der{A}$. Indeed, the twist acts only on $B$ while the right-invariant vector field
 $X^i$  acts only on $H$ and so, for all $b\otimes h\in A_\theta=B_\theta\otimes H$,
\begin{equation*}\begin{split}
    \dd(X^i)(b\otimes h)
&=\one{\bF^\alpha} \trl (X^i(S(\two{\bF^\alpha}){\bF_\alpha}\trl(b\otimes h)))\\
&=\one{\bF^\alpha} \trl
(S(\two{\bF^\alpha}){\bF_\alpha}\trl b\,\otimes X^i(h))\\ &=b\otimes
X^i(h) = X^i(b\otimes h)~.
\end{split}
\end{equation*}
Then $[X^i,X^j]_{\r_\F}=\dd([X^i,X^j]_\F)=[X^i,X^j]$ since $[X^i,X^j]_\F=[X^i,X^j]$.
From
\eqref{LieAmodF++}  the braided Lie bracket (defined in
\eqref{restrictedbracket}) explicitly reads, for all $b_i, b'_i\in B$,
$$
[b_i\dott X^i,b'_j\dott X^j]_{\r_\F}=b_i\dott b'_j\dott[X^i,X^j] ,
$$
being $(b_i\dott X^i)(a)=b_i\dott X^i(a)$
for all $a\in (B_\theta\otimes H)$, cf. \eqref{buldpsi}.

From Proposition \ref{prop:uea}  
  the universal enveloping algebra
  ${{\U}}(B_\theta\otimes {\rm{Lie}}(G))$ of this
braided Lie algebra is a $K_\F$-braided Hopf algebra (since $K$ is
abelian $K_\F$ concides with  $K$ as algebra but has triangular structure $\r_\F$).
The coproduct, counit and antipode are uniquely determined on the generators
$b\dott {X}\in B_\theta\otimes {\rm{Lie}}(G)$ as
$$\Delta(b\dott{X})=b\dott{X}\,\bot\, 1+ 1\,\bot\, b\dott{X}~,~~\varepsilon(b\dott{X})=0~,~~S(b\dott{X})=-\;\!b\dott{X}~.
$$

We briefly compare this result with the approach to the gauge group
defined in equation \eqref{def1GG} and based on algebra maps (not considering twist deformations of the structure Hopf algebras as
in Proposition \ref{isoclass}). An algebra map $\FP: {B_\theta\ot H}\to
{B_\theta\ot H}$ which is the identity on
$B_\theta\ot 1$ is determined by
its restriction to $\FP_{1\ot H}:1\ot H\to B_\theta\ot H$.  
Using the algebra map property of $\FP$ 
we further have, for all $b\in B_\theta, h\in H$, 
$$\FP(1\ot h)\:(b\ot 1) =\FP(1\ot h)\FP(b\ot 1)=
\FP((1\ot h)(b\ot 1))=\FP((b\ot 1)(1\ot h))=(b\ot 1)\FP(1\ot h)$$
that shows that $\FP_{1\ot H}:1\ot H\to Z(B_\theta)\ot H$, where $Z(B_\theta)$
is the center of $B_\theta$. This is drastically different from the
commutative case. For example if $M=\mathbb{T}^2$ then
$B_\theta=\O(\mathbb{T}^2_\theta)$ and, for $\theta$ irrational,
$Z(B_\theta)=\mathbb{C}$ and the gauge group is just the structure
group $G$. 
  \end{ex}

 \begin{ex}{\it{The instanton bundle on the sphere $S^{4}_\theta$}.}
The spheres $S^7$ and $S^4$ are the homogeneous spaces
$S^7=Spin(5)/SU(2)$ and
$S^4=Spin(5)/Spin(4)\simeq$ $ Spin(5)/SU(2)\times SU(2)$, hence the Hopf fibration 
$S^7\to S^4$ is a $Spin(5)$-equivariant $SU(2)$-principal bundle. Then,  
the right-invariant vector fields $X\in so(5)\simeq
spin(5)$ on  $Spin(5)$  project to the
right cosets $S^7$ and $S^4$ and generate the $\mathcal{O}(S^7)$-module of
vector fields on $S^7$ and the $\mathcal{O}(S^4)$-module of vector
fields on $S^4$.   This latter is the submodule of
right $SU(2)$-invariant vector fields on $S^7$.
A convenient generating set for the $\mathcal{O}(S^7)$-module 
 is given by the following right  SU(2)-invariant vector fields (cf. \cite{L06}):
\begin{align}\label{der-sopra1}
&H_1= \tfrac{1}{2}( z_1 \partial_1 - z_1^* \partial^*_1 - z_2 \partial_2  + z_2^* \partial^*_2 - z_3 \partial_3 + z_3^* \partial^*_3 + z_4 \partial_4  - z_4^* \partial^*_4) 
\nn
\\
&H_2 =  \tfrac{1}{2}(- z_1 \partial_1 + z_1^* \partial^*_1 + z_2 \partial_2  - z_2^* \partial^*_2 - z_3 \partial_3 + z_3^* \partial^*_3 + z_4 \partial_4  - z_4^* \partial^*_4)
\end{align}
\begin{align}\label{der-sopra2}
& E_{10} =  \stwo (z_1 \partial_3  - z_3^* \partial^*_1 - z_4 \partial_2 + z_2^* \partial^*_4)
&&  
E_{-10} = \stwo (z_3 \partial_1 - z_1^* \partial^*_3 - z_2 \partial_4  + z_4^* \partial^*_2)
\nn
\\
& E_{01} = \stwo ( z_2 \partial_3  - z_3^* \partial^*_2 + z_4 \partial_1 - z_1^* \partial^*_4) 
&&  
E_{0 -1} = \stwo (z_1 \partial_4  - z_4^* \partial^*_1 + z_3 \partial_2  - z_2^* \partial^*_3)
\nn
\\
& E_{11}= - z_4 \partial_3  + z_3^* \partial^*_4 
&&  
E_{-1-1} = z_4^* \partial^*_3  - z_3 \partial_4 
\nn
\\
& E_{1-1}= - z_1 \partial_2  + z_2^* \partial^*_1  
&&  
E_{-1 1}= - z_2 \partial_1  + z_1^* \partial^*_2 . 
\end{align}
Here $z_1,z_2,z_3,z_4$ generate the coordinate ring of $S^7$ with
$z_1z^*_1+z_2z^*_2+z_3z^*_3+z_4z^*_4=1$, the partial derivatives
$\partial_\mu, \partial_\mu^*$,  are defined by  $\partial_\mu(z_\nu) 
= \delta_{\mu \nu}$ and $\partial_\mu(z_\nu^*)=0$  and 
similarly  for  $\partial_\mu^*$, $\mu, \nu=1,2,3,4$. The above vector fields are chosen so that their commutators close the Lie algebra 
$so(5)$ in the form
\begin{align}\label{so5}
[H_1,H_2] &=0 \; ; \quad
[H_j,E_\textsf{r}] = r_j E_\textsf{r} \; ; \nn \\  
[E_\textsf{r},E_{-\textsf{r}}] &= r_1 H_1 + r_2 H_2  \; ; \quad
[E_\textsf{r}, E_\textsf{s}]= N_{rs } E_\textsf{r+s} \; .
\end{align}
Then, the elements 
$H_j$, $j=1,2$, are the generators of the Cartan subalgebra, 
and $E_\textsf{r}$ is labelled by  
$\textsf{r}=(r_1,r_2)  \in \Gamma= \{(\pm 1 , 0), (0, \pm 1), (\pm1,\pm
1)\}$, one of the eight roots. Also,  $N_\textsf{rs}=0$ if
$\textsf{r+s}$ is not a root  and $N_\textsf{rs} \in \{1,-1\}$
otherwise.
The $*$-structure is given by $H_j^*=H_j$ and $E_\textsf{r}
^*=E_{-\textsf{r}}$. Since the $*$-structure on 
vector fields $X$ is defined by $X^*(f)=-(X(f^*))^*$ for any function $f$,
one accordingly checks that for the vector fields in \eqref{der-sopra1} and \eqref{der-sopra2},  $E_{-\textsf{r}} (z_\mu) 
=-(E_{\textsf{r} }(z^*_\mu))^*$ and $H_j (z_\mu)=-(H_j(z^*_\mu))^*$.
The general 
right $SU(2)$-invariant vector field on $S^7$, that is  
$H$-equivariant derivation of $\O(S^7)$, is 
\beq\label{Xreal}
X= b_1 H_1 + b_2 H_2  + {\mbox{$\sum_\textsf{r}$}}  b_\textsf{r}  E_\textsf{r}   \; 
\eeq
with $b_j, b_\textsf{r}  \in \mathcal{O}(S^4)$.
The derivations $X$ are real,  $X^*=X$,
if and only if $b_j^*=b^{}_j$ and
$b_\textsf{r}^*=b^{}_{-\textsf{r}}$.

Infinitesimal gauge transformations are $H$-equivariant derivations $X$ 
which are  vertical: $X (b)=0$ for  $b \in \O(S^4)$. 
As shown in \cite[\S 4]{pgcex}, they form the $\O(S^4 )$-module generated by 
 \begin{align}
K_1&= 2x H_2 + \beta^* \sqrt{2} E_{01} + \beta \sqrt{2} E_{0-1}  &
K_2&= 2x H_1 + \alpha^* \sqrt{2} E_{10}  + \alpha \sqrt{2} E_{-10} 
\nn \nonumber\\
W_{01}&=   \sqrt{2} \big( \beta H_1 + \alpha^*  E_{11} +  \alpha
          E_{-1 1} \big) &
  W_{0-1}&=   \sqrt{2} \big( \beta^* H_1 +  \alpha^* E_{1-1} + \alpha E_{-1-1} \big)
\nn \nonumber\\
 W_{10}&=  \sqrt{2} \big( \alpha H_2 - \beta^*   E_{11}  + \beta E_{1-1} \big)
&
  W_{-10}&=  \sqrt{2} \big( \alpha^*  H_2 + \beta^* E_{-11}  - \beta E_{-1-1} \big)
\nn \nonumber\\
 W_{11}&=   2x  E_{11} + \alpha \sqrt{2}E_{01}  - \beta \sqrt{2}E_{10}  
& W_{-1-1}&= 2x E_{-1-1}  + \alpha^* \sqrt{2}E_{0-1}  - \beta^* \sqrt{2}E_{-10}
\nn \nonumber\\
W_{1 -1}&=  -2x E_{1-1}  + \beta^* \sqrt{2}E_{10} + \alpha \sqrt{2}E_{0-1}
 & W_{-1 1}&=   -2x E_{-11} + \beta \sqrt{2}E_{-10} + \alpha^* \sqrt{2}E_{01}  
 \label{KW}\end{align}
where 
$\alpha= 2(z_1 z_3^* + z^*_2 z_4)$,
$\beta= 2(z_2 z_3^* - z^*_1 z_4)$,
$x= z_1 z_1^* + z_2 z_2^* - z_3 z_3^* -z_4 z_4^*$ 
satisfy the relation
$
\alpha^* \alpha + \beta^* \beta + x^2=1 
$ 
and  are the coordinates of $\O(S^4)\subset \O(S^7)$.
The commutators of the generators in \eqref{KW} define the gauge Lie algebra since  $[bX,b'X']=bb'[X,X']$ for any $b,b' \in \O(S^4)$ and $X,X' \in \mathrm{aut}_{\O(S^4)}(\O(S^7))$. 
These generators
satisfy  $K_j^*=K_j$, $W^*_\textsf{r}  =W_{-\textsf{r}}$.
They transform under the adjoint representation 
of $so(5)$,
in particular 
\begin{equation}\label{adjSO}
 H_j \trl K_l = [H_j, K_l]=0 \, , \quad H_j \trl W_\textsf{r}= [H_j, W_\textsf{r}] = r_j W_\textsf{r}  \, .
\end{equation}
Due to $Spin(5)$-equivariance we have the decomposition, see
\cite[\S 4]{pgcex},
\begin{equation}\label{dsd}
\mathrm{aut}_{\O(S^4) }(\O(S^7))=\mbox{$\bigoplus_{n\in \mathbb{N}_0} $}\, [d(2,n)]
\end{equation}
where $[d(2,n)]$ is the representation of $so(5)$ as derivations on
$\O(S^7)$
of highest weight vector $\alpha^n W_{11}$ of weight $(n+1,1)$
 and dimension 
$
d(2,n)=\tfrac{1}{2}(n+1)(n+4)(2n+5)
$. These derivations
are combinations of the derivations
 in \eqref{KW} with spherical harmonics of degree $n$ on $S^4$
 (harmonic polynomials on $\mathbb{R}^5$ of homogenous degree $n$).
\\

The right-invariant vector fields
$H_1$ and $H_2$ of $Spin(5)$  are the vector fields of
a maximal torus subgroup $\mathbb{T}^2 \subset Spin(5)$. They define the universal enveloping algebra
$K$ of the abelian Lie algebra $[H_1,H_2]=0$. Their action \eqref{der-sopra1} on  $\O(S^7)$
commutes with the $\O(SU(2))$ right
coaction on $\O(S^7)$. 
The associated twist in \eqref{twistT2}
corresponds to the torus 2-cocycle of \cite[Ex. 3.21]{ppca}, hence
it gives the $\mathcal{O}(SU(2))$-Hopf--Galois extension $\mathcal{O}(S^4_{\theta})
=\mathcal{O}(S^7_\theta)^{co\O(SU(2))} \subset
\mathcal{O}(S^7_\theta)$ introduced in \cite{LS0}.
 In the following we use the subscript $\theta$
  instead of $\F$ for twisted algebras and their multplications to
  conform with the literature.
  The twisted algebra $\mathcal{O}(S^4_\theta)$ is generated by
  $\alpha, \alpha^*, \beta, \beta^*, x$ with 
  $\alpha\dott \alpha^*+\beta\dott \beta^*+x\dott  x=1$ where the only nontrivial
  commutation relations are
  \begin{equation}\label{t4sphererel}
    \alpha\dott \beta=e^{2\pi  i\theta} \beta\dott\!\: \alpha~,~~\alpha\dott \beta^*=e^{-2\pi  i\theta} \beta^*\!\!\:\dott \!\:\alpha 
 \end{equation}
 and their complex conjugates. They are 
obtained from the general definition of twisted multiplication
\eqref{rmod-twist} using that the coordinates
$\alpha, \beta, x$ are eigen-functions of $H_1$ and $H_2$.

The twist deformation
of the gauge Lie algebra
$\mathrm{aut}_{\mathcal{O}(S^4)}(\mathcal{O}(S^7))$ via the
action in \eqref{adjSO}
is the
$\mathcal{O}(S^4_\theta)$-module and braided Lie algebra
$\big(\mathrm{aut}_{\mathcal{O}(S^4)}(\mathcal{O}(S^7) )_\F, [~,~]_\F,\cdot_\F\big)$
associated with $(K_\F, \r_\F= \bF^2)$.
It has braided Lie bracket  on generators 
 (see Proposition \ref{prop:gf}):
\begin{align}\label{aut-twist}
[K_1,K_2]_\F &=[K_1,K_2] \; ; \quad [K_j,W_\textsf{r}]_\F= [K_j,W_\textsf{r}] \; ; \nn \\
[W_\textsf{r}, W_\textsf{s}]_\F &= e^{- i \pi \theta \wdg{r}{s} } [W_\textsf{r}, W_\textsf{s}] \; .
\end{align}
  From Proposition \ref{autautF} we have
  $\mathrm{aut}_{\mathcal{O}(S^4_\theta) }(\mathcal{O}(S^7_\theta)
  )=\dd(\mathrm{aut}_{\mathcal{O}(S^4) }(\mathcal{O}(S^7) )_\F)$ and
from Corollary \ref{DAmodiso} and equation \eqref{LieAmodF++}
we can conclude that (we denote the multiplications $\dotF$ in $B_\F=B_\theta$
and $A_\F=A_\theta$, and the module structures by $\dott$): 
\begin{prop}\label{gaugeinstanton}
The braided Lie algebra $\mathrm{aut}_{\mathcal{O}(S^4_\theta) }(\mathcal{O}(S^7_\theta) )$ of infinitesimal gauge transformations of the  $\mathcal{O}(SU(2))$-Hopf--Galois extension 
$\mathcal{O}(S^4_\theta) \subseteq \mathcal{O}(S^7_\theta)$
is generated, as an  $\mathcal{O}(S^4_\theta)$-module, by the elements
\begin{equation}\label{Dgenerators}
\widetilde{K}_j :=\dd (K_j)=K_j \, , \quad  \widetilde{W}_\mathsf{r}
:= \dd(W_\mathsf{r})=W_\mathsf{r} \,e^{\pi i\theta({r_1}H_2-r_2H_1)}\;
, \quad j=1,2 \, , \quad \mathsf{r} \in \Gamma
\end{equation}
with bracket 
\begin{align*}
[\widetilde{K}_1,\widetilde{K}_2]_\rF &=\dd([{K}_1,{K}_2]) \; ; \quad
[\widetilde{K}_j,\widetilde{W}_\mathsf{r}]_\rF = \dd([{K}_j,{W}_\mathsf{r}]) \; ;\nonumber \\
[\widetilde{W}_\mathsf{r}, \widetilde{W}_\mathsf{s}]_\rF &= e^{- i \pi \theta \mathsf{r}\wedge\mathsf{s}  } \, \dd([{W}_\mathsf{r}, {W}_\mathsf{s}]) \; .
\end{align*}
The braided Lie bracket of generic elements $\widetilde{X},\widetilde{X}'$ in $\mathrm{aut}_{\mathcal{O}(S^4_\theta)}(\mathcal{O}(S^7_\theta) )$ 
and $b,b'\in \mathcal{O}(S^4_\theta)$ 
is given by
\begin{equation}\label{Lierelations}
    [b \dott \widetilde{X}, b'\dott \widetilde{X}']_{\r_\F}\,=~b\dott (\r_\alpha\trl b')  \dott[\r^\alpha\trl
    \widetilde{X},\widetilde{X}']_{\r_\F}~   
  \end{equation}
with $\mathcal{O}(S^4_\theta)$-module structure as in \eqref{buldpsi}. 
\end{prop}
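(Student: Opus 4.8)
The plan is to assemble Proposition \ref{gaugeinstanton} directly from the general machinery already developed, specialized to the torus twist \eqref{twistT2} acting by \eqref{adjSO} on $\mathrm{aut}_{\mathcal{O}(S^4)}(\mathcal{O}(S^7))$. First I would invoke Proposition \ref{autautF}: the braided Lie algebra isomorphism $\dd:\mathrm{aut}_{\mathcal{O}(S^4)}(\mathcal{O}(S^7))_\F\to \mathrm{aut}^{\r_\F}_{\mathcal{O}(S^4_\theta)}(\mathcal{O}(S^7_\theta))$ identifies the twisted gauge braided Lie algebra with $\dd$ applied to $\mathrm{aut}_{\mathcal{O}(S^4)}(\mathcal{O}(S^7))_\F$. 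Since the latter is, as a linear space and an $\mathcal{O}(S^4_\theta)$-module, the same as the classical $\mathrm{aut}_{\mathcal{O}(S^4)}(\mathcal{O}(S^7))$ (only the products and bracket are deformed), and since this module is generated over $\mathcal{O}(S^4)$ by the $K_j,W_{\mathsf r}$ of \eqref{KW}, its image under the $\mathcal{O}(S^4_\theta)$-module isomorphism $\dd$ (Corollary \ref{DAmodiso}) is generated over $\mathcal{O}(S^4_\theta)$ by $\dd(K_j),\dd(W_{\mathsf r})$.

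Next I would compute these generators explicitly. Writing $\bF=\bF^\alpha\otimes\bF_\alpha=e^{-\pi i\theta(H_1\otimes H_2-H_2\otimes H_1)}$, the formula $\dd(\psi)(a)=\F^\alpha\trl_V\psi(S(\F_\alpha)\buf\trl_V a)$ from the proof of Proposition \ref{prop:Diso} (together with $\buf=S(\bF^\alpha)\bF_\alpha$) applies. Because $K_j,W_{\mathsf r}$ lie in eigenspaces of the adjoint $\mathbb{T}^2$-action by \eqref{adjSO} — $K_l$ has weight $(0,0)$, $W_{\mathsf r}$ has weight $\mathsf r=(r_1,r_2)$ — the twist acts on them by a scalar phase. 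Diagonalizing $\F$ on the weight vector $W_{\mathsf r}$ (as permitted by Remark \ref{rem:topological}) gives $\dd(W_{\mathsf r})=W_{\mathsf r}\,e^{\pi i\theta(r_1H_2-r_2H_1)}$ and $\dd(K_j)=K_j$, exactly \eqref{Dgenerators}; this is essentially the same weight-by-weight computation used for $\dd(X^i)=X^i$ in the trivial bundle example, except here the relevant vector fields are not annihilated by $H_1,H_2$.

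For the bracket relations I would combine Proposition \ref{prop:gf}, which gives $[X,X']_\F=[\bF^\alpha\trl X,\bF_\alpha\trl X']$, with the weight data \eqref{adjSO}. On the weight vectors $W_{\mathsf r},W_{\mathsf s}$ the twist contributes the scalar $e^{-i\pi\theta\,\mathsf r\wedge\mathsf s}$ (with $\mathsf r\wedge\mathsf s:=r_1s_2-r_2s_1$), yielding \eqref{aut-twist}; the $K_j$ relations are undeformed since $K_j$ has weight zero. Applying the braided Lie algebra isomorphism $\dd$ (Theorem \ref{thm:Dalg}, restricted as in Proposition \ref{autautF}), $\dd([X,X']_\F)=[\dd(X),\dd(X')]_{\r_\F}$, transports these to the stated relations among $\widetilde K_j,\widetilde W_{\mathsf r}$. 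Finally, the module compatibility \eqref{Lierelations} is a direct instance of \eqref{LieAmodF++} from Corollary \ref{DAmodiso}: since $\mathcal{O}(S^4_\theta)$ is braided-commutative (the deformation is abelian, cf. \eqref{t4sphererel}), the term $\tilde\psi(a')$ contributes $X(b')$ which vanishes by verticality of the gauge transformations, and the last term of \eqref{LieAmodF++} likewise involves $\tilde\psi'$ acting on the base coordinate $a$ and hence vanishes, collapsing \eqref{LieAmodF++} to \eqref{Lierelations}.

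The only genuinely delicate point is bookkeeping the phases: one must check that the scalar produced by the twist in $\dd(W_{\mathsf r})$ and the scalar in $[W_{\mathsf r},W_{\mathsf s}]_\F$ are mutually consistent, i.e. that $\dd$ of the twisted bracket reproduces precisely $e^{-i\pi\theta\,\mathsf r\wedge\mathsf s}\dd([W_{\mathsf r},W_{\mathsf s}])$ rather than some other power of the phase. This is a short but sign-sensitive calculation using $\r_\F=\F_{21}\bF=\bF^2$ and the $K_\F$-equivariance of $\dd$; everything else is a routine specialization of the already-proven general statements, so I would keep the exposition brief and refer back to Propositions \ref{prop:gf}, \ref{autautF}, Theorem \ref{thm:Dalg} and Corollary \ref{DAmodiso} for the structural content.
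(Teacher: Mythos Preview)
Your proposal is correct and follows essentially the same route as the paper: the paper simply invokes Proposition \ref{autautF} for the isomorphism $\dd$, Corollary \ref{DAmodiso} for the $\mathcal{O}(S^4_\theta)$-module structure, and equation \eqref{LieAmodF++} for the bracket formula, with the twisted brackets \eqref{aut-twist} already computed beforehand via the weight data \eqref{adjSO}. Your explanation of why the first and third terms of \eqref{LieAmodF++} drop out (verticality on base elements) and your weight-by-weight computation of $\dd(K_j),\dd(W_{\mathsf r})$ are exactly the details the paper leaves implicit.
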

In \cite{pgcex} the Lie brackets $[K_j,W_\textsf{r}]$ and
$[W_\textsf{r} ,W_\textsf{s}]$ are found.
We now compute the
 brackets of the generators $\widetilde K_j, \widetilde W_\textsf{r}$
 of the braided gauge Lie algebra
 $\mathrm{aut}_{\mathcal{O}(S^4_\theta) }(\mathcal{O}(S^7_\theta) )$. 
For example, the bracket
$[W_{-1-1},W_{01}]=     \sqrt{2} \beta W_{-1-1} - \sqrt{2}
\alpha^*(K_1+K_2 )$ is the same as 
$e^{-\pi i\theta}[W_{-1-1},W_{01}]_\F=     e^{\pi i\theta}\sqrt{2}
\beta \cdot_\F W_{-1-1} - \sqrt{2} \alpha^*\cdot_\F(K_1+K_2 )$, where we used the module structure of
$\mathrm{aut}_{\mathcal{O}(S^4) }(\mathcal{O}(S^7) )_\F$ given by
 \eqref{acdotFD} and that the coordinates of the sphere $S^4$
are eigen-functions of $H_1$ and $H_2$. Then, 
by applying the map $\dd$ we have that $[\widetilde W_{-1-1},\widetilde
W_{01}]_{\r_\F}=  e^{2\pi i\theta}\sqrt{2} \beta \dott\widetilde
W_{-1-1} -  e^{\pi i\theta}\sqrt{2}
\alpha^*\!\!\:\dott(\widetilde K_1+\widetilde K_2 )$. 
 We list the
independent braided Lie algebra brackets in the following table:
 \begin{align*}
& [\widetilde K_1, \widetilde K_2]_{\r_\F} = \sqrt{2} ( \alpha^*  \!\dott\widetilde W_{10} - \alpha \dott\widetilde W_{-10} ) 
 \\
&[\widetilde K_1, \widetilde W_{01}]_{\r_\F} = - \sqrt{2} \beta\dott\widetilde K_2 + 2 x    \dott\widetilde W_{01} 
 \\
&[\widetilde K_1, \widetilde W_{1-1}]_{\r_\F} =- 2 x \dott\widetilde W_{1-1} + \sqrt{2}e^{\pi i\theta} \beta^* \! \dott\widetilde W_{10} \\
&[\widetilde K_1, \widetilde W_{10}]_{\r_\F} =\sqrt{2}e^{-\pi i\theta}\beta \dott\widetilde W_{1-1}-\sqrt{2}e^{\pi i\theta} \beta^* \!  \dott\widetilde W_{11}  \\
&[\widetilde K_1, \widetilde W_{11}]_{\r_\F} = 2 x
                                                                                                                                                                                  \dott\widetilde W_{11} - \sqrt{2} e^{-\pi i\theta}\beta \dott \widetilde W_{10}\\
&
[\widetilde K_2, \widetilde W_{01}]_{\r_\F} = \sqrt{2}e^{-\pi i\theta} \alpha^* \!  \dott\widetilde W_{11} + \sqrt{2}e^{\pi i\theta}\alpha \dott\widetilde W_{-11} \\
 &  [\widetilde K_2, \widetilde W_{1-1}]_{\r_\F}=2 x
   \dott\widetilde W_{1-1} - \sqrt{2}e^{-\pi i\theta}  \alpha
   \dott\widetilde W_{0-1}\\  
&[\widetilde K_2, \widetilde W_{10}]_{\r_\F} = {2}  x
    \dott\widetilde W_{10} -\sqrt{2} \alpha\dott    \widetilde K_1\\
   & [\widetilde K_2, \widetilde W_{11}]_{\r_\F}=2 x
   \dott\widetilde W_{11} + \sqrt{2} e^{\pi i\theta}\alpha
   \dott\widetilde W_{01}
\end{align*}
\begin{align*}
  &
[\widetilde W_{01}, \widetilde W_{1-1}]_{\r_\F}= \sqrt{2} \beta \dott\widetilde W_{1-1} + \sqrt{2} e^{\pi i\theta}\alpha \dott(\widetilde K_2 -\widetilde K_1 )
                               \\
&[\widetilde W_{01}, \widetilde W_{10}]_{\r_\F}=  \sqrt{2} \beta                            \dott\widetilde W_{10} - \sqrt{2}e^{\pi i\theta}\alpha \dott\widetilde W_{01}      \\
&[\widetilde W_{01}, \widetilde W_{11}]_{\r_\F}=  \sqrt{2} \beta \dott\widetilde W_{11} \\
&[\widetilde W_{1-1}, \widetilde W_{10} ]_{\r_\F}=     \sqrt{2}
      \alpha \dott\widetilde W_{1-1} \\
  & [\widetilde W_{1-1}, \widetilde W_{11}]_{\r_\F}=  -\sqrt{2}
     e^{-\pi i\theta}  \alpha \dott\widetilde W_{10} \\
 &[\widetilde W_{10}, \widetilde W_{11}]_{\r_\F}=   \sqrt{2}   \alpha
    \dott\widetilde W_{11}
\end{align*}
 \begin{align*}
&[ \widetilde W_{-1-1},\widetilde W_{01}]_{\r_\F}=     \sqrt{2} e^{2\pi i\theta}\beta \dott\widetilde W_{-1-1} - \sqrt{2}e^{\pi i\theta} \alpha^* \!\dott(\widetilde K_1 +\widetilde K_2 )\\
&[\widetilde W_{-1-1}, \widetilde W_{1-1}]_{\r_\F} =  \sqrt{2}   e^{-2\pi i\theta}\beta^* \!\dott\widetilde W_{0-1} \\
&[\widetilde W_{-1-1}, \widetilde W_{10}]_{\r_\F}=  \sqrt{2}   \alpha
                                                                                                                         \dott\widetilde W_{-1-1}  + \sqrt{2} e^{\pi i\theta}\beta^* \! \dott (\widetilde K_1 +\widetilde K_2 )\\
&[ \widetilde W_{-1-1}, \widetilde W_{11}]_{\r_\F}= -2x \dott (\widetilde K_1 +\widetilde K_2 ) - \sqrt{2}   \alpha \dott\widetilde W_{-10}- \sqrt{2}   \beta \dott\widetilde W_{0-1} \\
& [\widetilde W_{-10},\widetilde W_{01}]_{\r_\F}=  \sqrt{2} e^{2\pi i\theta}\beta \dott\widetilde W_{-10} - \sqrt{2}\alpha^* \! \dott\widetilde W_{01} \\
&[\widetilde W_{-10}, \widetilde W_{1-1}]_{\r_\F}=  -  \sqrt{2}
                                                                                                                                                            \alpha^* \! \dott\widetilde W_{1-1}  +  \sqrt{2} e^{-\pi i\theta}\beta^* \! \dott (\widetilde K_2 -\widetilde K_1 )\\
& [\widetilde W_{10}, \widetilde W_{-10}]_{\r_\F}= \sqrt{2} (\beta^* \! \dott\widetilde W_{01} + \beta \dott\widetilde W_{0-1} )\\
&[ \widetilde W_{-11},\widetilde W_{01}]_{\r_\F}=  \sqrt{2} e^{2\pi i\theta}\beta \dott\widetilde W_{-11}\\
 &[\widetilde W_{0-1}, \widetilde W_{01}]_{\r_\F}= \sqrt{2} ( \alpha^* \!
   \dott\widetilde W_{10} - \alpha \dott\widetilde W_{-1 0})\\
& [\widetilde W_{-11},\widetilde W_{1-1}]_{\r_\F} = 2x\dott (
                                                                               \widetilde K_1 -\widetilde K_2 ) - \sqrt{2}   \beta^* \! \dott\widetilde W_{01}  +  \sqrt{2}   \alpha \dott\widetilde W_{-10} 
 \end{align*}
\centerline{\it Table 1}
\\[1em]
The remaining brackets are obtained via  $*$-conjugation using
$([\widetilde X, \widetilde X']_{\r_\F})^*=[{{\widetilde X}'}{}^*,
{\widetilde X}^*]_{\r_\F}$ and $(b\dott\widetilde X)^*=
{\r_\F}_\alpha
\trl b^*\dott \!\:{\r_\F}^\alpha\trl\widetilde X^*$, where $\r_\F=\F_{21}\bF=\bF^2$.
These expressions hold since $\F$ is compatible with $*$-conjugation:
$\F^{*\otimes *}=(S\otimes S)\F_{21}$, (for details see
e.g. \cite[Sect. 8]{GR2}).
They can also readily be obtained by computing the brackets of the
missing generators starting from the undeformed brackets.

From Proposition \ref{prop:uea} we have that
  the universal enveloping algebra ${{\U}}(
\mathrm{aut}_{\mathcal{O}(S^4_\theta) }(\mathcal{O}(S^7_\theta) ))$ of this
$K_\F$-braided Lie algebra is a $K_\F$-braided Hopf algebra. The
algebra is the quotient of the tensor algebra of $\mathrm{aut}_{\mathcal{O}(S^4_\theta) }(\mathcal{O}(S^7_\theta) )$ modulo the
ideal generated by the braided Lie algebra relations \eqref{Lierelations}.
The coproduct, counit and antipode are uniquely determined on the generators
$b\dott \widetilde{X}\in
\mathrm{aut}_{\mathcal{O}(S^4_\theta) }(\mathcal{O}(S^7_\theta) )$ as
$$\Delta(b\dott\widetilde{X})=b\dott\widetilde{X}\,\bot\, 1+ 1\,\bot\, b\dott\widetilde{X}~,~~\varepsilon(b\dott\widetilde{X})=0~,~~S(b\dott\widetilde{X})=-\;\!b\dott\widetilde{X}~.
$$

Let us explicitly check the coproduct for the derivations $\widetilde{W}_\textsf{r}$.
For $a_\textsf{s} \in \mathcal{O}(S^7_\theta)$ an eigen-function of $H_j$ of eigenvalue $\textsf{s}_j$, the derivation  $\widetilde{W}_\textsf{r}$
acts as
\beq
\widetilde{W}_\textsf{r} (a_\textsf{s}) 
= (\bF^\alpha \trl {W}_\textsf{r} )(\bF_\alpha \trl a_\textsf{s}) 
= e^{- i \pi \theta \wdg{r}{s}} {W}_\textsf{r} (a_\textsf{s}) \, .
\eeq
On the product of two such eigen-functions  $a_\textsf{s}, {a}_\textsf{m}  \in \mathcal{O}(S^7_\theta)$, we can explicitly see that $\widetilde{W}_\textsf{r} $ acts as a braided derivation, with respect to the braiding  $\rF = \F_{21} \, \bF = \bF^2$:
\begin{align*}
\widetilde{W}_\textsf{r} (a_\textsf{s} \dott {a}_\textsf{m} ) 
& = e^{- i \pi \theta \wdg{r}{(s+m)} } e^{- i \pi \theta \wdg{s}{m}}  {W}_\textsf{r} (a_\textsf{s}   {a}_\textsf{m} ) 
\\
& = e^{- i \pi \theta (\wdg{r}{(s+m)}+ \wdg{s}{m}) } 
\big[ {W}_\textsf{r} (a_\textsf{s})   {a}_\textsf{m}  + a_\textsf{s} {W}_\textsf{r} ({a}_\textsf{m} ) \big]
\\
& = e^{- i \pi \theta (\wdg{r}{(s+m)}+ \wdg{s}{m}) } 
\big[ e^{i \pi \theta \wdg{(r+s)}{m} }  {W}_\textsf{r} (a_\textsf{s}) \dott {a}_\textsf{m}
+
e^{ i \pi \theta \wdg{s}{(r+m)}   } 
 a_\textsf{s}) \dott {W}_\textsf{r} ({a}_\textsf{m} ) \big]
\\
& = e^{- i \pi \theta \wdg{r}{s} }  {W}_\textsf{r} (a_\textsf{s}) \dott {a}_\textsf{m}
+
e^{- 2 i \pi \theta  (\wdg{r}{s}+ \wdg{r}{m})       } 
 a_\textsf{s} \dott {W}_\textsf{r} ({a}_\textsf{m} )  
\\
& =  \widetilde{W}_\textsf{r} (a_\textsf{s}) \dott {a}_\textsf{m}
+
e^{-2  i \pi \theta  \wdg{r}{s}      } 
 a_\textsf{s} \dott \widetilde{W}_\textsf{r} ({a}_\textsf{m} ) 
\\
&=\Delta(\widetilde{W}_\textsf{r} )(a_\textsf{s} \dott {a}_\textsf{m} ) \,  . \qedhere
\end{align*}\\[-2.1em]\phantom{3}  
\end{ex}

\begin{ex}{\it{The quantum orthogonal bundle on the homogeneous space
  $S^{4}_\theta$.}}
We describe the principal bundle
$\mathcal{O}(SO(5,\mathbb{R}))\to
\mathcal{O}(SO(5,\mathbb{R}))/\mathcal{O}(SO(4,\mathbb{R}))=S^4$ as
the Hopf--Galois extension $\O(S^4)\subseteq
\mathcal{O}(SO(5,\mathbb{R}))$. Here $\mathcal{O}(SO(5,\mathbb{R}))$
is the Hopf algebra generated by the commuting entries of the matrix
$N=(n_{JK})$, $J,K=1,2\ldots 5$, modulo the relations 
$$N^t Q N =Q~,~~ N Q N^t=Q~,~~
\det(N)=1~,~\mbox{ where }~ Q:=\tiny{\begin{pmatrix}
0 & \II_2 &0
\\
\II_2 & 0 &0
\\
0 & 0 & 1
\end{pmatrix} }~.
$$
The costructures on the generators are $\Delta(N)=N \overset{.}{\otimes} N$,
$\varepsilon(N)=\II_5$, $S(N)=N^{-1}$ and $*$-structure
$N^*=(n^*_{JK})= Q N Q^t$, that is, $n_{JK}^*=n_{J'K'}$ with
$1'\!=\!3$, $2'\!=\!4$, $3'\!=\!1$, $4'\!=\!2$, $5'\!=\!5$. 
The Hopf algebra $\mathcal{O}(SO(4,\mathbb{R}))$ is the quotient via the Hopf ideal
generated by $n_{55}-1, n_{5\nu}, n_{\mu 5}$ for $\mu,\nu=1,2,3,4$ so
that we have the Hopf algebra projection
\beq\label{pi}
\pi:\mathcal{O}(SO(5, \IR)) \longrightarrow \mathcal{O}(SO(4, \IR)) \; , \quad 
N
\longmapsto
{\tiny{\begin{pmatrix}
M &0 
\\
0 &  1
\end{pmatrix}~. }}
\eeq
This gives the coaction $\delta=(\id\otimes
\pi)\Delta: \mathcal{O}(SO(5, \IR))\to  \mathcal{O}(SO(5,\IR))\otimes
\mathcal{O}(SO(4, \IR))$. The base space algebra $\O(S^4)\subset \mathcal{O}(SO(5, \IR))$ is then generated by the coinvariant elements
\begin{equation}\label{4sphere}\alpha=\sqrt{2}\!\:n_{15}\,,~ \beta=\sqrt{2}\!\:n_{25}\,,~
\alpha^*=\sqrt{2}\!\:n_{35}\,,~ \beta^*=\sqrt{2}\!\:n_{45}\,,~
x=n_{55}\,.
\end{equation}
The $\sqrt{2}$ rescaling is chosen so that
$\alpha,\beta, x$ satisfy the 4-sphere relation
$\alpha\alpha^*+\beta\beta^*+x^2=1$ (coming from  the reality and orthogonality
conditions $N^tN^*=\II_5$).

The $\mathcal{O}(S^{4})$-module freely
generated by the right-invariant vector fields on the group manifold
$SO(5,\mathbb{R})$ is the $\mathcal{O}(S^{4})$-module of $SO(4,\mathbb{R})$-equivariant derivations of the
principal bundle $SO(5,\mathbb{R}))\to S^4$. 
A convenient basis is 
\begin{align}\label{der-sopra-SO}
&H_1:=  n_{1K} \parn{1K}- n_{3K} \parn{3K}
\quad  
&&H_2 := n_{2K} \parn{2K}- n_{4K} \parn{4K}
\nn
\\
& E_{10}   :=   n_{5K} \parn{3K}  - n_{1K} \parn{5K}
\quad  
&& E_{-10 } := n_{3K} \parn{5K} -n_{5K} \parn{1K}  
\nn
\\
& E_{01}  := n_{5K} \parn{4K} - n_{2K} \parn{5K}  
\quad  
&&E_{0 -1}  :=    n_{4K} \parn{5K} - n_{5K} \parn{2K}
\nn
\\
& E_{11}:=    n_{2K} \parn{3K} - n_{1K} \parn{4K} 
\quad  
&& E_{-1-1}   :=  n_{3K} \parn{2K} - n_{4K} \parn{1K}
\nn
\\
& E_{1-1}:= n_{4K} \parn{3K} - n_{1K} \parn{2K} 
\quad  
&& E_{-1 1}:=   n_{3K} \parn{4K} - n_{2K} \parn{1K}
\end{align}
with summation on  $K=1, \dots, 5$ understood, and 
$\parn{IJ}(n_{KL}) = \delta_{IK} \delta_{JL}$, for $I,J,K,L=1,2,\dots, 5$. 
These ten generators close the Lie algebra \eqref{so5} of $so(5)$
and satisfy the reality conditions  $H_j^*=H_j$ and $E_\textsf{r}
^*=E_{-\textsf{r}}$.
As in \eqref{Xreal}, the generic real equivariant derivation is of the form 
$
X= b_1 H_1 + b_2 H_2 + \sum_\textsf{r}  b_\textsf{r}  E_\textsf{r} 
$, 
with 
$b_j^*=b_j$,
$b_\textsf{r}^*=b_{-\textsf{r}}$ in $\mathcal{O}(S^4)$ 
and now $H_j,
E_{\mathsf{r}}$ in \eqref{der-sopra-SO}. 

Using \eqref{4sphere}, these derivations restricted to $O(S^4)$
coincide with  those in \eqref{der-sopra1}, \eqref{der-sopra2} restricted to $O(S^4)$.
This implies that the verticality condition $X(b) = 0$ for each $b \in
O(S^4)$, is the same as that imposed on
the derivations of the istanton example. Therefore, 
infinitesimal gauge transformations, that is,
$SO(4,\mathbb{R})$-equivariant derivations $X$ which are also
vertical, are generated, as an $O(S^4)$-module, by the
vector fields  $K_j, W_\textsf{r}$ defined as in \eqref{KW} but now
with $H_j, E_\textsf{r}$ as in \eqref{der-sopra-SO}.

Due to $SO(5)$-equivariance of the bundle
$SO(5)\to S^4$ we have the direct sum decomposition \cite[Prop. 4.3]{pgcex}, 
\begin{equation}\label{dsdd}
\mathrm{aut}_{\mathcal{O}(S^4) }(\mathcal{O}(SO(5, \IR)))=\mbox{$\bigoplus_{n\in \mathbb{N}_0}$} \, [d(2,n)] \oplus \widehat{[d(2,n-1)]}
\end{equation}
of the gauge Lie algebra. Here 
 $[d(2,n)]$,  $\widehat{[d(2,n-1)]}$ are the
 representations of $so(5)$ with highest weight vectors $\alpha^n
 W_{11}$, $\alpha^{n-1}
 (\sqrt{2} x W_{11} + \alpha W_{01} - \beta W_{10})$ of weights
 $(n,1)$,  $(n+1,1)$, respectively. They consist of  derivations on
 $\mathcal{O}(SO(5, \IR))$ which are combinations of the derivations
 in \eqref{KW} with spherical harmonics of degree $n$ on $S^4$.
\\

The twist deformation of the Hopf--Galois extension
$\O(S^4)\subset \mathcal{O}(SO(5,\mathbb{R}))$ 
gives the $\mathcal{O}(SO_\theta(4,\mathbb{R}))$-Hopf--Galois extension
$\O(S_\theta^4)\subset \mathcal{O}(SO_\theta(5,\mathbb{R}))$. 
It can be obtained in two equivalent ways
(cf. \cite[Sect.~4.1 and 4.1.1]{ppca} where, dually, 2-cocycles
are used):
\item{- }{On the one hand, using the Cartan subalgebra of $so(5)$ given by
    $H_1$ and $H_2$ 
we  twist with
\begin{equation}\label{tso5}
\F:= e^{2 \pi i\theta (H_1 \ot H_2 -H_2 \ot H_1)}~,~~~\theta \in \mathbb{R}~,
\end{equation}
the Hopf algebra
$\mathcal{O}(SO(5,\mathbb{R}))$ to $\mathcal{O}(SO_\theta(5,\mathbb{R}))$
and then obtain $\mathcal{O}(SO_\theta(4,\mathbb{R}))$ as the quotient
via the Hopf ideal generated by $n_{55}-1, n_{5\nu}, n_{\mu 5}$ for
$\mu,\nu=1,2,3,4$. Defining the right
$\mathcal{O}(SO_\theta(4,\mathbb{R}))$-coaction
$\delta=(\id\otimes\pi)\Delta$, with $\pi
:\mathcal{O}(SO_\theta(5,\mathbb{R}))\to
\mathcal{O}(SO_\theta(4,\mathbb{R}))$, $N\mapsto \big(
{}^{M_{\,}}_{\:0^{}} 
{}^{0_{}}_{1^{}}\big)
$, cf. \eqref{pi}, we
obtain the quantum homogeneous space  $\mathcal{O}(S^4_\theta)\subset
\mathcal{O}(SO_\theta(5,\mathbb{R}))$ as the coinvariant subalgebra.}
The generators of $\mathcal{O}(S_\theta^{4})$
are in \eqref{4sphere} and satisfy the
unit radius and the commutation relations  \eqref{t4sphererel}.

\item{- }
On the other hand we can look at the inclusion 
$\O(S^4)\subset \mathcal{O}(SO(5,\mathbb{R}))$
as a Hopf–Galois extension, and twist it. In this case we forget the
Hopf algebra structure of $\mathcal{O}(SO(5,\mathbb{R}))$ and just use that we have a left and a
right action of $U_H:=\U(so(4))$ on the total space, this is the left
$U_H^{op}\otimes U_H$ action of Section \eqref{QPBQHS}.
We first consider the twist in
\eqref{tso5}, with $H_1$ and $H_2$ that are the generators of the  Cartan subalgebra of
$so(4)\subset so(5)$, and twist the Hopf--Galois extension using the
$U_H$-action.
The structure Hopf algebra
$\mathcal{O}(SO(4,\mathbb{R}))$ is twisted to
the Hopf algebra $\mathcal{O}(SO_\theta(4,\mathbb{R}))$ and the total
space $\mathcal{O}(SO(5,\mathbb{R}))$ is twisted  
as a  
left $U_H$-module algebra. The resulting Hopf--Galois extension
is still equivariant with respect to the $U_H^{op}$-action. 
We then consider a second  twist deformation using 
\eqref{tso5} as a
twist of $K=U^{op}$ (as in the first paragraph of Section \ref{BHagsftd}). This gives the Hopf--Galois extension
$\O(S_\theta^4)\subset
\mathcal{O}(SO_\theta(5,\mathbb{R}))$.

This second approach
is well adapted to describe the gauge Lie algebra of the Hopf--Galois extension
$\O(S_\theta^4)\subset \mathcal{O}(SO_\theta(5,\mathbb{R}))$.
The first twist deformation uses the $U_H$-action which acts trivially on 
$SO(4,\mathbb{R})$-equivariant derivations and hence on infinitesimal gauge transformations. The gauge Lie
algebra is therefore undeformed. This same result follows from
Proposition \ref{isoclass}, where  the
2-cocycle $\cot$ is associated with the maximal torus  $\mathbb{T}^2\subset SO(4,\mathbb{R})$ of the Cartan
subalgebra and defined by the twist \eqref{tso5}.
Under the second twisting the Hopf algebra $K = U^{op}$ becomes the
Hopf algebra $K_\F$ with nontrivial braiding $\r_\F={\bF}^{2}$ and the gauge Lie algebra
correspondingly becomes a $K_\F$-braided Lie algebra. According to
Proposition \ref{prop:gf}  the braided Lie bracket on the generators
reads as in \eqref{aut-twist} where $K_j, W_{\textsf{r}}$ are
defined as in \eqref{KW} but now with $H_j, E_{\textsf{r}}$ in
\eqref{der-sopra-SO}. We next apply the isomorphism $\dd$ of
Proposition \ref{autautF} and obtain the braided gauge Lie algebra
$\mathrm{aut}_{\mathcal{O}(S^4_\theta)
}(\mathcal{O}(SO_\theta(5,\mathbb{R})))$ of  the  $\mathcal{O}(SO_\theta(4,\mathbb{R}))$-Hopf--Galois extension 
$\O(S_\theta^4)\subset \mathcal{O}(SO_\theta(5,\mathbb{R}))$.
\begin{prop}The braided gauge Lie algebra 
$\mathrm{aut}_{\mathcal{O}(S^4_\theta)
}(\mathcal{O}(SO_\theta(5,\mathbb{R}))$ is described in Proposition
\ref{gaugeinstanton} by replacing
 $\mathcal{O}(S^7_\theta)$ with 
$\mathcal{O}(SO_\theta(5,\mathbb{R}))$. 
\end{prop}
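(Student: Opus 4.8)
The plan is to show that the proof of Proposition \ref{gaugeinstanton} transcribes verbatim, once one checks that every piece of algebraic data it used is unchanged for the orthogonal bundle $\O(S^4_\theta)\subset\mathcal{O}(SO_\theta(5,\mathbb{R}))$.

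First I would settle the undeformed picture. As recalled above, $\mathrm{aut}_{\O(S^4)}(\mathcal{O}(SO(5,\mathbb{R})))$ is generated as an $\O(S^4)$-module by the fields $K_1,K_2,W_\mathsf{r}$ of \eqref{KW}, now built from the right-invariant fields $H_j,E_\mathsf{r}$ of \eqref{der-sopra-SO}; these obey the same $so(5)$ relations \eqref{so5} and restrict to $\O(S^4)$ in the same way as the fields \eqref{der-sopra1}--\eqref{der-sopra2}. The step to carry out first is to verify that the $\O(S^4)$-bilinear brackets $[K_j,K_l]$, $[K_j,W_\mathsf{r}]$, $[W_\mathsf{r},W_\mathsf{s}]$ are given by the very same formulas as in the instanton case. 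Since the gauge bracket satisfies $[bX,b'X']=bb'[X,X']$ for $b,b'\in\O(S^4)$, each of these brackets is computed using only the Lie brackets \eqref{so5} among the $H_j,E_\mathsf{r}$ and the action of $H_j,E_\mathsf{r}$ on the generators $\alpha,\beta,x$ of $\O(S^4)$; both data are identical, so the structure constants coincide. Likewise the $so(5)$-adjoint action on the generators is again \eqref{adjSO}, $H_j\trl K_l=0$ and $H_j\trl W_\mathsf{r}=r_jW_\mathsf{r}$.

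Next I would feed these data into the twisting machinery exactly as in the instanton example. The twist \eqref{tso5} is built from the Cartan generators $H_1,H_2$, so $K_\F$ is triangular with $\r_\F=\bF^2$ and Proposition \ref{prop:gf} yields the twisted bracket $[\,,\,]_\F$ on $\mathrm{aut}_{\O(S^4)}(\mathcal{O}(SO(5,\mathbb{R})))_\F$ in the form \eqref{aut-twist}, via the weights \eqref{adjSO}. Then Proposition \ref{autautF} supplies the $(K_\F,\r_\F)$-braided Lie algebra isomorphism $\dd:\mathrm{aut}_{\O(S^4)}(\mathcal{O}(SO(5,\mathbb{R})))_\F\to\mathrm{aut}_{\O(S^4_\theta)}(\mathcal{O}(SO_\theta(5,\mathbb{R})))$, with $\widetilde K_j=\dd(K_j)=K_j$ and $\widetilde W_\mathsf{r}=\dd(W_\mathsf{r})$ differing from $W_\mathsf{r}$ by the $H_j$-weight exponential exactly as in \eqref{Dgenerators} (a computation using only the $H_j$-weights of $K_j,W_\mathsf{r}$). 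Finally, $\O(S^4_\theta)$ is $K_\F$-quasi-commutative, being a twist of the commutative $\O(S^4)$, so Corollary \ref{DAmodiso} and equation \eqref{LieAmodF++} transport the $\O(S^4_\theta)$-module structure \eqref{buldpsi} and give the generic bracket \eqref{Lierelations}. Assembling these facts reproduces the statement of Proposition \ref{gaugeinstanton} with $\mathcal{O}(S^7_\theta)$ replaced by $\mathcal{O}(SO_\theta(5,\mathbb{R}))$; in particular the explicit table of brackets there holds verbatim in the realisation \eqref{der-sopra-SO}.

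The main obstacle is precisely the first step: although the two total-space algebras and their $so(5)$-isotypical decompositions \eqref{dsd}, \eqref{dsdd} differ (the orthogonal bundle carrying the extra summands $\widehat{[d(2,n-1)]}$), one must confirm that the brackets of the module generators $K_j,W_\mathsf{r}$ were obtained in \cite{pgcex} using nothing beyond the $so(5)$ relations and the $H_j,E_\mathsf{r}$-action on $\O(S^4)$, with no ingredient special to $\mathcal{O}(S^7)$. Once that is granted, everything else is a routine transcription of the instanton computation together with Propositions \ref{prop:gf}, \ref{autautF} and Corollary \ref{DAmodiso}.
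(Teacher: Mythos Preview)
Your overall strategy---check that the undeformed generators $K_j,W_\mathsf{r}$ and their $so(5)$-structure are literally the same combinations of $H_j,E_\mathsf{r}$ and $\O(S^4)$-coefficients, then push through Propositions \ref{prop:gf}, \ref{autautF} and Corollary \ref{DAmodiso}---is correct in spirit and matches the paper's computation of the brackets. But you gloss over the one point where the orthogonal bundle genuinely differs from the instanton bundle: here the \emph{structure Hopf algebra} is also deformed, from $\O(SO(4,\mathbb{R}))$ to $\O(SO_\theta(4,\mathbb{R}))$. Proposition \ref{autautF} is stated for a twist of $K$ with $H$ held fixed; invoking it directly, as you do, produces the gauge Lie algebra of an $\O(SO(4,\mathbb{R}))$-Hopf--Galois extension, not of the $\O(SO_\theta(4,\mathbb{R}))$-extension $\O(S^4_\theta)\subset\O(SO_\theta(5,\mathbb{R}))$ in the statement.

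The paper resolves this by the two-step twisting described just before the proposition. First one twists with the $U_H$-action (the right action of $\U(so(4))$): this deforms $H$ to $\O(SO_\theta(4,\mathbb{R}))$ and the total space accordingly, but since $U_H$ acts trivially on $SO(4,\mathbb{R})$-equivariant derivations the gauge Lie algebra is unchanged at this stage (one can also invoke Proposition \ref{isoclass} here, the 2-cocycle being that of the maximal torus $\mathbb{T}^2\subset SO(4,\mathbb{R})$). Only then does one twist by $K=U^{op}$, and now Proposition \ref{autautF} applies legitimately to the intermediate $\O(SO_\theta(4,\mathbb{R}))$-extension. Your verification that the module generators, their $H_j$-weights \eqref{adjSO}, and their brackets coincide with the instanton case is still the essential computational input---but it feeds into this two-step argument rather than a single application of Proposition \ref{autautF}.
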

In particular, the Lie algebra bracket of 
$\mathrm{aut}_{\mathcal{O}(S^4_\theta)}(\mathcal{O}(SO_\theta(5,\mathbb{R}))$ is determined by the brackets
  in {\it Table 1} of the previous Example
  \ref{gaugeinstanton}. The difference between the gauge
  transformations of the noncommutative instanton and orthogonal
  bundles on $S^4_\theta$ is in their actions on the total
  spaces $\mathcal{O}(S^7_\theta)$ and
  $\mathcal{O}(SO_\theta(5,\mathbb{R}))$, respectively.  This is clear
  when comparing the two
  different direct sums \eqref{dsd} and \eqref{dsdd}
  describing the linear space structure of the  braided  Lie algebras 
$\mathrm{aut}_{\mathcal{O}(S^4_\theta)}(\mathcal{O}(S^7_\theta)$
and
$\mathrm{aut}_{\mathcal{O}(S^4_\theta)}(\mathcal{O}(SO_\theta(5,\mathbb{R}))$.
\end{ex}

\begin{ex}{\it{A principal line bundle over the
      $\kappa$-Minkowski spacetime (Jordanian twist deformation).}} 
Let $P=\mathbb{R}_{>0}\ltimes \mathbb{R}^{n+1}$ be the affine
group of dilatations and translations of the  
$\mathbb{R}^{n+1}$. It is the group of invertible
matrices $({}^u_x{\,}^0_\II)$ with $u\in \mathbb{R}_{>0}, x\in
\mathbb{R}^{n+1}$. We consider the  principal bundle $P\to P/\mathbb{R}_{>0}$, where  $\mathbb{R}_{>0}$ 
acts by right multiplication of 
$({}^u_0{\,}^0_\II)$. From $({}^u_x{\,}^0_\II)=
({}^{\;1}_{xu^{-1}}{\,}^0_\II)({}^u_0{\,}^0_\II)$ the base space is
$\mathbb{R}^{n+1}$ and the bundle is trivial.
Dually
we have 
the Hopf--Galois extension  
$B=A^{co
  H}\subseteq A$ of coordinate rings on these affine algebraic varieties. The 
  coordinate ring  $A$ is generated by the elements $u$, $x^I$ while  the coordinate
ring  $B=\O(\mathbb{R}^{n+1})$ is generated by $\x^I:=x^Iu^{-1}$, $I=0,1,\ldots n$.
The infinitesimal gauge transformations form the abelian Lie
algebra  
$\O(\mathbb{R}^{n+1})\otimes{\rm{Lie}}(\mathbb{R}_{>0})\simeq
\O(\mathbb{R}^{n+1})$.
The Lie algebra of right-invariant vector fields of the group $P$ is
spanned by
$$u\frac{\partial}{\partial u},~u\frac{\partial}{\partial x^I}~,~~~I=0,1,\ldots n~.$$ 
The  vector field $u\frac{\partial}{\partial u}+ \sum_I x^I\frac{\partial}{\partial x^I}$ is vertical
and   span, with coefficients  in $B=\O(\mathbb{R}^{n+1})$, the Lie algebra 
${\rm{Lie}}(\mathbb{R}_{>0})$ of infinitesimal gauge group transformations. 

In order to deform the principal bundle $P\to P/\mathbb{R}_{>0}$, 
let
$K$ 
be the universal enveloping algebra of
the right-invariant vector fields of the group $P$ and, for $\kappa\in
\mathbb{R}$,  define the
Jordanian twist (see e.g. \cite{Borowiec:2008uj})
\begin{equation}
\F=\exp \left( u\mbox{$\frac{\partial}{\partial u}$}\otimes \sigma \right) ~,\label{nsymJ}
\end{equation}%
where $\sigma =\ln
\left( 1+\frac{1}{\kappa }P_0\right)$, $P_0=iu\frac{\partial}{\partial
  x^0}$ and $[u\mbox{$\frac{\partial}{\partial u}$},P_0]=iP_0$.
This twist is a formal power series in $\frac{1}{\kappa }P_0$.
However, due to the
affine algebraic nature of $A$, only a finite number of powers of $\frac{1}{\kappa
}P_0$ are relevant and we can   avoid defining the
topological completion of the tensor product $K\otimes K$.
We deform the $K$-equivariant Hopf--Galois extension 
$B=A^{co
  H}\subseteq A$
 to the $K_\F$-equivariant  one
$B_\F=A_\F^{co H}\subseteq A_\F$.
The total space algebra $A_\F$ is the 
 polynomial ring generated by the coordinates $u$, $u^{-1}$ with relations
$uu^{-1}=u^{-1}u=1$,  and  $n+1$ coordinates $x^I$, $I=0,1,\ldots , n$. The only nontrivial commutation
relation is
\begin{equation}\label{xumeno}
  x^0\dotF u^{-1}-u^{-1}\dotF
x^0=-\frac{i}{\kappa}~.
\end{equation}
The base algebra
$B_\F$
is generated by the coordinates $\x^I=x^Iu^{-1}=x^I\dotF u^{-1}$, $I=0,1,\ldots , n$, with  commutation
relations 
\begin{equation} \label{kppa-mink} 
  \x^0\dotF \x^j- \x^j \dotF
  \x^0=-\frac{i}{\kappa}\x^j~, \quad\x^l \dotF \x^j - \x^j\dotF \x^l=0~
\end{equation}
for $j , l = 1, \dots, n$.  
This is the algebra of the  $(n+1)$-dimensional $\kappa$-Minkowski quantum space.
A $*$-conjugation on $A_\F$ is
defined by $u^*=u, {(x^I)}^*=x^I$. It follows that $(\x^j)^*=\x^j$ and
$(\x^0)^*=\x^0+\frac{i}{\kappa}$, so that
$(\x^0+\frac{i}{2\kappa})^*=\x^0+\frac{i}{2\kappa}$ and this real generator
has the same commutation relations as $\x^0$ in \eqref{kppa-mink}. 

Since the 
vector field $u\frac{\partial}{\partial u}$ entering the twist
commutes with the vertical derivation $X:=u\frac{\partial}{\partial u}+
\sum_I x^I\frac{\partial}{\partial x^I}$, we have the braided vertical
$\mathbb{R}_{>0}$-equivariant derivation
$\dd(X)=X$ satisfying $[X,X]_{\r_\F}=\dd([X,X]_\F)=\dd([X,X])=0$.
Infinitesimal gauge transformations are given by $b\dotF X$ with $b\in
B_\F=\O(\mathbb{R}^{n+1})^{}_\F$.
From \eqref{LieAmodF++} we see that the braided gauge Lie algebra is abelian:
$[b\dotF X,b'\dotF X]_{\r_\F}=b\dotF b'\dotF [X,X]_{\r_\F}=0$.
 \end{ex}
\begin{ex} {\it{A non-abelian principal  bundle over the
      $\kappa$-Minkowski spacetime.}} 
  The previous example generalises to a non-abelian
  setting. For example let the total space  be $P=\mathbb{R}_{>0}\ltimes\mathbb{R}^{n+1}\rtimes SO(1,n)$,
the $n+1$-dimensional Poincar\'e-Weyl group, the semidirect product of the Poincar\'e group with the group
$\mathbb{R}_{>0}$ of dilatations. This  is  the group of invertible
matrices $({}^u_x{\,}^0_T)$ with $u\in \mathbb{R}_{>0}, x\in
\mathbb{R}^{n+1}$ and $T=(t_{IJ})_{I,J=0,1,...,n}\in  SO(1,n)$. 

Associated with the  principal bundle $P\to \mathbb{R}^{n+1}=P/G$, where $G=\mathbb{R}_{>0}\times
SO(1,n)$ we have the Hopf--Galois
extension  
$B=A^{co
  H}\subseteq A$ with $A$ generated by the coordinate functions
$u$, $x^I$, $t_{IJ}$. As in the previous example, with  the twist in \eqref{nsymJ}  the
only nontrivial commutation relation in the deformed algebra $A_\F$ is the one in \eqref{xumeno}; the base space
algebra $B_\F$ is the $\kappa$-Minkowski  in \eqref{kppa-mink}. The infinitesimal  
 gauge transformations form the braided Lie algebra   
\begin{equation*}
  \begin{split}
    {\rm{aut}}^{\r_\F}_{B^{}_\F}(A_\F)= \dd({\rm{aut}}_{B}(A)_\F)&=
\O(\mathbb{R}^{n+1})_\F\otimes\dd({\rm{Lie}}(\mathbb{R}_{>0}\times
SO(1,n))^{}_\F)\\
&=\O(\mathbb{R}^{n+1})_\F\otimes{\rm{Lie}}(\mathbb{R}_{>0}\times
SO(1,n))~.
\end{split}\end{equation*}
The last equality follows from  the commutativity of 
 the vector field $u\frac{\partial}{\partial u}$,
 entering the twist, with the right $G$-invariant ($H$-equivariant)
 vertical vector fields $X\in {\rm{Lie}}(\mathbb{R}_{>0}\times
SO(1,n))$ generating the gauge
transformations
$$u\frac{\partial}{\partial
  u}+x^I\frac{\partial}{\partial x^I}~,~~
t_{0K}\frac{\partial}{\partial t_{jK}}+t_{jK}\frac{\partial}{\partial
  t_{0K}}~,~~t_{iK}\frac{\partial}{\partial t_{jK}}-t_{jK}\frac{\partial}{\partial
  t_{iK}}~.
  $$ 
Here $i,j=1,2,\ldots , n$ and sum over $I,K=0,1,\ldots , n$
is understood. Indeed this commutativity
implies $\dd(X)=X$ and  $[X,X']_\F=[X,X']$, so that $[X,X']_{\r_\F}=\dd([X,X']_\F)=[X,X']$ for all
$X,X'\in {\rm{Lie}}(\mathbb{R}_{>0}\times
SO(1,n))$. Moreover, 
$[b\dotF X,b'\dotF X']_{\r_\F}=b\dotF
b'\dotF [X,X']$,
for all $b,b'\in \O(\mathbb{R}^{n+1})_\F$.
 \end{ex}

\begin{ex}\label{formalex}{\it{Formal deformation quantization of smooth principal bundles
and their gauge groups.}}
The twist deformations presented in this section for 
principal bundles that are affine algebraic varieties can be also
considered for smooth $L$-equivariant principal bundles $P\to P/G$.  In this case, see
\cite[Ex. 3.24]{ppca}, we have the
 gical $K[[\hbar]]$-equivariant
$H[[\hbar]]$-Hopf–Galois extension
$B[[\hbar]] \simeq A[[\hbar]]^{co H[[\hbar]]} \subseteq A[[\hbar]]$, 
where $K[[\hbar]]$, $B[[\hbar]]$, $H[[\hbar]]$ and
$A[[\hbar]]$ are the formal power
series extension of the $\mathbb{C}$-modules $K=C^\infty(L)$, $B=C^\infty(P/G)$, $H=C^\infty(G)$ and $A=C^\infty(P)$.
A twist $F\in K[[\hbar]]\hat\otimes
K[[\hbar]]\simeq C^\infty(L\times L)[[\hbar]]$ then leads to 
the noncommutative topological $H[[\hbar]]_\F$-Hopf–Galois
extension $B[[\hbar]]_\F \simeq A[[\hbar]]_\F^{co H[[\hbar]]_\F} \subseteq
A[[\hbar]]_\F$.  
 The associated braided gauge
  Lie algebra is ${\rm{aut}}^{\r_\F}_{B[[\hbar]]_\F}(A[[\hbar]]_\F)$,
  where ${\rm{aut}}^{\r_\F}_{B[[\hbar]]_\F}(A[[\hbar]]_\F)\subseteq
{\rm{Der}}^{\r_\F}(A[[\hbar]]_\F)\subseteq\Hom(A[[\hbar]]_\F,A[[\hbar]]_\F)$, this latter being the
linear space  of continuous algebra automorphisms.
 \end{ex}

\medskip
\noindent
\textbf{Acknowledgments.}~\\[.5em]
We thank Alessandro Ardizzoni for useful discussions.
PA acknowledges partial support from INFN, CSN4, Iniziativa
Specifica GSS and from INdAM-GNFM. 
This research has a financial support from Universit\`a del Piemonte Orientale. GL acknowledges partial support from INFN, Iniziativa
Specifica GAST and from INdAM-GNSAGA.
CP acknowledges partial support from Fondazione Cassa di Risparmio di Torino,  from Universit\`a di Trieste (Dipartimenti di Eccellenza,  legge n. 232 del 2016) and from INdAM-GNSAGA.

\appendix

\section{Proof of Proposition  \ref{DKF}}\label{app:DKF}
From the expression $(\Delta \ot \id) \bF$ obtained from the twist condition \eqref{twist-bF} for $\bF$, it is immediate to see that the two expressions of the map $\dd$ in \eqref{mapD} coincide. Then, 
$\dd^{-1}(k) = \bF^\alpha k S_\F(\bF_\alpha) \uf$ is the inverse of $\dd$. 
An equivalent expression for $\dd^{-1}$ is then given by
$\dd^{-1}(k)=(\F^\alpha \tadF k)\F_\alpha$. Indeed, this follows by recalling that $\F$ is a twist for $\K$ if and only if $\bF$ is a twist for $\KF$ and observing that the element $\bar{\textsf{u}}_{\bF}$ in the Hopf algebra $\KF$ (analogous to the element $\buf$ in $\K$) is 
\beq\label{u=u}
\bar{\textsf{u}}_{\bF}=S_\F(\F^\beta) \F_\beta = \uf S(\F^\beta) \buf \F_\beta
= \uf S(\F^\beta)  S(\bF^\alpha)  \bF_\alpha \F_\beta =
\uf S(\bF^\alpha \F^\beta)  \bF_\alpha \F_\beta = \uf \; .
\eeq
Next we show that $\dd$, or equivalently its inverse $\dd^{-1}$, is a morphism of $\KF$-modules:
$
\dd^{-1} (h \tadF k) = h \tad \dd^{-1}(k)
$.
On the one hand we compute 
\begin{align*}
\dd^{-1} (h \tadF k) &=  ((\F^\alpha h) \tadF k) \F_\alpha 
= \fone{(\F^\alpha h)}k  \uf S(\ftwo{(\F^\alpha h)}) \buf \F_\alpha 
\\
&= \F^\beta \one{\F^\alpha}\one{ h} \bF^\gamma ~k ~   \uf  S( \F_\beta \two{\F^\alpha}\two{ h}
\bF_\gamma) \buf \F_\alpha 
\end{align*}
which, by using the twist condition  \eqref{twist-F} gives
\begin{align*}
\dd^{-1} (h \tadF k)  
&= \F^\alpha \one{ h} \bF^\gamma ~k ~   \uf  S( \F^\beta \one{\F_\alpha}\two{ h}
\bF_\gamma) \buf \F_\beta \two{\F_\alpha}
\\
&= \F^\alpha \one{ h} \bF^\gamma ~k ~   \uf  S(\bF_\gamma) S(\two{ h}) S( \one{\F_\alpha}) S(\F^\beta
) \buf \F_\beta \two{\F_\alpha}
\\
&= \F^\alpha \one{ h} \bF^\gamma ~k ~   \uf  S(\bF_\gamma) S(\two{ h}) S( \one{\F_\alpha}) \two{\F_\alpha}
\end{align*}
when using that $S(\F^\beta) \buf \F_\beta=1$. Finally, since  $\F$ is unital,
\begin{align*}
\dd^{-1} (h \tadF k)  = \one{ h} \bF^\gamma ~k ~   \uf  S(\bF_\gamma) S(\two{ h}). 
\end{align*}
On the other hand, since $S_\F (\cdot) \uf=\uf S(\cdot)$, we have
$$
h \tad \dd^{-1}(k) = \one{h}  \bF^\alpha k S_\F(\bF_\alpha) \uf S(\two{h})
=
\one{h}  \bF^\alpha k \uf S(\bF_\alpha)  S(\two{h})
$$
and the two expressions coincide.

It is easy to prove that the map $\dd$ is an algebra map: 
\begin{align*}
\dd(h \dotF k) & = \dd ((\bF^{\alpha} \trl  h)\, (\bF_{\alpha} \trl k))
=\left( \bF^\beta \trl ((\bF^{\alpha} \trl  h)\, (\bF_{\alpha} \trl k))\right) \bF_\beta
\\
& =\left( (\one{\bF^\beta} \bF^{\alpha}) \trl  h \right) \left(  (\two{\bF^\beta}\bF_{\alpha}) \trl k)\right) \bF_\beta ,
\end{align*}
and using the twist condition, this simplifies to
 \begin{align*}
\dd(h \dotF k) &
=( \bF^\beta \trl  h ) \left(  (\one{\bF_\beta}\bF^{\alpha}) \trl k)\right) \two{\bF_\beta}\bF_{\alpha}
=( \bF^\beta \trl  h )  \one{\bF_\beta} (\bF^{\alpha}\trl k) S(\two{\bF_\beta}) \three{\bF_\beta}\bF_{\alpha}
\\
&= ( \bF^\beta \trl  h )  {\bF_\beta} (\bF^{\alpha}\trl k) \bF_{\alpha}
=
\dd(h) \dd( k)  \; .
 \end{align*}

The proof that $\dd^{-1}$ is also a coalgebra morphism, 
$(\dd^{-1} \ot \dd^{-1}) \circ \underline{\Delta_\F}=  \Delta_{\tK_F}  \circ \dd^{-1}$, 
 is more involved. We first observe that   $\bF^\alpha k S(\bF_\alpha)= \dd^{-1}(k ~\buf)$ for each $k \in \underline{\KF}$. Then, using the twist condition \eqref{twist-F},
$$
(\F^\alpha \tad k)\ot \F_\alpha  = 
\bF^\gamma \F^\nu k S(  \bF_\gamma \F^\mu \one{\F_{\nu}}) \ot \F_\mu \two{\F_\nu} 
=
\dd^{-1} \left( \F^\nu k  S(\F^\mu \one{\F_\nu}) \buf \right)  \ot \F_\mu   \two{\F_\nu} ,
$$
and the coproduct in ${\tK_F}$ can be rewritten as
\begin{align*}
 \Delta_{\tK_F} (k) & =
\F^{\beta} \tad (\one{k} S(\r_\alpha) ) \ot (\F_\beta \r^\alpha) \tad \two{k} 
\\
& =
\dd^{-1} \left( \F^\nu  (\one{k} S(\r_\alpha)) S(\F^\mu \one{\F_\nu}) \buf \right)  
\ot (\F_\mu   \two{\F_\nu}  \r^\alpha) \tad \two{k} 
\\
& =
\dd^{-1} \left( \F^\nu  \one{k} S(\one{\F_\nu} \r_\alpha) S(\F^\mu) \buf \right)  
\ot (\F_\mu   \two{\F_\nu}  \r^\alpha) \tad \two{k} 
\\
& =
\big( \dd^{-1}  \ot \id \big) \left( \F^\nu  \one{k} S(\r_\alpha \two{\F_\nu} ) S(\F^\mu)  \buf
\ot (\F_\mu    \r^\alpha \one{\F_\nu} ) \tad \two{k} \right) .
\end{align*}
In the last equality we have used the quasitriangularity condition. 

For the coproduct in $\underline{\KF}$ we have
\begin{align*}
\underline{\Delta_\F} (k) &= \fone{k}  S_\F (\rF_\alpha) \ot (\rF^\alpha \tadF \ftwo{k}) 
=
\F^{\mu} \one{k} \bF^{\nu} \uf S (\rF_\alpha) \buf \ot \left(\rF^\alpha \tadF (\F_\mu \two{k} \bF_{\nu}) \right)
\end{align*}
with
$\rF = \F_{21} \r ~\bF = \F_\alpha \r^\beta \bF^{\gamma} \ot \F^\alpha \r_\beta \bF_{\gamma} $. 
Then, since $\dd^{-1}$ is a $\KF$-module morphism, 
\begin{align*}
(\id \ot \dd^{-1})\underline{\Delta_\F} (k) &  =\F^{\mu} \one{k} \bF^{\nu} \uf S (\rF_\alpha) \buf \ot \left(\rF^\alpha \tad \dd^{-1}(\F_\mu \two{k} \bF_{\nu}) \right)
\\
& =\F^{\mu} \one{k} \bF^{\nu} \uf S (\F^\alpha \r_\beta \bF_{\gamma}) \buf \ot \left(
(\F_\alpha \r^\beta \bF^{\gamma})
 \tad \dd^{-1}(\F_\mu \two{k} \bF_{\nu}) \right) .
\end{align*}
To prove that $\dd^{-1}$ is also a coalgebra morphism, we have hence to show that
\begin{multline*}
\F^{\mu} \one{k} \bF^{\nu} \uf S (\F^\alpha \r_\beta \bF_{\gamma}) \buf \ot \left(
(\F_\alpha \r^\beta \bF^{\gamma})
 \tad \dd^{-1}(\F_\mu \two{k} \bF_{\nu}) \right) \\ 
=
\F^\nu  \one{(\dd^{-1}(k))} S(\r_\alpha \two{\F_\nu} ) S(\F^\mu)  \buf
\ot (\F_\mu    \r^\alpha \one{\F_\nu} ) \tad \two{(\dd^{-1}(k))} 
\end{multline*}
where
$$\Delta(\dd^{-1}(k))= \one{\bF^\gamma} \one{k}  \bF^\rho \uf S(\bF_\sigma) S(\two{\bF_\gamma})
\ot \two{\bF^\gamma} \two{k} \bF_\rho \uf S(\bF^\sigma) S(\one{\bF_\gamma})~,
$$
as deduced from
$ \Delta(\uf)= \bF (\uf \ot \uf) (S \ot S)(\bF_{21})= \bF^\rho \uf S(\bF_\sigma) \ot
\bF_\rho \uf S(\bF^\sigma)$, obtained with some computations. 
Then the right hand side of the identity we wish to show becomes
\begin{align*}
& \F^\nu   
\one{\bF^\gamma} \one{k}  \bF^\rho \uf S(\bF_\sigma) S(\two{\bF_\gamma})
S(\r_\alpha \two{\F_\nu} ) S(\F^\mu)  \buf
\\ & \hspace{5cm}
\ot (\F_\mu    \r^\alpha \one{\F_\nu} ) \tad 
\left(\two{\bF^\gamma} \two{k} \bF_\rho \uf S(\bF^\sigma) S(\one{\bF_\gamma})\right)
\\
&=
\F^\nu  
\one{\bF^\gamma} \one{k}  \bF^\rho \uf S(\bF_\sigma) S(\two{\bF_\gamma})
S(\r_\alpha \two{\F_\nu} ) S(\F^\mu)  \buf 
\\ & \hspace{5cm}
\ot (\F_\mu    \r^\alpha \one{\F_\nu} ) \tad 
\left(\two{\bF^\gamma} \two{k} \bF_\rho \uf S(\bF^\sigma) S(\one{\bF_\gamma})\right)
\\
&= \F^\nu  
\one{\bF^\gamma} \one{k}  \bF^\rho \uf S( \F^\mu \r_\alpha \two{\F_\nu}\two{\bF_\gamma} \bF_\sigma)   \buf
\ot (\F_\mu    \r^\alpha \one{\F_\nu} ) \tad 
\left(\two{\bF^\gamma} \two{k} \bF_\rho \uf S(\one{\bF_\gamma} \bF^\sigma) \right)
\\
&= \F^\nu  
\bF^\delta \F^\gamma \one{k}  \bF^\rho \uf S( \F^\mu \r_\alpha \two{\F_\nu}
\three{\bF_\delta} \two{\bF_\beta} 
 \bF_\sigma)   \buf
\\
& \hspace{5cm}
\ot (\F_\mu    \r^\alpha \one{\F_\nu} ) \tad 
\left(\one{\bF_{\delta}}\bF^\beta \F_{\gamma}  \two{k} \bF_\rho \uf S(\two{\bF_\delta} \one{\bF_\beta}   \bF^\sigma) \right)
\\
&= \F^\nu  
\bF^\delta \F^\gamma \one{k}  \bF^\rho \uf S( \F^\mu \r_\alpha \two{\F_\nu}
\two{\bF_\delta} \two{\bF_\beta} 
 \bF_\sigma)   \buf 
\\ & \hspace{5cm}
\ot (\F_\mu    \r^\alpha \one{\F_\nu} \one{\bF_{\delta}} ) \tad \left(
\bF^\beta \F_{\gamma}  \two{k} \bF_\rho \uf S( \one{\bF_\beta}   \bF^\sigma) \right)
\end{align*}
where we have used the twist condition \eqref{twist-bF} to get the second last identity.
Next, being $\bF$ the inverse of $\F$, using again the twist condition \eqref{twist-bF}, this expression simplifies to 
\begin{align*}
&
\F^\gamma \one{k}  \bF^\rho \uf S( \F^\mu \r_\alpha  \two{\bF_\beta} 
 \bF_\sigma)   \buf
\ot (\F_\mu    \r^\alpha  ) \tad \left(
\bF^\beta \F_{\gamma}  \two{k} \bF_\rho \uf S( \one{\bF_\beta}   \bF^\sigma) \right)
\\
& \qquad =\F^\gamma \one{k}  \bF^\rho \uf S( \F^\mu \r_\alpha  
{\bF_{\beta}})   \buf
\ot (\F_\mu    \r^\alpha  ) \tad \left(
\one{\bF^{\beta}}  \bF^\sigma
\F_{\gamma}  \two{k} \bF_\rho \uf S( 
\two{\bF^\beta}\bF_{\sigma} 
) \right)
\\
& \qquad =\F^\gamma \one{k}  \bF^\rho \uf S( \F^\mu \r_\alpha  
{\bF_{\beta}})   \buf
\ot (\F_\mu    \r^\alpha  \bF^{\beta}) \tad \left(
\bF^\sigma
\F_{\gamma}  \two{k} \bF_\rho \uf S( 
\bF_{\sigma} 
) \right) .
\end{align*}
From the definition of $\dd^{-1}$, it is easy to see that the above can be rewritten as
\begin{align*}
\F^\gamma \one{k}  \bF^\rho \uf S( \F^\mu \r_\alpha  
{\bF_{\beta}})   \buf
\ot (\F_\mu    \r^\alpha  \bF^{\beta}) \tad \left(
\dd^{-1}(
\F_{\gamma}  \two{k} \bF_\rho 
) \right),
\end{align*}
and thus coincides with the left hand side, $(\id \ot \dd^{-1})\underline{\Delta_\F} (k)$. This concludes the proof.

\end{document}